\documentclass[10pt,reqno,a4]{amsart}
\usepackage[margin=2.5cm]{geometry}
\parskip 0.1cm
\usepackage[utf8]{inputenc}
\DeclareUnicodeCharacter{00A0}{~}
\setcounter{tocdepth}{1}

\usepackage{amssymb,amsmath,amscd,amsfonts,amsthm,bbm}
\usepackage[usenames, dvipsnames]{color}
\usepackage{graphicx}
\usepackage{caption}
\usepackage{subcaption}
\usepackage{enumitem}
\usepackage[colorlinks=true]{hyperref}
\usepackage{array}
\usepackage{soul}

\usepackage[nocorrection]{optional} 

\newcommand{\correction}{\opt{correction}}


\newtheorem{theo}{Theorem}[section]

\newtheorem{lemma}[theo]{Lemma}
\newtheorem{coro}[theo]{Corollary}
\newtheorem{Def}[theo]{Definition}
\newtheorem{prop}[theo]{Proposition}

\newtheorem{claim}[theo]{Claim}
\theoremstyle{remark}
\newtheorem{rem}{Remark}[section]
\newtheorem{example}{Example}[section]

\theoremstyle{definition}
  \newtheorem{notation}[theo]{Notation}

\newcommand\nc\newcommand
\newcommand\dmo\DeclareMathOperator


\nc{\dz}{{\bf d}_z}
\nc{\N}{\mathbb{N}}
\nc{\R}{\mathbb{R}}
\nc{\C}{\mathbb{C}}
\nc{\E}{\mathbb{E}}
\nc{\PP}{\mathbb{P}}
\nc{\bdm}{\begin{displaymath}}
\nc{\edm}{\end{displaymath}}
\nc{\bea}{\begin{eqnarray*}}
\nc{\eea}{\end{eqnarray*}}
\nc{\la}{\langle}
\nc{\ra}{\rangle}
\nc{\Cplus}{\mathbb{C}_+}
\nc{\Rplus}{\mathbb{R}^+}
\nc{\pitilde}{\tilde{\pi}}
\nc{\tv}{\mathrm{tv}}
\nc{\Cnabla}{\mathbb{C}^{\nabla}}
\nc{\im}{\mathrm{Im}}
\nc{\bs}{\boldsymbol}
\nc{\ti}{\tilde}
\nc{\Msub}{M_{\mathrm{sub}}}
\nc{\diag}{\mathrm{diag}}
\nc{\ii}{\mathrm{i}}
\numberwithin{equation}{section}
\nc{\bv}{\boldsymbol{\varepsilon}}
\nc{\dzz}{\boldsymbol{\delta}_z}
\nc{\tr}{\mathrm{tr}\,}
\nc{\cvgP}[1]{\xrightarrow[#1]{\mathcal P}}
\nc{\cvgD}{\xrightarrow[]{\mathcal D}}
\nc{\eqdef}{\stackrel{\triangle}{=}} 
\nc{\lrn}{\left|\!\left|\!\left|} 
\nc{\rrn}{\right|\!\right|\!\right|} 
\nc{\bell}{\boldsymbol{\ell}}
\nc{\blambdaR}{\boldsymbol{\lambda}_R}
\nc{\comment}[1]{\textcolor{blue}{[{\it #1}]}}

\nc{\rhoVn}{\rho(V_n)}	
\nc{\rhoV}{\rho(V)}	
\nc{\tkappa}{{\tilde \kappa}}

\nc{\Ncenter}{\stackrel{\circ}{\mathcal N}_n}
\nc{\Sn}{{\mathcal S}_n}
\nc{\Kn}{{\mathcal K}_n}

\dmo{\Imm}{Im}
\dmo{\Real}{Re}
\dmo*{\dist}{dist}
\dmo{\var}{var}
\dmo{\trace}{Tr}
\dmo{\rank}{rank}

\nc{\Rd}{R_n^{1/2}}
\nc{\Rdb}{\bar{R}_n^{1/2}}
\nc{\V}{\mathcal V}
\nc{\VV}{|\mathcal V|^2}
\nc{\dlp}{d_{\mathcal LP}}
\nc{\Qij}{Q_{[ij]}}

\nc{\cU}{{\mathcal U}}

\nc{\smax}{\sigma_{\max}} 
\nc{\smin}{\sigma_{\min}} 


\nc{\Xn}{X^{\mathcal N}}
\nc{\Yn}{Y^{\mathcal N}}
\nc{\Hn}{H^{\mathcal N}}
\nc{\Gn}{G^{\mathcal N}}
\nc{\tGn}{\widetilde{G}^{\mathcal N}}
\nc{\Fn}{F^{\mathcal N}}
\nc{\cFn}{{F}^{'\mathcal N}}

\nc{\vxi}{\vec{\xi}}
\nc{\vxin}{\vec{\xi}^{\mathcal N}}
\nc{\xin}{\xi^{\mathcal N}}
\nc{\bxin}{\bar{\xi}^{\mathcal N}}

\nc{\Ec}{\E_{\{1\}}}

\nc{\Oeta}[1]{{\mathcal O}_{\eta} \left( {#1}\right)}
\nc{\vOeta}[1]{\vec{\mathcal O}_{\eta} \left( {#1}\right)}
\nc{\specnorm}[1]{\left\| {#1}\right\|_{\mathrm{sp}}}
\nc{\posneq}{\succcurlyeq_{\neq}}

\nc{\bq}{\bs{q}}
\nc{\qt}{\widetilde{q}}
\nc{\bqt}{\bs{\qt}}
\nc{\qvec}{\vec{\bs q}}
\nc{\qvecstar}{\qvec_*}
\nc{\bqstar}{\bq_*}
\nc{\bqtstar}{\bqt_*}

\nc{\rr}{r}
\nc{\rt}{\widetilde{r}}
\nc{\br}{\bs{r}}
\nc{\brt}{\bs{\rt}}
\nc{\rvec}{\vec{\bs r}}
\nc{\rvecstar}{\rvec_*}
\nc{\brstar}{\br_*}
\nc{\brtstar}{\brt_*}

\nc{\pt}{\widetilde{p}}
\nc{\bp}{\bs{p}}
\nc{\bpt}{\bs{\pt}}
\nc{\pvec}{\vec{\bp}}

\nc{\vp}{\varphi}
\nc{\vpt}{\widetilde{\varphi}}
\nc{\bphi}{\bs{\varphi}}
\nc{\bphit}{\bs{\widetilde \varphi}}
\nc{\phivec}{\bs{\vec{\varphi}}}

\nc{\q}{\bs{q}}
\nc{\qtilde}{\bs{\widetilde q}}	
\nc{\p}{\bs{p}}
\nc{\ptilde}{\bs{\widetilde p}}
\nc{\tp}{\widetilde{\varphi}}
\nc{\bphitilde}{\bs{\widetilde \varphi}}
\nc{\bphivec}{\bs{\vec{\varphi}}}

\nc{\vecepsilon}{\vec{\boldsymbol{\varepsilon}}}


\nc{\Qa}{{\mathcal Q}(\bs{\alpha}, A, \bs{a})}
\nc{\Qb}{{\mathcal Q}(\bs{\beta}, B, \bs{b})}

\nc{\Sab}{{\mathcal S}_{AB}}
\nc{\Sa}{{\mathcal S}_{A}}
\nc{\Sb}{{\mathcal S}_{B}}

\nc{\Sig}{A}

\dmo{\eps}{\varepsilon}
\dmo{\ls}{\lesssim}
\dmo{\gs}{\gtrsim}

\nc{\expo}[1]{\exp \left( #1 \rule{0mm}{3mm}\right)}
\dmo{\e}{\mathbb{E}}
\dmo{\pr}{\mathbb{P}}
\dmo{\un}{\mathbbm{1}}
\dmo{\1}{\mathbf{1}}
\nc{\tran}{\mathsf{T}} 	
\nc{\scut}{\snot}		
\nc{\snot}{\sigma_0}	
\nc{\mN}{\mathcal{N}}
\nc{\vpmax}{\|\bphi\|_\infty}
\nc{\tpmax}{\|\bphitilde\|_\infty}
\nc{\bS}{\bs{S}}
\nc{\bd}{\bs{d}}
\nc{\bdt}{\bs{\tilde{d}}}
\nc{\dt}{\tilde{d}}
\nc{\HS}{\mathsf{HS}}
\nc{\Mo}{M_0}
\nc{\Res}{\bs{R}}
\nc{\Y}{\bs{Y}}
\nc{\Am}{\bs{A}^{(m)}}
\nc{\Vm}{\bs{V}^{(m)}}
\nc{\Ym}{\bs{Y}^{(m)}}
\nc{\Lm}{\check{\bs{L}}^{(m)}}
\nc{\wY}{{\widetilde Y}}
\nc{\wA}{{\widetilde A}}
\dmo{\Leb}{Leb}

\nc{\Blue}[1]{\textcolor{blue}{#1}}
\nc{\comN}[1]{\textcolor{ForestGreen}{#1}}

\title[ Non-Hermitian random matrices with a variance profile]{
Non-Hermitian random matrices with a variance profile (I): Deterministic equivalents and limiting ESDs} 

\author[N. Cook, W. Hachem, J. Najim, D. Renfrew]{Nicholas Cook, Walid Hachem, Jamal Najim and David Renfrew} 

\date{\today}

\keywords{}
\subjclass[2010]{Primary 15B52, Secondary 15A18, 60B20}

\begin{document}

\begin{abstract} 
For each $n$, let $A_n=(\sigma_{ij})$ be an $n\times n$ deterministic matrix and let $X_n=(X_{ij})$ be an $n\times n$ random matrix with i.i.d.\ centered entries of unit variance.
We study the asymptotic behavior of the empirical spectral distribution $\mu_n^Y$ of the rescaled entry-wise product
\[
Y_n = \left(\frac1{\sqrt{n}} \sigma_{ij}X_{ij}\right).
\]
For our main result we provide a deterministic sequence of probability measures $\mu_n$, each described by a family of \emph{Master Equations}, such that the difference $\mu^Y_n - \mu_n$
converges weakly in probability to the zero measure. A key feature of our results is to allow some of the entries $\sigma_{ij}$ to vanish, provided that the standard deviation profiles $A_n$ satisfy a certain quantitative irreducibility property. 
An important step is to obtain quantitative bounds on the solutions to an associate system of Schwinger--Dyson equations, which we accomplish in the general sparse setting using a novel \emph{graphical bootstrap} argument.
\end{abstract}

\maketitle

\tableofcontents


\section{Introduction} 
\label{sec:intro}
For an $n\times n$ matrix $M$ with complex entries and eigenvalues $\lambda_1,\dots,\lambda_n\in \C$ (counted with multiplicity and labeled in some arbitrary fashion), the \emph{empirical spectral distribution (ESD)} is given by
\begin{equation}	\label{def:esd}
\mu_M = \frac1n\sum_{i=1}^n \delta_{\lambda_i}\;.
\end{equation}
A seminal result in non-Hermitian random matrix theory is the \emph{circular law}, which describes the asymptotic global distribution of the spectrum for matrices with i.i.d.\ entries of finite variance.
The following strong form of the circular law was established by Tao and Vu \cite{tao2010random}, and is the culmination of the work of many authors \cite{Ginibre-1965, mehta1967, girko1985circular, Bai-circular-law-1997, bai-sil-book, gotze2010circular, pan-zhou-circular-2010, TaVu:circ} -- see the survey \cite{2012-bordenave-chafai-circular} for a detailed historical account. 

\begin{theo}[Circular law]	\label{thm:circ}
Let $\xi$ be a complex random variable of zero mean and unit variance, and for each $n$ let $X_n = (X_{ij}^{(n)})$ be an $n\times n$ matrix whose entries are i.i.d.\ copies of $\xi$.
Then almost surely, the ESDs $\mu_{\frac1{\sqrt{n}}X_n}$ converge weakly to the circular measure
\[
\mu_{\mathrm{circ}}(dx\, dy) := \frac1{\pi}1_{\{ |x|^2+|y|^2\le 1\}} \, dx\, dy.
\]
\end{theo}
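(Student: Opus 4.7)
The plan is to prove Theorem \ref{thm:circ} via the Hermitization method introduced by Girko, which reduces the study of the complex spectrum of a non-Hermitian matrix to the study of singular values of a family of shifted Hermitian matrices. The starting point is that for a.e.\ $z \in \C$, the logarithmic potential of the ESD admits the representation
\[
U_{\mu_{\frac1{\sqrt n} X_n}}(z) \;=\; -\int_{\C} \log|w-z| \, d\mu_{\frac1{\sqrt n} X_n}(w) \;=\; -\frac{1}{n} \log \bigl|\det\bigl(\tfrac1{\sqrt n} X_n - z I\bigr)\bigr| \;=\; -\int_0^\infty \log s \, d\nu_n^z(s),
\]
where $\nu_n^z$ denotes the empirical distribution of the singular values $s_1(z) \geq \cdots \geq s_n(z)$ of $\frac{1}{\sqrt n} X_n - z I$. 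Since weak convergence of probability measures on $\C$ together with tightness can be deduced from pointwise (a.e.) convergence of logarithmic potentials, the task reduces to showing that for a.e.\ $z$, $\int \log s \, d\nu_n^z(s)$ converges a.s.\ to $\int \log s \, d\nu^z(s)$, where $\nu^z$ is the singular value distribution of a free circular element shifted by $z$.

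The first key step is the weak almost sure convergence $\nu_n^z \to \nu^z$. This is a Hermitian random matrix problem: one studies the $2n\times 2n$ Hermitization
\[
H_n(z) \;=\; \begin{pmatrix} 0 & \frac{1}{\sqrt n} X_n - z I \\ \bigl(\frac{1}{\sqrt n} X_n - z I\bigr)^* & 0 \end{pmatrix},
\]
whose eigenvalues are $\pm s_k(z)$. I would analyze the resolvent/Stieltjes transform of $\frac{1}{2n}\trace (H_n(z) - wI)^{-1}$ for $w \in \C_+$ via a Schur complement / moment method argument, show that it satisfies (asymptotically) a fixed-point equation that admits a unique solution in the upper half-plane, and thereby identify $\nu^z$ explicitly. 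This step is classical and only requires $\E|\xi|^2 < \infty$.

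The second, and main, obstacle is establishing uniform integrability of $\log s$ against $\nu_n^z$. Convergence of $\nu_n^z$ only controls the bulk of the singular values; to pass to the logarithmic potential one must rule out mass escaping to $0$ (a pile-up of small singular values). Concretely, one needs quantitative lower bounds of the form $s_n(z) \geq n^{-C}$ with overwhelming probability, together with a polynomial Wegner-type estimate on the number of $s_k(z)$ lying in $[n^{-C}, \eta]$ for small $\eta$. For the least singular value, the crucial input under no moment or anti-concentration assumption on $\xi$ beyond $\E|\xi|^2 = 1$ is Tao--Vu's truncation-and-replacement scheme combined with inverse Littlewood--Offord theorems \cite{TaVu:circ}, which upgrade the Rudelson--Vershynin bounds available in the sub-Gaussian/bounded-density case to the general i.i.d.\ setting. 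The intermediate singular values are controlled via a negative second moment identity or net arguments on compressible/incompressible vectors.

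Combining these ingredients, the uniformly integrable convergence $\int \log s \, d\nu_n^z(s) \to \int \log s \, d\nu^z(s)$ holds a.s.\ for a.e.\ $z$, so $U_{\mu_{\frac1{\sqrt n}X_n}}(z) \to U_{\mu_{\mathrm{circ}}}(z)$ a.s.\ for a.e.\ $z$. A standard potential-theoretic lemma then upgrades this to a.s.\ weak convergence $\mu_{\frac1{\sqrt n}X_n} \to \mu_{\mathrm{circ}}$, after verifying tightness (which follows from a crude bound on $\frac{1}{n}\E\|X_n\|_{\mathrm{HS}}^2 = 1$ controlling the second moment of $\mu_{\frac1{\sqrt n}X_n}$). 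The delicate heart of the argument is undoubtedly the control of the least singular value of $\frac{1}{\sqrt n} X_n - z I$ under only a finite second moment hypothesis; the Hermitian convergence and the potential-theoretic wrap-up are comparatively soft.
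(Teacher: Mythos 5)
The paper does not prove Theorem~\ref{thm:circ}; it is stated as a known result and attributed to Tao and Vu \cite{tao2010random} (with a detailed historical account delegated to the survey \cite{2012-bordenave-chafai-circular}). There is therefore no ``paper's own proof'' to compare against. That said, your sketch does correctly describe the architecture of the known proof, and the same Girko Hermitization / logarithmic-potential framework is what the paper adapts in Sections~\ref{sec:svd}--\ref{sec:end} to prove its own main theorem for variance profiles. Your three ingredients --- (i) a.s.\ weak convergence of the symmetrized singular value distributions $\nu_n^z$ to a deterministic limit via a resolvent fixed-point equation, (ii) quantitative least-singular-value and Wegner-type bounds to establish uniform integrability of $\log|\cdot|$ near zero, (iii) a potential-theoretic lemma upgrading a.e.\ pointwise convergence of $U_{\mu_n}$ to weak convergence of $\mu_n$ --- are exactly the ones in the Tao--Vu proof and in the paper's Sections~\ref{outline}, \ref{sec:logpotential}. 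A couple of small points worth tightening: the identification of $\nu^z$ with the singular value law of a shifted free circular element is a useful heuristic, but the standard proof identifies $\nu^z$ directly via a self-consistent equation for its Stieltjes transform; and in the tightness argument your displayed quantity should be $\frac1n\E\|\tfrac1{\sqrt n}X_n\|_{\mathrm{HS}}^2$, not $\frac1n\E\|X_n\|_{\mathrm{HS}}^2$ (the latter equals $n$, not $1$). The genuinely hard step is, as you say, the lower tail of the smallest singular value without extra hypotheses on $\xi$; that is where the Tao--Vu inverse Littlewood--Offord machinery is indispensable, and a one-line invocation of it is appropriate at this level of detail but is where the real work lies.
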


One of the remarkable features of the circular law is that the asymptotic behavior of ESDs is insensitive to specific details of the entry distributions, apart from the first two moments. This is an instance of the \emph{universality phenomenon} in random matrix theory.

The circular law has been an important tool for understanding the stability of dynamical systems on complex networks, going back to work of May in ecology \cite{may1972will}, and later work of Sompolinski et al.\ in neuroscience \cite{SCS}. 
May used an i.i.d.\ matrix $X_n$ to model the \emph{community matrix} for a food network of $n$ species, where the entry $X_{ij}$ determines the rate of growth (or decay) of the population of species $i$ due to species $j$.
The stability of the system is determined by the spectrum of $X_n$ -- specifically by whether it has eigenvalues with sufficiently large real part -- and May used the circular law\footnote{At the time the circular law was only known to hold in the complex Gaussian case thanks to work of Ginibre and Mehta \cite{Ginibre-1965, mehta1967}. Strictly speaking, May's argument assumes that there are asymptotically no eigenvalues outside the limiting support, which is now known to hold under some moment hypotheses \cite{bai-sil-rmta12,BCCT}.} to derive a criterion for stability.

Recently there has been increasing interest in extending the arguments of \cite{may1972will,SCS} to matrix models with more structured distributions.
In neural networks, where random matrices are used to model the \emph{synaptic matrix}, the work \cite{RaAb} considered perturbed i.i.d.\ matrices of the form $X_n+M_n$, where $M_n$ is a fixed matrix with all entries within a fixed proportion of columns taking a fixed positive value $\mu^+$, and all remaining entries taking a fixed negative value $\mu^-$. Their motivation was to conform to Dale's Law, stating that neurons are either inhibitory or excitatory. In this case $M_n$ is a rank-one perturbation; as was later shown rigorously in work of Tao \cite{Tao:outliers}, low rank perturbations do not affect the limiting spectral distribution, but may lead to the creation of outlier eigenvalues. 

Several recent works have studied the limiting spectral distribution for random matrices of the form 
\begin{equation}	\label{sdev.profile}
A_n\odot X_n = (\sigma_{ij}X_{ij})
\end{equation}
(suitably rescaled) where $A_n=(\sigma_{ij})$ is a fixed (deterministic) \emph{standard deviation profile}. 
From a modeling perspective the $\sigma_{ij}$ can reflect the varying degrees of interaction between species/neurons.
In theoretical ecology the works \cite{allesina2015predicting,Allesina:2015ux} considered asymmetric standard deviation profiles, i.e.\ taking $\sigma_{ij}\ne \sigma_{ji}$, in order to create more realistic predator-prey cascading relationships.
In neuroscience the works \cite{ASS,ARS} considered matrices $A_n$ partitioned into a bounded number of block submatrices having constant entries within each block, in order to model networks with a bounded number of cell types. 
We also note that predating these works, Girko \cite[Chap. 25, 26]{girko-canonical-equations-I} (see also the references therein) studied non-Hermitian matrices with standard deviation and mean profiles and provided {\em canonical equations} to describe the limiting spectral densities.

Some works have also gone beyond matrices with independent entries of specified mean and variance, for instance considering products and sums of deterministic matrices with a random matrix having i.i.d.\ entries \cite{Ahmadian:2015xw}, or allowing correlations between entries $X_{ij}, X_{ji}$ \cite{AlTa12:nature}. 
We also mention that parallel to the study of non-Hermitian matrices there have been many works devoted to the study of Hermitian random matrices with a
variance profile, both Wigner and Gram-type -- see for instance
Girko \cite[Chapter 7, 8]{girko-canonical-equations-I}, Shlyakhtenko \cite{shlyakhtenko-96}, Guionnet \cite{Guionnet:band}, Anderson and Zeitouni \cite{AnZe:band},
Hachem et al.\ \cite{hachem-et-al-2007}, Ajanki et al.\ \cite{ajanki2015universality}.

As has been pointed out in the ecology 
literature \cite{Allesina:2015ux}, a key feature that is missing from the literature on models of the form \eqref{sdev.profile} is to allow $A_n$ to have zero entries. Indeed, the nodes in large real-world ecological or neural networks do not interact with all other nodes.
One fix has been to take $A_n$ to have i.i.d.\ Bernoulli($p$) indicator entries, independent of $X_n$, i.e.\ to model the support of the network by a sparse Erd\H{o}s--R\'enyi digraph. As was shown by Wood \cite{Wood:sparse} the circular law still holds for $A_n\odot X_n$ (after rescaling by $(pn)^{-1/2}$) if $p\ge n^{\alpha-1}$ for any fixed $\alpha\in (0,1]$. However, the valence of the nodes in the resulting network is highly concentrated around $pn$, while the valence distribution for real-world networks is highly non-uniform \cite{Allesina:2015ux}.
With the ability to set $A_n$ deterministically one can reflect some known underlying geometry of the network.

In the present work, our main focus is to understand the asymptotic global spectral distribution under minimal assumptions on the variance profile.
Key features of our results, which also present significant challenges in the proofs, are to allow a large number of the entries $\sigma_{ij}$ to be zero, as well as to allow asymmetry (i.e.\ $\sigma_{ij}\ne \sigma_{ji}$).
As an example, we are able to handle variance profiles satisfying a \emph{robust irreducibility} condition (see Definition \ref{def:robust}), which includes matrices that are close in the cut norm to an irreducible graphon (Lemma \ref{lem:graphon}).

As compared with the proof of the circular law, the identification and description of a limiting measure is significantly more involved.
In this article we prove the existence of a sequence of deterministic measures -- called \emph{deterministic equivalents} -- which asymptotically approximate the random ESDs.
In particular we obtain non-trivial information even when the ESDs themselves do not converge to a limit.
The identification of the deterministic equivalents involves analysis of a (cubic) polynomial system of \emph{Master Equations} determined by the variance profile. A relative of the Master Equations known as the Quadratic Vector Equation was studied in recent work of Ajanki, Erd\H os and Kr\"uger and Alt, Erd\H os and Kr\"uger on the spectrum of Hermitian matrices with a variance profile \cite{AEKquadequations,AEKgram}. 

Since the initial release of this paper, a local law version of our main statement (Theorem \ref{thm:main}) was proved in \cite{alt2016local} under the restriction that the standard deviation profile $\sigma_{ij}$ is uniformly strictly positive and that the distribution of the matrix entries possesses a bounded density and has all its moments finite. 
In this case, it is also proved that the density of the deterministic equivalents is positive and bounded on its support. 
The latter properties may no longer hold if the standard deviation profile has zero entries or is not uniformly lower bounded. In these cases, the limiting distribution may offer a wider variety of behavior, such as a blowup or vanishing density at zero, or a point mass at zero; see \cite{cook-et-al-in-preparation}. 

It is by now well known that the study of ESDs for non-Hermitian random matrices is intimately connected with proving the quantitative invertibility of such matrices -- that is, establishing lower tail estimates for small singular values. The possible sparsity of the matrices considered here gives rise to significant challenges for this task. 
Bounds on the smallest singular value sufficient for our purposes were established by the first author in \cite{Cook:ssv}.
In the present work we obtain control on the remaining small singular values from Wegner-type bounds, which are established by a quantitative analysis of the Master Equations.
Specifically, the key is to show solutions to the \emph{Regularized Master Equations} (see \eqref{def:MEt}) are uniformly bounded in the spectral scale parameter $t$ and the dimension $n$. 
For this task we use an iterative \emph{graphical bootstrap} argument 
(reminiscent of bootstrap arguments from the theory of differential equations) that exploits expansion properties of a directed graph naturally associated to the variance profile.
(Graph expansion properties were also key for the analysis of the smallest singular value in \cite{Cook:ssv}.)
We discuss these aspects of the proof in more detail after presenting the results in Section \ref{sec:results}.

\subsection{The model} 

In this article we study the following general class of random matrices with non-identically distributed entries.

\begin{Def}[Random matrix with a variance profile]		\label{def:model}
For each $n\ge 1$, let $A_n$ be a (deterministic) $n\times n$ matrix with entries $\sigma_{ij}^{(n)}\ge0$, let $X_n$ be a random matrix with i.i.d.\ entries $X_{ij}^{(n)}\in \C$ satisfying
\begin{equation}	\label{standardized}
\E X_{11}^{(n)}=0\, , \quad \E|X_{11}^{(n)}|^2=1
\end{equation}
and set
\begin{equation}	\label{def:Yn}
Y_n = \frac1{\sqrt{n}}A_n\odot X_n
\end{equation}
where $\odot$ is the matrix Hadamard product, i.e.\ $Y_n$ has entries $Y_{ij}^{(n)} = \frac1{\sqrt{n}} \sigma_{ij}^{(n)} X_{ij}^{(n)}$.
The empirical spectral distribution of $Y_n$ is denoted by $\mu_n^Y$.
We refer to $A_n$ as the \emph{standard deviation profile} and to $A_n\odot A_n = \big((\sigma_{ij}^{(n)})^2\big)$ as the \emph{variance profile}.
We additionally define the \emph{normalized variance profile} as
$
V_n=n^{-1} A_n\odot A_n$. 
When no ambiguity occurs, we will drop the index $n$ and simply write $\sigma_{ij}, X_{ij}, V$, etc.  
\end{Def}

\begin{rem}
Note we do not assume the $(X_{ij}^{(n)})$'s are independent or identically distributed for different $n$'s.
\end{rem}

Our goal is to describe the asymptotic behavior of the ESDs $\mu_n^Y$ given a sequence of standard deviation profiles $A_n$ which can be sparse, and may not converge in any sense to a limiting standard deviation profile.

\subsection{Master equations and deterministic equivalents} 

The main result of this article states that under certain assumptions on the sequence of standard deviation profiles $A_n$ and the distribution of the entries of $X_n$, there exists a sequence of deterministic probability measures $\mu_n$ that are \emph{deterministic equivalents} of the
spectral measures $\mu_n^Y$ such that 
\[
\mu_n^Y \sim \mu_n\qquad \text{in probability} \quad (n\to\infty),
\]
see Section \ref{subseb:convofmeas} for notation.


The measures $\mu_n$ are described by a polynomial system of \emph{Master Equations}.
Denote by $V_n^\tran$ the transpose matrix of $V_n$ and by $\rhoVn$ its spectral radius. 
For a parameter $s\ge0$, the Master Equations are the following system of $2n+1$ equations in $2n$ unknowns $q_1,\dots,q_n,\qt_1,\dots,\qt_n$:
\begin{equation}	\label{def:ME2}	
\begin{cases}
q_i&= \dfrac{ (V_n^\tran {\bq})_i} {s^2+(V_n\bqt)_i(V_n^\tran\bq)_i  }  \\
\\
\qt_i&= \dfrac{ (V_n \bqt)_i} {s^2+(V_n\bs {\qt})_i(V_n^\tran\bq)_i  } \\ 
\\
&\sum_{i\in [n]} q_i  = \sum_{i\in [n]} {\qt}_i
\end{cases}\, , 
\quad q_i, \qt_i\ge0,\ i\in [n],
\end{equation}
where $\bq,\bqt$ are the $n\times 1$ column vectors with components $q_i,\qt_i$, respectively. 

If $s\ge \sqrt{\rhoVn}$, it can be proved that the Master Equations admit the unique trivial solution 
$( \bq,  \bqt)=0$. 
Provided that $0<s<\sqrt{\rhoVn}$ and that the matrix $V_n$ is irreducible, it can be shown that the Master Equations admit a unique positive solution $( \bq,  \bqt)$
which depends only on $s$. 
This solution $s\mapsto (\bq(s),\bqt(s))$ is continuous on $(0,\infty)$.
With this definition of $\bq$ and $\bqt$, the deterministic equivalent $\mu_n$ is defined as the radially symmetric probability distribution over $\C$ satisfying
\[
\mu_n\{ z \in \C\, , \ |z|\le s\} = 1 - \frac 1n \bq^\tran (s) V_n \bqt(s)\ ,\quad s>0\ .
\]
It readily follows that the support of $\mu_n$ is contained in the disk of radius $\sqrt{\rhoVn}$.

\subsection*{Acknowledgements} 
The work of NC was partially supported by NSF grants DMS-1266164 and DMS-1606310.
The work of WH and JN was partially supported by the program ``mod\`eles
num\'eriques'' of the French Agence Nationale de la Recherche under the grant
ANR-12-MONU-0003 (project DIONISOS). Support of the Labex BEZOUT from 
the Universit\'e Paris Est is also acknowledged. The work of DR was partially supported by Austrian Science Fund (FWF): M2080-N35. The authors thank the referee for their careful reading and insights which enabled to substantially shorten the paper.


\section{Presentation of the results}
\label{sec:results}

\subsection{Notational preliminaries}		\label{sec:notation}

Denote by $[n]$ the set $\{1,\cdots,n\}$ and let 
$\C_+=\{ z\in \C\, ,\ \im(z)>0\}$. For ${\mathcal X} = \C$ or $\R$, let 
$C_c(\mathcal X)$ (resp.~$C_c^\infty(\mathcal X)$) the set of 
$\mathcal X \to\R$ continuous (resp.~smooth) and compactly supported functions. 
Let $\mathcal B(z,r)$ be the open ball of $\C$ with center $z$ and 
radius $r$. 
If $z\in \C$, then $\bar{z}$ is its complex conjugate; let $\ii^2=-1$. 
The Lebesgue measure on $\C$ will be either denoted by $\ell(\, dz)$ or $dx dy$.
For $x,y\in \R$ we write $\max(x,y)= x\vee y $ and $\min(x,y)=x\wedge y$. The cardinality of a finite set $S$ is denoted by $|S|$.
For $S\subset[n]$ and when clear from the context we will abbreviate $S^c= [n]\setminus S$.
 
\subsubsection{Matrices} 

We denote by $\1_n$ the $n\times 1$ vector of 1's. Given two $n\times 1$ vectors $\bs u,\bs v$, we denote their scalar product $\langle \bs u,\bs v\rangle=\sum_{i\in [n]} \bar{u}_i v_i$.

For a given matrix $A$, denote by $A^\tran$ its transpose and by $A^*$ its
conjugate transpose. Denote by $I_n$ the $n\times n$ identity matrix. If clear from
the context, we omit the dimension. For $a\in \C$ and when clear from the
context, we sometimes write $a$ instead of $a\, I$ and similarly write $a^*$
instead of $(aI)^*=\bar{a}I$.  For matrices $A,B$ of the same dimensions we denote by $A\odot B$ their Hadamard, or entrywise, product (i.e.\ $(A\odot B)_{ij} =
A_{ij} B_{ij}$). 

Given two Hermitian matrices $A$ and $B$, the notations $A \ge B$ and $A > B$ 
refer to the usual positive semidefinite ordering. 
Notations $\succ$ and $\succcurlyeq$ refer to the elementwise inequalities for 
real matrices or vectors. Namely, if $A$ and $B$ are real matrices, 
\[
A\succ B\quad \Leftrightarrow\quad A_{ij} > B_{ij}\quad \forall i,j\qquad \text{and}\qquad A\succcurlyeq B\quad \Leftrightarrow\quad A_{ij} \ge B_{ij}\ \quad\forall i,j .
\]
The notation $A \posneq 0$ stands for $A\succcurlyeq 0$ and $A\neq 0$. 
Given a matrix $A$, $\|A\|$ refers to its spectral norm, and 
$\lrn A \rrn_{\infty}$ to its max-row norm, defined as:
$
\lrn A \rrn_{\infty} := \max_{i\in[n]} \sum_{j=1}^n |A_{ij}|=\max_{\|{\bs u}\|_{\infty}\le 1} \| A{\bs u}\|_{\infty}
$.
We denote the spectral radius of an $n\times n$ matrix $A$ by
$
\rho(A) := \max\big\{\, |\lambda|\colon \text{ $\lambda$ is an eigenvalue of $A$}\,\big\}
$. 
If $A$ is a square matrix, we write $\im(A)=(A-A^*)/(2\ii)$. For an $n$-dimension vector $a$,
$\diag(a_i)$ denotes the $n\times n$ diagonal matrix with $a$ as its diagonal elements. 

\subsubsection{Convergence of measures} \label{subseb:convofmeas}
Given probability distributions $\nu_n,\nu$ over some set ${\mathcal X}$ ($=\R$
or $\C$), we will denote the weak convergence of $\nu_n$ to $\nu$ by
$\nu_n\xrightarrow[n\to\infty]{\bf w} \nu$. If $\nu_n$ is random,
$\nu_n\xrightarrow[n\to\infty]{\bf w} \nu$ almost surely (resp.~in probability)
stands for the fact that for all $f\in C_c({\mathcal X})$, 
\[
\int f\, d\nu_n \xrightarrow[n\to\infty]{} \int f\, d\nu 
\quad \text{almost surely} \ \text{(resp. in probability)} .  
\] 
Let $(\mu_n)$ and $(\nu_n)$ be deterministic sequences of probability distributions over ${\mathcal X}$, and let $(\nu_n)$ be tight, i.e.\ for all $\varepsilon>0$, one can find a compact set ${\mathcal K}_{\varepsilon}$ such that 
$
\sup_{n} \nu_n({\mathcal X} \setminus {\mathcal K}_{\varepsilon}) \le \varepsilon$.
We will denote by 
\[
\mu_n \sim \nu_n\quad \text{as}\quad n\to\infty
\]
the fact that the signed measure $\mu_n - \nu_n$ weakly converges to zero, i.e.\
$\int f\, d\mu_n - \int f\, d\nu_n \to 0$ for all $f\in C_c({\mathcal X})$. If
the sequence $(\mu_n)$ is random while $(\nu_n)$ is deterministic and tight,
then 
\[
\mu_n \sim \nu_n \quad \text{almost surely (resp. in probability)} 
\]
stands for
\[
\int f\, d\mu_n - \int f\, d\nu_n \xrightarrow[n\to\infty]{} 0 
\quad \text{almost surely} \ \text{(resp. in probability)} , 
\]
for all $f\in C_c({\mathcal X})$. Notice that $\mu_n \sim \nu_{\infty}$ is equivalent to $\mu_n\xrightarrow[n\to\infty]{\bf w} \nu_{\infty}$.

\subsubsection{Stieltjes transforms}
Let $\mu$ be a nonnegative finite measure on $\R$ and
\begin{equation}\label{def:Stieltjes}
g_\mu(\eta)=\int \frac{\mu(d\lambda)}{\lambda-\eta}\ ,\quad \eta\in \C_+
\end{equation}
its Stieltjes transform. Then the following properties are standard
$$
\mathrm{(i)}\quad g_{\mu}(\eta)\in \C_+\,,\qquad \mathrm{(ii)}\quad | g_{\mu}(\eta)| \le \frac {\mu(\R)}{\im(\eta)}\,,\qquad \mathrm{(iii)}
\ \lim_{y\to+\infty} -\ii y g_{\mu}(\ii y)= \mu(\R)\ .
$$
Moreover, $-(z+g_{\mu}(z))^{-1}$ is the Stieltjes transform of a probability measure, see for instance \cite[Theorem B.3]{book-weidmann}. In particular
\begin{equation}\label{eq:ST-estimate}
\left| z +g_{\mu}(z)\right|^{-1} \le {\im^{-1}(z)}\ ,\quad z\in \C_+ .
\end{equation} 

\subsubsection{Graph theoretic notation}\label{sec:graph-notations}

Given an $n\times n$ non-negative matrix $A=(\sigma_{ij})$ we form a directed graph $\Gamma=\Gamma(A)$ on the vertex set $[n]$ that puts an edge $i\to j$ whenever $\sigma_{ij}>0$.
We denote the out-neighborhood of a vertex $i\in [n]$ in the graph $\Gamma$ by
\begin{equation}	\label{def:nbrs}
\mN_{A}(i) := \{j\in [n]: \sigma_{ij}>0\}.
\end{equation}
Consequently, the in-neighborhood is denoted $\mN_{A^\tran}(i)$.
For a set $S\subset[n]$ we write 
\begin{equation}	\label{def:nbrs2}
\mN_{A}(S) := \bigcup_{i\in S} \mN_A(i) = \{j\in [n]: \mN_{A^\tran}(j)\cap S \ne \emptyset\}.
\end{equation}
For $\delta\in (0,1)$ we denote the associated \emph{densely-connected out-neighbors} of a set $S\subset[n]$ by
\begin{align}
\mN_{A}^{(\delta)}(S) &= \{j\in [n]: |\mN_{A^\tran}(j)\cap S| \ge \delta |S|\}. \label{def:dense-nbrs}
\end{align}
To obtain quantitative results we will generally work with the graph associated to the matrix
\begin{equation}	\label{def:Acut}
A(\scut) = (\sigma_{ij}1_{\sigma_{ij}\ge \scut})
\end{equation}
which only keeps the entries exceeding a fixed cutoff parameter $\scut>0$, setting the remaining entries to zero.

\subsection{Model assumptions}

We will establish results concerning sequences of matrices $Y_n$ as in Definition \ref{def:model} under various additional assumptions on $A_n$ and $X_n$, which we now summarize.
We note that many of our results only require a subset of these assumptions.

For our main result we need the following additional assumption on the distribution of the entries of $X_n$.

\vspace{.1cm}
\begin{enumerate}[leftmargin=*, label={\bf A0}]
\item\label{ass:moments} 
(Moments). We have $\E|X_{11}^{(n)}|^{4+\varepsilon}\le \Mo$ for all $n\ge1$ and some fixed $\varepsilon>0$, $\Mo<\infty$.
\end{enumerate}

\begin{rem}
\label{rmk:moment}
Assumption \ref{ass:moments} is needed to apply the results from \cite{Cook:ssv} to bound the smallest singular value as in \eqref{smin.tail}. It is also used in Section \ref{sec:svd} to quantitatively bound the difference between our random measures and their deterministic equivalents, which is crucial for obtaining logarithmic integrability of singular value distributions. This latter step can likely be accomplished with fewer moments, but we do not pursue this. 
\end{rem}


\vspace{.1cm}
\begin{enumerate}[leftmargin=*, label={\bf A1}]
\item\label{ass:sigmax}		
(Bounded variances).
There exists $\smax\in (0,\infty)$ such that
\[
\sup_n \max_{1\le i,j\le n} \sigma_{ij}^{(n)} \le \smax.
\]
\end{enumerate}

\begin{rem}[Convention]
While we will keep the value $\smax$ generic in the statements, we will always set it to 1 in the proofs, with no loss of generality. 
\end{rem}


In order to express the next key assumption, we need to introduce the following {\em Regularized Master Equations} which are a specialization of the Schwinger--Dyson equations of Girko's Hermitized model associated to $Y_n$ (see Section \ref{outline} for further discussion). The following is proved in Section \ref{sec:RME}.

\begin{prop}[Regularized Master Equations]
\label{prop:MEt}	
Let $n\ge 1$ be fixed, let $A_n$ be an $n\times n$ nonnegative matrix and write $V_n=\frac1nA_n\odot A_n$. Let $s,t > 0$ be fixed, and consider the following system of equations 
\begin{equation}
\label{def:MEt} 
\left\{ 
\begin{array}{ccc}
\rr_i&=& \dfrac{ (V_n^\tran  \br)_i+t} {s^2+((V_n\brt)_i+t) ((V_n^\tran \br)_i +t) }  \\
\\
\rt_i&=& \dfrac{ (V_n \brt)_i+t} {s^2+((V_n\brt)_i+t) ((V_n^\tran \br)_i +t) } \\ 
\end{array}
\right.\ ,
\end{equation}
where $\br=(\rr_i)$ and $\brt=(\rt_i)$ are $n\times 1$ vectors. Denote by $\rvec=\begin{pmatrix} \br \\ \brt\end{pmatrix}$. Then this 
system admits a unique solution $\rvec=\rvec(s,t) \succ 0$. This solution
satisfies the identity 
\begin{equation}	\label{q_t:trace}
\sum_{i\in [n]} \rr_i \ =\ \sum_{i\in [n]} \rt_i \, .
\end{equation}
\end{prop}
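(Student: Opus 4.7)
I would begin by extracting the identity \eqref{q_t:trace} directly from the defining equations. Writing $a_i := (V_n^\tran \br)_i + t$ and $b_i := (V_n\brt)_i + t$, the two equations in \eqref{def:MEt} give
\[
r_i\, b_i \;=\; \frac{a_i b_i}{s^2 + a_i b_i} \;=\; \tilde r_i\, a_i,\qquad i \in [n].
\]
Summing over $i$, expanding $a_i$ and $b_i$, and using $\sum_i r_i (V_n\brt)_i = \br^\tran V_n \brt = \sum_i \tilde r_i (V_n^\tran \br)_i$, the bilinear terms cancel and one is left with $t\sum_i r_i = t\sum_i \tilde r_i$. Since $t>0$, this is exactly \eqref{q_t:trace}.

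For \emph{existence}, let $\Phi$ denote the map sending $(\br,\brt) \in \R_+^{2n}$ to the right-hand side of \eqref{def:MEt}. The regularization $t>0$ yields $a_i, b_i \ge t$ for every $(\br,\brt) \succcurlyeq 0$, so
\[
0 \le r_i = \frac{a_i}{s^2 + a_i b_i} \le \frac{1}{b_i} \le \frac{1}{t},
\]
and similarly $\tilde r_i \le 1/t$. Thus $\Phi$ continuously maps the compact convex cube $[0,1/t]^{2n}$ into itself, and Brouwer's theorem supplies a fixed point $\rvec = (\br,\brt)$. The same lower bound $a_i, b_i \ge t$ forces the numerators on the right-hand side of \eqref{def:MEt} to be strictly positive at this fixed point, whence $\rvec \succ 0$.

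The substantive difficulty lies in \emph{uniqueness}. The most transparent route I would take is to interpret \eqref{def:MEt} as the Schwinger--Dyson equations for the deterministic equivalent of the resolvent of the Hermitization
\[
\mathcal H_s = \begin{pmatrix} 0 & Y_n - sI_n \\ (Y_n - sI_n)^* & 0 \end{pmatrix}
\]
evaluated at a purely imaginary spectral parameter calibrated by $t$, with $\ii r_i$ and $\ii \tilde r_i$ playing the role of the diagonal entries of the resolvent, which necessarily lie in $\C^+$. Uniqueness of the positive solution then reduces to the well-known uniqueness of Stieltjes transforms of finite positive measures, paralleling the treatment of the Quadratic Vector Equation by Ajanki--Erd\H os--Kr\"uger. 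A purely real-variable alternative exploits the mixed monotonicity of $\Phi$: in its first block it is increasing in $\br$ and decreasing in $\brt$, and conversely in its second. Equipping $\R_+^{2n}$ with the partial order $(\br,\brt) \preceq (\br',\brt')$ iff $\br \preccurlyeq \br'$ and $\brt \succcurlyeq \brt'$, the map $\Phi$ becomes order-preserving, and the strict subhomogeneity coming from $t>0$ opens the door to a Thompson-metric contraction argument on the positive cone, forcing any two fixed points to coincide. The cubic nonlinearity of \eqref{def:MEt} is precisely what rules out any naive $\ell^\infty$-contraction uniform in $s,t$, and setting up the correct analytic or metric framework for uniqueness is the main obstacle; the algebraic identity and the existence step are comparatively routine.
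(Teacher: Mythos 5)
Your algebraic derivation of the trace identity \eqref{q_t:trace} is correct and is in fact how the paper obtains it (the paper deduces $\sum p_i = \sum\tilde p_i$ for the Schwinger--Dyson solution in Proposition \ref{prop:deteq}(4) by exactly this pairing-and-cancellation computation, then specializes $\eta=\ii t$). Your existence step is a genuine alternative: the paper never invokes Brouwer but instead gets existence and positivity for free from the Earle--Hamilton fixed point of the complex system \eqref{def:self}, together with the Stieltjes-transform representation \eqref{eq:p->q} showing the solution at $\eta = \ii t$ is purely imaginary with positive imaginary part. Your Brouwer argument on the cube $[0,1/t]^{2n}$, using $a_i, b_i \ge t$ to get the a priori bound $r_i,\tilde r_i \le 1/t$ and strict positivity of any fixed point, is correct, more elementary, and self-contained; what it cannot give is uniqueness.

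Uniqueness is the real gap. Your first route --- identify $\pvec := \ii\rvec$ as a solution of the Schwinger--Dyson system \eqref{def:self} at $\eta = \ii t$ with $\Imm\pvec\succ 0$, and invoke uniqueness of that solution --- is precisely the paper's argument, and is valid: any two positive real solutions of \eqref{def:MEt} give two solutions of \eqref{def:self} in the upper-half-space domain, which must coincide. However, you misattribute the mechanism. Uniqueness of the Schwinger--Dyson solution is \emph{not} "the well-known uniqueness of Stieltjes transforms"; there is nothing about uniqueness of the Herglotz representation that rules out two distinct fixed points of $\mathcal J$ both with positive imaginary part. The paper obtains uniqueness from the Earle--Hamilton theorem: $\mathcal J$ maps the bounded domain $\{\Imm\pvec\succ 0,\ \|\pvec\|_\infty < b\}$ strictly into itself (the lower bound on $\Imm(\mathcal J(\pvec))_i$ and the upper bound $|(\mathcal J(\pvec))_i|\le 1/\Imm\eta$ supply the required uniform margin), hence it is a strict contraction in the Carath\'eodory metric and has a unique fixed point. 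If you want to cite AEK-style arguments, the analogue there is the stability operator bound, not a Stieltjes uniqueness statement. Your second route --- mixed-order monotonicity plus a Thompson/Hilbert-metric contraction --- is not carried out, and I would not take it as given: the map is order-preserving in your mixed order, but order-preservation alone yields only the existence of minimal and maximal fixed points; to conclude they coincide you need a genuine strict-subhomogeneity or strict-concavity input adapted to the mixed cone ordering, and that input is not articulated. As written, the proposal establishes existence and the trace identity but leaves the uniqueness argument incomplete.
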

\vspace{.1cm}
\begin{enumerate}[leftmargin=*,label={\bf A2}]
\item\label{ass:admissible}		
(Admissible variance profile).
Let $\rvec(s,t)=\rvec_n(s,t)\succ 0$ be the solution of the Regularized Master Equations for given $n\ge 1$. For all $s>0$, there exists a constant $C=C(s)>0$ such that
$$
\sup_{n\ge 1} \sup_{t\in (0,1]} \frac 1n \sum_{i\in [n]} \rr_i(s,t)\ \le \ C\ . 
$$
A variance profile $V_n$ for which the previous estimate holds is called {\em admissible}.
\end{enumerate}
\begin{rem} Assumption \ref{ass:admissible} may seem obscure at first sight as it necessitates to solve the regularized master equations to check if a variance profile is admissible. In particular, it is not clear if this assumption is compatible with some sparsity. In Section \ref{sec:sufficient}, we provide sufficient conditions on the variance profile $V_n$ which imply \ref{ass:admissible}, namely \ref{ass:sigmin} (lower bound on $V_n$), \ref{ass:symmetric} (symmetric $V_n$) and \ref{ass:expander} (robust irreducibility for $V_n$). 
\end{rem}

\subsection{Statement of the results} 	 \label{results} 



Recall the Master Equations \eqref{def:ME2}, and notice that these equations are obtained from the Regularized Master Equations \eqref{def:MEt} by letting the parameter $t$ go to zero. Notice however that condition $\sum q_i=\sum \tilde q_i$ is now required for uniqueness and not a consequence as in \eqref{def:MEt}.



In order to prove existence of solutions $\q,\bqt$ to the Master Equations we need to assume the standard deviation profile $A_n$ is irreducible. 
This is equivalent to assuming the associated digraph $\Gamma(A_n)$ is strongly connected (recall the notations from Section \ref{sec:graph-notations}).
This will cause no loss of generality, as we can conjugate $Y_n$ by an appropriate permutation matrix to put $A_n$ in block-upper-triangular form with irreducible blocks on the diagonal -- the spectrum of $Y_n$ is then the union of the spectra of the block diagonal submatrices.

\vspace{.2cm}
\begin{theo} [Master equations]		\label{thm:master}	
Let $n\ge 1$ be fixed, let $A_n$ be an $n\times n$ nonnegative matrix and write $V_n=\frac1nA_n\odot A_n$. Assume that $A_n$ is irreducible. 
Then the following hold:
\begin{enumerate}
\item\label{q:sys1} 
For $s \geq \sqrt{\rho(V_n)}$ the system \eqref{def:ME2} has the unique solution $\qvec(s) = 0$.
\item\label{q:sys2} 
For $s\in (0,\sqrt{\rhoV_n})$ the system \eqref{def:ME2} has a unique non-trivial solution $\qvec(s) \posneq 0$. Moreover, this solution satisfies $\qvec(s)\succ 0$. 
\item\label{q:contdif} 
The function $s\mapsto \qvec(s)$ defined in parts (1) and (2) is continuous on 
$(0,\infty)$ and is continuously differentiable on 
$(0,\sqrt{\rhoV}) \cup (\sqrt{\rhoV},\infty)$. 
\end{enumerate} 
\end{theo}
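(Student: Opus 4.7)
The plan is to combine Perron--Frobenius theory for the irreducible matrix $V_n$ with a limit $t \to 0^+$ of the Regularized Master Equations from Proposition~\ref{prop:MEt}.

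For Part~(1), observe that any nonnegative solution $\qvec$ of \eqref{def:ME2} satisfies componentwise $s^2 \bq \preccurlyeq V_n^\tran \bq$ and $s^2 \bqt \preccurlyeq V_n \bqt$, since the denominators in \eqref{def:ME} are bounded below by $s^2$. The Collatz--Wielandt characterization of $\rho(V_n^\tran) = \rho(V_n)$ then forces $s^2 \le \rho(V_n)$ whenever $\bq \ne 0$, disposing of the case $s^2 > \rho(V_n)$. At the critical value $s^2 = \rho(V_n)$, pairing with the positive left Perron eigenvector of $V_n^\tran$ forces the equality $V_n^\tran \bq = s^2 \bq$, so $\bq$ is a strictly positive Perron eigenvector of $V_n^\tran$ by irreducibility. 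Substituting back into \eqref{def:ME} then yields $(V_n \bqt)_i (V_n^\tran \bq)_i = 0$ for every $i$; since $V_n^\tran \bq \succ 0$, we deduce $V_n \bqt = 0$, and hence $\bqt = 0$ by irreducibility. The trace constraint $\sum_i q_i = \sum_i \qt_i = 0$ combined with $\bq \succ 0$ is a contradiction, so $\bq = 0$ after all.

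For Part~(2), I would start from the unique strictly positive solution $\rvec(s,t) \succ 0$ of \eqref{def:MEt}, which automatically obeys $\sum_i r_i = \sum_i \rt_i$ by \eqref{q_t:trace}, and send $t \to 0^+$. The key analytic input is a uniform upper bound $\sup_{t \in (0,1]}\|\rvec(s,t)\|_\infty \le C(s) < \infty$ for each fixed $s \in (0, \sqrt{\rho(V_n)})$. Rewriting the first equation of \eqref{def:MEt} as $r_i \phi_i + s^2 r_i/\psi_i = 1$, with $\phi_i = (V_n \brt)_i + t$ and $\psi_i = (V_n^\tran \br)_i + t$, gives the componentwise duality estimates $r_i \phi_i \le 1$ and $\rt_i \psi_i \le 1$, and the quantitative irreducibility of $V_n$ (some power $V_n^k$ has all positive entries) propagates these along paths of the digraph $\Gamma(A_n)$ to yield the claimed uniform upper bound. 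Any limit point $\qvec \succcurlyeq 0$ of $\rvec(s,t)$ as $t \to 0^+$ then solves \eqref{def:ME2}. To rule out the trivial limit $\qvec = 0$, pair $r_i(s^2 + \phi_i \psi_i) = \psi_i$ with the right Perron eigenvector $\bs u \succ 0$ of $V_n$ and sum to obtain the identity
$$ \sum_i u_i r_i \phi_i \psi_i \;=\; (\rho(V_n) - s^2) \sum_i u_i r_i + t \sum_i u_i, $$
which, combined with $\phi_i \psi_i \le (\|V_n\|_\infty \|\rvec\|_\infty + t)^2$, yields the uniform \emph{lower} bound $\|\rvec(s,t)\|_\infty \ge (\sqrt{\rho(V_n) - s^2} - t)/\|V_n\|_\infty$, forcing $\qvec \ne 0$. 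Strict positivity $\qvec \succ 0$ is then automatic: if $q_i = 0$ for some $i$, the first master equation gives $(V_n^\tran \bq)_i = 0$, and irreducibility propagates this to $\bq \equiv 0$, a contradiction. Uniqueness of the positive solution follows from a Perron--Frobenius comparison on the nonlinear map $\mathcal I$.

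For Part~(3), continuity of $s \mapsto \qvec(s)$ on $(0,\infty)$ follows from the uniqueness in Parts~(1)--(2) via a standard compactness argument: any limit point of $\qvec(s_n)$ as $s_n \to s$ is a nonnegative solution at $s$, hence equals the unique one. For $C^1$ smoothness on $(0,\sqrt{\rho(V_n)}) \cup (\sqrt{\rho(V_n)}, \infty)$, apply the implicit function theorem to $F(\qvec, s) := \qvec - \mathcal I(\qvec)$. On the super-spectral side the Jacobian at $\qvec = 0$ reduces to $I - \tfrac{1}{s^2}\diag(V_n^\tran, V_n)$, which is invertible because $\rho(V_n)/s^2 < 1$. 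On the sub-spectral side, invertibility of the Jacobian at the strictly positive solution follows from a Perron--Frobenius argument combined with the trace constraint. \emph{The main obstacle is the uniform upper bound in Part~(2)}: in the sub-spectral regime the linearization of the Master Equations at the origin has spectral radius $\rho(V_n)/s^2 > 1$, so linear a priori bounds fail, and one must genuinely exploit both the nonlinearity in the denominators and the quantitative irreducibility of $V_n$ to prevent blow-up along paths in the associated digraph.
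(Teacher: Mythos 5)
Your Part~(1) argument and your lower-bound identity
$\sum_i u_i r_i\phi_i\psi_i=(\rho(V_n)-s^2)\sum_i u_i r_i+t\sum_i u_i$
(obtained by pairing with the Perron eigenvector $\bs u$ of $V_n$) are both correct, and the latter is a clean alternative to the paper's argument, which instead runs through continuity of the spectral radius of $\Psi(\rvec(s,t_k))V$ and the contradiction $\rho(s^{-2}V)>1$ at a hypothetical zero limit. The qualitative uniform bound $\sup_{t\in(0,1]}\|\rvec(s,t)\|_\infty<\infty$ is indeed the paper's Lemma~\ref{lem:qbound}, and your sketch (propagating $r_i\phi_i\leq 1$, $\rt_i\psi_i\leq 1$ along paths in $\Gamma(A_n)$, closing the loop with the trace identity) follows the same strategy; you are right to flag it as the main analytic obstacle, and a full argument is still needed before this counts as a proof.

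The genuine gap is the uniqueness statement in Part~(2). You write that ``uniqueness of the positive solution follows from a Perron--Frobenius comparison on the nonlinear map $\mathcal I$,'' but for $t=0$ the map $\mathcal I(\cdot,0)$ has at least two fixed points (the trivial one and the positive one), so the contraction-type arguments that give uniqueness for $t>0$ do not transfer, and no standard Collatz--Wielandt comparison is available directly. In the paper this is the most delicate step: one first shows that two distinct positive solutions $\qvec\neq\qvec'$ must have $\Psi(\qvec)\neq\Psi(\qvec')$ (using simplicity of the Perron eigenvalue of $\Psi(\qvec)V^\tran$ together with the trace constraint to force the rescaling factor to be one), then introduces $\bs\varphi=V\bqt$, $\bs{\tilde\varphi}=V^\tran\bq$, a discrepancy vector $\bs\varepsilon_i=|(\varphi_i-\varphi_i')/\sqrt{\varphi_i\varphi_i'}|$, and a nonnegative irreducible matrix $K_{\vec{\bs q},\vec{\bs q}'}$ with $\bs\varepsilon\preccurlyeq K_{\vec{\bs q},\vec{\bs q}'}\bs\varepsilon$, and finally shows $\rho(K_{\vec{\bs q},\vec{\bs q}'})<1$ by a row-wise Cauchy--Schwarz inequality that is strict on at least one row. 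None of that is implicit in a one-line appeal to Perron--Frobenius, so as written the uniqueness part of (2) is unproved. Similarly, in Part~(3) the claim that ``invertibility of the Jacobian at the strictly positive solution follows from a Perron--Frobenius argument combined with the trace constraint'' needs to be turned into an actual verification that the $(2n+1)\times 2n$ matrix obtained by adjoining $\bs 1_n^\tran\,\,{-}\bs 1_n^\tran$ to $I-M(s)$ has full column rank; the key point is identifying the one-dimensional null space of $I-M(s)$ as the span of $(\bq(s)^\tran,-\bqt(s)^\tran)^\tran$ and checking the constraint row does not annihilate it.
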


\begin{rem}[Convention]
Above and in the sequel we abuse notation and write $\qvec=\qvec(s)$ to mean a solution of the equation \eqref{def:ME2}, understood to be the nontrivial solution for $s\in (0,\sqrt{\rho(V_n)})$. 
\end{rem}


The main result of this paper is the following.

\begin{theo}[Main result]	\label{thm:main}	
Let $(Y_n)_{n\ge1}$ be a sequence of random matrices as in Definition \ref{def:model}, and assume \ref{ass:moments}, \ref{ass:sigmax} and \ref{ass:admissible} hold. Assume moreover that $A_n$ is irreducible for all $n\ge 1$.
\begin{enumerate}
\item There exists a sequence of deterministic measures $(\mu_n)_{n\ge 1}$ on $\C$ such that
\[
\mu_n^Y \sim \mu_n\quad \mbox{ in probability}.
\]
\item Let $\qvec(s)^\tran = (\bq(s)^\tran\; \bqt(s)^\tran)$ be as in Theorem \ref{thm:master}, and for $s\in (0,\infty)$ 
put 
\begin{equation}	\label{expF}
F_n(s) = 1-\frac1n\langle \bq(s),V_n \bqt(s)\rangle.
\end{equation}
Then $F_n$ extends to an absolutely continuous function on $[0,\infty)$ which is the CDF of a probability measure with support contained in $[0,\sqrt{\rho(V_n)}]$ and continuous density on $(0,\sqrt{\rho(V_n)})$.
\item For each $n\ge1$ the measure $\mu_n$ from part (1) is the unique radially symmetric probability measure on $\C$ with $\mu_n(\{z:|z|\le s\})= F_n(s)$ for all $s\in (0,\infty)$.
\end{enumerate}
\end{theo}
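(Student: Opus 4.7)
The plan is to apply Girko's Hermitization strategy. For $z\in\mathbb C$, introduce the Hermitization
$$\mathcal H_n(z)=\begin{pmatrix} 0 & Y_n-zI_n \\ (Y_n-zI_n)^* & 0\end{pmatrix},$$
whose spectrum consists of $\pm s_i(Y_n-zI_n)$, and denote by $\nu_n^z$ its empirical spectral distribution. Then
$$U_{\mu_n^Y}(z) \;=\; -\frac{1}{n}\log|\det(Y_n-zI_n)| \;=\; -\frac{1}{2}\int_{\mathbb R}\log|\lambda|\,\nu_n^z(d\lambda).$$
By the standard logarithmic-potential approach to the circular law (see for instance \cite{2012-bordenave-chafai-circular}), to prove part (1) it is enough to show that for Lebesgue-a.e. $z\in\mathbb C$ one has $U_{\mu_n^Y}(z)-U_{\mu_n}(z)\to 0$ in probability, together with uniform integrability on compact sets of $z\mapsto U_{\mu_n^Y}(z)$. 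The latter follows from \ref{ass:sigmax} via $\E\|Y_n\|_{\HS}^2\le \smax^2$, combined with the polynomial lower bound on the smallest singular value of $Y_n-zI_n$ proved in \cite{Cook:ssv}.

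The core analytical step is a deterministic equivalent for the resolvent $\mathbf G_n(z,\eta)=(\mathcal H_n(z)-\eta I_{2n})^{-1}$ with $\eta=\ii t$, $t>0$. A Schur-complement expansion for the diagonal entries of $\mathbf G_n$, combined with concentration of quadratic forms in the entries of $X_n$ (via \ref{ass:moments} and a Hanson--Wright-type estimate), yields an approximate fixed-point system for the vectors $(-\ii G_{ii})$ and $(-\ii G_{i+n,i+n})$ of precisely the form \eqref{def:MEt}, under the identifications $s=|z|$ and $t=\im\eta$. Assumption \ref{ass:admissible} is exactly the control needed to keep the normalized trace of the deterministic solution bounded as $t\downarrow 0$, so one may pass $t\downarrow 0$ and recover the Master Equations \eqref{def:ME}, whose solution $\qvec(s)$ is provided by Theorem \ref{thm:master}. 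This produces a deterministic symmetric spectral measure $\pi_n^z$ on $\mathbb R$ such that $\int\phi\,d\nu_n^z-\int\phi\,d\pi_n^z\to 0$ in probability for every $\phi\in C_c(\mathbb R)$.

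To upgrade test-function convergence to convergence of $\int\log|\lambda|\,\nu_n^z(d\lambda)$, one must control the mass of $\nu_n^z$ near $0$. The contribution of the smallest singular value is handled by the quantitative invertibility bound of \cite{Cook:ssv}. The number of intermediate small singular values in a window $[\varepsilon_n,\varepsilon]$ is controlled by a Wegner-type estimate, obtained through a quantitative analysis of \eqref{def:MEt} showing that $\pi_n^z$ does not concentrate too rapidly near zero. Establishing this Wegner estimate in the sparse or non-uniformly positive regime is \emph{the main technical obstacle} of the proof; it relies heavily on expansion properties of the directed graph $\Gamma(A_n(\snot))$ and is precisely why some robust form of irreducibility of $V_n$ is needed. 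Combining these bounds with the bulk convergence gives $U_{\mu_n^Y}(z)-U_{\mu_n}(z)\to 0$ in probability for a.e. $z$, proving part (1).

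For part (2), continuity of $F_n$ and the identity $F_n(s)=1$ for $s\ge\sqrt{\rho(V_n)}$ are immediate from Theorem \ref{thm:master}. The limit $\lim_{s\downarrow 0}F_n(s)=0$ follows by inspection of the $s\downarrow 0$ limit of \eqref{def:ME}, which forces $\frac{1}{n}\langle\bq,V_n\bqt\rangle\to 1$. Non-negativity of the density $F_n'$ on $(0,\sqrt{\rho(V_n)})$ is obtained by differentiating \eqref{def:ME} in $s$, using the constraint $\langle\1_n,\bq\rangle=\langle\1_n,\bqt\rangle$ to fix the resulting linear system for $\partial_s\qvec$, and applying a Perron--Frobenius argument to an auxiliary substochastic matrix built from $V_n$, $\bq$ and $\bqt$. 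Part (3) is then immediate: any continuous CDF on $[0,\sqrt{\rho(V_n)}]$ determines a unique rotationally invariant probability measure on $\mathbb C$, namely $\mu_n(d\rho\,d\theta)=dF_n(\rho)\otimes\frac{d\theta}{2\pi}$, and a direct computation with the Master Equations identifies $-\int\log|z-w|\,\mu_n(dw)$ with the limit $U_{\mu_n}(z)$ produced by the previous step, confirming that the $\mu_n$ just constructed is indeed the deterministic equivalent obtained in part (1).
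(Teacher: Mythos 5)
Your overall plan matches the paper's strategy (Girko Hermitization, Schwinger--Dyson equations for the Hermitized resolvent, Wegner estimate obtained from \ref{ass:admissible} via the Regularized Master Equations, smallest singular value from \cite{Cook:ssv}, uniform $L^2$-integrability of $U_{\mu_n^Y}$ on compacts via the Hilbert--Schmidt bound), so most of the skeleton is right. However, there is one genuine error in part (2).

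You claim that ``the limit $\lim_{s\downarrow 0}F_n(s)=0$ follows by inspection of the $s\downarrow 0$ limit of \eqref{def:ME}, which forces $\frac{1}{n}\langle\bq,V_n\bqt\rangle\to 1$.'' This is false under the hypotheses of the theorem. The deterministic equivalent $\mu_n$ \emph{can} have an atom at the origin: Proposition \ref{prop:singular.profile} of the paper gives an explicit irreducible, admissible profile (the $k$-block $A_n$ of \eqref{Asingular}) for which $F_n(0^+)=1-\tfrac{2}{k}>0$. The conclusion $F_n(0^+)=0$ holds under the stronger assumption \ref{ass:sigmin} (this is Proposition \ref{prop:LB}(1)), but not under \ref{ass:admissible} alone. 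Fortunately, the theorem's statement in part (2) is only that $F_n$ \emph{extends to an absolutely continuous function on $[0,\infty)$}, and a function can be absolutely continuous with $F_n(0)>0$; the CDF of a probability measure with an atom at zero plus a density on $(0,\sqrt{\rho(V_n)})$ is exactly of this form. So you should simply drop the claim that $F_n(0^+)=0$; the paper establishes the needed limit at the other endpoint instead (namely $\lim_{s\uparrow\sqrt{\rho(V_n)}}F_n(s)=1$, which requires showing $\mu_n$ puts no mass on the bounding circle, Lemma \ref{propF}).

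Two smaller remarks. First, a bookkeeping slip: with $\nu_n^z$ the ESD of the $2n\times 2n$ Hermitization, one has $U_{\mu_n^Y}(z)=-\int_{\R}\log|\lambda|\,\nu_n^z(d\lambda)$ with no factor $\tfrac12$; the factor you wrote would give $\tfrac12 U_{\mu_n^Y}$. Second, your sketch glosses over how the function $h_n(z)=-\int\log|x|\,\check\nu_{n,z}(dx)$ is shown to be the logarithmic potential of an honest probability measure $\mu_n$; the paper settles this by a meta-model argument (blowing each $\sigma_{ij}$ up to an $m\times m$ block, sending $m\to\infty$ at fixed $n$, see Section \ref{sec:meta} and Proposition \ref{Umu}), which also supplies the tightness of $(\mu_n)$ for free. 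Your alternative route---define $\mu_n$ directly from the CDF $F_n$ and then verify $U_{\mu_n}=h_n$---is plausible but would still need a computation of $\Delta h_n$ in $\mathcal D'(\C)$ of the sort the paper carries out in Lemma \ref{lemma:identification}, so it is not materially shorter.
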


\begin{rem}[Almost sure convergence under different hypotheses]	\label{rmk:almostsure}
As with Theorem \ref{thm:circ}, a key component of the proof is a lower tail estimate for the smallest singular value of scalar shifts $Y_n-zI_n$ of the form 
\begin{equation}	\label{smin.tail}
\pr(s_{n}(Y_n-zI_n)\le n^{-\beta}) = O(n^{-\alpha})
\end{equation}
holding for a.e.\ fixed $z\in \C$. (Crucially, we do not need such an estimate for \emph{every} $z\in \C$, as \ref{ass:admissible} only requires $s=|z|>0$ and allows variance profiles for which \eqref{smin.tail} is false when $z=0$.) 
Such a bound is available for arbitrary fixed $z\ne 0$ and $\alpha>0$ a small constant by recent work of the first author \cite{Cook:ssv}; see Proposition~\ref{prop:nick}.  
Obtaining \eqref{smin.tail} with $\alpha>1$ would immediately improve conclusion (1) to almost sure convergence by an application of the Borel--Cantelli lemma.
Such improvements 
are already available under stronger assumptions on $A_n$ and $X_n$.
For instance, under \ref{ass:sigmin} and replacing \ref{ass:moments} with a bounded density assumption, an easy argument gives \eqref{smin.tail} for any fixed $\alpha>0$ and some $\beta=\beta(\alpha)>0$; see \cite[Section 4.4]{2012-bordenave-chafai-circular}.
\end{rem}

%
%

\begin{rem}[Density of $\mu_n$ versus density of $F_n$]\label{rem:density-mu}
In the previous theorem, by the classical polar change of coordinates, the density $\varphi_n$ of $\mu_n$ for $0<|z|<\sqrt{\rho_V}$ is given by the formula:
\[
\varphi_n(|z|) \ =\ \frac 1{2\pi  |z|} \frac{d}{ds}  F_n(s) \Big|_{s=|z|}\ =\  - \frac 1{2\pi n |z|} \frac{d}{ds} \langle \bq(s),  V \bqt(s) \rangle \Big|_{s=|z|}\ .
\]
\end{rem}


As an illustration we show how our results recover the circular law for matrices with i.i.d.\ entries.

\begin{example}[The circular law]\label{example:circular}
Consider a standard deviation profile $A_n$ with all elements equal to 1 and assume that \ref{ass:moments} holds. It is well known in this case that $\mu_n^Y \xrightarrow[]{w} \mu_{\mathrm{circ}}$ in probability (and even almost surely), where $\mu_{\mathrm{circ}}$ stands for the circular law with density $\pi^{-1} 1_{\{ |z|\le 1\}}$. We can recover this result with Theorem \ref{thm:main}. 
In this case, both systems \eqref{def:MEt} and \eqref{def:ME2} simplify into a single equation:
\begin{equation}	\label{ME:scalar}
\rr_i \equiv r = \frac{r+t}{s^2 + (r+t)^2}\ ,\quad r>0\qquad \textrm{and} \qquad  q_i \equiv q = \frac{q}{s^2 + q^2}\ , \quad q \ge 0\ .
\end{equation}
From the first equation, one can prove that $r(s,t)\le 1$ for $t\in (0,1]$.  In fact,
$$
r = \frac{r+t}{s^2 + (r+t)^2} \le \frac 1{r+t} \quad \Rightarrow \quad r^2 + rt \le 1 \quad \Rightarrow \quad r^2 \le 1\ .
$$ 
Hence \ref{ass:admissible} is fulfilled. The second equation has the unique nontrivial solution
\begin{equation}	\label{ME:scalar.soln}
q(s) = \begin{cases} \sqrt{1-s^2} & 0\le s\le 1\\ 0 & s\ge 1 \end{cases}.
\end{equation}
Consequently, $F_n(s)= s^2$ for $s\le 1$. From Remark \ref{rem:density-mu}, we conclude the desired convergence.
\end{example}
In the next example, we prove that a doubly stochastic normalized variance profile is admissible and that the associated deterministic equivalent $\mu_n$ is the circular law.
\begin{example}[Doubly stochastic variance profile]\label{ex:bistochastic}
Assume that matrix $V_n$ is doubly stochastic, i.e.\  $\frac 1n \sum_i \sigma_{ij}^2 =\frac 1n \sum_j \sigma_{ij}^2=1$ for all $1\le i,j\le n$.
Then, one quickly verifies that the vectors $\rvec=r\bf{1}$ and $\qvec= q \bf{1}$ with $r,q$ as in \eqref{ME:scalar} respectively satisfy the Regularized Master Equations and the Master Equations. As a consequence \ref{ass:admissible} can be established as in Example \ref{example:circular}.
Let now  \ref{ass:sigmax} hold and assume that the variance profile $V_n$ is irreducible for all $n\ge 1$ then one can apply Theorem \ref{thm:main} with 
$\mu_n$ equal to the circular law. 
\end{example}

\begin{rem} Note that under \ref{ass:sigmax} the doubly stochastic condition implies that the number of non-zero entries in each row and column is linear in $n$. 
\end{rem}

In the following theorem, we relax the irreducibility assumption, which requires some additional argument.

\begin{theo}[The circular law for doubly stochastic variance profiles]  \label{thm:bistochastic} 
Let $(Y_n)_{n\ge1}$ be a sequence of random matrices as in Definition \ref{def:model}, and assume \ref{ass:moments} and \ref{ass:sigmax} hold. 
Suppose also that the normalized variance profiles $V_n$ are doubly stochastic, i.e.\  $\frac 1n \sum_i \sigma_{ij}^2 =\frac 1n \sum_j \sigma_{ij}^2=1$ for all $1\le i,j\le n$.
Then $\mu_n^Y \xrightarrow[]{w} \mu_{\textrm{circ}}$ in probability.
\end{theo}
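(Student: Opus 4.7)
The plan is to reduce to the irreducible setting of Example \ref{ex:bistochastic}, exploiting the fact that under double stochasticity combined with \ref{ass:sigmax} the Frobenius block-upper-triangular decomposition of $A_n$ is actually block diagonal, with every diagonal block of linear size. First I would conjugate $A_n$ by an appropriate permutation matrix so that it takes block upper triangular form with irreducible diagonal blocks $A^{(1)},\ldots,A^{(K_n)}$ of sizes $n_1,\ldots,n_{K_n}$ corresponding to the strongly connected components of $\Gamma(A_n)$. This conjugation preserves the spectrum, hence $\mu_n^Y$.

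The key observation is that \emph{double stochasticity forces the off-diagonal blocks of $V_n$ to vanish}. Arguing by reverse induction on the block index, start with the last block $B_{K_n}$: summing the row-sum identity $\sum_j V_{n,ij} = 1$ over $i \in B_{K_n}$ gives $\sum_{i,j \in B_{K_n}} V^{(K_n,K_n)}_{ij} = n_{K_n}$, whereas summing the column-sum identity over $j \in B_{K_n}$ gives $\sum_{l \le K_n}\sum_{i \in B_l, j \in B_{K_n}}V^{(l,K_n)}_{ij} = n_{K_n}$. Equating these two expressions and using nonnegativity forces $V^{(l,K_n)} = 0$ for every $l < K_n$. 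Applying the same argument to $B_{K_n-1}$ (using what was just shown), then $B_{K_n-2}$, and so on, yields that after permutation $V_n$ is block diagonal, each diagonal block $V^{(k,k)}$ is itself doubly stochastic, and $Y_n = \diag(Y^{(1)}_n, \ldots, Y^{(K_n)}_n)$ with $Y^{(k)}_n = \frac{1}{\sqrt n} A^{(k)} \odot X^{(k)}$, where $X^{(k)}$ is the corresponding $n_k \times n_k$ submatrix of $X_n$.

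Next I would use \ref{ass:sigmax} to bound the block sizes from below: the row-sum identity $\sum_j \sigma_{ij}^2 = n$ together with $\sigma_{ij}^2 \le \smax^2$ forces every row of $A_n$ to have at least $n/\smax^2$ nonzero entries, which (by the decoupling just established) must all lie within a single block. Hence $n_k \ge n/\smax^2$ for every $k$, so $K_n \le \smax^2$ is uniformly bounded and each $n_k$ grows linearly in $n$. Rewriting $Y^{(k)}_n = \frac{1}{\sqrt{n_k}}\tilde A^{(k)}\odot X^{(k)}$ with $\tilde\sigma^{(k)}_{ij} := \sqrt{n_k/n}\,\sigma_{ij} \le \smax$, the rescaled profile $\tilde A^{(k)}$ is irreducible, bounded by $\smax$, and has doubly stochastic local normalized variance profile $\tilde V^{(k)} = V^{(k,k)}$. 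Example \ref{ex:bistochastic} therefore applies to each sequence $(Y^{(k)}_n)_n$ (since $n_k \to \infty$ with $n$) and yields $\mu_{n_k}^{Y^{(k)}_n} \to \mu_{\mathrm{circ}}$ weakly in probability.

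Finally, since $\mu_n^Y = \sum_{k=1}^{K_n} (n_k/n)\, \mu_{n_k}^{Y^{(k)}_n}$, the result follows by a standard sub-subsequence argument: the uniform bounds $K_n \le \smax^2$ and $n_k/n \in [\smax^{-2},1]$ allow me to extract from any subsequence a further subsequence along which $K_n$ stabilizes to some $K$ and $n_k/n \to \alpha_k$ with $\sum_k \alpha_k = 1$, along which the previous step yields $\mu_n^Y \to \sum_k \alpha_k \mu_{\mathrm{circ}} = \mu_{\mathrm{circ}}$ in probability. The main conceptual step is the block-diagonal decoupling established in the second paragraph; once that is in hand, the remainder is a direct reduction to the irreducible case already treated in Example \ref{ex:bistochastic}.
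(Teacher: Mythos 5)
Your proof is correct and follows essentially the same route as the paper: conjugate to Frobenius block-upper-triangular form, observe that double stochasticity forces the off-diagonal blocks to vanish, use \ref{ass:sigmax} to deduce each block has size at least $n/\smax^2$, rescale to reduce each diagonal block to the irreducible doubly stochastic case of Example \ref{ex:bistochastic}, and conclude via the weighted-sum decomposition of $\mu_n^Y$. The only differences are that you spell out the reverse-induction argument for block-diagonality (the paper only asserts it) and you add a sub-subsequence step to handle the $n$-dependence of the block structure, which is a harmless tidying of what the paper leaves implicit.
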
 

This parallels results of Girko \cite[\S 7.11, \S 8.2]{girko-canonical-equations-I} and Anderson and Zeitouni \cite{AnZe:band} (see also \cite{shlyakhtenko-96}, \cite{Guionnet:band} for the Gaussian case) for random Hermitian matrices with a doubly-stochastic variance profile which obey the Marchenko--Pastur or Wigner semi-circle laws.

%


\subsection{Sufficient conditions for admissibility}\label{sec:sufficient}


Hereafter we introduce a series of assumptions directly checkable over matrices $(V_n)$ without solving a priori the regularized master equations.
These assumptions enforce \ref{ass:admissible}.

The simplest such assumption is to enforce uniform positivity of the variances, which allows one to bypass some of the most technical portions of our argument. This assumption was also made in the recent work \cite{alt2016local}.

\vspace{.1cm}
\begin{enumerate}[leftmargin=*, label={\bf A3}]
\item\label{ass:sigmin}		
(Lower bound on variances).
There exists $\smin>0$ such that
\[
\inf_n \min_{1\le i,j\le n} \sigma_{ij}^{(n)} \ge \smin.
\]
\end{enumerate}
\vspace{.1cm}

We generalize \ref{ass:sigmin} below with the expansion-type condition \ref{ass:expander}.

\begin{prop}\label{prop:sigmin}
Let $A=(\sigma_{ij})$ be an $n\times n$ matrix with entries $\sigma_{ij}\ge \smin>0$ for some $\sigma>0$. 
Let $\rvec\succ 0$ be the unique solution of the Regularized Master Equations \eqref{def:MEt}. Then 
\[
\frac1n \sum_{i=1}^n \rr_i \le \frac1{\smin}.
\]
In particular, if $A_n=(\sigma_{ij}^{(n)})$ is a sequence of standard deviation profiles as in Definition \ref{def:model} for which \ref{ass:sigmin} holds, then \ref{ass:admissible} is satisfied, i.e. $V_n$ is admissible.
\end{prop}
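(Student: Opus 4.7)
The strategy is to extract a one-sided inequality from the Regularized Master Equations, couple it with the uniform lower bound on the variances, and then close the argument using the trace identity \eqref{q_t:trace}.

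\textbf{Step 1 (Pointwise inequality from the equations).}
Starting from the first equation of \eqref{def:MEt}, one rewrites
\[
r_i\bigl[s^2 + ((V\brt)_i + t)((V^\tran \br)_i + t)\bigr] = (V^\tran \br)_i + t .
\]
Since $r_i > 0$ and $s^2 > 0$, dropping the $s^2$ term and dividing by the strictly positive factor $(V^\tran\br)_i + t$ yields
\[
r_i\bigl((V\brt)_i + t\bigr) \le 1, \qquad \text{hence} \qquad r_i (V\brt)_i \le 1, \quad i \in [n].
\]
(The analogous inequality $\rt_i (V^\tran \br)_i \le 1$ will not be needed here, but is obtained symmetrically.)

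\textbf{Step 2 (Using the lower bound on variances).}
Under \ref{ass:sigmin} we have $V_{ij} = \sigma_{ij}^2/n \ge \smin^2/n$ for all $i,j$, so
\[
(V\brt)_i = \frac{1}{n}\sum_{j=1}^n \sigma_{ij}^2 \,\rt_j \;\ge\; \frac{\smin^2}{n}\sum_{j=1}^n \rt_j .
\]
Substituting into the inequality of Step 1 gives, for each $i$,
\[
r_i \cdot \frac{\smin^2}{n}\sum_{j=1}^n \rt_j \;\le\; 1 .
\]

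\textbf{Step 3 (Summing and closing via the trace identity).}
Summing over $i \in [n]$,
\[
\Bigl(\sum_{i=1}^n r_i\Bigr)\Bigl(\sum_{j=1}^n \rt_j\Bigr) \;\le\; \frac{n^2}{\smin^2}.
\]
By \eqref{q_t:trace} we have $\sum_i r_i = \sum_j \rt_j$, so
\[
\Bigl(\sum_{i=1}^n r_i\Bigr)^{2} \;\le\; \frac{n^2}{\smin^2}, \qquad \text{i.e.} \qquad \frac{1}{n}\sum_{i=1}^n r_i \;\le\; \frac{1}{\smin} ,
\]
which is the desired bound. The second assertion follows immediately: the right-hand side is independent of $n$ and of $t \in (0,1]$, so the admissibility bound in \ref{ass:admissible} holds with $C(s) = 1/\smin$ (in fact uniformly in $s>0$).

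The argument is elementary once one notices that the one-sided inequality $r_i((V\brt)_i + t) \le 1$ is already forced by the algebraic form of \eqref{def:MEt}; the only non-trivial input beyond that is the symmetry identity \eqref{q_t:trace}, which converts the product bound $(\sum r_i)(\sum \rt_j) \le n^2/\smin^2$ into a bound on $\sum r_i$ alone. There is no real obstacle here --- the role of this proposition is to serve as the simplest sufficient condition for \ref{ass:admissible}, to be generalized later (for example by the expander-type hypothesis \ref{ass:expander}).
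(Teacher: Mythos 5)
Your proof is correct and uses the same three ingredients as the paper's: the one-sided bound $\rr_i\bigl((V\brt)_i + t\bigr)\le 1$ (which is precisely the estimate $\rr_i \le 1/\vp_i$ from the paper's \eqref{qbds}, with $\vp_i = t + (V\brt)_i$), the lower bound $(V\brt)_i \ge \frac{\smin^2}{n}\sum_j \rt_j$ coming from \ref{ass:sigmin}, and the trace identity \eqref{q_t:trace}. The only difference is presentational: the paper argues by contradiction, using the pigeonhole principle to extract a single index $i$ with $\rr_i > 1/\smin$ and deriving the opposite bound from it, while you sum the product bound over all $i$ to get $\bigl(\sum_i \rr_i\bigr)\bigl(\sum_j \rt_j\bigr) \le n^2/\smin^2$ and then apply the trace identity directly. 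Your direct version is a slight streamlining of the same argument and is, if anything, a bit cleaner since it avoids the contrapositive setup; there is no substantive difference in the mathematics.
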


\vspace{.1cm}
\begin{enumerate}[leftmargin=*, label={\bf A4}]
\item\label{ass:symmetric}		
(Symmetric variance profile). For all $n\ge 1$, the normalized variance profile (or equivalently the standard deviation profile) is symmetric: 
$$V_n= V_n^\tran\ .$$
\end{enumerate}
\vspace{.1cm}

\begin{prop}\label{prop:symmetry}
Let $A=(\sigma_{ij})$ be a symmetric matrix with nonnegative entries, and let $\rvec\succ 0$ be the unique solution of the Regularized Master Equations \eqref{def:MEt}. Then 
\[
\frac1n \sum_{i=1}^n \rr_i \le \frac1{2s}.
\]
In particular, if $A_n=(\sigma_{ij}^{(n)})$ is a sequence of standard deviation profiles as in Definition \ref{def:model} for which \ref{ass:symmetric} holds, then \ref{ass:admissible} is satisfied.
\end{prop}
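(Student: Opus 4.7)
The plan is to exploit the symmetry of $V_n$ together with the uniqueness part of Proposition \ref{prop:MEt} to collapse the Regularized Master Equations to a single scalar inequality, and then apply AM--GM entrywise.

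First, observe that when $V_n = V_n^\tran$, the system \eqref{def:MEt} becomes invariant under the swap $(\br, \brt) \leftrightarrow (\brt, \br)$: substituting $(\br', \brt') = (\brt, \br)$ into the first equation of \eqref{def:MEt} gives exactly the second equation for $(\br, \brt)$, and vice versa. Hence if $\rvec = \binom{\br}{\brt}$ is the unique positive solution guaranteed by Proposition \ref{prop:MEt}, then $\binom{\brt}{\br}$ is also a positive solution, and uniqueness forces
\[
\br = \brt.
\]

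Writing $u_i := (V_n \br)_i + t > 0$, the equations reduce to the single scalar relation
\[
\rr_i = \frac{u_i}{s^2 + u_i^2}.
\]
By the AM--GM inequality, $s^2 + u_i^2 \ge 2 s u_i$, so
\[
\rr_i \le \frac{u_i}{2 s u_i} = \frac{1}{2 s}
\]
for every $i \in [n]$. Averaging over $i$ yields $\frac{1}{n}\sum_{i=1}^n \rr_i \le \frac{1}{2s}$, which is the claimed pointwise-in-$s$ bound.

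Finally, when this holds for every $n \ge 1$ and the bound is uniform in $t \in (0,1]$ (which it is, since the right-hand side $1/(2s)$ does not depend on $t$ or $n$), we conclude that \ref{ass:admissible} is satisfied with constant $C(s) = 1/(2s)$. There is essentially no obstacle here: once the symmetry-plus-uniqueness reduction is in place, the AM--GM step is immediate. The only subtlety worth double-checking is that Proposition \ref{prop:MEt}'s uniqueness statement applies to the full $2n$-dimensional system, so that the symmetry argument genuinely forces $\br = \brt$ rather than merely permitting such solutions to exist.
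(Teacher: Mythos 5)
Your proof is correct and follows the same route as the paper: use the symmetry-plus-uniqueness argument to conclude $\br = \brt$, reduce to the scalar map $u \mapsto u/(s^2+u^2)$, and bound it by $1/(2s)$ (the paper phrases the last step as an elementary analysis of that function rather than AM--GM, but it is the same bound). Your version is, if anything, slightly more careful, since you keep the $+t$ in the denominator explicitly, which the paper's displayed equation drops by typo.
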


We now introduce the following strengthening of the irreducibility assumption, which can be understood as a kind of expansion condition on an associated directed graph. 

\begin{Def}[Robust irreducibility]	\label{def:robust}
For $\delta,\kappa\in (0,1)$ we say that a nonnegative $n\times n$ matrix $A$ is \emph{$(\delta,\kappa)$-robustly irreducible} if the following hold:
\begin{enumerate}
\item For all $i\in [n]$,
\begin{equation}	\label{mindegree}
\big|\mN_{A}(i)\big|,\,\big|\mN_{A^\tran}(i)\big| \ \ge\  \delta n.
\end{equation}
\item For all $S\subset[n]$ with $1\le |S|\le n-1$,
\begin{equation}	\label{lb:expand}
\big|\mN_{A^\tran}^{(\delta)}(S) \cap S^c\big|\ \ge\  \min(\kappa|S|, |S^c|).	
\end{equation}
\end{enumerate}
\end{Def}

For comparison, a nonnegative $n\times n$ matrix $A$ is irreducible if and only if $\mN_{A^\tran}(S)\cap S^c\ne \emptyset$ for all $S\subset [n]$ with $1\le |S|\le n-1$.
Thus, a matrix $A$ satisfying the conditions of Definition \ref{def:robust} is ``robustly irreducible" in the sense that $A$ remains irreducible even after setting a small linear proportion of entries equal to zero.

\begin{rem}[Relation to broad connectivity]	\label{rmk:broad}
In their work on permanent estimators, Rudelson and Zeitouni assume a stronger expansion-type condition on $A(\scut)$ which they call \emph{broad connectivity} \cite{rudelson-zeitouni-2016}. 
The conditions for a nonnegative square matrix $A$ to be $(\delta,\kappa)$-broadly connected are the same as in Definition \ref{def:robust}, except \eqref{lb:expand} is replaced by the stronger condition
\begin{equation}	\label{def:broad}
\big| \mN_{A^\tran}^{(\delta)}(S)\big|  \ge \min(n,(1+\kappa)|S|)
\end{equation}
for all nonempty $S\subset[n]$.

\end{rem}
Recall the definition \eqref{def:Acut} of $A(\sigma_0)$.

\vspace{.1cm}
\begin{enumerate}[leftmargin=*,label={\bf A5}]
\item\label{ass:expander}		
(Robust irreducibility).
There exist constants $\scut,\delta,\kappa\in (0,1)$ such that for all $n\ge 1$, $A_n(\scut)$ is $(\delta,\kappa)$-robustly irreducible.
\end{enumerate}
\vspace{.1cm}

Note that \ref{ass:expander} enables variance profiles with a large proportion of vanishing entries and is implied by \ref{ass:sigmin}.

\begin{theo}\label{th:sufficient} Consider a sequence of standard deviation profiles $A_n=(\sigma_{ij}^{(n)})$ as in Definition \ref{def:model}, and assume that \ref{ass:sigmax} and \ref{ass:expander} hold. Then \ref{ass:admissible} holds, i.e.\ $V_n$ is admissible. 
\end{theo}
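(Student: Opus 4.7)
The plan is to adapt the strategy underlying Proposition~\ref{prop:sigmin}, which under \ref{ass:sigmin} deduces $R\le 1/\smin$ from the pointwise lower bound $\alpha_i\ge\smin^2R$ (plugged into $r_i(\alpha_i+t)\le 1$). Under \ref{ass:expander} many entries $\sigma_{ij}^{(n)}$ may vanish so such a uniform pointwise bound is unavailable, and one must instead derive an averaged lower bound exploiting the graph expansion. WLOG $\smax=1$; write $R:=R_n(s,t)=\tfrac1n\sum_i r_i$, $\alpha_i=(V_n\brt)_i$, $\beta_i=(V_n^\tran\br)_i$. From \eqref{def:MEt} one reads off the elementary inequalities $r_i(\alpha_i+t)\le1$, $r_is^2\le\beta_i+t$ and their $\tilde r_i$-counterparts. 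Summing the first gives $\br^\tran V_n\brt\le n(1-tR)\le n$; since $V_{ji}\le1/n$, also $\beta_i\le R$, hence the pointwise bound $r_i\le(R+1)/s^2$. For $s>1$ one bootstraps $V_n^\tran\br\succcurlyeq s^2\br-t\1$ with $\|V_n\|_{\infty,\infty}\le1$ to get $R\le 1/(s^2-1)$, so it remains to treat $s\in(0,1]$.

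For $s\in(0,1]$ the goal is the matching lower bound
\[
\br^\tran V_n\brt \;\ge\; c(s)\,nR^2,\qquad c(s)=c(s,\delta,\kappa,\sigma_0)>0,
\]
from which $R\le 1/\sqrt{c(s)}$ follows. A pigeonhole from $\sum r_i=nR$ and $r_i\le(R+1)/s^2$ gives, in the nontrivial regime $R\ge1$, linearly-large level sets $|S_\eta|,|T_\eta|\ge c_2(\eta,s)n$, where $S_\eta:=\{i:r_i\ge\eta R\}$ and $T_\eta:=\{j:\tilde r_j\ge\eta R\}$ (with $c_2\simeq (1-\eta)s^2/2$). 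The comparison $V_n\succcurlyeq(\sigma_0^2/n)M$ with $M$ the $0/1$ adjacency matrix of $A_n(\sigma_0)$ then reduces the problem to a graph-theoretic lower bound of the form $E_{\sigma_0}(S_\eta,T_\eta)\ge c_3n^2$ on the number of $A_n(\sigma_0)$-edges between these level sets, since then $\br^\tran V_n\brt\ge (\sigma_0^2\eta^2R^2/n)E_{\sigma_0}(S_\eta,T_\eta)\ge \sigma_0^2\eta^2c_3 nR^2$.

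The hard part will be this edge-density estimate. It cannot hold for arbitrary linearly-sized subsets of a robustly irreducible digraph -- e.g.\ for a cyclic block matrix with three $m\times m$ blocks obeying $V_1\to V_2\to V_3\to V_1$, the linearly-large pair $S=T=V_1$ satisfies $E(S,T)=0$ -- so the argument must exploit that $S_\eta,T_\eta$ arise from the Master Equations rather than being arbitrary. My plan is an iterative argument: the iterates $T_{k+1}:=T_k\cup\mathcal N_{A_n(\sigma_0)^\tran}^{(\delta)}(T_k)$ starting from $T_0:=T_\eta$ grow geometrically with ratio $1+\kappa$ by condition~(ii) of Definition~\ref{def:robust}, exhausting $[n]$ after $K=O(\log(1/c_2))$ steps; at each step the newly-added vertices $j$ have $\ge\delta|T_{k-1}|$ out-neighbors in $T_{k-1}$ in $A_n(\sigma_0)$, which via $V_n\succcurlyeq(\sigma_0^2/n)M$ translates into a quantitative lower bound on $\alpha_j$ in terms of $\tilde r$ on the previous layer, and hence via $r_j(\alpha_j+t)\le 1$ into an upper bound on $r_j$. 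Propagating these matched bounds through the $K$ expansion steps -- morally estimating $\br^\tran V_n^K\brt$ instead of $\br^\tran V_n\brt$ and absorbing the $K$-fold overhead into $c(s)$ through combinatorial accounting along walks of length $K$ in $\Gamma(A_n(\sigma_0))$ -- yields the desired bound. This technical step is cleanest as a standalone lemma combining the graph-theoretic expansion of Definition~\ref{def:robust} with a quantitative stability estimate for the fixed-point system \eqref{def:MEt}.
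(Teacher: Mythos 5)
Your initial manipulations of the Regularized Master Equations are all correct: the bounds $r_i(\alpha_i+t)\le 1$, $r_is^2\le\beta_i+t$, the consequences $\br^\tran V_n\brt\le n(1-tR)$ and $r_i\le(R+1)/s^2$, the dispatch of $s>\smax$, and the pigeonhole showing $|S_\eta|,|T_\eta|\gtrsim n$ in the regime $R\ge 1$ (this last step quietly uses the trace identity $\sum r_i=\sum\tilde r_i$ from Proposition~\ref{prop:MEt} to ensure $T_\eta$ is large -- worth flagging explicitly). You also correctly observe, via the cyclic block counterexample, that an edge-density bound $E_{\sigma_0}(S,T)\gtrsim n^2$ cannot hold for arbitrary linearly-sized $S,T$ under robust irreducibility, so the argument must use that $S_\eta,T_\eta$ are level sets of a solution. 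This is where the proposal stalls: the iterative scheme you sketch (growing $T_k\cup\mN^{(\delta)}_{A_n(\sigma_0)^\tran}(T_k)$ and ``morally estimating $\br^\tran V_n^K\brt$ through combinatorial accounting along walks of length $K$'') is not fleshed out, and I do not see how it can be made to close the gap. There is no upper bound on $\br^\tran V_n^K\brt$ analogous to $\br^\tran V_n\brt\le n$ to play it against, and the proposed upper bounds on $r_j$ in later layers do not translate into a lower bound on the original bilinear form $\br^\tran V_n\brt$.

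The paper's proof (Proposition~\ref{prop:boundq} via Lemmas~\ref{lem:Wbounds}--\ref{lem:alphaprime}) is organized around a structurally different contradiction, and you appear to be missing its engine. The key facts you have but are not exploiting: (i) the product bound $r_i\tilde r_i\le 1/(4s^2)$, a direct consequence of \eqref{def:MEt} (the paper's \eqref{qproduct}), implies that when $R$ is large the level sets $S_\eta$ and $T_\eta$ are essentially \emph{disjoint} -- ``$r_i$ large'' and ``$\tilde r_i$ large'' cannot hold simultaneously; (ii) the trace identity $\sum_ir_i=\sum_i\tilde r_i$, \eqref{qtrace}, is what the product bound must be played against. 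The paper's argument proves (Lemma~\ref{lem:beta0}) that either $R$ is already bounded, or $\tilde r_i\ge\beta_0/\tpmax$ uniformly; then introduces the inverse matrix $W^S=(\Psi_{S\times S}^{-1}-V^\tran_{S\times S})^{-1}$ with its positivity and column-sum bounds (Lemma~\ref{lem:Wbounds}) as the quantitative tool to propagate bounds across the boundary of a set; then iteratively enlarges the set $S_\alpha=\{i: r_i\ge\alpha\tpmax\}$ via a nested double loop (Lemmas~\ref{lem:avgLB}, \ref{lem:alphaprime}) until $|S_{\alpha''}|\ge(1-\delta/2)n$ for an $n$-independent $\alpha''$; and finally uses \eqref{qproduct} to conclude $\tilde r_j\lesssim 1/(\alpha''\tpmax)$ for almost all $j$, so that \eqref{qtrace} forces $\tpmax\lesssim 1$, whence $R\le\tpmax/s^2\lesssim 1$. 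No bilinear-form lower bound is ever proved or needed. I would recommend abandoning the target lemma $\br^\tran V_n\brt\ge c(s)nR^2$ -- even if true it is not on the path of least resistance -- and instead working out: how does robust irreducibility propagate the property ``$r_i\ge\alpha\tpmax$'' from $S_\alpha$ to $S_\alpha^c$ with $n$-independent constants, and how does the trace identity finish once the level set is almost all of $[n]$?
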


It turns out that the mere irreducibility of $V_n$ provides a weaker form of \ref{ass:admissible}. 

\begin{prop}\label{prop:qualitative} Let $V_n$ be an irreducible variance profile and let $\rvec=\rvec(s,t)$ be the solution of the associated Regularized Master Equations \eqref{def:MEt}. Then there exists $C=C(s,n)$ such that 
$$
\sup_{t\in (0,1]}\ \frac 1n \sum_{i\in [n]} r_i(s,t) \ \le \ C\ .
$$
\end{prop}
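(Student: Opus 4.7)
Fix $s>0$ and $n\ge 1$. The strategy is to show that $t\mapsto \rvec(s,t)$ extends to a continuous function on the compact interval $[0,1]$, whence the required sup bound reduces to the extreme value theorem. This splits into two tasks: continuity on $(0,\infty)$, and an a priori bound ensuring the limit at $t=0^+$ is finite.

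\textbf{Continuity on $(0,\infty)$.} Writing $a_i=(V_n^\tran\br)_i+t$ and $b_i=(V_n\brt)_i+t$, the RME gives $r_ib_i=a_ib_i/(s^2+a_ib_i)<1$, hence $r_i(s,t)<1/t$, with the analogous bound for $\tilde r_i$. For any $t_k\to t_\ast>0$, the sequence $\rvec(s,t_k)$ is therefore eventually bounded; Bolzano--Weierstrass furnishes a subsequential limit, which by continuity of the map defining \eqref{def:MEt} satisfies the RME at $t_\ast$ and, by the uniqueness in Proposition~\ref{prop:MEt}, coincides with $\rvec(s,t_\ast)$. Since every subsequence has a further subsequence with the same limit, $t\mapsto\rvec(s,t)$ is continuous on $(0,\infty)$.

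\textbf{A priori bound as $t\to 0^+$.} The heart of the argument is to rule out blow-up as $t\to 0^+$. Two structural bounds from \eqref{def:MEt} play a central role: the saturation $r_i\tilde r_i\le 1/(4s^2)$, obtained by AM--GM on $s^2+a_ib_i$, and the linear relaxation $s^2\br\preceq V_n^\tran\br+t\bs{1}_n$, obtained by dropping the nonlinear term. Pairing the latter with the right Perron--Frobenius eigenvector $\bs v\succ 0$ of $V_n$ (satisfying $V_n\bs v=\rho(V_n)\bs v$) gives
$(s^2-\rho(V_n))\langle \bs v,\br\rangle \le t\langle \bs v,\bs{1}_n\rangle$, which settles the easy regime $s^2>\rho(V_n)$. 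In the complementary regime $s^2\le\rho(V_n)$, I would argue by contradiction: any blow-up sequence with $m_k:=\|\rvec(s,t_k)\|_\infty\to\infty$, after rescaling by $m_k$ and extracting a convergent subsequence, would yield a nontrivial sup-norm-$1$ accumulation point on which the saturation bound forces disjoint supports for the $\br$- and $\brt$-coordinates. The RME relation $r_i(V_n\brt)_i<1$, combined with the strong connectivity of $\Gamma(V_n)$ (equivalent to irreducibility of $V_n$), then propagates this imbalance across all vertices, driving the entire limit to zero and contradicting the normalization.

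\textbf{Conclusion and main obstacle.} With the a priori bound in hand, any subsequential limit of $\rvec(s,t)$ as $t\to 0^+$ is a nonnegative solution of the Master Equations \eqref{def:ME2}; by Theorem~\ref{thm:master} such a solution is either $\qvec(s)$ or the zero vector, so in either case $\rvec(s,\cdot)$ extends continuously to $[0,1]$ with a finite value at $0$. Continuity on a compact interval yields $C(s,n):=\max_{t\in[0,1]}\tfrac1n\sum_i r_i(s,t)<\infty$, which is the desired constant. The main obstacle is the a priori bound in the regime $s^2\le\rho(V_n)$: the Perron--Frobenius spectral estimate is vacuous there, and one must extract quantitative information from the interplay of the saturation bound with the graph-theoretic expansion afforded by irreducibility of $V_n$.
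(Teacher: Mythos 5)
Your skeleton is sensible and you have correctly identified where the difficulty lies ($s^2\le\rho(V_n)$), but the blow-up mechanism you sketch for that regime does not close, and the gap is not cosmetic.

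After rescaling by $m_k=\|\rvec(s,t_k)\|_\infty$ and passing to a subsequential limit $\rvec^\infty$, the information you actually retain is: $r_i^\infty\,\tilde r_i^\infty=0$ for all $i$ (from $r_i\tilde r_i\le 1/s^2$), $r_i^\infty(V_n\brt^\infty)_i=0$ and $\tilde r_i^\infty(V_n^\tran\br^\infty)_i=0$ (from $r_i b_i<1$), $s^2\br^\infty\preceq V_n^\tran\br^\infty$ and $s^2\brt^\infty\preceq V_n\brt^\infty$, and the trace identity. Writing $S=\{i:r_i^\infty>0\}$ and $\tilde S=\{i:\tilde r_i^\infty>0\}$, these translate to: $S\cap\tilde S=\varnothing$; no edge of $\Gamma(A)$ from $S$ to $\tilde S$; every $i\in S$ has an in-neighbor in $S$ (so $S$ contains a directed cycle); every $i\in\tilde S$ has an out-neighbor in $\tilde S$; and both $S,\tilde S$ are nonempty (by the trace identity, since $\|\rvec^\infty\|_\infty=1$). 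None of this is contradictory in a strongly connected digraph. For instance take $\{1,\dots,8\}$ with the two disjoint $3$-cycles $1\to2\to3\to1$ (supporting $\br^\infty$) and $4\to5\to6\to4$ (supporting $\brt^\infty$), joined by paths $3\to7\to4$ and $6\to8\to1$: the graph is strongly connected, no edge leaves $S$ into $\tilde S$, and the propagation you describe ($r_{i_0}^\infty>0\Rightarrow\tilde r_j^\infty=0$ on $\mN(i_0)$) stalls as soon as it reaches vertices where both rescaled coordinates vanish — which is exactly what happens on $\{7,8\}$. So "strong connectivity drives the entire limit to zero" does not follow from the listed constraints. The underlying issue is that the RME is not homogeneous, so rescaling by $m_k$ discards precisely the absolute-scale information (the sizes of the products $a_i^{(k)}b_i^{(k)}$ relative to $s^2$, i.e.\ the quantities $\tpmax$, $\vpmax$ and their interaction with $t_k\to0$) that is needed to rule out such configurations. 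In particular you cannot even determine whether $r_i^\infty$ is positive from $(a_i^\infty,b_i^\infty)$ alone when $a_i^\infty>0$ and $b_i^\infty=0$.

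The paper's proof (Proposition~\ref{prop:bdd-weak}) is a genuinely different argument and retains that scale. It works with the unscaled $\rvec$, introduces the sets $S_\alpha=\{i:r_i\ge\alpha\tpmax\}$ and $T_\beta=\{i:\tilde r_i<\beta/\tpmax\}$ keyed to the absolute quantity $\tpmax$, first uses irreducibility to force a uniform lower bound $\tilde r_i\ge\beta_0/\tpmax$ (Lemma~\ref{lem:beta0.qual}, where the alternative contradicts $K$ large via the trace identity), then propagates lower bounds on $r_i$ across vertices via the nonnegative inverse $W^S=(\Psi_{S\times S}^{-1}-V^\tran_{S\times S})^{-1}$ and the entrywise estimates of Lemma~\ref{lem:Wbounds}, enlarging $S_\alpha$ one vertex at a time (Lemma~\ref{lem:alphaprime.qual}) until $S_\alpha=[n]$, at which point the trace identity and the product bound give $\tpmax\ls 1$. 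Your continuity argument and your treatment of $s^2>\rho(V_n)$ via the Perron--Frobenius pairing are both correct, and the compactness wrap-up at the end would be fine if the a priori bound were in hand — but the a priori bound in the regime $s^2\le\rho(V_n)$ is where all the work is, and your rescaled-limit argument does not supply it.
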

The main difference here is that constant $C$ depends on $n$ and may blow up with $n$. Depending on the variance profile, this proposition is sometimes sufficient to verify \ref{ass:admissible}.

\begin{example}[Variance profile with a block structure]	\label{example:blockVP}
Let $k\ge 1$ be a fixed integer, and $M=(m_{ij})_{i,j\in [k]}$ be a $k\times k$ irreducible matrix with nonnegative elements. 
Let $J_m=\bs{1}_m \bs{1}_m^\tran$.
Assume that $n = km$ ($m\ge 1$) and consider the $n\times n$ matrix
\begin{equation}	\label{Asingular}
V_n  = \frac 1n \begin{pmatrix} m_{11} J_m & \cdots  & m_{1k} J_m   \\ \vdots \\ m_{k1} J_m &\cdots & m_{kk} J_m
\end{pmatrix} . 
\end{equation} 
Then $V_n$ is admissible, i.e.\ \ref{ass:admissible} is fulfilled. In fact, $V_n$ is irreducible and its block structure implies that  
$$
\boldsymbol{r}^\tran=(\underbrace{\rho_1,\cdots, \rho_1}_{m\ \textrm{times}}, \cdots, \underbrace{\rho_k,\cdots, \rho_k}_{m\ \textrm{times}})\ ,\quad 
\boldsymbol{\tilde r}^\tran=(\underbrace{\tilde \rho_1,\cdots, \tilde \rho_1}_{m\ \textrm{times}}, \cdots, \underbrace{\tilde \rho_k,\cdots, \tilde \rho_k}_{m\ \textrm{times}})
$$
where $\boldsymbol{\rho}=(\rho_i)$ and $\boldsymbol{\tilde \rho}=(\tilde \rho_i)$ satisfy the $2k$ equations
$$
\rho_i = \frac{( M_k^\tran \boldsymbol{\rho})_i +t}{s^2 + ((M_k^\tran \boldsymbol{\rho})_i +t) (M_k \boldsymbol{\tilde \rho})_i +t)}\,,\quad 
\tilde \rho_i = \frac{(M_k \boldsymbol{\tilde \rho})_i +t}{s^2 + ((M_k^\tran \boldsymbol{\rho})_i +t) (M_k \boldsymbol{\tilde \rho})_i +t)}\,,\quad i\in[k]
$$
with $M_k = \frac 1k M$. In particular, 
$$
\sup_{t\in(0,1]} \frac 1n \sum_{i\in [n]} r_i(s,t) = \sup_{t\in(0,1]} \frac 1k \sum_{i\in [k]} \rho_i(s,t)\, ,
$$
where the latter is finite by Proposition \ref{prop:qualitative} and does not depend on $n$, hence \ref{ass:admissible}.
\end{example}

We now state two general classes of standard deviation profiles satisfying \ref{ass:expander} that include many examples not covered by \ref{ass:sigmin}, \ref{ass:symmetric} or Example \ref{example:blockVP}.
The classes are in a similar spirit, essentially saying that $A=(\sigma_{ij})$ is approximately controlled from below by a block matrix whose nonzero blocks form an irreducible ``pattern", and which are ``regular" in the sense that the nonzero entries in the blocks are uniformly distributed in a certain sense.
In what follows we write
\[
e_A(S,T)= \sum_{i\in S,j\in T} 1_{\sigma_{ij}>0}, \qquad \rho_A(S,T)= \frac{e_A(S,T)}{|S||T|}.
\]
We recall a common assumption from the extremal combinatorics literature (see for instance \cite[Definition 1.6]{KoSi:survey}).
For $\eps,\rho_0\in (0,1)$, we say that an $m\times n$ matrix $A$ is \emph{$(\eps,\rho_0)$-super-regular} if for every $S\subset[m], T\subset[n]$ with $|S|\ge \eps m, |T|\ge \eps n$ we have $\rho_A(S,T) \ge \rho_0$. 
Informally, this says that all sub-matrices of $A$ of relative size $\eps$ have density (proportion of non-zero entries) at least $\rho_0$. 

\begin{Def}[Block-pseudorandom irreducibility]
For $\delta,\eps,\rho_0\in (0,1)$ and $K\in \N$, we say that a non-negative $n\times n$ matrix $A$ is \emph{$(\delta, K, \eps, \rho_0)$-block-pseudorandomly irreducible}, or \emph{$(\delta, K, \eps, \rho_0)$-BPI} for short, if \eqref{mindegree} holds, and if there is a partition of $[n]$ into sets $T_1,\dots, T_K$ with $T_q\ge n/(2K)$ for each $1\le q\le K$, and a $K\times K$ irreducible 0/1 matrix $M=(m_{pq})$, such that for each $(p,q)\in [K]^2$ with $m_{pq}=1$, the sub-matrix $A_{T_p,T_q}$ is $(\eps,\rho_0)$-super-regular. 
\end{Def}

Thus, the BPI condition is met if we can partition $[n]$ into a bounded number of parts of roughly equal size, such that the partitioned matrix contains an irreducible ``pattern" of super-regular blocks. 
We note the ways in which the assumption that $A(\sigma_0)$ is BPI generalizes Example \ref{example:blockVP}:
\begin{enumerate}
\item The blocks do not need to be the same size.
\item The matrix $A$ does not have to be constant within blocks (and can in fact have a large proportion of zero entries in every block, as well as an arbitrary proportion of entries larger that $\sigma_0$).
\item $A$ is arbitrary outside the sub-matrices $A_{T_p,T_q}$ with $m_{pq}=1$ (whereas in Example \ref{example:blockVP} the entries of $A$ were zero there).
\end{enumerate}
We also note that this assumption generalizes the \emph{block fully indecomposable} assumption from \cite{AEKquadequations} (see Definition 2.9 there), which makes a stronger assumption on the connectivity pattern $M$ than irreducibility (that it is \emph{fully indecomposable}) and also requires entries to be uniformly bounded below in blocks for which $m_{pq}=1$.

\begin{lemma}	\label{lem:BPItoRI}
Let $A=(\sigma_{ij})$ be an $n\times n$ non-negative matrix that is $(\delta, K, \eps, \rho)$-BPI for some $\delta,\rho\in (0,1)$, $\eps\in (0,\delta/8)$, and $K\in \N$. Then $A$ is $(\delta_0, \kappa)$-robustly irreducible with $\delta_0=\min(\delta/2, \rho/(4K))$ and $\kappa = \min(\delta/4,1/(8K))$. 
\end{lemma}

\begin{proof}
The condition \eqref{mindegree} is immediate. 
For \eqref{lb:expand}, we first consider the case that $|S|\le \delta n/4$. From \eqref{mindegree} we have
\[
\delta n |S|\le \sum_{i=1}^n \sum_{j\in S} 1_{\sigma_{ij}>0} = \sum_{i=1}^n |\mN_A(i)\cap S| \le |\mN^{(\delta/2)}_{A^\tran}(S)||S| + n(\delta/2)|S|
\]
which rearranges to give
\[
|\mN^{(\delta/2)}_{A^\tran}(S)\cap S^c| \ge |\mN^{(\delta/2)}_{A^\tran}(S)| - |S| \ge \delta n/2-|S|\ge \delta n/4 
\]
whenever $|S|\le \delta n/4$. Since $n\ge |S|$ we conclude \eqref{lb:expand} holds in this case with $\kappa = \delta/4$. 

Assume now that $|S|>\delta n/4$. Let $G=\{q\in [K]: |S\cap T_q|\ge \frac 12|S||T_q|/n\}$. 
From the pigeonhole principle we know that $G$ is nonempty. Suppose first that $G\ne [K]$. 
Then since $M$ is irreducible, there exists $(p,q)\in G^c\times G$ such that $m_{pq}=1$. 
Now since $|S\cap T_q| \ge |S||T_q|/(2n) \ge (\delta/8)|T_q|$, and since $\eps<\delta/8$, we deduce from the super-regularity of $A_{T_p,T_q}$ that $|\mN_A(i)\cap (S\cap T_q)|\ge \rho|S\cap T_q|$ for at least $(1-\eps)|T_p|$ values of $i\in T_p$. Indeed, supposing otherwise, if $T_p'$ is the set of $i\in T_p$ for which this lower bound fails then we have $e_A(T_p', S\cap T_q) = \sum_{i\in T_q} |\mN_A(i)\cap (S\cap T_q)|<\rho |T_p'||S\cap T_q|$, which is only possible if $|T_p'|<\eps|T_p|$. Thus, we have 
\[
|\mN_A(i)\cap S| \ge |\mN_A(i)\cap (S\cap T_q)|\ge \rho|S\cap T_q| \ge \rho|S||T_q|/(2n) \ge \frac{\rho}{4K}|S|
\]
for at least
\[
(1-\eps)|T_p| - |S\cap T_p| \ge \left( 1-\eps - \frac{|S|}{2n}\right)|T_p| \ge \left(\frac12-\eps\right)|T_p| \ge \frac{1}{2K}  \left(\frac12-\eps\right)n \ge \frac1{8K}n
\]
values of $i\in S^c$, where in the penultimate inequality of the former display we used that $q\in G$, and in the first inequality of the latter display we used that $p\in G^c$. The claim follows in this case.

It remains to handle the case that $|S|>\delta n/4$ and $G=[K]$. In this case, repeating the lines above shows that $|N_A(i)\cap S|\ge \frac\rho{4K}|S|$ for at least $(1-\eps)|T_p|$ values of $i\in T_p$ for every $1\le p\le K$ (because for every $p\in [K]$ we must have $m_{pq}=1$ for at least one value of $q$). 
We hence have $|\mN^{(\rho/(4K))}_{A^\tran}(S)|\ge (1-\eps)n$. But since $\eps< \delta/8$ and $|S|>\delta n/4$ we conclude that $|\mN^{(\rho/(4K))}_{A^\tran}(S)\cap S^c|\ge \delta n/4\ge (\delta/4)|S|$ as desired. 
\end{proof}

Our next sufficient condition for robust irreducibility involves notions from the theory of graph limits, in particular the concept of a graphon. For further background we refer to \cite{Lovasz:book}. 
For our purposes, let us say that a \emph{graphon} is a (Lebesgue) measurable function $W:[0,1]^2\to [0,1]$.
We equip the set of graphons with the cut norm:
\[
\|W\|_\Box = \sup_{S,T\subseteq[0,1]} \left|\int_{S\times T} W(x,y) dxdy\right|,
\]
where the integral is taken with respect to product Lebesgue measure, and the supremum is taken over all measurable subsets of $[0,1]$. 
To a non-negative $n\times n$ matrix $A=(\sigma_{ij})$ we associate the graphon $W_A$ which is equal to $\sigma_{ij}$ on the set $[\frac{i-1}{n}, \frac{i}{n})\times [\frac{j-1}{n}, \frac{j}{n})$ for each $1\le i,j\le n$ (and set $W_A(x,1)=W_A(1,x)\equiv0$).

We say that a graphon $U$ is a \emph{stepfunction} if there is a measurable partition $\mathcal{P}=\{S_1,\dots, S_K\}$ of $[0,1]$ and a $K\times K$ matrix $M=(m_{pq})$ such that $U(x,y)=m_{pq}$ for all $p,q\in [K]$ and all $(x,y)\in S_p\times S_q$. 
We call $\mathcal{P}, M$ the \emph{partition} and \emph{pattern}, respectively, associated with $U$.

\begin{Def}
Let $K\in \N$, and $\alpha,\sigma_*\in (0,1)$. Say that a graphon $W$ is \emph{$(K,\alpha,\sigma_*)$-irreducible} if there exists a stepfunction $U$ with $W\ge U$ pointwise, and such that the following hold for the partition $\mathcal{P}=\{S_1,\dots, S_K\}$ and pattern $M$ associated with $U$:
\begin{enumerate}
\item $\Leb(S_p)\ge \alpha/K$ for all $p\in [K]$.
\item $m_{pq}\in \{0\}\cup[\sigma_*,1]$ for all $p,q\in [K]$.
\item $M$ is irreducible. 
\end{enumerate}
\end{Def}

\begin{lemma}	\label{lem:graphon}
Let $A=(\sigma_{ij})$ be a non-negative $n\times n$ matrix and let $K\in N$ and $\delta,\alpha,\sigma_*\in (0,1)$. Assume \eqref{mindegree} holds, and that there is a $(K,\alpha,\sigma_*)$-irreducible graphon $W$ such that 
$
\|W_A-W\|_\Box \le \frac{\delta^2 \alpha^2\sigma_*}{32 K^2}=:\delta_0.
$
Then $A$ is $(\delta_0/2,\delta_0\delta/4)$-robustly irreducible. 
\end{lemma}

\begin{proof}
Arguing as in the proof of Lemma \ref{lem:BPItoRI} it suffices to verify the condition \eqref{lb:expand} for $|S|\ge \delta n/4$. If $|S|\ge (1-\delta/2)n$ then from \eqref{mindegree} it follows from the triangle inequality that $|\mN_A(i)\cap S|\ge \delta n/2 $ for all $i\in [n]$, and \eqref{lb:expand} follows in this case.
We henceforth assume $|S^c|\ge \delta n/2$.
We have
\[
\frac1{n^2}e_A(S^c,S)= \int_{\hat{S}\times \hat{S}^c} W_A(x,y)dxdy
\ge \int_{\hat{S}\times \hat{S}^c} W(x,y)dxdy -\delta_0
\ge  \int_{\hat{S}\times \hat{S}^c} U(x,y)dxdy -\delta_0,
\]
where for $S\subset[n]$ we denote the dilation $\hat{S}:=\frac1n\cdot S$.
Let $L\subset [K]$ such that $1\le |L|\le K-1$, and such that $\Leb(\hat{S}\cap S_p)\ge (\delta/4)\Leb(S_p)$ for all $p\in L$ and $\Leb(\hat{S}^c\cap S_p)\ge (\delta/4)\Leb(S_p)$ for all $p\in L^c$. It is possible to choose $L$ such that both $L$ and $L^c$ are nonempty since $|\hat{S}|, |\hat{S}^c|\ge \delta/4$. Since $M$ is irreducible we must have $m_{p'q'}\ge \sigma_*$ for some $(p,q)\in L\times L^c$. Now we have
\[
 \int_{\hat{S}\times \hat{S}^c} U(x,y)dxdy \ge \sum_{p, q=1}^K m_{p,q} \Leb(\hat{S}\cap S_p) \Leb(\hat{S}^c\cap S_q) \ge \sigma_* (\delta/4)^2\Leb(S_{p'})\Leb(S_{q'})
 \ge \frac{\delta^2\alpha^2\sigma_*}{16K^2} = 2\delta_0.
\]
Combined with the previous display, we conclude $e_A(S^c,S)\ge \delta_0n^2$. On the other hand,
\[
e_A(S^c,S) = \sum_{i\in S^c} |\mN_A(i)\cap S| \le |S||\mN_{A^\tran}^{(\delta_0/2)}(S)| + \frac{\delta_0}2|S||S^c|
\]
which rearranges to $|\mN_{A^\tran}^{(\delta_0/2)}(S)|\ge (\delta_0/2)|S^c|\ge (\delta \delta_0/4)|S|$, as desired.
\end{proof}

\subsection{Outline of the proof}
\label{outline} 

As is well known in the literature devoted to large non-Hermitian random
matrices, the spectral behavior of such matrices can be studied with the help
of the so-called Girko's Hermitization procedure, which is intimately related
with the \emph{logarithmic potential} of their spectral measure
\cite{girko1985circular}. By definition, the logarithmic potential $U_\mu$ of a
probability measure $\mu$ on $\C$ which integrates $\log|\cdot|$ near infinity is the
function from $\C$ to $(-\infty,\infty]$ defined by
\[
U_\mu(z) = -\int_{\C} \log|\lambda-z|\, \mu(d\lambda)\, .
\]
Writing $z=x+\ii y$ the Laplace operator is 
$
\Delta = \partial_{xx}^2 + \partial_{yy}^2  = 4 \, \partial_z \, \partial_{\bar{z}}
$
where $\partial_z = \frac 12\left(
\partial_x -\ii \partial_y\right) $ and $\partial_{\bar{z}}   =  \frac 12\left(
{\partial_x} +\ii \partial_y \right)$. The probability measure $\mu$ can be recovered by the formula
$
\mu =  -\frac 1{2\pi} \Delta U_{\mu}\ ,
$ 
valid in the set $\mathcal D'(\C)$ of the Schwartz distributions, which means 
that 
\[
\int \psi(z) \, \mu(dz)  =  
- \frac 1{2\pi} \int_{\C} \Delta \psi(z) \, U_\mu(z) \, \ell(dz) \qquad \textrm{for all}\quad \psi \in C_c^\infty(\C)\ .
\]
Now, setting $\mu = \mu^Y_n$, the logarithmic potential can be written as 
\begin{align*} 
U_{\mu_n^Y}(z) &= - \frac 1n \sum_{i=1}^n \log|\lambda_i -z| 
= -\frac 1n \log |\det(Y_n-z)|
= -\frac 1{n} \log\sqrt{\det (Y_n-z)(Y_n-z)^*} \\
&=  - \int_0^\infty \log (x) \,  L_{n,z}(dx) \, , 
\end{align*} 
where 
$
L_{n,z} := \frac 1n \sum_{i=1}^n \delta_{s_{i,z}} 
$
is the empirical distribution of the singular values $s_{1,z}\ge \cdots\ge s_{n,z}\ge 0$ of
$Y_n-z$. 
For technical reasons, it is easier to consider the \emph{symmetrized} empirical distribution of the singular values 
\begin{equation} \label{eq:symsingularvalues}
\check{L}_{n,z}\ :=\ \frac 1{2n} \sum_{i=1}^n \delta_{-s_{i,z}} + \frac 1{2n} \sum_{i=1}^n \delta_{s_{i,z}}
\end{equation}
for which a similar identity holds:
\[
U_{\mu_n^Y}(z) = - \int_{\R} \log|x|  \,  \check{L}_{n,z}(dx) \, . 
\]
This identity is at the heart of Girko's strategy. In a word, it shows
that in order to evaluate the asymptotic behavior of the spectral distribution
$\mu_n^Y$, we can focus on the asymptotic behavior of $\check{L}_{n,z}$ for almost 
all $z \in \C$. By considering $\check{L}_{n,z}$, we are in the more familiar world of 
Hermitian matrices. Informally, for all $z\in\C$, we will find a sequence 
$(\check \nu_{n,z})_{n\in\N}$ of deterministic probability measures such that 
$\check L_{n,z} \sim \check \nu_{n,z}$, and 
\[
\int_\R \log |x| \,  \check L_{n,z}(dx) \approx 
\int_\R \log |x| \,  \check \nu_{n,z}(dx)  \quad \text{for large } n \, .
\]
Setting \[h_n(z) := - \int_\R \log |x| \, \check \nu_{n,z}(dx),\] gives  
that for all $\psi\in C_c^\infty(\C)$, 
\[
\int \psi(z) \, \mu^Y_n(dz) \approx 
- \frac 1{2\pi} \int_{\C} \Delta \psi(z) \, h_n(z) \, \ell(dz)  
  \quad \text{for large } n \, , 
\]
showing that $h_n(z)$ is the logarithmic potential of a probability measure 
$\mu_n$, and in particular that $\mu_n^Y\sim \mu_n$. Further smoothness properties of $h_n(z)$ will finally yield the properties of 
$\mu_n$ stated in Theorem~\ref{thm:main}. 

We provide more details hereafter with precise pointers to the article's results.

\subsubsection*{1. Study of the associated Hermitian model} 

This topic is covered in Section \ref{sec:svd}.

Given $z\in\C$, we establish the existence of a sequence 
$(\check \nu_{n,z})_{n\in\N}$ of deterministic probability measures such that 
$\check L_{n,z} \sim \check \nu_{n,z}$ almost surely. To this end, we 
introduce the $2n\times 2n$ Hermitian matrix 
\begin{equation}	\label{Y2n}
\Y_n^z := \begin{pmatrix} 
0 & Y-z \\
Y^* -z^* & 0
\end{pmatrix} ,\quad z\in \C , 
\end{equation}
with spectral measure $\check L_{n,z}$ \cite[Theorem~7.3.7]{book-horn-johnson}. 
The asymptotic analysis of $\check L_{n,z}$ relies on the \emph{resolvent}:
\begin{equation}	\label{def:res0}
\Res_n(z,\eta):= \begin{pmatrix} 
-\eta & Y-z\\
Y^* - z^* & -\eta
\end{pmatrix}^{-1} , \quad \eta \in \C_+ , 
\end{equation}
of $\Y_n^z$. The mere definition \eqref{def:Stieltjes} of a Stieltjes transform yields
\[
g_{\check L_{n,z}}(\eta)  =  \frac 1{2n} \tr \Res_n(z,\eta)\,.
\]

The rigorous use of the Stieltjes transform for the study of ESDs of Hermitian random matrices goes back to Pastur \cite{Pastur73}, and was further developed by Bai to obtain quantitative results \cite{Bai93a, Bai93b}. Beginning with the seminal works \cite{ESY:local2, ESY:local1} of Erd\H{os}, Schlein and Yau this approach has been used to show that the semicircle law governs the spectral distribution for Wigner matrices down to near-optimal scales. In these works, the basic strategy is to use resolvent identities to show that the Stieltjes transform approximately satisfies a fixed-point equation, sometimes called the \emph{Schwinger--Dyson} (or \emph{master-loop}) equation. This approach was extended to Hermitian matrices with doubly stochastic variance profile \cite{EYY:generalizedWigner}. 
However, for Hermitian matrices with more general variance profiles and non-zero mean it becomes necessary to consider a \emph{system} of equations that are approximately satisfied by individual diagonal entries of the resolvent.

In Section~\ref{sec:svd} we derive the Schwinger--Dyson equations for our setting, cf.\ Proposition \ref{prop:deteq}:
\begin{equation}	\label{cve0}
\begin{cases}
\,p_i &=\, \dfrac{(V_n^\tran\bs{p})_i +\eta}{|z|^2-((V_n\bs{\tilde p})_i+\eta) ((V_n^\tran\bs{p})_i +\eta) }  \\
\,\tilde p_i &=\, \dfrac{(V_n\bs{\tilde p})_i +\eta}{|z|^2-((V_n\bs{\tilde p})_i+\eta) ((V_n^\tran\bs{p})_i +\eta) }  \\
\end{cases}
\qquad \bs{p}=( p_i)\, , \ \bs{\tilde p}=( \tilde p_i )\, ,
\end{equation}
for $\eta\in \C_+$,
with unique solution the $2n\times 1$ vector $\bs{\vec p}=(\bs{p}\; \bs{\tilde p})$ satisfying $\Imm \bs{\vec p} \succ 0$. We prove that there exists a probability distribution $\check \nu_{n,z}$ whose Stieltjes transform $g_{\check\nu_{n,z}}$ is defined as  $g_{\check\nu_{n,z}}=\frac 1n \sum_{i\in [n]} p_i$.

In Theorem \ref{th:L->nu}, it is established that for all $\eta\in\C_+$, 
$
g_{\check L_{n,z}}(\eta) - g_{\check\nu_{n,z}}(\eta) \to 0$ almost surely, which in particular implies that $\check L_{n,z} \sim \check \nu_{n,z}$ a.s.
In Proposition \ref{prop:QR} we obtain a quantitative estimate 
along the imaginary axis
of the form
\begin{equation}\label{eq:conv-rate}
\E g_{\check L_{n,z}}(\ii t) - g_{\check\nu_{n,z}}(\ii t)= {\mathcal O} \left(\frac 1{t^c\, \sqrt{n}}\right) ,\quad t>0,
\end{equation}
for some integer $c$.

We note that recently there has been much work analyzing the Schwinger--Dyson equations corresponding to Hermitian random matrices with mean and variance profiles satisfying a range of assumptions. 
For the centered case one is led to the so-called \emph{Quadratic Vector Equation}, which has been thoroughly analyzed in works of Ajanki, Erd\H os and Kr\"uger and Alt, Erd\H os and Kr\"uger \cite{AEKquadequations,AEKgram}; the application to universality for local spectral statistics was carried out in \cite{ajanki2015universality}.
Very recently they have made the extension to matrices with correlated entries, which involves the study of the so-called \emph{Matrix Dyson Equation} \cite{AEK16:mde}.
In another recent work, He, Knowles and Rosenthal prove an approximate (matrix-valued) self-consistent equation for resolvents of Hermitian random matrices with arbitrary mean and variance profile, which covers the structure of the model \eqref{Y2n} \cite{HKR16:local}. However, they assume the entries have all moments finite, and their aim is to obtain a local law at the optimal scale. In the present work, a sub-optimal quantitative analysis of the system \eqref{cve0} under few moments will suffice for our purposes of understanding the spectrum of the associated non-Hermitian model $Y_n$ at global scale.

\subsubsection*{2. From the spectral measures $\check L_{n,z} \sim \check \nu_{n,z}$ to the spectral measures $\mu_n^Y\sim \mu_n$ via the associated logarithmic potentials} 

This topic is covered in Sections \ref{sec:svd} (partly) and \ref{sec:logpotential}.

The fact that $\check L_{n,z} \sim \check \nu_{n,z}$ a.s.\ does not 
ensure that the random logarithmic potential $U_{\mu^Y_n}(z)$ becomes close to the deterministic logarithmic potential $h_n(z)$ (assuming the latter is
well defined). 
Essentially, this is due to the fact that $x\mapsto \log|x|$ is unbounded 
near zero and infinity. 
While the singularity at infinity is easily handled using the almost sure tightness of the measures $\check L_{n,z}$, the singularity at zero presents a major technical challenge (indeed, this hurdle was the reason it took decades to establish the circular law under the optimal hypotheses).
We show that under the admissibility assumption \ref{ass:admissible},
$x\mapsto \log|x| $ is $\check \nu_{n,z}$-integrable, and that 
for all $\tau,\tau' > 0$, there is $\varepsilon > 0$ small enough such that
\begin{equation} 
\label{0epsilon}
\PP\left\{ \Bigl| \int_{-\varepsilon}^\varepsilon 
  \log|x| \,  \check L_{n,z}(dx) \Bigr| 
 > \tau \right\}   < \tau' \quad \text{for all large } n .
\end{equation} 
Together with the almost sure tightness and weak convergence $\check L_{n,z} \sim \check \nu_{n,z}$, we can show
that $U_{\mu^Y_n}(z) - h_n(z) \to_n 0$ in probability. The almost sure convergence is an open problem not covered in this article.

The proof of~\eqref{0epsilon} is based on two ingredients. The first one is a result from \cite{Cook:ssv} by the first author giving a lower tail estimate for the smallest singular value of $Y_n-z$
for arbitrary fixed $z\in\C\setminus\{0\}$ under the sole assumption \ref{ass:sigmax} on the 
standard deviation profile $A_n$.
For the second result, established in Section \ref{sec:logpotential}, we show that 
there exist two constants $C,\gamma_0 > 0$ such that for all $x > 0$, 
\[
\E \check L_{n,z}((-x,x)) \ \leq\  C (x \vee n^{-\gamma_0})\ .
\]
Bounds of this form on the expected density of states for random Hermitian operators are sometimes referred to as \emph{local Wegner estimates} (after the work \cite{Wegner:bounds}).
Their application to the convergence of the empirical spectral distribution for non-Hermitian random matrices goes back to Bai's proof of the circular law \cite{Bai-circular-law-1997}; our presentation of the argument is closer to the one in \cite{MR2831116}.

Such an estimate follows from control on $\Imm \E g_{\check L_{n,z}}(\ii t)$ for 
small $t > 0$, is obtained in two steps. First, sufficient control of 
$\E g_{\check L_{n,z}}(\ii t) - g_{\check \nu_{n,z}}(\ii t)$ is already 
provided by the estimate \eqref{eq:conv-rate}.

Second, we rely on \ref{ass:admissible} to state that $g_{\check \nu_{n,z}}(\ii t)$ is bounded independent of $n$ and $t$. For this task, a variation of the Schwinger--Dyson equations, namely the {\em Regularized Master Equations}, introduced in Proposition \ref{prop:MEt}, 
is obtained simply by setting $\eta=\ii t$ in the Schwinger--Dyson equations \eqref{cve0} -- in this case, $\pvec\in \ii \R^{2n}$ and $\rvec=({\br}^\tran\ \brt^\tran)^\tran$ is defined as $\rvec(t) = \Imm \pvec(\ii t)$. Hence by \ref{ass:admissible},
\begin{equation}	\label{bdd:sec2}
\Imm g_{{\check \nu}_{n,z}}(\ii t) = \frac 1n \sum_{i\in [n]} \rr_i(t) \le C
\end{equation}
for some $C<\infty$ independent of $n$ and $t$ (depending only on $|z|$ and the parameters in our assumptions). 


\subsubsection*{3. Description of the deterministic probability measure $\mu_n$} This is covered in Sections \ref{sec:master} and \ref{sec:end}.

We have proved so far that $\mu^Y_n \sim \mu_n$ in probability, where
$\mu_n$ is the probability measure whose logarithmic potential $U_{\mu_n}(z)$
coincides with $h_n(z)$. It
remains to establish the properties of $\mu_n$ that are stated by
Theorem~\ref{thm:main}. 

In Section \ref{sec:master} we prove Theorem \ref{thm:master}.
Our approach to obtaining the solution $\qvec(s)$ of \eqref{def:ME2} is through the Regularized Master Equations \eqref{def:MEt}.
Since these equations are obtained by a simple transformation of the Schwinger--Dyson equations \eqref{cve0}, by our work in Section \ref{sec:svd} we know that \eqref{def:MEt} has a unique solution $\rvec(s,t)$ satisfying $\rvec(s,t)\succ 0$, where we write $s=|z|$.
We then show that the pointwise limit $\rvecstar(s) := \lim_{t\downarrow0}\rvec(s,t)$ exists, and moreover that $\qvec=\rvecstar$ and is the unique solution to \eqref{def:ME2}.
Having properly defined $\qvec(s)$, our main task is to show that the distribution $-(2\pi)^{-1}
\Delta h_n(z)$ in fact defines a density on the set 
${\mathcal D} := \{ z \in \C \, : \, |z| \neq 0, \ |z| \neq \sqrt{\rho(V_n)} \}$
and to provide an expression for this density. The general approach towards
solving this problem can be found in the physics literature 
(see \cite{FeZe97}). Define on $\C \times (0,\infty)$ the functions  
\begin{equation}\label{cU} 
\cU_{n}^{\Y}(z,t) := 
- \frac{1}{2n} \log\det ( (Y-z)^* (Y-z) + t^2 ) \qquad \text{and} \qquad 
\cU_n(z,t) := 
- \frac 12 \int_{\R} \log(x^2 + t^2)  \check{\nu}_{n,z}(dx)\,  . 
\end{equation}
For fixed $t > 0$, these functions can be seen as regularized versions of the 
logarithmic potentials $U_{\mu^Y_n}(z)$ and $U_{\mu_n}(z)$ respectively, which
converge back as $t\downarrow 0$, in $\mathcal D'(\C)$. 

On the other hand, let us consider again the resolvent $\Res_n(z,\eta)$ introduced in \eqref{def:res0}. Using the well-known formula for 
the inverse of a partitioned matrix \cite[\S 0.7.3]{book-horn-johnson} and writing  
\begin{equation} \label{def:FGdef}
\Res_n(z,\eta) =: \begin{pmatrix}
G_n(z,\eta) & F_n(z,\eta)\\
F_n'(z,\eta)&\widetilde G_n(z,\eta)
\end{pmatrix} , 
\end{equation}
we get that by setting $\eta = \ii t$, that $\partial_{\bar{z}} \cU^{Y}_{n}(z,t)$ coincides with 
$n^{-1} \tr F_n(z,\ii t)$. Relying on the asymptotic analysis made in 
Section~\ref{sec:svd} on the resolvent $\Res_n$, we can easily obtain an 
expression for $\partial_{\bar{z}} \cU_n(z,t)$ by considering the asymptotic 
behavior of $n^{-1} \tr F_n(z,\ii t)$. We then conclude by studying the 
equation 
\[
\Delta U_{\mu_n} = 
4 \lim_{t\downarrow 0} \partial_z \partial_{\bar{z}} \cU_n(z,t).
\]
Section \ref{sec:density} is devoted to these questions. 

In Section \ref{sec:bistochastic} we conclude the proof of Theorem~\ref{thm:bistochastic}. As we noted above, the key is that \eqref{bdd:sec2} is easily obtained under the double stochasticity assumption by examining the explicit solution $\rvec$ to the Regularized Master Equations.

\subsubsection*{4. Sufficient conditions for \ref{ass:admissible} to hold} This topic is covered in Section \ref{sec:bddness}.

While \eqref{bdd:sec2} can be proved in a few lines under \ref{ass:sigmin} (see Proposition~\ref{prop:sigmin}) or \ref{ass:symmetric} (see Proposition ~\ref{prop:symmetry}), establishing such a bound under the more general robust irreducibility assumption \ref{ass:expander} is significantly more technical. 
Here it is helpful to view the standard deviation profile in terms of the associated directed graph $\Gamma(A(\scut))$ (which was defined in Section \ref{sec:notation}). 
The basic idea is that the equations \eqref{def:MEt} encode relationships between the size of components $\rr_i(t),\rt_i(t)$ at a vertex $i$ to the sizes of the components at neighboring vertices.
Assuming toward a contradiction that $\rr_{i_0}(t)$ is large at some vertex $i_0$, we can use the robust irreducibility assumption to propagate the property of having large $\rr_i(t)$ to most of the other vertices $i$.
We can also use the equations \eqref{def:MEt} to show that $\rt_i(t)$ will consequently be small for most $i$. However, this contradicts the crucial trace identity $\sum_{i=1}^n \rr_i(t)=\sum_{i=1}^n \rt_i(t)$, which derives from the fact that the matrix $\Res$ in \eqref{def:res0} satisfies \[\sum_{i=1}^n \Res_{ii} = \sum_{i=1}^{n}\Res_{n+i,n+i}.\]
See Section \ref{sec:bddness} for further details.

We remark that under the stronger broad connectivity assumption on the standard deviation profile (see Remark \ref{rmk:broad}), Wegner-type estimates that are sufficient for the purposes of this paper were obtained by the first author by a completely different argument, following a geometric approach introduced by Tao and Vu in \cite{tao2010random} -- see \cite[Theorem 4.5.1]{Cook:thesis}.

\subsection{Open questions}	\label{sec:open}

\correction{
\textcolor{red}{
\subsubsection*{Further properties of the density of deterministic equivalents}
Lemma \ref{q:dif} provides an expression for the derivative of the solution $\qvec(s)$ to the Master Equation which could shed some light on further properties of $\mu_n$. However, this expression appears difficult to analyze, and we have not pursued this. Recently \cite{alt2016local} showed the density is strictly positive on the closed disk with radius $\sqrt{\rho(V_n)}$ under assumptions \ref{ass:sigmin}. For instance, is the support of $\mu_n$ always connected? Is it possible to describe the density of $\mu_n$ near zero? 
}}

\paragraph*{\em Relaxing the robust irreducibility assumption}
While control on the smallest singular value is proved under very general conditions (see Proposition~\ref{prop:nick}), we have made the additional robust irreducibility assumption \ref{ass:expander} in order to handle the other small singular values via Wegner estimates. Would it be possible to lighten this assumption? %

\paragraph*{\em Almost sure convergence} One may want to upgrade the convergence $\mu_n^Y\sim \mu_n$ in probability in Theorem \ref{thm:main} to  
almost sure convergence, as discussed in Remark \ref{rmk:almostsure}.

\correction{
\textcolor{red}{
\subsubsection*{Almost sure convergence} 
As we noted in Remark \ref{rmk:almostsure}, the convergence $\mu_n^Y\sim \mu_n$ in probability in Theorem \ref{thm:main} could be upgraded to almost sure convergence if we had improved lower tail estimates on the smallest singular value for nonzero scalar shifts of the matrices $Y_n$ (specifically, an improvement of the bound in Proposition~\ref{prop:nick} to be summable in $n$).
Such an improvement may be possible by combining tools of Inverse Littlewood--Offord theory with the approach in \cite{Cook:ssv}.
}
}

\correction{
\textcolor{red}{
\subsubsection*{Local law}
In \cite{Bourgade2014} it was shown that the circular law holds on the optimal scale of $n^{-1/2+\epsilon}$. These results were extended in \cite{Xiproductlocallaw} to $TX_n$ where $T$ is a deterministic matrix and $X_n$ is an i.i.d.\ random matrix. Both results rely heavily on proving an optimal local law for the empirical distribution of the singular values of scalar shifts of an i.i.d.\ matrix. \\
After the initial release of this paper, a local law was proved in \cite{alt2016local} under \ref{ass:sigmin} and a stronger assumption on distribution of the matrix entries. Additionally they show that \ref{ass:sigmin} implies the deterministic density is bounded from above and below. It remains an open problem to prove a local law on the optimal scale when the density vanishes or is unbounded.
}
}

\paragraph*{\em Extension to sparse models}
While our assumptions allow any fixed proportion of the entries $\sigma_{ij}$ to be zero, Assumption \ref{ass:sigmax} requires the number of non-zero entries to be a constant proportion of the total number of entries. Indeed, otherwise by the Weyl comparison inequality (cf.\ e.g.\ \cite[Theorem~3.3.13]{hor-joh-topics}), the empirical spectral distributions $\mu^Y_n$ converge weakly in probability to $\delta_0$, the point mass at the origin. 
To obtain a nontrivial limit would require a rescaling of the matrices $Y_n$, which amounts to rescaling $A_n$ to have entries of growing size.

\ref{ass:sigmax} is required both to bound the smallest singular value of the shifted random matrices and to prove effective bounds on the Stieltjes transform. 
We expect that our results should extend to certain matrices with density $\sim n^{\eps-1}$ for arbitrary fixed $\eps\in (0,1)$, suitably rescaled.
An interesting first case to consider is random band matrices with shrinking bandwidth. The limit of the empirical distribution of the singular values was recently computed in \cite{Janaband}, but bounds on the smallest singular value were not considered.

\paragraph*{\em Extension to allow heavy-tailed entries}
In a similar direction, it would be interesting to prove an analogue of Theorem \ref{thm:main} for the case that the entries $X_{ij}$ lie in the basin of attraction of an $\alpha$-stable law for some $\alpha\in (0,2)$. 
In this case we expect the deterministic equivalents $\mu_n$ will not have compact support.
The limiting empirical distribution of singular values for such matrices (allowed to be rectangular with bounded eccentricity) was studied by Belinschi, Dembo and Guionnet in \cite{BDG:heavytail}. 
For the case that the entries are i.i.d.\ the limiting empirical spectral distribution was established by Bordenave, Caputo and Chafa\"i in \cite{BCC:heavytail}.

\section{Asymptotics of singular values distributions} 
\label{sec:svd}  

Recall that in Section \ref{outline}
we introduced the Hermitian matrix $\Y_n^z $ in \eqref{Y2n} whose spectral measure is $\check{L}_{n,z}$ in \eqref{eq:symsingularvalues}. We also introduced the resolvent of $\Y_n^z $, $\Res(z,\eta)$ in \eqref{def:res0} and labeled its blocks in \eqref{def:FGdef}. 
%
By the well-known formula for the inverse of a partitioned matrix
\cite[\S 0.7.3]{book-horn-johnson}, 
\begin{equation}\label{def:FG} 
\begin{array}{ccllccl}
G(z,\eta) &=& \eta \left( (Y-z)(Y-z)^* -\eta^2\right)^{-1}&,&F(z,\eta) &=& (Y-z)  \left( (Y-z)^*(Y-z) -\eta^2\right)^{-1}\ ,\\
\widetilde G(z,\eta)& = &\eta \left( (Y-z)^*(Y-z) -\eta^2\right)^{-1}
& , &F'(z,\eta) &=& \left( (Y-z)^*(Y-z) -\eta^2\right)^{-1}(Y-z)^*\ .
\end{array}
\end{equation}
The main objective of this section is to provide deterministic counterparts 
of the normalized traces of these matrix functions. Given the matrices $G$ and $\widetilde G$, it will be convenient to introduce the vectors 
\begin{equation}\label{def:gbold}
\bs{g}=(G_{11}\ \cdots G_{nn})^\tran\, , \qquad  \bs{\tilde g}=(\widetilde G_{11}\ \cdots \widetilde G_{nn})^\tran \qquad \textrm{and} \qquad
\vec{\bs g}^\tran = ({\bs g}^\tran \, \bs{\tilde g}^\tran)\ . 
\end{equation}

We begin by deriving the \emph{Schwinger--Dyson equations}, a system of equations approximately satisfied by the diagonal entries of the matrices in \eqref{def:FG}.  We then show the Schwinger--Dyson equations have a unique solution corresponding to Stieltjes transforms of probability measures and analyze the properties of the solution. 
Finally we estimate the difference between \eqref{def:FG} and the true solution of the Schwinger--Dyson equations, which in turn is used to estimate the difference between the empirical spectral measure of $\Y_n^z$ and its deterministic counterpart.  

\begin{notation}		\label{not:asymp}
Let $\alpha_n = \alpha_n(z,\eta)$ and $\beta_n=\beta_n(z,\eta)$ be complex
sequences such that there exist some constant $C>0$ and some integers $c_0,c_1$ all
independent from $\eta$ and $n$ but which may depend on $z$ such that  
$$
|\alpha_n| \le \frac{C|\eta|^{c_1}}{\im^{c_0}(\eta)\wedge 1} |\beta_n|\ .
$$
We denote this by $\alpha_n = \Oeta{\beta_n}$. If $\alpha_n=\alpha_n^i$ and
$\beta_n=\beta_n^i$ depend on some extra parameter $i\in {\mathcal I}$, then
the notation $\Oeta{}$ in $\alpha_n^i=\Oeta{\beta_n^i}$ must be understood
uniform in $i$. If $\alpha_n$ and $\beta_n$ are vectors or matrices, the
notation $\alpha_n = \vOeta{\beta_n}$ corresponds to a uniform entrywise
relation. 
\end{notation}

\subsection{Derivation of the Schwinger--Dyson equations}

In this subsection we specialize to the case that the entries of $X$ are i.i.d.\ standard complex Gaussian variables. Later we will compare a general matrix with a Gaussian matrix, at which point we will label the Gaussian matrix and associated quantities with a superscript ${\mN}$; however, we omit the superscript in the present subsection.

For a resolvent $\Res$ as defined in \eqref{def:res0} with complex entries, the following differentiation formulas hold true and will be needed in the sequel:
\begin{equation}\label{eq:diff}
\frac{\partial \Res_{ij}}{\partial Y_{k\ell}} = - \Res_{ik}\Res_{n+\ell, j}\ ,\quad \frac{\partial \Res_{ij}}{\partial \overline{Y}_{\ell k}} = - \Res_{i,n+k}\Res_{\ell j}\ ,\quad 1\le k,\ell\le n\ ,\quad 1\le i,j\le 2n\ .
\end{equation}

We will heavily rely on the variance estimates provided in Proposition \ref{prop:var-estimates} and Corollary \ref{coro:var-estimates}.
Denote by $\Y=\left[ \begin{array}{cc} 0 &Y\\ Y^* & 0\end{array}\right]$. The equation $\Res^{-1} \Res=I_{2n}$ yields
\begin{equation}\label{eq:RR=I}
-\eta \Res + \Y \Res + \left[ \begin{array}{cc} -z F'  &-z\widetilde{G} \\ -z^* G & -z^* F\end{array}\right] = I_{2n}\ .
\end{equation} 
Taking $i\in [n]$ yields
\begin{equation}\label{eq:1}
-\eta \E G_{ii} +\E (\Y \Res)_{ii} -z\E F'_{ii} = 1\ .
\end{equation}

Applying the integration by part formula for complex Gaussian random variables (see for instance \cite[(2.1.40)]{pas-livre}) together with \eqref{eq:diff} yields
$$
\E(\Y\Res)_{ii} = \sum_{\ell=1}^n \E Y_{i\ell} \Res_{n+\ell,i} =\sum_{\ell=1}^n \frac{\sigma^2_{i\ell}}n \mathbb{E} \left[
\frac{\partial \Res_{n+\ell, i}}{\partial \overline{Y_{i\ell}}}  
\right] =- \sum_{\ell=1}^n \frac{\sigma^2_{i\ell}}n \E ( \Res_{n+\ell, n+\ell } \Res_{ii} )\ .
$$
Plugging this into \eqref{eq:1} yields
\begin{equation}\label{eq:1bis}
-\E \left(\eta + [V_n\bs{\tilde g}]_i \right) G_{ii} - z\E F'_{ii}=1 \ .
\end{equation}
Specializing again Equation \eqref{eq:RR=I} for $i\in [n]$ yields, with similar arguments, 
\begin{equation}\label{eq:2}
-\eta \E F_{ii} -  \E [V_n\bs{\tilde g}]_iF_{ii} - z \E \widetilde{G}_{ii} =0\ .
\end{equation}
Arguing similarly with the help of the following integration by parts formula, valid for $i>n$,
$$
\E(\Y\Res)_{ij} = \sum_{\ell=1}^n \E (\overline{Y}_{\ell, i-n} \Res_{\ell j}) = \sum_{\ell=1}^n \frac{\sigma^2_{\ell, i-n}}n \E\left[ \frac{\partial \Res_{\ell j}}
{\partial Y_{\ell, i-n}}\right] = - \sum_{\ell=1}^n \frac{\sigma_{\ell, i-n}^2}n \E(\Res_{\ell \ell} \Res_{ij})
$$
yields the following equations
\begin{eqnarray}
-\E (\eta + [V_n^\tran  \bs{g}]_i )\widetilde G_{ii} -z^* \E F_{ii} &=& 1\ ,\label{eq:3}\\
-\E(\eta+ [V_n^\tran  \bs{g}]_i)F'_{ii} -z^*\E G_{ii} &=&0\ .\label{eq:4}
\end{eqnarray}
Notice that equations \eqref{eq:1bis}-\eqref{eq:4} can be compactly written
\begin{equation}
\label{eq:approx-master-equation}
\E \left[\begin{array}{cc}
-\diag (V_n\bs{\tilde g}) -\eta & -z\\
-z^* &-\diag (V_n^\tran  \bs{g})-\eta
\end{array}\right]\Res = I_{2n}\ .
\end{equation}
Using Cauchy-Schwarz
inequality and the estimates in Proposition \ref{prop:var-estimates}, we get
$$
\E [V_n\bs{\tilde g}]_i F_{ii} - \E [V_n\bs{\tilde g}]_i\E F_{ii} = \Oeta{\frac 1{n^{3/2}}}\quad \textrm{thus}\quad 
-\left( \eta +\E [V_n\bs{\tilde g}]_i\right) \E F_{ii}= z\E \widetilde G_{ii} + \Oeta{\frac 1{n^{3/2}}}
$$
by \eqref{eq:2}. In particular,
$$
- \E F_{ii} = z\frac{\E \widetilde G_{ii}}{\eta +\E [V_n\bs{\tilde g}]_i} + \Oeta{\frac 1{n^{3/2}}}
$$
since $\left|\eta +\E  [V_n\bs{\tilde g}]_i\right|^{-1}\le \im^{-1}(\eta)$. On the other hand, using the same decorrelation argument in equation \eqref{eq:3}, we obtain
$$
-\E (\eta + [V_n^\tran  \bs{g}]_i)\E \widetilde G_{ii} -z^* \E F_{ii}= 1 + \Oeta{\frac 1{n^{3/2}}}\ .
$$
Combining these two equations, we finally get
$$
\E \widetilde G_{ii} \left\{ - (\eta + \E [V_n^\tran  \bs{g}]_i) + \frac{|z|^2}{\eta +  \E [V_n\bs{\tilde g}]_i} \right\} = 1+ \Oeta{\frac 1{n^{3/2}}}\ .
$$
Using the property \eqref{eq:ST-estimate} twice, one has
$$
\left| - (\eta + \E [V_n^\tran  \bs{g}]_i ) + \frac{|z|^2}{\eta + \E [V_n\bs{\tilde g}]_i} \right|^{-1}\ \le\ \frac 1{\im(\eta)}\ .
$$
Hence
\begin{equation}\label{eq:Gtilde-ii}
\E \widetilde G_{ii} \ =\  \frac 1{- (\eta + \E [V_n^\tran  \bs{g}]_i) + \frac{|z|^2}{\eta + \E[V_n\bs{\tilde g}]_i}}+ \Oeta{\frac 1{n^{3/2}}}\ .
\end{equation}
Combining similarly equations \eqref{eq:1bis} and \eqref{eq:4} and decorrelating when needed with the help of Proposition \ref{prop:var-estimates}, we obtain the companion equation:
\begin{equation}\label{eq:G-ii}
\E G_{ii} \ =\  \frac 1{- (\eta + \E [V_n\bs{\tilde g}]_i) + \frac{|z|^2}{\eta +\E [V_n^\tran  \bs{g}]_i}}+ \Oeta{\frac 1{n^{3/2}}}\ .
\end{equation}

We now introduce an unperturbed version of equations \eqref{eq:Gtilde-ii} and \eqref{eq:G-ii}.

\subsection{Schwinger--Dyson equations}		\label{sec:self}

In this section we introduce the Schwinger--Dyson equations \eqref{cve0}. 
Notice that these equations already appear in \cite{AEK16:mde}, from which we will deduce properties for their solutions.
To write this more compactly, we introduce the notation $\boldsymbol{\vec{b}}=\begin{pmatrix} \bs{b} \\ \bs{\tilde b}\end{pmatrix}$  for any two $n\times 1$ vectors $\boldsymbol{b}$ and $\boldsymbol{\tilde b}$ with complex components 
and the following definitions: 
\begin{equation}
\label{def-upsilon} 
\bs{\Upsilon}(\bs{\vec{b}},\eta) \ :=\ 
\diag \left( \frac 1{|z|^2-([V_n\bs{\tilde b}]_i+\eta) ([V_n^\tran \bs{b}]_i +\eta) }  
;\, i\in [n]\right) \ :=\ \diag( \Upsilon_i(\bs{\vec{b}},\eta)\, ;\  i\in[n] )\ ,
\end{equation} 

and
\begin{eqnarray}
\mathcal J(\bs{\vec b},\eta) &:=&
\begin{pmatrix} 
\bs{\Upsilon}(\bs{\vec b},\eta) V_n^\tran  & 0\\
0&\bs{\Upsilon}(\bs{\vec b},\eta) V_n
\end{pmatrix} \bs{\vec b}
+ \eta \begin{pmatrix} \Upsilon(\bs{\vec b},\eta) \boldsymbol{1}\\
                    \Upsilon(\bs{\vec b},\eta) \boldsymbol{1} \end{pmatrix} \ .\\
                    \nonumber
\end{eqnarray} 
Then \eqref{cve0} can be compactly written as 
\begin{equation}
\bs{\vec p} = {\mathcal J}(\bs{\vec p}, \eta).
\end{equation}
Both $\bs{\Upsilon}$ and ${\mathcal J}$ depend on $z$ as well (and to be even more precise, on $|z|$). We will not indicate this dependence in the sequel.

We now collect properties of solutions to \eqref{cve0}. 

\begin{prop}[Schwinger--Dyson equations]
\label{prop:deteq} 
For all fixed $\eta \in \C_+$ and $z\in \C$, let $\bs p=(p_i)$ and $\bs{\tilde p}=(\tilde p_i)$ be two $n\times 1$ vectors which solve \eqref{cve0}.
\begin{enumerate}
\item \label{SD-prop1} The system \eqref{cve0} admits a unique solution $\pvec$ 
satisfying $\Imm \pvec \succ 0$.
\item \label{SD-prop2} For any initial vector $\pvec_0$ with $\Imm\pvec_0 \succcurlyeq 0$, 
the iterations 
$
\pvec_{k+1} = \mathcal J(\pvec_k)
$ converge to this solution $\pvec$
as $k\to\infty$. 
\item \label{SD-prop3} For all $z\in\C$ and $i\in [n]$, the functions
$
\eta\mapsto p_i(\eta)$ and $\eta\mapsto \tilde p_i(\eta)
$ 
are Stieltjes transforms of symmetric probability measures on $\R$ respectively denoted by $\mu_i$ and $\tilde \mu_i$. In particular, let $r_i(t):= \im\, p_i(\ii t)$ and $\tilde r_i(t) := \im\, \tilde p_i(\ii t)$ for $t>0$ then
$$
p_i(\ii t) = \ii r_i(t) \quad \textrm{and}\quad \tilde p_i(\ii t) = \ii \tilde r_i(\ii t)\, .
$$ 
Otherwise stated, $p_i$ and $\tilde p_i$ are purely imaginary complex numbers along the imaginary axis.
\item \label{SD-prop4} Moreover, $\sum_{i=1}^n p_i \ =\ \sum_{i=1}^n \tilde p_i$ and the common value
\[
\eta\ \longmapsto\ \frac 1n \sum_{i=1}^n p_i(\eta) \ =\ 
\frac 1n \sum_{i=1}^n \tilde p_i(\eta) 
\]
is the Stieltjes transform if a symmetric probability measure $\check\nu_{n,z}$. We denote this Stieltjes transform by $\eta \mapsto g_{\check\nu_{n,z}}(\eta)$.
\item \label{SD-prop5} The sequences of probability measures $(\mu_i;\, i\le n; \, n\ge 1)$, $(\tilde \mu_i;\, i\le n;\, n\ge 1)$ and $(\check \nu_{n,z};\, n\ge 1)$ are tight. In particular, 
$$
\sup_{n\ge 1} \int |x|^2 \check \nu_{n,z}(\, dx) \ <\ \infty\ .
$$

\end{enumerate}
\end{prop} 
We henceforth refer to the solution $\bs{\vec p}=\bs{\vec p}(\eta)$, $\Imm \bs{\vec p}\succ 0$ as the solution to the \emph{Schwinger--Dyson equations}. 

\begin{proof} 
Proofs of parts \eqref{SD-prop1} and \eqref{SD-prop2} are a direct application of Earle-Hamilton's theorem \cite{har-01}, which was first used (to the authors' knowledge) in random matrix theory by \cite{speicher-et-al-2007}.

\correction{
\textcolor{red}{[TO BE REMOVED] \{ 
The proof of part (1) relies on Earle-Hamilton's theorem \cite{har-01}, which was first used (to the authors' knowledge) in random matrix theory by \cite{speicher-et-al-2007}. 
Let $\mathcal D$ be the domain of $\C^{2n}$ defined as $\mathcal D := \{ \pvec
\in \C^{2n} \, : \, \Imm \pvec \succ 0, \| \pvec \|_\infty < b \}$, where
$\|\cdot\|_\infty$ is the supremum norm.  For a convenient choice of the
constant $b > 0$, we will show that the holomorphic function $\mathcal J$ on
the domain $\mathcal D$ satisfies the property that there is an $\varepsilon > 0$
such that the $\varepsilon$-neighborhood of $\mathcal J(\mathcal D)$ lies in
$\mathcal D$. The Earle-Hamilton theorem then states that $\mathcal J$ is a
strict contraction with respect to the so-called Carath\'eodory-Riffen-Finsler
metric, and the results of the proposition follow at once. 
Write $(\mathcal J(\pvec))_i = 1/d_i$ for $i\in[n]$. For 
$\pvec \in \mathcal D$, we have 
\begin{gather*} 
\Imm d_i = - \Imm ([V_n \bs{\tilde p}]_i + \eta) - |z|^2 
\frac{\Imm ([V_n^\tran \bs{p}]_i + \eta)}{|[V_n^\tran \bs{p}]_i + \eta|^2} \leq 
- \Imm \eta , \quad \text{and}  \quad  
| d_i | \leq |\eta| + b + \frac{|z|^2}{\Imm \eta} . 
\end{gather*}
Therefore, 
\begin{equation}
\label{eq:iterbound}
\Imm (\mathcal J(\pvec))_i = \frac{- \Imm d_i}{|d_i|^2} \geq 
\frac{\Imm \eta}{(|\eta| + b + |z|^2 / \Imm\eta)^2} > 0, 
\quad \text{and} \quad 
| (\mathcal J(\pvec))_i | = \frac{1}{|d_i|} \leq \frac{1}{\Imm\eta} , 
\end{equation}
and the case $n+1\leq i \leq 2n$ is handled similarly. 
By choosing $b > 1/\Imm \eta$, we get the desired result on 
$\mathcal J(\mathcal D)$. 
Part (2) is a by-product of Earle-Hamilton's theorem.\}}}

We now address part \eqref{SD-prop3} and \eqref{SD-prop5}. 
The fact that the $p_i$'s and $\tilde p_i$'s are Stieltjes transforms of probability measures immediately follows from \cite[Proposition 2.1]{AEK16:mde}. This result also yields the desired tightness properties.
In order to complete the proof of part \eqref{SD-prop3}, we now prove that the probability measures associated to the $p_i$'s are symmetric. 
To this end, simply observe that, given $\eta \in \C_+$,
if $\pvec = (\bp, \bpt)$ is the solution with $\Imm \pvec \succ 0$ of the Schwinger--Dyson equations \eqref{cve0}, 
then $-\pvec$ is the unique solution with
$\Imm(-\pvec) \prec 0$ of the analogous system obtained by replacing $\eta$
with $-\eta$. The result follows from the application of Lemma~\ref{nusym}.

It remains to prove that along the imaginary axis $(\ii t,t>0)$, $p_i$ and $\tilde p_i$ are purely imaginary complex numbers.
\begin{eqnarray}
p_i(\ii t) &=& 
 \int_{(-\infty,0)} \frac{\mu_i(d\lambda) }{\lambda - \ii t}  
 + \int_{(0,\infty)} \frac{\mu_i(d\lambda) }{\lambda - \ii t} 
 - \frac{\mu_i(\{0\})}{\ii t}  
\ =\   \int_{(0,\infty)} \Bigl( \frac{1}{\lambda - \ii t} - 
\frac{1}{\lambda + \ii t} \Bigr) \mu_i(d\lambda) 
 + \frac{\ii\,  \mu_i(\{0\}) }{t} \nonumber \\
&=& 2\ii \int_{(0,\infty)} \frac{t}{\lambda^2+t^2} \mu_i(d\lambda) 
+ \frac{\ii\, \mu_i(\{0\})}{t} \quad =:\quad \ii \rr_i(t)\ .\label{eq:p->q}
\end{eqnarray}  

Similarly, $\bs{\tilde p}(\ii t)=\ii \brt(t)$ with $\brt(t)=(\rt_i(t))$ and $\rt_i(t)>0$. Notice for future use that 
\begin{equation}\label{eq:q-lim}
\lim_{t\to\infty} t \rr_i(t)= 1\quad \textrm{and}\quad  \lim_{t\to\infty} t\rt_i(t)=1\ .
\end{equation}

Parts \eqref{SD-prop3} and \eqref{SD-prop5} of the theorem are established.

\correction{
\textcolor{red}{and rely on the implicit function theorem for holomorphic functions (see for instance \cite[Theorem 7.6, Chapter 1]{fritzsche-holomorphic}).  
In order to prove that $\eta\mapsto \vec{\bs p}(\eta)$ is holomorphic, first notice that 
$$
\vec{\bs p}- {\mathcal J}(\vec{\bs p}, \eta) \ =\ 0\ .
$$
If, for all $\eta\in \C_+$,  the Jacobian $ J(\vec{\bs p},\eta)$ of $\vec{\bs p}\mapsto \vec{\bs p}- {\mathcal J}(\vec{\bs p}, \eta)$ differs from zero, then the function $\eta\mapsto \vec{\bs p}(\eta)$ will be holomorphic on $\C_+$.\\
Recall the definition of $\bs{\Upsilon}$ in \eqref{def-upsilon}. In order to express $ J(\vec{\bs p},\eta)$, we introduce a few more notations. \\
For given $n\times 1$ vectors $\bs{b}$ and $\bs{\tilde b}$, define
\begin{equation}
\left\{ 
\begin{array}{ccc}\Delta(\bs{b}) &=& \diag( \eta + (V_n^\tran  \bs{b})_i) := \diag (\Delta_i(\bs{b})\, )\\
\widetilde \Delta(\bs{\tilde b}) &=& \diag( \eta + (V_n \bs{\tilde b})_i):=\diag(\widetilde \Delta_i(\bs{\tilde b})\, )
\end{array}\right.\ .
\label{def:Delta}
\end{equation}
Recall that matrix $A$ is the standard deviation profile and satisfies $V=\frac A{\sqrt{n}}\odot \frac A{\sqrt{n}}$. 
For a given $2n\times 1$ vector $\bs{\vec b}=\begin{pmatrix} \bs{b}\\ \bs{\tilde b}\end{pmatrix}$, introduce the $2n\times 2n$ matrix
\begin{equation}\label{eq:def-A}
{\mathcal A}(\bs{\vec b}) =\begin{pmatrix}
\dfrac{|z| \bs{\Upsilon}\big(\bs{\vec b}\,\big) A^\tran }{\sqrt{n}}& \dfrac{\bs{\Upsilon}\big(\bs{\vec b}\,\big) \Delta(\bs{b})A}{\sqrt{n}}\\
\\
\dfrac{ \bs{\Upsilon}\big(\bs{\vec b}\,\big) \widetilde \Delta(\bs{\tilde b})A^\tran }{\sqrt{n}}& \dfrac{|z| \bs{\Upsilon}\big(\bs{\vec b}\,\big) A}{\sqrt{n}}
\end{pmatrix}\ .
\end{equation}
Then straightforward computations yield
\begin{equation}
 J(\vec{\bs p},\eta)\ =\ 
\mathrm{det}\left(
I_{2n} - {\mathcal A}(\vec{\bs p})\odot {\mathcal A}(\vec{\bs p})
\right)\ .
\end{equation}
On the other hand, straightforward but lengthy computations yield
$$
\im(\vec{\bs{p}}) = {\mathcal A}(\vec{\bs{p}}) \odot \overline{{\mathcal A}(\vec{\bs{p}})} \, \im(\vec{\bs{p}}) + \bs{v}\ ,
$$
where $\bs{v}\succ 0$ (see Section \ref{app:linear-algebra} for details). By Proposition \ref{prop:folklore}, 
$$
\rho \left(  {\mathcal A}(\vec{\bs{p}}) \odot \overline{{\mathcal A}(\vec{\bs{p}})}\right) \ <\ 1\ .
$$
Applying \cite[Theorem 8.1.18]{book-horn-johnson}, we have
$$
\rho \left(  {\mathcal A}(\vec{\bs{p}}) \odot {\mathcal A}(\vec{\bs{p}})\right)\ \le \ \rho \left(  {\mathcal A}(\vec{\bs{p}}) \odot \overline{{\mathcal A}(\vec{\bs{p}})}\right)\ <\ 1\ .
$$
Hence $I_{2n} - {\mathcal A}(\vec{\bs{p}}) \odot {\mathcal A}(\vec{\bs{p}})$ is invertible and its determinant is nonzero. We focus on $p_i$, the same arguments will work for $\tilde p_i$.  The implicit function theorem yields that $\eta\mapsto p_i(\eta)$ is an analytic map from $\C_+$ onto itself. It remains to prove that $\lim_{t\to \infty} \ii t p_i(\ii t) =-1$. \\
Since $\vec{\bs p}= {\mathcal J}(\vec{\bs p}, \eta)$, \eqref{eq:iterbound} implies that $\left| t \,p_i(\ii t) \right| \ \le \ 1$. Combining this estimate with \eqref{cve0}, implies that $\lim_{t\to \infty} \ii t p_i(\ii t) =-1$. }}

We now prove part \eqref{SD-prop4}, that is $\sum_i p_i = \sum_i \tilde p_i$. Getting back to the system \eqref{cve0}, we have 
\[
\sum_{i=1}^n p_i([V\tilde{\bs p}]_i+\eta) = 
\sum_{i=1}^n \frac{([V\tilde{\bs p}]_i +\eta)([V_n^\tran \bs{p}]_i +\eta)}
{-([V\tilde{\bs p}]_i+\eta)([V_n^\tran \bs{p}]_i +\eta) + |z|^2} = 
\sum_{i=1}^n \tilde p_i([V_n^\tran \bs{p}]_i +\eta) .
\]
But 
\[
\sum_{i=1}^n p_i[V\tilde{\bs p}]_i = \sum_{i,\ell=1}^n p_i \sigma_{i,\ell}^2 \tilde p_i
= \sum_{i=1}^n \tilde p_i[V_n^\tran \bs{p}]_i .
\]
Since $\eta\neq 0$, we get the desired result. 
\correction{
\textcolor{red}{We now prove part \eqref{SD-prop5}. Notice first that 
$$
\mathrm{Re} \left\{ t^2\left( \ii tp_i(\ii t) +1\right)\right\} =\int \frac{t^2\lambda^2}{t^2+\lambda^2} \mu_i(\, d\lambda)\quad \Rightarrow \quad 
\mathrm{Re} \left\{ t^2\left( \ii tp_i(\ii t) +1\right)\right\} \xrightarrow[t\to\infty]{}\int \lambda^2 \mu_i(\, d\lambda)\ .
$$ 
We now rely on the equation \eqref{cve0} satisfied by $p_i$. We have
$$
\ii t p_i(\ii t) + 1 = \ii t \frac{ [V^\tran  \bs{p}]_i +\ii t}{|z|^2 - ( [V^\tran  \bs{p}]_i +\ii t)( [ V \bs{\tilde p}]_i +\ii t)} +1
= \frac{|z|^2- \ii t[V\bs{\tilde p}(t)]_i -  [V^\tran \bs{p}(t)]_i  [ V\bs{\tilde p}(t)]_i }{|z|^2 - ( [V^\tran  \bs{p}]_i +\ii t)( [ V \bs{\tilde p}]_i +\ii t)}\, .
$$
Multiplying by $t^2$, taking the limit as $t\to \infty$ and taking into account $\lim_{t\to \infty} \ii t p_i(\ii t) =-1$ we have
$$
\int \lambda^2\, \mu_i(\, d\lambda) \ =\ |z|^2 + [V\bs{1}_n]_i \ \le \ |z|^2 + \smax^2\ .
$$
The same estimate holds for $\int \lambda^2 \tilde \mu_i(d\lambda)$ and $\int \lambda^2 \check \nu_{n,z}(d\lambda)$, hence the required tightness.}}
\end{proof}

\subsection{Asymptotics of the spectral measure $\check L_{n,z}$ and the Hermitian resolvent}

\begin{theo} 
\label{th:L->nu} 
Assume \ref{ass:moments} and \ref{ass:sigmax} hold, and let $\check{\nu}_{n,z}$ be defined as in Proposition \ref{prop:deteq}-(4).
Then for all $z\in \C$, $(\check \nu_{n,z})_n$ is tight, and 
$$
\check L_{n,z} \sim \check\nu_{n,z}
$$ 
almost surely. Moreover, for any $\varepsilon > 0$, $x\mapsto \log |x|$ is $\check \nu_{n,z}$-integrable on the set $\{ |x|\ge \varepsilon\}$ and
\[
\int_{\{ |x|\ge \varepsilon\}} \log |x| \, \check L_{n,z}(dx) - 
\int_{\{ |x|\ge \varepsilon\}} \log |x| \, \check\nu_{n,z}(dx)  
\ \xrightarrow[n\to\infty]{\text{a.s.}} \ 0 \ .
\]
\end{theo}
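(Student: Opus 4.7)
The plan is to establish the almost sure convergence of Stieltjes transforms $g_{\check L_{n,z}}(\eta) - g_{\check\nu_{n,z}}(\eta) \to 0$ for each fixed $\eta\in\C^+$, then upgrade from weak convergence to convergence of logarithmic integrals by a truncation argument exploiting second moment control. Tightness of $(\check\nu_{n,z})_n$ is already immediate from Proposition~\ref{prop:deteq}(5), which gives $\sup_n \int x^2\, \check\nu_{n,z}(dx) < \infty$; tightness of $(\check L_{n,z})_n$ is handled almost surely via the Frobenius-norm bound $\frac{1}{2n}\tr(\Y_n^z)^2 = \frac{1}{n}\|Y_n-z\|_{\HS}^2$, which concentrates around its mean by the strong law applied to the i.i.d.\ entries of $X_n$ (valid under \ref{ass:moments}).

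For the Stieltjes-transform convergence I would write
\[
g_{\check L_{n,z}}(\eta) - g_{\check\nu_{n,z}}(\eta)
 \;=\; \bigl(g_{\check L_{n,z}}(\eta) - \E g_{\check L_{n,z}}(\eta)\bigr)
 \;+\; \bigl(\E g_{\check L_{n,z}}(\eta) - g_{\check\nu_{n,z}}(\eta)\bigr).
\]
The random fluctuation term $g_{\check L_{n,z}}(\eta) - \E g_{\check L_{n,z}}(\eta)$ is handled by a standard martingale-difference decomposition along rows (or columns) of $X_n$ combined with the resolvent identity; using the variance estimates of Proposition~\ref{prop:var-estimates} together with \ref{ass:moments}, one obtains $\mathrm{Var}(g_{\check L_{n,z}}(\eta)) = \mathcal{O}_\eta(n^{-2})$, whence Markov plus Borel--Cantelli yields almost sure convergence to zero on a countable dense subset of $\C^+$. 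The deterministic term is controlled by invoking Proposition~\ref{prop:QR}, which provides a quantitative bound of order $\mathcal{O}(|\eta|^{-c}n^{-1/2})$; this in turn rests on the Gaussian integration-by-parts computation already carried out in Section~4.1 (yielding the approximate identities \eqref{eq:Gtilde-ii}--\eqref{eq:G-ii}) together with a Lindeberg-type swap to reduce the general case to the Gaussian one. A Vitali/normal-family argument then upgrades pointwise convergence on a countable set to all $\eta\in\C^+$, giving $\check L_{n,z} \sim \check\nu_{n,z}$ almost surely.

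For the final assertion, $\log|x|$-integrability of $\check\nu_{n,z}$ on $\{|x|\ge\varepsilon\}$ follows from $|\log|x|| \le C_\varepsilon(1+x^2)$ combined with the uniform second-moment bound from Proposition~\ref{prop:deteq}(5). To transfer weak convergence through this unbounded integrand, I would fix $R>1$ and split:
\[
\int_{|x|\ge\varepsilon}\log|x|\,d\mu \;=\; \int_{\varepsilon\le|x|\le R}\log|x|\,d\mu \;+\; \int_{|x|>R}\log|x|\,d\mu,
\]
applied to both $\check L_{n,z}$ and $\check\nu_{n,z}$. The tail contributions are bounded by $(\log R)R^{-2}\int x^2 d\mu$ up to constants; combining the deterministic second-moment bound with the almost sure control of $\frac{1}{n}\|Y_n-z\|_\HS^2$ (strong law) makes these uniformly small in $n$ as $R\to\infty$. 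On the compact annulus $\{\varepsilon\le|x|\le R\}$ the function $\log|x|$ is bounded and continuous, so the convergence reduces to the already-established $\check L_{n,z} \sim \check\nu_{n,z}$.

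The main obstacle is the deterministic estimate $\E g_{\check L_{n,z}}(\eta) - g_{\check\nu_{n,z}}(\eta) \to 0$ under the rather weak moment hypothesis \ref{ass:moments}: the Gaussian computation of Section~4.1 produces only entry-wise errors $\mathcal{O}_\eta(n^{-3/2})$ involving implicit constants that depend polynomially on $1/\Imm(\eta)$ and on moments of the resolvent, and one must argue carefully that summing these entry-wise errors yields a $\mathcal{O}(n^{-1/2})$ bound on the normalized trace uniformly in $n$. Extending from the Gaussian to the general case via Lindeberg swapping is also delicate with only $4+\varepsilon$ moments, but since the required accuracy is polynomial in $1/\sqrt{n}$ rather than local-law-scale, the fourth-moment cancellation combined with the variance estimates of Proposition~\ref{prop:var-estimates} should suffice.
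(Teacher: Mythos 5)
Your proposal is correct and takes essentially the same route as the paper. The paper's proof of this statement is in fact very short: it invokes Theorem~\ref{th:convergence-QR} directly for the a.s.\ deterministic equivalence $\check L_{n,z} \sim \check\nu_{n,z}$ (that theorem itself being proved by exactly the three-step decomposition you outline---a.s.\ concentration of the normalized trace via Proposition~\ref{prop:QR-concentration}, Gaussian interpolation via Proposition~\ref{prop:QR-interpolation}, and the Gaussian Schwinger--Dyson computation via Proposition~\ref{prop:QR}), and then combines the law of large numbers (for $\limsup_n \int x^2\, \check L_{n,z}(dx) < \infty$ a.s.) with Proposition~\ref{prop:deteq}-(5) (for $\sup_n \int x^2\, \check\nu_{n,z}(dx) < \infty$) to obtain the truncated log-integral convergence, just as you do.
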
 
We will sometimes refer to $\check \nu_{n,z}$ as the deterministic equivalent of $\check L_{n,z}$.

The proof of Theorem \ref{th:L->nu} is postponed to Section~\ref{prf:L->nu}. 
Notice that the first part ($\check L_{n,z}\sim \check \nu_{n,z}$) is a variation of classical results, see for example \cite{hachem-et-al-2007}. It will be a direct consequence of the forthcoming theorem on the asymptotics of Hermitian resolvent.

In order to get some insight on the asymptotics of the spectral measure $\mu_n^Y$, we need more than the asymptotics of $\check L_{n,z}$. 
We rewrite hereafter the Schwinger--Dyson equations of Proposition \ref{prop:deteq} in a more suitable way for the forthcoming analysis. 
In what follows, the dependence in $|z|$ is implicit and will be recalled if necessary.

We now introduce the deterministic equivalents to $F$ and $G$, defined in \eqref{def:FG}.
Let $\pvec=(\bs{p} , \bs{\tilde p})$ be the solution of the Schwinger--Dyson equations \eqref{cve0}. Define the $n\times n$ diagonal matrices $P$, $\widetilde P$, $\Theta$ and $\widetilde \Theta$ by
$$
P \ :=\  \diag(\p) \ ,\quad  \widetilde P \ := \ \diag(\ptilde) \ ,\quad \Theta  \ :=\  \diag\left ( (V_n\,\bs{\tilde p})_i \,   ,\ i\in [n] \right)
\quad \textrm{and}\quad 
\widetilde \Theta \ :=\
\diag\left( (V_n^\tran\bs{p})_i \, , \ i\in [n]\right) \ . 
$$
After easy massaging, the Schwinger--Dyson equations $\pvec ={\mathcal J}(\pvec,\eta)$ are equivalent to:
$$
P \ =\  \left( - (\Theta +\eta) + 
|z|^2 (\widetilde\Theta +\eta)^{-1} \right)^{-1}\quad \textrm{and}\qquad 
\widetilde P  \ =\  \left( - (\widetilde\Theta +\eta) + 
|z|^2 (\Theta +\eta)^{-1} \right)^{-1}\ .
$$
Consider $2n\times 2n$ matrix $\bS$ defined as
\begin{equation}
\label{eq:master-equation}
\bS :=
-\begin{pmatrix} 
 \Theta(|z|,\eta) +\eta & z\\
z^*&  \widetilde \Theta(|z|,\eta)  +\eta
\end{pmatrix}^{-1} .  
\end{equation}
This definition is similar to equation \eqref{eq:approx-master-equation} satisfied by the entries of the resolvent $\Res$.
By the formula for the inverse of a partitioned matrix
\cite[\S 0.7.3]{book-horn-johnson}, it holds that 
\[
\bS = \begin{pmatrix} P(|z|,\eta) & B(z,\eta)\\
B'(z,\eta)& \widetilde P(|z|,\eta) \end{pmatrix} , 
\]
where
\begin{equation}
B(z,\eta) \ =\  -z\Big(\Theta(|z|,\eta) +\eta \Big)^{-1} \widetilde P(|z|,\eta) \ =\ 
 -zP(|z|,\eta) \left(\widetilde \Theta(|z|,\eta)  +\eta\right)^{-1} \ , 
 \label{def:B}
\end{equation}
and $B'(z,\eta)$ can be made explicit in a similar fashion, but will not 
be used.

\begin{theo} 
\label{th:convergence-QR} 
Assume \ref{ass:moments} and \ref{ass:sigmax} hold.
Then almost surely, for every $z\in \C$ and $\eta\in \C_+$, 
\[
\frac 1n \begin{pmatrix} 
\tr G(z,\eta) & \tr F(z,\eta) \\
\tr F'(z,\eta) & \tr \widetilde G(z,\eta) 
\end{pmatrix} 
-
\frac 1n 
\begin{pmatrix} 
 \tr P(|z|,\eta) & \tr B(z,\eta)\\
 \tr B'(z,\eta) &  \tr \widetilde P(|z|,\eta)
\end{pmatrix} 
\xrightarrow[n\to\infty]{}  0 \ .
\]
Moreover, there exist $\alpha,\beta>0$ such that for $t\in (n^{-\alpha},n^{\beta})$ and $n$ large enough, with $\eta=\ii t$ we have 
\[
\frac 1n 
\begin{pmatrix} 
\tr \E G(z,\eta) & \tr \E F(z,\eta) \\
\tr \E F'(z,\eta) & \tr \E \widetilde G(z,\eta) 
\end{pmatrix} 
 - \frac 1n \begin{pmatrix} 
 \tr P(|z|,\eta) & \tr B(z,\eta)\\
 \tr B'(z,\eta) &  \tr \widetilde P(|z|,\eta)
\end{pmatrix} 
= \vOeta{n^{-1/2}} \, .
\] 
\end{theo}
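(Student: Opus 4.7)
My plan is to prove the result first for i.i.d.\ complex Gaussian entries (completing the calculation initiated earlier in Section~\ref{sec:svd}), and then transport it to general entries via a comparison argument, handling the almost sure statement with a concentration bound.

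For the Gaussian case, I would extend the integration-by-parts computation that produced \eqref{eq:Gtilde-ii}--\eqref{eq:G-ii} to the off-diagonal blocks $F$ and $F'$ using the partitioned-inverse formula \eqref{def:FG} and the variance/decorrelation estimates of Proposition~\ref{prop:var-estimates} and Corollary~\ref{coro:var-estimates}. Organizing the resulting identities with the help of \eqref{eq:approx-master-equation}, the vector $\pvec^{\mN}$ with components $(\E G_{ii}^{\mN})_{i\in[n]}$ and $(\E \widetilde G_{ii}^{\mN})_{i\in[n]}$ satisfies an approximate Schwinger--Dyson equation
\[
\pvec^{\mN} = \mathcal J(\pvec^{\mN},\eta) + \vOeta{n^{-3/2}}.
\]
Comparing with the exact identity $\pvec = \mathcal J(\pvec,\eta)$, linearizing, and inverting $I_{2n} - \mathcal A(\pvec)\odot \mathcal A(\pvec)$ (whose invertibility was established inside the proof of Proposition~\ref{prop:deteq} via the Earle--Hamilton contraction property and a bound on the spectral radius of $\mathcal A\odot \overline{\mathcal A}$) yields $\E G_{ii}^{\mN} - P_{ii} = \Oeta{n^{-3/2}}$ and similarly for $\widetilde G^{\mN}$, $F^{\mN}$, $F'^{\mN}$, uniformly in $i$. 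Averaging gives the trace estimates in the Gaussian setting with the faster rate $\Oeta{n^{-3/2}}$.

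To pass to general $X_n$ under~\ref{ass:moments}, I would implement an entrywise Lindeberg swap interpolating $X_n$ with $X_n^{\mN}$ one entry at a time. After truncating the $X_{ij}$ at a level $\tau_n\to\infty$ chosen so that the truncation and recentering cost is $o(n^{-1/2}\Imm(\eta)^{-C})$ (feasible thanks to the $(4+\varepsilon)$ moment bound), a third-order resolvent expansion in each swapped entry, together with the matching of the first two moments, bounds the change in $\E\, n^{-1}\tr \Res$ per swap by $\Oeta{n^{-5/2}}$; summing over the $n^2$ swaps yields the $\vOeta{n^{-1/2}}$ bound claimed for the expectation, simultaneously for all four matrix functions. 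The almost sure statement would then follow from an Azuma/McDiarmid martingale-difference decomposition of $n^{-1}\tr G(z,\eta) - \E\, n^{-1}\tr G(z,\eta)$ along the columns of $X_n$ (each column perturbing the normalized trace by $\Oeta{n^{-1}}$ by a rank-one resolvent identity), again preceded by truncation, which gives concentration at scale $\Oeta{n^{-1/2+\varepsilon}}$ with probability $1 - O(n^{-p})$ for any $p$. Borel--Cantelli at each fixed $(z,\eta)$, combined with the Lipschitz dependence of resolvent traces on $(z,\eta)$ on compact subsets of $\C\times\C_+$ and a countable-dense-subset argument, then upgrades the convergence to hold almost surely for every $(z,\eta)\in \C\times\C_+$.

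The principal obstacle is the low moment assumption~\ref{ass:moments}: both the Lindeberg swap and the martingale concentration are cleanest under bounded moments of all orders, and with only $4+\varepsilon$ moments one must implement a truncation/centering scheme that preserves centering and unit variance to within errors $o(n^{-1/2})$ while keeping the swap remainders under control. A secondary technical point is making the dependence of the stability constant---essentially the operator norm of $(I_{2n} - \mathcal A(\pvec)\odot \mathcal A(\pvec))^{-1}$---on $\eta$ explicit, so that every intermediate bound fits into the notation $\vOeta{\cdot}$ of Notation~\ref{not:asymp}; this is where the factors $|\eta|^{c_1}/(\Imm(\eta)\wedge 1)^{c_0}$ ultimately come from.
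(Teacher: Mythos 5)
Your three-step architecture (Gaussian computation, moment comparison, concentration for the a.s.\ statement) matches the paper's, which also splits the theorem into Propositions~\ref{prop:QR-concentration}, \ref{prop:QR-interpolation} and \ref{prop:QR}. The Gaussian step and the a.s.\ concentration step match in substance: the paper establishes the $\vOeta{n^{-3/2}}$ rate for Gaussian entries by exactly the linearization-and-inversion argument you describe (it is Lemma~\ref{lemma:key-lemma} and Proposition~\ref{prop:inversion} that control $\lrn(I - \mathcal A(\E\vec{\bs g})\odot \mathcal A(\pvec))^{-1}\rrn_\infty$), and the a.s.\ convergence is handled by a bounded-differences/Borel--Cantelli argument equivalent to yours, via \cite[Lemma~4.21]{2012-bordenave-chafai-circular}.

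Where you genuinely diverge is the comparison step. You propose an entrywise Lindeberg swap with a third-order resolvent Taylor expansion, bounded by $\Oeta{n^{-5/2}}$ per swap over $n^2$ swaps. The paper instead interpolates column by column (replacing $\vec y_i$ by a Gaussian $\vec y_i^{\mN}$ one at a time, $n$ swaps total) and controls each swap by the Sherman--Morrison formula together with the variance estimates of Proposition~\ref{prop:var-estimates}--Corollary~\ref{coro:var-estimates}, obtaining $\Oeta{n^{-1/2}}$ per column. This is closer in spirit to the Bai--Silverstein tradition than to the Tao--Vu/Erd\H{o}s--Yau entrywise comparison. The columnwise route avoids any Taylor expansion of the resolvent and is arguably more robust under the $4+\varepsilon$ moment hypothesis, since it requires only the variance/decorrelation bounds and a truncation of the matrix norm (the paper uses Lata\l a's inequality, cf.~\eqref{eq:truncation-SN}--\eqref{eq:control-lambda-max}) rather than a delicate per-entry truncation/recentering scheme.

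One concrete point you should not gloss over if you pursue the entrywise route: the hypothesis~\eqref{standardized} fixes $\E X_{11} = 0$ and $\E|X_{11}|^2 = 1$ but does \emph{not} fix the non-conjugated second moment $\E X_{11}^2$, whereas the standard complex Gaussian has $\E(X^{\mN}_{11})^2 = 0$. Thus ``matching of the first two moments'' fails in the complex sense, and your Taylor expansion will retain a genuine second-order mismatch term proportional to $\E X_{11}^2 - 0$. The paper confronts exactly this in the analysis of $\E T_2^{(1)}$ in Appendix~\ref{app:proof-interpolation}, where this contribution is isolated and shown to be $\Oeta{n^{-1}}$ --- subdominant, because the relevant second derivative of $n^{-1}\tr\Res$ involves a squared off-diagonal resolvent entry which is itself small. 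Your argument would need the analogous estimate to close. This is not a fatal gap, but it is a real step that is missing from the sketch.
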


The rate provided along the imaginary axis is not optimal, but sufficient for our purposes. 

The proof of Theorem \ref{th:convergence-QR} immediately follows from Propositions \ref{prop:QR-concentration}, \ref{prop:QR-interpolation}
and \ref{prop:QR} stated hereafter.

\begin{prop}\label{prop:QR-concentration}
Assume \ref{ass:moments} holds and let $z\in \C$ and $\eta\in
\Cplus$.  Then almost surely,
\[
\frac 1n 
\begin{pmatrix} 
\tr  G(z,\eta) & \tr  F(z,\eta) \\
\tr  F'(z,\eta) & \tr  \widetilde{G}(z,\eta) 
\end{pmatrix} 
 - \frac 1n \begin{pmatrix} 
\tr \E G(z,\eta) & \tr \E F(z,\eta) \\
\tr \E F'(z,\eta) & \tr \E \widetilde{G}(z,\eta) 
\end{pmatrix} \xrightarrow[n\to\infty]{} 0\ . 
\] 
\end{prop}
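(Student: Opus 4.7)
The proof uses the classical martingale difference method along the rows of $X$, combined with Burkholder's inequality and a Borel--Cantelli argument. First observe that each of the four quantities is of the form $T_n=\tr(\Res(z,\eta) L)$ for a fixed deterministic $2n\times 2n$ matrix $L$ with $\|L\|\le 1$: a diagonal block projection for $G$ and $\widetilde G$, and the appropriate off-diagonal block matrix (e.g.\ the one with $L_{n+i,i}=1$, other entries zero, for $\tr F$) for $F$ and $F'$. Fix $z\in\C$ and $\eta\in\Cplus$, let $\mathcal F_k$ denote the $\sigma$-algebra generated by the first $k$ rows of $X$, and set $M_k=(\E_k-\E_{k-1})T_n$, so that $T_n-\E T_n=\sum_{k=1}^n M_k$. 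Writing $Y^{(k)}$ for the matrix obtained from $Y$ by zeroing out its $k$-th row, and denoting by $\Res^{(k)}$ and $T_n^{(k)}$ the corresponding resolvent and functional, we may rewrite $M_k=(\E_k-\E_{k-1})(T_n-T_n^{(k)})$ because $T_n^{(k)}$ is independent of the $k$-th row of $X$.

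The key step is a rank-two resolvent perturbation estimate. Writing $\mathbf E_k:=\Y_n^z-\Y_n^{(k),z}$, the block structure of the Hermitization yields the decomposition $\mathbf E_k=f_k\beta_k^*+\beta_k f_k^*$, where $f_k\in\C^{2n}$ is the $k$-th standard basis vector and $\beta_k\in\C^{2n}$ is supported on the last $n$ coordinates with $(\beta_k)_{n+j}=\overline{Y_{kj}}$, so that $\|\beta_k\|=\|y_k\|$, the Euclidean norm of the $k$-th row of $Y$. The resolvent identity $\Res-\Res^{(k)}=-\Res\,\mathbf E_k\,\Res^{(k)}$, combined with the bounds $\|\Res\|,\|\Res^{(k)}\|\le 1/\im(\eta)$ and $\|L\|\le 1$, gives
\[
|T_n-T_n^{(k)}|\ =\ |\tr(\Res\,\mathbf E_k\,\Res^{(k)}L)|\ \le\ \frac{2\|y_k\|}{\im^2(\eta)}.
\]
Since the $k$-th row of $X$ is independent of $\mathcal F_{k-1}$, one has $\E_{k-1}\|y_k\|\le(\E\|y_k\|^2)^{1/2}\le\smax$, and therefore $|M_k|\le C(\|y_k\|+\smax)/\im^2(\eta)$.

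Apply Burkholder's inequality at $p=4$:
\[
\E|T_n-\E T_n|^4\ \le\ C_4\,\E\!\left(\sum_{k=1}^n|M_k|^2\right)^{\!2}\ \le\ \frac{C}{\im^8(\eta)}\,\E\bigl(\|Y\|_\HS^2+n\smax^2\bigr)^{\!2},
\]
where we used $\sum_k\|y_k\|^2=\|Y\|_\HS^2$. Under \ref{ass:moments} one has $\E|X_{11}|^4<\infty$, and a direct expansion yields $\E\|Y\|_\HS^4\le Cn^2$. Hence $\E|n^{-1}(T_n-\E T_n)|^4\le C/(n^2\im^8(\eta))$, which is summable in $n$; Markov's inequality together with Borel--Cantelli yield $n^{-1}(T_n-\E T_n)\to 0$ almost surely for each of the four choices of $L$. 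The only mild obstacle is that the bound on $|T_n-T_n^{(k)}|$ is random (scaling with $\|y_k\|$) rather than deterministic, so Azuma's inequality does not apply directly; this is precisely what the $(4+\varepsilon)$-th moment hypothesis is used to circumvent, through the Burkholder $L^4$ estimate.
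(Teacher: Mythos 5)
Your proof is correct, but it misses the key simplification that lies at the heart of the paper's citation (\cite[Lemma~4.21]{2012-bordenave-chafai-circular}) and that makes the argument both shorter and sharper. The martingale difference $T_n - T_n^{(k)}$ admits a \emph{deterministic} bound, not only the random bound $O(\|y_k\|/\im^2(\eta))$ you derive. Indeed, the perturbation $\Res - \Res^{(k)} = -\Res\,\mathbf E_k\,\Res^{(k)}$ has rank at most $\rank(\mathbf E_k)\le 2$ and operator norm at most $\|\Res\| + \|\Res^{(k)}\| \le 2/\im(\eta)$; since $|\tr(PL)|\le \rank(P)\,\|P\|\,\|L\|$ for any matrices $P,L$, one obtains
\[
|T_n - T_n^{(k)}| = \bigl|\tr\bigl((\Res - \Res^{(k)})L\bigr)\bigr| \le \frac{4\|L\|}{\im(\eta)} \,.
\]
This is the standard rank inequality (cf.\ Bai--Silverstein, Lemma~A.43). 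With this deterministic bound Azuma--Hoeffding applies directly and yields
\[
\PP\bigl(|T_n - \E T_n| > n\varepsilon\bigr) \le 2\exp\bigl(-c\,\varepsilon^2 n\,\im^2(\eta)\bigr),
\]
which is summable in $n$ and requires no moment hypothesis whatsoever. Your Burkholder/Rosenthal route at $p=4$ is a valid alternative, and the bookkeeping ($\E\|Y\|_\HS^4\le Cn^2$ under \ref{ass:moments}, then Borel--Cantelli) checks out, but your closing claim that the $(4+\varepsilon)$-th moment assumption is ``precisely what is used to circumvent'' the randomness of the martingale-difference bound is inaccurate: the deterministic rank-two bound removes the issue entirely, and \ref{ass:moments} is not actually needed for this proposition (it enters elsewhere, notably in the Gaussian interpolation of Proposition~\ref{prop:QR-interpolation} and the smallest-singular-value estimates). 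In short, you follow the same martingale-along-rows skeleton as the cited lemma, but at a higher cost in moments and length because the rank bound was not exploited.
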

\begin{proof} This is a direct application of \cite[Lemma 4.21]{2012-bordenave-chafai-circular}.
\end{proof}

To manage the expectation terms $n^{-1} \tr\E(\cdot)$, we introduce the Gaussian counterparts of the quantities of interest.
Consider a family of i.i.d. standard complex random variables 
$(\Xn_{ij}; 1\le i,j\le n)$, where $\Xn_{ij}=(U+\ii U')/\sqrt{2}$, with 
$U,U'$ being independent real ${\mathcal N}(0,1)$ random variables. 
Notice in particular that
$$
\E \Xn_{ij} =0\ ,\quad \E\left( \Xn_{ij}\right)^2=0 \quad \text{and}\quad \E|\Xn_{ij}|^2=1\ .
$$
Similarly, let $\Yn_{ij}=\frac{\sigma_{ij}}{\sqrt{n}} \Xn_{ij}$, and let 
$\Res^{\mathcal N},\ \Gn,\ \tGn,\ \Fn$, and $\cFn$ the matrix functions associated with the matrix 
$\Yn = (\Yn_{ij})$ as in~\eqref{def:res0},\eqref{def:FG}. Then we have the following 
proposition.

\begin{prop}
\label{prop:QR-interpolation} 
Assume \ref{ass:moments} and \ref{ass:sigmax} hold. Let $z\in \C$ and $\eta\in \Cplus$. Then 
\[
\frac 1n 
\begin{pmatrix} 
\tr \E G(z,\eta) & \tr \E F(z,\eta) \\
\tr \E F'(z,\eta) & \tr \E \widetilde{G}(z,\eta) 
\end{pmatrix} 
- \frac 1n \begin{pmatrix} 
\tr \E \Gn(z,\eta) & \tr \E \Fn(z,\eta) \\
\tr \E \cFn(z,\eta) & \tr \E \tGn(z,\eta) 
\end{pmatrix} 
=\vOeta{n^{-1/2}}\ .
\] 
\end{prop}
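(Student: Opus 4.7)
The approach is a Lindeberg-style interpolation that swaps the entries of $X$ for the Gaussian entries of $X^{\mathcal N}$ one at a time. Fix an arbitrary enumeration of the pairs $(i,j)\in[n]^2$ and, for $0\le k\le n^2$, let $Y^{(k)}$ denote the matrix that uses $\sigma_{i'j'}X^{\mathcal N}_{i'j'}/\sqrt n$ at the first $k$ positions of the enumeration and $\sigma_{i'j'}X_{i'j'}/\sqrt n$ at the remaining positions, so that $Y^{(0)}=Y$ and $Y^{(n^2)}=Y^{\mathcal N}$. Denote by $G^{(k)},F^{(k)},F'^{(k)},\widetilde G^{(k)}$ the associated matrix functions defined by \eqref{def:FG}. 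By telescoping,
\[
\tfrac{1}{n}\tr \E G - \tfrac{1}{n}\tr\E G^{\mathcal N} \,=\, \sum_{k=1}^{n^2}\bigl(\tfrac{1}{n}\tr\E G^{(k-1)} - \tfrac{1}{n}\tr\E G^{(k)}\bigr),
\]
and similarly for $F, F', \widetilde G$; it suffices to bound each summand by $\vOeta{n^{-5/2}}$.

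For the $k$-th swap at position $(i,j)$, set $h(\xi):=n^{-1}\tr G$ viewed as a function of the complex scalar $\xi$ substituted for the $(i,j)$-entry (that entry being $\sigma_{ij}\xi/\sqrt n$, all other entries of $Y^{(k-1)}$ being held fixed). Writing $\xi=x+\ii y$ and Taylor-expanding $h$ to third order about $\xi=0$, then taking expectation with respect to $X_{ij}$, resp.\ $X^{\mathcal N}_{ij}$, and subtracting: the constant and first-order terms cancel by centering, and the isotropic second-order contribution $2\partial_\xi\partial_{\bar\xi} h(0)$ cancels by $\E|X_{ij}|^2=\E|X^{\mathcal N}_{ij}|^2=1$. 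What remains at second order is a mismatch proportional to $\E X_{ij}^2$ (against $\E(X^{\mathcal N}_{ij})^2=0$) weighted by the Wirtinger derivatives $\partial_\xi^2 h(0)$ and $\partial_{\bar\xi}^2 h(0)$.

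The key ingredient for handling this second-order mismatch is the quantitative smallness of off-diagonal resolvent entries. Applying \eqref{eq:diff} twice yields
\[
\partial_\xi^2 h(0) \,=\, \frac{2\sigma_{ij}^2}{n^2}\,\Res^{(k-1)}_{n+j,\,i}\,\bigl((\Res^{(k-1)})^2\bigr)_{n+j,\,i},
\]
and analogously for $\partial_{\bar\xi}^2 h(0)$. The entry $\Res^{(k-1)}_{n+j,i}$ equals $F'^{(k-1)}_{j,i}$, which is off-diagonal within its block whenever $i\ne j$. The Schwinger--Dyson analysis of Section~\ref{sec:self} shows that the leading deterministic counterpart of $F'$ is diagonal (cf.\ the formula for $B$ in \eqref{def:B}), so that $|\E F'^{(k-1)}_{j,i}|=\vOeta{n^{-1/2}}$ for $i\ne j$, and Proposition~\ref{prop:var-estimates} gives $\var(F'^{(k-1)}_{j,i})=\vOeta{n^{-1}}$; the same analysis applies to $(\Res^2)^{(k-1)}_{n+j,i}$. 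Cauchy--Schwarz then yields $|\E\partial_\xi^2 h(0)|=\vOeta{\sigma_{ij}^2 n^{-3}}$ for $i\ne j$, with a crude bound $\vOeta{\sigma_{ij}^2 n^{-2}}$ for the $n$ diagonal swaps; summed over all $(i,j)$ the second-order contribution totals $\vOeta{n^{-1}}$, comfortably within budget.

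The third-order Taylor remainder is controlled using the moment bound $\E|X_{ij}|^3\le\Mo^{3/(4+\varepsilon)}$ from \ref{ass:moments} and the derivative estimate $\|h^{(3)}\|_\infty=\vOeta{\sigma_{ij}^3 n^{-5/2}}$ obtained by iterating \eqref{eq:diff} and using $\|\Res^{(k-1)}\|\le 1/\Imm\eta$; this gives $\vOeta{\sigma_{ij}^3 n^{-5/2}}$ per swap and sums to $\vOeta{n^{-1/2}}$ in total. The argument for $F, F', \widetilde G$ is identical after adjusting the indices in the derivative formulas \eqref{eq:diff}. The main technical obstacle is the quantitative control of off-diagonal resolvent entries uniformly across the interpolation; the required estimates extend to each hybrid matrix $Y^{(k)}$ because Proposition~\ref{prop:var-estimates} and the Schwinger--Dyson structure of Section~\ref{sec:self} depend only on entrywise independence and the uniform variance bound \ref{ass:sigmax}, both of which are preserved throughout the interpolation.
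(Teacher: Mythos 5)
Your proposal takes an entry-by-entry Lindeberg interpolation, which is a genuinely different decomposition from the paper's column-by-column interpolation combined with the Sherman--Morrison rank-one update. The third-order remainder argument is sound: the bound $\|\partial_\xi^3 h\|_\infty = \vOeta{\sigma_{ij}^3 n^{-5/2}}$ is correct (using $\|\Res\| \le 1/\Imm\eta$ and a Cauchy--Schwarz estimate on the partial sum $\sum_{m\le n}\Res_{mi}\Res_{n+j,m}$), and the moment bound $\E|X_{ij}|^3 = O(1)$ from \ref{ass:moments} makes the total third-order contribution $\vOeta{n^{-1/2}}$.

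The gap is in your treatment of the second-order mismatch. Because only $\E|X_{ij}|^2$ is matched while $\E X_{ij}^2$ need not equal $\E (\Xn_{ij})^2 = 0$, you correctly identify a surviving term proportional to $\E X_{11}^2\,\partial_\xi^2 h(0)$. The trivial bound $|\partial_\xi^2 h(0)| = \vOeta{\sigma_{ij}^2 n^{-2}}$ (from $\|\Res\| = \Oeta{1}$) summed over $n^2$ pairs gives only $\vOeta{1}$, so your argument cannot proceed without quantitative smallness of the off-diagonal entry $\Res_{n+j,i}$ for $i\neq j$. Specifically you need $\E|\Res^{(k)}_{n+j,i}|^2 = \vOeta{n^{-1}}$, hence $|\E\Res^{(k)}_{n+j,i}| = \vOeta{n^{-1/2}}$, uniformly over the hybrid matrices $Y^{(k)}$. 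You justify this by appealing to the Schwinger--Dyson analysis of Section~\ref{sec:self} and the fact that the deterministic equivalent $B'$ is diagonal, but the paper never establishes that $\E F'_{ji}$ is close to $B'_{ji}$ for individual off-diagonal entries: Proposition~\ref{prop:QR} (and the entire analysis of Section~\ref{sec:self}) compares only normalized \emph{traces} of the diagonal blocks, and Proposition~\ref{prop:var-estimates} gives variances $\var(\Res_{ij}) = \Oeta{n^{-1}}$ but is silent about the means. Establishing $|\E\Res_{n+j,i}| = \vOeta{n^{-1/2}}$ for all off-diagonal $(i,j)$, uniformly over the interpolating matrices, is a non-trivial isotropic-type estimate requiring its own resolvent-expansion argument which you have not supplied.

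By contrast, the paper's column-by-column interpolation writes the one-step difference via Sherman--Morrison as a ratio of quadratic forms in the swapped column; the error is then controlled entirely by the \emph{variance} bounds of Proposition~\ref{prop:var-estimates} and Corollary~\ref{coro:var-estimates} on resolvent entries and quadratic forms, sidestepping any need for off-diagonal mean bounds. If you were to supply the missing off-diagonal estimate, your Lindeberg route would be a valid alternative and arguably more systematic; as written, it rests on an unproven intermediate claim.
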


The proof of Proposition \ref{prop:QR-interpolation} relies on fairly standard
arguments and is thus postponed to Appendix~\ref{app:proof-interpolation}.

\begin{prop}
\label{prop:QR} 
Assume \ref{ass:sigmax} holds, and let $z\in \C$ and $\eta\in \C_+$. Then 
\[
\frac 1n 
\begin{pmatrix} 
\tr \E \Gn(z,\eta) & \tr \E \Fn(z,\eta) \\
\tr \E \cFn(z,\eta) & \tr \E \tGn(z,\eta) 
\end{pmatrix} 
- \frac 1n \begin{pmatrix} 
\tr P(|z|,\eta) &\tr B(z,\eta)\\
\tr B'(z,\eta) &  \tr \widetilde P(|z|,\eta)
\end{pmatrix} 
\xrightarrow[n\to\infty]{} 0\, .
\] 
Moreover, there exist $\alpha,\beta>0$ such that for $t\in (n^{-\alpha},n^{\beta})$ and $n$ large enough, with $\eta=\ii t$ we have 
\[
\frac 1n 
\begin{pmatrix} 
\tr \E \Gn(z,\eta) & \tr \E \Fn(z,\eta) \\
\tr \E \cFn(z,\eta) & \tr \E \tGn(z,\eta) 
\end{pmatrix} 
- \frac 1n \begin{pmatrix} 
\tr P(|z|,\eta) &\tr B(z,\eta)\\
\tr B'(z,\eta) &  \tr \widetilde P(|z|,\eta)
\end{pmatrix} 
= \vOeta{n^{-3/2}}\, .
\] 
\end{prop} 

The proof of Proposition \ref{prop:QR} follows hereafter in Section
\ref{proof:QR-quantitative}. 
\correction{
\textcolor{red}{It relies on an inequality on quadratic forms of
independent interest (cf.\ Lemma \ref{lemma:key-lemma}).}}

From Theorem \ref{th:convergence-QR}, we will deduce the asymptotic behavior
of the empirical distribution $L_{n,z}$ of the singular values of $Y_n-z$ 
by analyzing the convergence of $n^{-1} \tr G(z,\eta)$. Moreover, for any 
$\varepsilon > 0$, the asymptotic behavior of $\int_{\{|x|\ge \varepsilon\} } 
\log |x| \, \check L_{n,z}(dx)$ will be identified.

\subsection{Proof of Proposition \ref{prop:QR}}
\label{proof:QR-quantitative}
We first prove the convergence to zero. Recall that $V=\frac A{\sqrt{n}}\odot \frac A{\sqrt{n}}$, that $\vec{\bs g}$ is introduced in \eqref{def:gbold}, that $\vec{\bs p}$ is the solution of the Schwinger--Dyson equations and define 
$$
\bs{\vec \varepsilon} = \E \bs{\vec g} - \pvec\, .
$$
Recall the definition of $\bs{\Upsilon}$ in \eqref{def-upsilon}. For given $n\times 1$ vectors $\bs{b}$ and $\bs{\tilde b}$, define
\begin{equation}
\left\{ 
\begin{array}{ccc}\Delta(\bs{b}) &=& \diag( \eta + (V_n^\tran  \bs{b})_i) := \diag (\Delta_i(\bs{b})\, )\\
\widetilde \Delta(\bs{\tilde b}) &=& \diag( \eta + (V_n \bs{\tilde b})_i):=\diag(\widetilde \Delta_i(\bs{\tilde b})\, )
\end{array}\right.\ .
\label{def:Delta}
\end{equation}
Then
\begin{align}
\E G_{ii} - p_i
 &=  \frac 1{- (\eta +  [V\E \bs{\tilde g}]_i) + \frac{|z|^2}{\eta + [V^\tran \E\bs{g}]_i}}
- \frac 1{- (\eta + [V\bs{\tilde p}]_i) + \frac{|z|^2}{\eta +[V^\tran \bs{p}]_i}}
+ \Oeta{n^{-3/2}}\ \nonumber\\
&= \Upsilon_i (\E \bs{\vec g}) \Upsilon_i (\pvec)
\left\{ |z|^2 [V^\tran(\E {\bs g} - \bs{p})]_i +  \Delta_i(\E \bs g) \Delta_i({\bs p}) [V(\E \bs{\tilde g} - \bs{\tilde p})]_i
\right\} + \Oeta{n^{-3/2}}\ ,
\label{eq:diff-Gii}
\end{align}
and a similar expression holds for $\E \widetilde G_{ii} - \tilde p_i$. Taking into account the straightforward estimates
$$
| \Delta_i(\E \bs g) | ,\ | \Delta_i({\bs p})| \le |\eta| + \frac{\sigma_{\max}^2}{\im(\eta)}\quad \textrm{and}
\quad | \Upsilon_i (\E \bs{\vec g})|,\  | \Upsilon_i (\pvec)| \le \frac 1{\im^2(\eta)}\, ,
$$
we end up with 
$$
\| \bs{\vec \varepsilon}\,\|_{\infty} \le K\left( \frac{|\eta|^2 +|z|^2}{\im^4(\eta)} + \frac 1{\im^6(\eta)}\right) \| \bs{\vec \varepsilon}\,\|_{\infty} + 
\vOeta{n^{-3/2}}\ ,
$$
where $K$ is an absolute constant (depending on $\sigma_{\max}$).
Letting $\eta\in \Cplus$ be chosen in such a way that $K\left( \frac{|\eta|^2 +|z|^2}{\im^4(\eta)} + \frac 1{\im^6(\eta)}\right)<1$, one has 
$\| \bs{\vec \varepsilon}\,\|_{\infty}(\eta)\xrightarrow[n\to\infty]{} 0$. 
On the other hand, $\E G_{ii} - p_i$ is analytic and uniformly bounded over the compact subsets of $\C_+$. 
Therefore, every converging subsequence converges toward an analytic function which coincides with the zero function on a sufficiently large 
open subset  
of $\Cplus$, and hence is equal to the zero function over $\Cplus$. The same applies to $\E \tilde G_{ii} - \tilde p_i$. This proves that $\| \bs{\vec \varepsilon}\,\|_{\infty} \xrightarrow[n\to\infty]{} 0$ for every $\eta\in \Cplus$.

We now focus on $\eta=\ii t$. . 
For a given $2n\times 1$ vector $\bs{\vec b}=\begin{pmatrix} \bs{b}\\ \bs{\tilde b}\end{pmatrix}$, introduce the $2n\times 2n$ matrix
\begin{equation}\label{eq:def-A}
{\mathcal A}(\bs{\vec b}) =\frac 1{\sqrt{n}}\begin{pmatrix}
|z| \bs{\Upsilon}\big(\bs{\vec b}\,\big) A^\tran & \bs{\Upsilon}\big(\bs{\vec b}\,\big) \Delta(\bs{b})A\\
\\
\bs{\Upsilon}\big(\bs{\vec b}\,\big) \widetilde \Delta(\bs{\tilde b})A^\tran &|z| \bs{\Upsilon}\big(\bs{\vec b}\,\big) A
\end{pmatrix}\ .
\end{equation}
We can compactly express this equation as
$
\bs{\vec \varepsilon} ={\mathcal A}(\E  \bs{\vec g}\, )\odot {\mathcal A}(\pvec )\  \bs{\vec \varepsilon}\, +\,  \vec{\mathcal O}_{\eta} \left( n^{-3/2}\right)
$
and easily prove that 
\begin{equation}\label{eq:approx-epsilon}
\bs{\vec \varepsilon} \quad =\quad {\mathcal A}(\pvec)\odot {\mathcal A}(\pvec )\  \bs{\vec \varepsilon}\, +\,  \vec{\mathcal O}_{\eta} \left( 
\| \bs{\vec \varepsilon}\,\|^2_{\infty} +
n^{-3/2}\right)\, .
\end{equation}
In Appendix \ref{app:linear-algebra} we prove that $I-{\mathcal A}(\pvec )\odot {\mathcal A}(\pvec )$ is invertible and that 
\begin{equation}\label{eq:invertibility-I-AA}
\lrn (I-{\mathcal A}(\pvec )\odot {\mathcal A}(\pvec ))^{-1}\rrn_{\infty} \ =\  \Oeta{1}\ ,\quad \eta=\ii t\, .
\end{equation}
Combining \eqref{eq:approx-epsilon} and \eqref{eq:invertibility-I-AA}, we finally end up with
$
\| \bs{\vec \varepsilon}\,\|_{\infty}   ={\mathcal O}_{\eta} \left( 
\| \bs{\vec \varepsilon}\,\|^2_{\infty} +
n^{-3/2}\right)\, ,
$ for $\eta=\ii t$. Following Notation \ref{not:asymp}, we rewrite this estimate as 
\begin{equation}\label{cond:epsilon}
\| \bs{\vec \varepsilon}\,\|_{\infty}   \le \kappa(\ii t)  \left( 
\| \bs{\vec \varepsilon}\,\|^2_{\infty} +
n^{-3/2}\right)\quad\textrm{for}\quad \kappa(\ii t)= \frac{C t^{c_1}}{t^{c_0} \wedge 1}\, .
\end{equation} 
The quadratic polynomial $P(X)=\kappa(\ii t)X^2-X+\kappa(\ii t) n^{-3/2}$ admits two distinct real roots as long as $1-4\kappa^2(\ii t)n^{-3/2}>0$. 
Denote by $\Gamma_n(\alpha,\beta)$ the set
$$
\Gamma_n(\alpha,\beta)=\left\{ \eta=\ii t,\, t>0,\,  t\in (n^{-\alpha},n^\beta)\right\}
$$
If $\eta=\ii t\in \Gamma_n(\alpha,\beta)$, then 
$
4\kappa^2(\ii t) n^{-3/2} \le C^2 n^{2c_0\alpha + 2c_1 \beta - 2/3}. 
$ The r.h.s. goes to zero as long as $c_0 \alpha + c_1 \beta <1/3$.  This can be fulfilled for $\alpha, \beta>0$ small enough, which is supposed to hold henceforth.

Then for $n\ge n_0(\alpha,\beta)$ large enough, $1-4\kappa^2(\ii t)n^{-3/2}>0$, thus $P(X)$ admits two distinct real roots
and \eqref{cond:epsilon} yields 
\begin{equation}\label{cond:trinome}
\| \bs{\vec \varepsilon}\,\|_{\infty} \ \le\ X_1(\ii t) := \frac{1-\sqrt{ 1 - 4 \kappa^2(\ii t) n^{-3/2}}}{2\kappa(\ii t)} \qquad \textrm{or} \qquad \|\bs{\vec \varepsilon}\,\|_{\infty}\  
\ge\ X_2(\ii t) := \frac{1+\sqrt{ 1 - 4 \kappa^2(\ii t) n^{-3/2}}}{2\kappa(\ii t)}\, .
\end{equation}
Since for $n\ge n_0(\alpha,\beta)$ and $\eta\in \Gamma_n(\alpha,\beta)$ the functions $\eta\mapsto \| \bs{\vec \varepsilon}\|_{\infty}(\eta), X_1(\eta), X_2(\eta)$ are continuous, only one of the two conditions \eqref{cond:trinome} can hold uniformly in $\Gamma_n(\alpha,\beta)$. For $\eta=\ii$ we have that $\kappa(\ii)=C$ and $X_2(\ii)= \frac{1+\sqrt{1-4C^2n^{2/3}}}{2C}={\mathcal O}(1)$, but $\|\bs{\vec \varepsilon}\, \|_{\infty} \xrightarrow[n\to\infty]{} 0$ by the first part of the proposition. Hence the condition 
$\|\bs{\vec \varepsilon}\,\|_{\infty}(\ii)\  
\ge\ X_2(\ii)$ cannot hold and necessarily
$$
\| \bs{\vec \varepsilon}\,\|_{\infty} \quad \le\quad X_1(\eta) = \frac{2\kappa(\eta)n^{-3/2}}{1+\sqrt{ 1 - 4 \kappa^2(\eta) n^{-3/2}}} \quad \le
\quad  2\kappa(\eta)n^{-3/2}\qquad \forall \eta\in \Gamma_n(\alpha,\beta)\, .
$$
We have proved so far that for $n\ge n_0(\alpha,\beta)$ large enough and $\eta\in \Gamma_n(\alpha,\beta)$, $\| \bs{\vec \varepsilon}\,\|_{\infty}=\Oeta{n^{-3/2}}$.

\correction{
\textcolor{red}
{The crux of the proof lies in the following proposition
\begin{prop}\label{prop:inversion} Let the matrix ${\mathcal A}$ be as in \eqref{eq:def-A}. Under the assumptions of Proposition~\ref{prop:QR}, we have that the matrix 
$$
I-{\mathcal A}(\E  \bs{\vec g}\, )\odot {\mathcal A}(\pvec )
$$ is invertible and $$
\lrn (I-{\mathcal A}(\E  \bs{\vec g}\, )\odot {\mathcal A}(\pvec ))^{-1}\rrn_{\infty} \ =\  \Oeta{1}\ .
$$
\end{prop}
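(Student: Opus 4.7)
The plan is to dominate $M := \mathcal{A}(\E \bs{\vec g}\,) \odot \mathcal{A}(\pvec)$ entrywise in absolute value by a non-negative matrix $N$ whose spectral radius is bounded away from $1$ with a gap of order $\Oeta{1}^{-1}$, and then to derive the required max-row norm bound on $(I-M)^{-1}$ by comparison with $(I-N)^{-1}$.

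First I would derive, for $\E \bs{\vec g}$, a positivity identity analogous to the one established for $\pvec$ in the proof of Proposition~\ref{prop:deteq}(3). Taking imaginary parts in \eqref{eq:Gtilde-ii}--\eqref{eq:G-ii} and running the same manipulations that led to $\im \pvec = (\mathcal{A}(\pvec) \odot \overline{\mathcal{A}(\pvec)})\, \im \pvec + \bs v_1$ with $\bs v_1 \succ 0$, one obtains
$$ \im \E \bs{\vec g} \;=\; \bigl(\mathcal{A}(\E \bs{\vec g}\,) \odot \overline{\mathcal{A}(\E \bs{\vec g}\,)}\bigr)\, \im \E \bs{\vec g} \;+\; \bs v_2 \;+\; \vOeta{n^{-3/2}}, \qquad \bs v_2 \succ 0. $$
Using the lower bound~\eqref{eq:iterbound} and the fact that the diagonal entries of $\E G$ and $\E \widetilde G$ are expectations of diagonal entries of a resolvent (hence Stieltjes transforms of nonnegative measures), the entries of $\bs v_1,\bs v_2,\im \pvec$ and $\im \E \bs{\vec g}$ are all squeezed between two positive constants of order $\Oeta{1}$, uniformly in $n$ and in the index.

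Set $N_1 := \mathcal{A}(\pvec) \odot \overline{\mathcal{A}(\pvec)}$, $N_2 := \mathcal{A}(\E \bs{\vec g}\,) \odot \overline{\mathcal{A}(\E \bs{\vec g}\,)}$ and $N := |M|$, the entrywise modulus. A direct calculation gives the geometric-mean identity $N_{ij} = \sqrt{(N_1)_{ij}(N_2)_{ij}}$. Introduce the positive vector $\bs u$ defined block-wise by $\bs u_k := \sqrt{(\im \pvec)_k\, (\im \E \bs{\vec g}\,)_k}$. The Cauchy--Schwarz inequality applied to the sum defining $(N \bs u)_i$ yields
$$ (N \bs u)_i \;\le\; \sqrt{(N_1 \im \pvec)_i \,(N_2 \im \E \bs{\vec g}\,)_i} \;\le\; \sqrt{\bigl((\im \pvec)_i - (\bs v_1)_i\bigr)\bigl((\im \E \bs{\vec g}\,)_i - (\bs v_2)_i + O(n^{-3/2})\bigr)}. $$
Combined with the uniform two-sided bounds above, this gives $N \bs u \preccurlyeq (1-\delta)\bs u$ entrywise for some $\delta = \Oeta{1}^{-1} > 0$ and all $n$ sufficiently large.

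From $(I-N)\bs u \succcurlyeq \delta \bs u$ with $\bs u \succ 0$, and the fact that $(I-N)^{-1}$ has nonnegative entries (it equals the convergent Neumann series $\sum_{k\ge 0} N^k$ since $\rho(N)\le 1-\delta$), one deduces $(I-N)^{-1}\bs u \preccurlyeq \delta^{-1} \bs u$. Since the ratio $\max_k \bs u_k / \min_k \bs u_k$ is $\Oeta{1}$, we get $\lrn (I-N)^{-1} \rrn_\infty = \Oeta{1}$. Finally, since $|M| \preccurlyeq N$ entrywise one has $|M^k|\preccurlyeq N^k$ for every $k\ge 0$; hence $I-M$ is invertible and $|(I-M)^{-1}| \preccurlyeq (I-N)^{-1}$ entrywise, which gives the stated bound. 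The main technical obstacle is deriving the positivity identity for $\im \E \bs{\vec g}$ with an explicit $\vOeta{n^{-3/2}}$ remainder and obtaining uniform lower bounds on $\bs v_1,\bs v_2$; once these are in hand, the Cauchy--Schwarz/Collatz--Wielandt argument produces the quantitative spectral gap $\delta$ needed to convert $\rho(N) < 1$ into a bound on the max-row norm.
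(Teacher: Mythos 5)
Your proof is correct and takes a genuinely different route from the paper's. The paper introduces the auxiliary Lemma~\ref{lemma:key-lemma}, a Cauchy--Schwarz-type estimate on quadratic forms in $(I-A\odot B)^{-1}$, proved by a fairly intricate induction on the dimension via Schur complements; it is then applied with $A=\mathcal{A}(\pvec)$, $B=\mathcal{A}(\E\bs{\vec g})$ after establishing $\lrn(I-\mathcal{A}(\pvec)\odot\overline{\mathcal{A}(\pvec)})^{-1}\rrn_\infty = \Oeta{1}$ and the analogue for $\E\bs{\vec g}$ via Proposition~\ref{prop:folklore}. You instead use the observation $|M_{ij}| = \sqrt{(N_1)_{ij}(N_2)_{ij}}$ to build the geometric-mean certificate vector $\bs u = \sqrt{\im\pvec\odot\im\E\bs{\vec g}}$, prove $|M|\bs u \preccurlyeq (1-\delta)\bs u$ directly by a single application of Cauchy--Schwarz, and then get both invertibility (via $\rho(M)\le\rho(|M|)<1$) and the max-row bound (via $(I-|M|)^{-1}\bs u\preccurlyeq\delta^{-1}\bs u$ together with $\max_k \bs u_k/\min_k \bs u_k = \Oeta{1}$, plus the entrywise Neumann domination $|(I-M)^{-1}|\preccurlyeq(I-|M|)^{-1}$). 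This is shorter and more elementary than the paper's route and avoids the induction entirely; the tradeoff is that the paper's Lemma~\ref{lemma:key-lemma} is a general-purpose inequality with independent interest, whereas your argument is tailored to this particular max-row norm estimate. One point to flesh out in a full write-up is the uniform lower bound $\im\E G_{ii}\ge\Oeta{1}^{-1}$, which (as for $\im p_i$) needs a quantitative tightness argument on the underlying spectral measures; the paper glosses over this too in its ``arguing as before'' step, so this is an omission you share rather than a gap in your approach.
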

The proof of Proposition \ref{prop:inversion} is postponed to Appendix \ref{app:linear-algebra}.}}

We are now in position to conclude: 
\correction{
\textcolor{red}{
Notice that 
$$
|\E G_{ii} - p_i| = | \bs{e}_i^* \vecepsilon| = \left| \bs{e}_i^* \left(I-{\mathcal A}(\E  \bs{\vec g}\, )\odot {\mathcal A}(\pvec )\right)^{-1} \vec{\mathcal O}_{\eta}\left(\frac1{n^{3/2}}\right)\right| = \Oeta{\frac 1{n^{3/2}}}\ ,
$$ 
where $\bs{e}_i$ is the canonical $2n\times 1$ vector $(\delta_{ij},\ j\in [2n])$.  A similar estimate holds for $|\E \tilde G_{ii} - \tilde p_i|$. 
Since these estimates are uniform in $i\in [n]$, we obtain:
$$
\bs{\varepsilon}_{\max} :=\max_{i\in [n]}\left( |\E\, G_{ii} - p_i|, |\E\, \widetilde G_{ii} - \tilde p_i|\right) =  \Oeta{\frac 1{n^{3/2}}}\ .
$$
Finally,
}}
\begin{eqnarray*}
\left|  \frac 1n \sum_{i\in [n]}  \E\, G_{ii} - \frac 1n \sum_{i\in [n]} p_i\right| &\le & \| \vec{\bs \varepsilon}\|_{\infty} \ \le\ \Oeta{n^{-3/2}}\ .\\
\end{eqnarray*} 
The same arguments apply verbatim for the term $\frac 1n \sum_i (\E\, \widetilde G_{ii} - \tilde p_i)$. Consider now the term
\begin{align*}
\frac 1n \tr \E F - \frac 1n \tr B 
&\stackrel{(a)}= \frac 1n \sum_{i=1}^n \left\{ 
\frac z{-([V\E \bs{\tilde g}]_i+\eta)} \E \widetilde G_{ii} - \frac z{-([V\bs{\tilde p}]_i+\eta)} \tilde p_i
\right\} +\Oeta{n^{-3/2}} \ ,
\end{align*}
where $(a)$ follows from \eqref{eq:2} and \eqref{def:B}. One can now apply the same arguments as previously and handle similarly the term 
$\frac 1n \tr \E F'(z,\eta) - \frac 1n \tr B'(z,\eta)$. 
This completes the proof of Proposition~\ref{prop:QR}.

\subsection{Proof of Theorem \ref{th:L->nu}}\label{prf:L->nu} 

The convergence $\check L_{n,z} \sim \check\nu_{n,z}$ is a direct consequence of Theorem \ref{th:convergence-QR}. Now, it is easy to prove with the help of the law of large numbers that a.s.
$$
\limsup_n \int |x|^2 \check{L}_{n,z}(\, dx)\ <\ \infty\ .
$$
This, together with Proposition \ref{prop:deteq}-\eqref{SD-prop5}, yields 
\[
\int_{\{ |x|\ge \varepsilon\}} \log |x| \, \check L_{n,z}(dx) - 
\int_{\{ |x|\ge \varepsilon\}} \log |x| \, \check\nu_{n,z}(dx)  
\ \xrightarrow[n\to\infty]{\text{a.s.}} \ 0 \ .
\]
The proof is complete.

\section{Proof of Theorem \ref{thm:master}: Analysis of the Master Equations} 
\label{sec:master}

\subsection{Regularized Master Equations}	\label{sec:RME}

Our first step is to introduce the so-called \emph{Regularized Master Equations}, which are obtained from the Schwinger--Dyson equations \eqref{cve0} by taking $\eta=\ii t$ and substituting $\bp(\ii t) = \ii \br(t)$.

Given two $n\times 1$ vectors $\boldsymbol{a}$ and $\boldsymbol{\tilde a}$ 
with nonnegative components and fixed numbers $s>0$ and $t\ge 0$, let $\boldsymbol{\vec{a}}=\begin{pmatrix} \boldsymbol{a} \\ \boldsymbol{\tilde a}\end{pmatrix}$ and define the following quantities
\begin{align}
\label{def-psi} 
\Psi(\boldsymbol{\vec{a}},t) &\ :=\ 
\diag \left( \frac 1{s^2+((V_n\bs{\tilde a})_i+t) ((V_n^\tran \bs{a})_i +t) }  
;\, i\in [n]\right) \ ,\\
&\ :=\ \diag( \psi_i(\boldsymbol{\vec{a}},t)\, ;\  i\in[n] )\ , \nonumber
\end{align} 
and
\begin{align}
\mathcal I(\bs{\vec{a}},t) &:=
\begin{pmatrix} 
\Psi(\bs{\vec{a}},t) V_n^\tran  & 0\\
0&\Psi(\bs{\vec{a}},t) V_n
\end{pmatrix} \bs{\vec{a}} 
+ t \begin{pmatrix} \Psi(\bs{\vec{a}},t) \boldsymbol{1}_n\\
                    \Psi(\bs{\vec{a}},t) \boldsymbol{1}_n \end{pmatrix} \label{eq:def-I}\ .\\
                    \nonumber
\end{align} 
We also define $\Psi(\boldsymbol{\vec{a}}):=\Psi(\boldsymbol{\vec{a}},0)$.\\
The proof of Proposition \ref{prop:MEt} amounts to showing that the vector equation 
$
\rvec=  \mathcal I(\rvec,t)$
admits a unique solution $\rvec=\rvec(s,t) \succ 0$. 
\begin{rem}
We shall also prove that for any initial vector $\rvec_0 \succcurlyeq 0$, the iterations 
$\rvec_{k+1} = \mathcal I(\rvec_k,t)$ converge to $\rvec$ as $k\to\infty$. 
\end{rem}

%

\begin{proof}[Proof of Proposition~\ref{prop:MEt}]
This follows immediately from Proposition \ref{prop:deteq} by setting ${\bs p}(\ii t)=\ii \br(t)$ and \\
$\tilde{\bs p}(\ii t)=\ii \brt(t)$.
\end{proof}

\subsection{Proof of Theorem \ref{thm:master}}

In the following propositions, we recall some known properties of nonnegative and
irreducible matrices.

\begin{prop}[{\cite[Theorems 1.1 and 5.5]{book-seneta}}]\label{prop:seneta-1}
Let $A$ and $B$ be two square matrices such that 
$0\preccurlyeq A\preccurlyeq B$. Then $\rho(A)\le \rho(B)$.
Moreover, if $B$ is irreducible, then $\rho(A)=\rho(B)$ implies that 
$A=B$. 
\end{prop}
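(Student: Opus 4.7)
The statement is the classical comparison principle for nonnegative matrices from Perron--Frobenius theory, and since the paper explicitly attributes it to Seneta's monograph, I will just sketch the textbook proof.

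For the first inequality $\rho(A)\le \rho(B)$, my plan is to exploit the fact that the entrywise order $\preccurlyeq$ on nonnegative matrices is preserved by multiplication. A straightforward induction shows that $0\preccurlyeq A\preccurlyeq B$ implies $0\preccurlyeq A^{k}\preccurlyeq B^{k}$ for every $k\ge 1$. Then I would apply any entrywise-monotone submultiplicative norm on $\R^{n\times n}$ --- for instance the max-entry norm $\lrn M\rrn_{\max}=\max_{ij}|M_{ij}|$, which coincides up to factors of $n$ with a proper matrix norm --- to obtain $\|A^{k}\|\le \|B^{k}\|$ for all $k$. Gelfand's spectral radius formula $\rho(M)=\lim_{k\to\infty}\|M^{k}\|^{1/k}$ then gives $\rho(A)\le \rho(B)$ at once.

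For the equality case, the plan is to combine Perron--Frobenius for $B$ with a rank-one/contraction-type argument. Let $\mu:=\rho(A)=\rho(B)$. Because $B$ is nonnegative and irreducible, the Perron--Frobenius theorem provides strictly positive right and left eigenvectors $v\succ 0$ and $u\succ 0$ with $Bv=\mu v$ and $u^{\tran}B=\mu u^{\tran}$, and $\mu$ is a simple eigenvalue of $B$. Because $\rho(A)=\mu$ and $A\succcurlyeq 0$, Perron--Frobenius applied to $A$ (in its general, possibly reducible form) yields a nonzero $w\succcurlyeq 0$ with $Aw=\mu w$. Now set $C:=B-A\succcurlyeq 0$ and compute
\[
u^{\tran}Cw \;=\; u^{\tran}Bw - u^{\tran}Aw \;=\; \mu\, u^{\tran}w - \mu\, u^{\tran}w \;=\; 0.
\]

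Since $u\succ 0$ and $Cw\succcurlyeq 0$, the identity $u^{\tran}Cw=0$ forces $Cw=0$, hence $Bw=Aw=\mu w$. Thus $w$ is itself a nonnegative Perron eigenvector of the irreducible matrix $B$; by simplicity of the Perron eigenvalue we conclude $w=\alpha v$ for some $\alpha>0$, and in particular $w\succ 0$. Finally, from $Cw=0$ with $C\succcurlyeq 0$ and $w\succ 0$ I read off $\sum_{j}C_{ij}w_{j}=0$ for every $i$, which, since each $w_{j}>0$ and each $C_{ij}\ge 0$, forces $C_{ij}=0$ for all $i,j$. Hence $A=B$. The only delicate point is the availability of the \emph{strictly positive} Perron eigenvectors $u,v$ for $B$, which is precisely what the irreducibility hypothesis secures; once this is in hand, the argument is a routine contraction against a positive functional.
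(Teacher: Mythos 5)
Your proof is correct, and it is essentially the standard Perron--Frobenius argument that Seneta gives; the paper itself cites the result without reproducing a proof, so there is no alternative route in the paper to compare against. Both halves are sound: the monotonicity of powers combined with Gelfand's formula for the inequality $\rho(A)\le\rho(B)$, and the pairing of $C=B-A\succcurlyeq 0$ against the strictly positive left Perron vector $u$ of $B$ to force $Cw=0$, followed by the observation that $w$ must then itself be a strictly positive Perron vector of $B$ (by simplicity of the Perron eigenvalue), which kills $C$ entrywise.
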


\begin{prop}[{\cite[Theorem 1.6]{book-seneta}}]\label{prop:seneta-2} 
Let $A\succcurlyeq 0$ be a square and irreducible matrix, and let
$\bs{x}\posneq 0$ be a vector satisfying 
$A\bs{x} \preccurlyeq \bs{x}$. Then $\bs{x}\succ 0$ and $\rho(A)\le1$. 
Moreover, $\rho(A)=1$ if and only if $A\bs{x}=\bs{x}$. 
\end{prop}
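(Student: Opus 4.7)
The plan is to invoke the Perron–Frobenius structure for irreducible nonnegative matrices. Specifically, I will use the standard fact (from the same Seneta reference) that such an $A$ admits a strictly positive left eigenvector $\bs{y} \succ 0$ satisfying $\bs{y}^\tran A = \rho(A)\, \bs{y}^\tran$, and that $\rho(A)$ is the only eigenvalue of $A$ admitting a nonnegative eigenvector (up to scaling). The argument then breaks into three short steps: show $\bs{x} \succ 0$ by iterating the given inequality, deduce $\rho(A) \le 1$ by pairing with $\bs{y}$, and finally identify the equality case.

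First I would upgrade $\bs{x} \posneq 0$ to $\bs{x} \succ 0$. Since $A \succcurlyeq 0$, the inequality $A\bs{x} \preccurlyeq \bs{x}$ is preserved under left-multiplication by $A$, so $A^k \bs{x} \preccurlyeq \bs{x}$ for every $k \ge 0$. Fix $j_0$ with $x_{j_0} > 0$. By irreducibility of $A$, for each $i$ there is $k=k(i)$ with $(A^k)_{i j_0} > 0$ (the associated digraph is strongly connected, so some directed walk of length $k$ joins $i$ to $j_0$). Consequently
\begin{equation*}
x_i \ge (A^k \bs{x})_i \ge (A^k)_{i j_0}\, x_{j_0} > 0,
\end{equation*}
proving positivity of every coordinate.

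Next I would derive the spectral bound. Pair the inequality $A\bs{x} \preccurlyeq \bs{x}$ with the positive left Perron eigenvector $\bs{y}$:
\begin{equation*}
\rho(A)\, \langle \bs{y}, \bs{x}\rangle \;=\; \bs{y}^\tran A \bs{x} \;\le\; \bs{y}^\tran \bs{x} \;=\; \langle \bs{y}, \bs{x}\rangle.
\end{equation*}
Because both $\bs{y}$ and $\bs{x}$ are strictly positive, $\langle \bs{y}, \bs{x}\rangle > 0$, and dividing gives $\rho(A) \le 1$.

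For the equality case: if $A\bs{x} = \bs{x}$ then $\bs{x} \succ 0$ is a positive eigenvector of the irreducible matrix $A$ with eigenvalue $1$, and by the uniqueness-up-to-scaling clause of Perron–Frobenius this eigenvalue must coincide with $\rho(A)$, so $\rho(A) = 1$. Conversely, if $\rho(A) = 1$, then the displayed inequality is saturated, i.e.\ $\langle \bs{y}, \bs{x} - A\bs{x}\rangle = 0$; since $\bs{y} \succ 0$ and $\bs{x} - A\bs{x} \succcurlyeq 0$ coordinatewise, every coordinate of $\bs{x} - A\bs{x}$ must vanish, giving $A\bs{x} = \bs{x}$. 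The whole proof is short; the only real input is the existence of a strictly positive left Perron eigenvector, so the main (mild) obstacle is simply not to accidentally invoke a circular form of Perron–Frobenius — everything else is one-line manipulation.
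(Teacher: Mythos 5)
Your proof is correct. Note that the paper does not prove this proposition at all — it cites it directly as Theorem~1.6 of Seneta's \emph{Non-Negative Matrices and Markov Chains}, so there is no in-paper argument to compare against. Your three-step argument (bootstrap $\bs{x}\succ 0$ via powers of $A$ and strong connectivity, pair $A\bs{x}\preccurlyeq\bs{x}$ against the strictly positive left Perron eigenvector $\bs{y}$ to get $\rho(A)\le 1$, then read off the equality case from $\bs{y}^\tran(\bs{x}-A\bs{x})=0$) is the standard clean route and is logically sound: you only use two external Perron--Frobenius facts for irreducible nonnegative matrices (existence of a strictly positive left eigenvector at $\rho(A)$, and that $\rho(A)$ is the unique eigenvalue carrying a nonnegative eigenvector), neither of which presupposes the proposition being proved.
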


The proof of the following lemma is deferred to Section \ref{sec:bddness} -- see Proposition \ref{prop:bdd-weak}.

\begin{lemma}		\label{lem:qbound} 
Let $V$ be a nonnegative and irreducible $n\times n$ matrix, and let 
$\rvec(s,t)$ be the solution of the regularized master equations \eqref{def:MEt}. 
Let $[a,b]\subset (0,\infty)$ and $\varepsilon>0$, then
$$
\sup_{(s,t)\in [a,b]\times [0,\varepsilon]} \| \rvec(s,t)\| <\infty\ .
$$
In particular, $\rvec(s,t)$ admits an accumulation point for fixed $s>0$ as 
$t\downarrow 0$.
\end{lemma}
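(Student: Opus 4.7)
The plan is to combine continuity of $\rvec$ in the interior $(s,t)\in(0,\infty)^2$ with a propagation and trace-identity contradiction at the boundary $t\downarrow 0$; the latter is precisely the qualitative boundedness statement proved in Section~\ref{sec:bddness} (see Proposition~\ref{prop:bdd-weak}, or its consequence Proposition~\ref{prop:qualitative}).

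First I would note that, by the uniqueness in Proposition~\ref{prop:MEt} together with the holomorphic implicit function theorem (applied exactly as in the proof of Proposition~\ref{prop:deteq}(3) for $\pvec(\eta)$), the map $(s,t)\mapsto\rvec(s,t)$ is continuous on $(0,\infty)\times(0,\infty)$. Hence $\rvec$ is automatically bounded on any compact rectangle $[a,b]\times[\delta,\varepsilon]$ with $\delta>0$, and any blow-up of $\|\rvec(s,t)\|$ over $[a,b]\times(0,\varepsilon]$ must occur as $t\downarrow 0$. Arguing by contradiction, I suppose there exist $(s_k,t_k)\in[a,b]\times(0,\varepsilon]$ with $\|\rvec(s_k,t_k)\|\to\infty$; by compactness and the continuity just noted, after extraction $s_k\to s_*\in[a,b]$, $t_k\downarrow 0$, and each $r_i(s_k,t_k),\rt_i(s_k,t_k)$ converges in $[0,\infty]$. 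Abbreviating $\alpha_i = (V^\tran\br)_i + t$, $\beta_i = (V\brt)_i + t$, and setting
\begin{equation*}
S := \{i\in[n] : r_i(s_k,t_k)\to\infty\},\qquad \widetilde S := \{i\in[n] : \rt_i(s_k,t_k)\to\infty\},
\end{equation*}
the trace identity $\sum_i r_i = \sum_i \rt_i$ from Proposition~\ref{prop:MEt} forces both $S$ and $\widetilde S$ to be nonempty. The AM--GM bound $r_i\rt_i = \alpha_i\beta_i/(s^2+\alpha_i\beta_i)^2 \le 1/(4s^2) \le 1/(4a^2)$ gives $S\cap\widetilde S = \varnothing$. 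The identity $r_i s^2 = \alpha_i(1 - r_i\beta_i)$ together with $r_i\beta_i\in[0,1)$ forces $\alpha_i\to\infty$ for $i\in S$; since $\alpha_i = \sum_j V_{ji}r_j + t_k$ and $t_k\downarrow 0$, some in-neighbor of $i$ in $\Gamma(V)$ must lie in $S$, and finiteness of $[n]$ forces $S$ to contain a directed cycle. The symmetric argument shows $\widetilde S$ contains a directed cycle as well; moreover, $r_i\beta_i<1$ with $r_i\to\infty$ yields $\beta_i\to 0$, hence $\mathcal N_V(S)\cap\widetilde S = \varnothing$ and analogously $\mathcal N_{V^\tran}(\widetilde S)\cap S = \varnothing$.

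The main obstacle is to turn these structural facts into an actual contradiction using only plain irreducibility of $V$ (rather than a stronger expansion hypothesis like \ref{ass:expander}, which would allow a direct quantitative estimate). The idea is to pick a shortest directed path in $\Gamma(V)$ from $S$ to $\widetilde S$---necessarily of length at least two by the above---and iterate the equation-based propagation along its intermediate vertices, combining the ``in-neighbor in $S$'' and ``out-neighbor in $\widetilde S$'' implications with the global bound $\br^\tran V\brt \le n$ (obtained by summing $r_i\beta_i\le 1$ over $i$ and using $\sum_i r_i\beta_i = \br^\tran V\brt + t\sum_i r_i$) to force a contradiction on the number of vertices compelled to lie in $S$ or $\widetilde S$. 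This detailed graph-theoretic step is what Proposition~\ref{prop:bdd-weak} in Section~\ref{sec:bddness} accomplishes, and I would invoke it. Because the contradiction is reached from an \emph{arbitrary} blow-up sequence in $[a,b]\times(0,\varepsilon]$, the resulting bound is automatically uniform in $s\in[a,b]$, and the ``in particular'' clause of the lemma follows from Bolzano--Weierstrass.
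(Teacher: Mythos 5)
Your proof is correct and arrives at the same place as the paper: the paper literally proves this lemma by deferring to Proposition~\ref{prop:bdd-weak} in Section~\ref{sec:bddness}, exactly as you do. The blow-up--sequence scaffolding in your first two paragraphs (the sets $S,\widetilde S$, the AM--GM disjointness, the in/out-neighbor propagation and cycle extraction) is suggestive but ultimately unused once you invoke Proposition~\ref{prop:bdd-weak}: that proposition already gives $\frac1n\sum_i \rr_i\le K_0(n,s_0,\snot,\smax)$ uniformly in $t>0$ and $s\ge s_0$, from which the componentwise bound follows immediately from $\rr_i\le\tp_i/s^2\le\bigl(\varepsilon+\smax^2\cdot\frac1n\sum_j\rr_j\bigr)/a^2$ (and similarly for $\rt_i$ via the trace identity), with no need for a contradiction sequence or the continuity-on-compacts preamble. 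You are right that bare irreducibility does not turn the structural facts ($S,\widetilde S$ disjoint, each containing a cycle, $\mN_A(S)\cap\widetilde S=\varnothing$) into a contradiction without a quantitative propagation step, which is what Section~\ref{sec:qualitative} supplies via the $W^S$-matrix estimates of Lemma~\ref{lem:Wbounds} rather than the shortest-path heuristic you sketch; but since you correctly defer to Proposition~\ref{prop:bdd-weak}, the argument closes.
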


Next we show that any accumulation point provided by the above lemma constitutes a solution to the Master Equations \eqref{def:ME2}:

\begin{lemma}[Existence of solutions to the Master Equations] 		\label{lem:qexist}		
Let $V$ and $\rvec(s,t)$ be as in Lemma \ref{lem:qbound}.
\begin{enumerate}
\item Let $s>0$. If $\rvecstar=( \brstar , \brtstar )  \succcurlyeq 0$ is an accumulation point for $\rvec(s,t)$ as $t\downarrow 0$, then 
\[
\begin{pmatrix}
\rvecstar\\
0
\end{pmatrix} \ =\ 
\begin{pmatrix}
\Psi(\rvecstar\,) V^\tran  & 0\\
0& \Psi(\rvecstar\,) V\\
\boldsymbol{1}^\tran & -\boldsymbol{1}^\tran 
\end{pmatrix}\rvecstar \ , 
\] 
where we recall that 
$\Psi(\rvecstar) = \diag (\psi_i(\rvecstar))_{i=1}^n$ with 
$
\psi_i(\rvecstar) =  (s^2+(V\brtstar)_i (V^\tran  \brstar)_i)^{-1}$. 
\item If moreover $s^2\in (0,\rhoV)$, then $\rvecstar \posneq 0$. 
\end{enumerate}
 \end{lemma}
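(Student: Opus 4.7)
The proof proceeds by taking limits of $\rvec(s,t)$ as $t \downarrow 0$ along a subsequence provided by Lemma \ref{lem:qbound}.

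For part (1), fix a sequence $t_k \downarrow 0$ with $\rvec(s, t_k) \to \rvecstar$. Since each denominator $s^2 + ((V\brt)_i + t)((V^\tran \br)_i + t)$ is bounded below by $s^2 > 0$, the rational functions on the right-hand side of \eqref{def:MEt} are continuous at $(\rvecstar, 0)$, so passing to the limit in the two vector equations yields $\brstar = \Psi(\rvecstar)V^\tran \brstar$ and $\brtstar = \Psi(\rvecstar) V \brtstar$. The identity $\bs{1}^\tran \brstar = \bs{1}^\tran \brtstar$ follows by letting $k \to \infty$ in \eqref{q_t:trace}. Together these are precisely the matrix identity claimed in the statement.

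For part (2), I argue by contradiction: suppose $\rvecstar = 0$. Set $\gamma_k := \|\rvec(s,t_k)\|_\infty + t_k > 0$ and consider the normalized vectors $\hat{\br}_k := \br(s,t_k)/\gamma_k$ and $\hat{\brt}_k := \brt(s,t_k)/\gamma_k$, each bounded by $1$ in sup norm. After passing to a further subsequence I may assume $\hat{\br}_k \to \br^{**} \succcurlyeq 0$, $\hat{\brt}_k \to \brt^{**} \succcurlyeq 0$, and $t_k/\gamma_k \to c \in [0,1]$. Dividing the equation $r_i[s^2 + ((V\brt)_i+t)((V^\tran \br)_i+t)] = (V^\tran \br)_i + t$ by $\gamma_k$ and using $\br, \brt, t_k \to 0$ to annihilate the quadratic correction gives, in the limit,
\[
(s^2 I - V^\tran)\, \br^{**} \;=\; c\,\bs{1}, \qquad (s^2 I - V)\, \brt^{**} \;=\; c\,\bs{1},
\]
and the normalization $\|(\hat{\br}_k, \hat{\brt}_k)\|_\infty = \|\rvec(s,t_k)\|_\infty/\gamma_k \to 1 - c$ yields $\max_i(r^{**}_i, \tilde{r}^{**}_i) = 1 - c$.

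The contradiction is then obtained by a short case analysis. If $c > 0$, then $V^\tran \br^{**} = s^2 \br^{**} - c\,\bs{1}$ is nonnegative, which forces $\br^{**} \succcurlyeq (c/s^2)\bs{1} \succ 0$, and the relation $(V^\tran/s^2)\,\br^{**} \preccurlyeq \br^{**}$ with $V^\tran$ irreducible gives $\rho(V)/s^2 \le 1$ by Proposition \ref{prop:seneta-2}, contradicting $s^2 < \rho(V)$. If $c = 0$, then $\max_i(r^{**}_i, \tilde{r}^{**}_i) = 1$, so at least one of $\br^{**}, \brt^{**}$ is nonzero; the corresponding identity $V^\tran \br^{**} = s^2 \br^{**}$ or $V\brt^{**} = s^2 \brt^{**}$ realizes the equality case in Proposition \ref{prop:seneta-2} applied to $V^\tran/s^2$ or $V/s^2$, forcing $\rho(V) = s^2$, again a contradiction. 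The main technical point is the choice of normalization $\gamma_k = \|\rvec\|_\infty + t_k$, which unifies the regimes where $\rvec$ decays faster than, comparably to, or slower than $t$.
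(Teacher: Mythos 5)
Your proof of part~(1) matches the paper's (which dismisses it as ``straightforward''), and your proof of part~(2) is correct but takes a genuinely different route. The paper's argument is shorter: from the regularized equations $\Psi(\rvec(s,t_k),t_k)V^\tran\br(s,t_k)\prec \br(s,t_k)$ (strict because $t_k>0$), so Proposition~\ref{prop:seneta-2} gives $\rho(\Psi(\rvec(s,t_k),t_k)V)<1$, and continuity of the spectral radius (using $\rvec(s,t_k)\to\rvecstar$ and $t_k\to 0$, so $\Psi(\rvec(s,t_k),t_k)\to\Psi(\rvecstar)$) yields $\rho(\Psi(\rvecstar)V)\le 1$. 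If $\rvecstar=0$ then $\Psi(\rvecstar)=s^{-2}I$, so $\rho(\Psi(\rvecstar)V)=s^{-2}\rhoV>1$, a contradiction. Your renormalization argument, by contrast, divides by $\gamma_k=\|\rvec(s,t_k)\|_\infty+t_k$ and extracts a nontrivial limit of the rescaled solution, then runs a case analysis on $c=\lim t_k/\gamma_k$. What the paper's approach buys is brevity: a single continuity argument and one application of Proposition~\ref{prop:seneta-2}, without any further subsequence or normalization. What your approach buys is that it reveals the linearized structure $(s^2 I - V^\tran)\br^{**}=c\,\bs 1$ of the blow-up limit directly; it is a pattern that generalizes to settings where the nonlinearity is less explicit. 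Both are fully rigorous here; the only minor point worth flagging is that in the $c>0$ case you actually get strict inequality $\rho(V)<s^2$, so the contradiction is even stronger than stated.
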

\begin{proof}

The proof of part (1) is straightforward. 

We now prove part (2) of the lemma. Let $(t_k)$ be a positive sequence 
converging to zero in such a way that 
$\lim_{k\to\infty} \rvec(s,t_k)=\rvecstar$. 
Since $\rvec(s,t_k)$ satisfies
\eqref{def:MEt}, we have in particular that $\Psi(\rvec)(s,t_k)V \brt \prec \brt$ and $\Psi(\rvec)(s,t_k)V^\tran\br  \prec \br$.
From Proposition~\ref{prop:seneta-2} it follows that 
that $\rho(\Psi(\rvec)(s,t_k)V) =  \rho(\Psi(\rvec)(s,t_k)V^\tran) < 1$, and by the continuity of the spectral radius that $\rho(\Psi(\rvecstar)V)\le 1$. If $\rvecstar=0$, then 
$\Psi(\rvecstar)=s^{-2} I$ and $\rho(\Psi(\rvecstar)V)=s^{-2}\rhoV>1$ since
$s^2\in (0,\rhoV)$, which yields a contradiction. 
Necessarily, $\rvecstar\posneq 0$. 
\end{proof}

Theorem~\ref{thm:master}--\eqref{q:sys1} and \eqref{q:sys2} are now consequences of the following lemma. 

\begin{lemma}[Uniqueness of solutions to the Master Equations] 		\label{lem:qunique}	
Let $V$ be a nonnegative and irreducible $n\times n$ matrix, and let 
$\qvec \succcurlyeq 0$ be a solution of the system \eqref{def:ME2}, which 
exists by the previous lemma.
\begin{enumerate}
\item\label{s2>rho} If $s^2 \geq \rhoV$, then $\qvec=0$.
\item\label{s2<rho} 
If $s^2 \in (0,\rhoV)$, then $\qvec$ is unique as a solution 
of~\eqref{def:ME2} satisfying $\qvec \posneq 0$. This solution 
satisfies $\qvec\succ 0$.
\end{enumerate}
\end{lemma}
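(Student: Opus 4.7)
For part (1), the first step is to upgrade any non-trivial solution $\qvec \succcurlyeq 0$ to a strictly positive one. The trace constraint $\langle \bs{1}_n, \bq\rangle = \langle \bs{1}_n, \bqt\rangle$ rules out exactly one of $\bq, \bqt$ vanishing, so both are $\posneq 0$. The first master equation reads $\Psi(\qvec)V^\tran \bq = \bq$, and since $\Psi(\qvec)$ is a strictly positive diagonal matrix, $\Psi(\qvec)V^\tran$ is irreducible with the same zero pattern as $V^\tran$. By Perron--Frobenius the nonnegative nonzero eigenvector $\bq$ is in fact strictly positive, with $\rho(\Psi(\qvec)V^\tran) = 1$; symmetrically $\bqt \succ 0$. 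Since $\psi_i \le s^{-2}$ for every $i$, we have $\Psi(\qvec)V^\tran \preccurlyeq s^{-2}V^\tran$, so Proposition~\ref{prop:seneta-1} gives $1 = \rho(\Psi(\qvec)V^\tran) \le s^{-2}\rho(V)$. The equality case $s^2 = \rho(V)$ is excluded by the second statement of the same proposition: $\Psi(\qvec)V^\tran = s^{-2}V^\tran$ would force $\psi_i = s^{-2}$ on every index $i$ such that some entry of row $i$ of $V^\tran$ is nonzero --- that is, every $i$, by irreducibility --- while strict positivity of $(V\bqt)_i(V^\tran \bq)_i$ forces $\psi_i < s^{-2}$. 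This proves part (1) and, simultaneously, the strict positivity assertion in part (2).

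For part (2) the main content is uniqueness. Given two strictly positive solutions $\qvec^{(1)} = (\bq^{(1)}, \bqt^{(1)})$ and $\qvec^{(2)} = (\bq^{(2)}, \bqt^{(2)})$ at a common $s^2 \in (0, \rho(V))$, I would compare them via the pointwise ratios $\alpha_i := q_i^{(1)}/q_i^{(2)}$ and $\tilde\alpha_i := \tilde q_i^{(1)}/\tilde q_i^{(2)}$ and their extrema
\[
M := \max_i \max(\alpha_i, \tilde\alpha_i), \qquad m := \min_i \min(\alpha_i, \tilde\alpha_i),
\]
aiming to show $M = m$. Rewriting the master equations as
\[
\frac{1}{q_i} = \frac{s^2}{(V^\tran \bq)_i} + (V\bqt)_i, \qquad \frac{1}{\tilde q_i} = \frac{s^2}{(V\bqt)_i} + (V^\tran \bq)_i,
\]
and combining with the pointwise bounds $m(V^\tran \bq^{(2)})_i \le (V^\tran \bq^{(1)})_i \le M(V^\tran \bq^{(2)})_i$ (and likewise for $V\bqt^{(1)}$), evaluation at any index where $M$ is attained yields $mM \le 1$ after multiplying through by $M$, while evaluation at an index where $m$ is attained yields $mM \ge 1$. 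Hence $mM = 1$, and the underlying estimates must hold with equality.

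The main obstacle is promoting $mM = 1$ to $M = m$, where irreducibility of $V$ enters critically. The equality $(V^\tran \bq^{(1)})_k = M(V^\tran \bq^{(2)})_k$ at a maximizing index $k$ reads $\sum_j V_{jk}(M - \alpha_j) q_j^{(2)} = 0$ and so forces $\alpha_j = M$ for every $j \in \mN_{V^\tran}(k)$. If $M$ is attained by some $\alpha_k$, the set $\{i : \alpha_i = M\}$ is then closed under taking in-neighbors in the digraph of $V$, and strong connectivity propagates the equality to all of $[n]$. The companion equality $(V\bqt^{(1)})_k = m(V\bqt^{(2)})_k$ similarly forces $\tilde\alpha_j = m$ on $\mN_V(k)$, whereupon $\{i : \tilde\alpha_i = m\}$---nonempty after one propagation step---is closed under out-neighbors and equals $[n]$. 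The symmetric case in which $M$ is attained by some $\tilde\alpha_k$ is handled by swapping the roles of $\bq$ and $\bqt$, yielding $\tilde\alpha_i \equiv M$ and $\alpha_i \equiv m$. In either case, applying the trace identity $\sum_i q_i^{(j)} = \sum_i \tilde q_i^{(j)}$ to both solutions collapses to $M \sum_i q_i^{(2)} = m \sum_i q_i^{(2)}$ (or the same with the roles of $M, m$ swapped), forcing $M = m$ and contradicting $M > m$. Thus $\alpha_i \equiv \tilde\alpha_i \equiv c$ for some $c > 0$, and substituting $\bq^{(1)} = c\bq^{(2)}$, $\bqt^{(1)} = c\bqt^{(2)}$ into the master equation collapses it to $(c^2 - 1) q_i^{(2)} (V\bqt^{(2)})_i (V^\tran \bq^{(2)})_i = 0$, forcing $c = 1$.
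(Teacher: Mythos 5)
Your proof is correct, and while Part~(1) follows essentially the paper's own line (Perron--Frobenius on the irreducible matrix $\Psi(\qvec)V^\tran$, entrywise comparison with $s^{-2}V^\tran$ via Proposition~\ref{prop:seneta-1}), your Part~(2) takes a genuinely different route from the paper's. The paper proves uniqueness by an elegant but somewhat elaborate Cauchy--Schwarz scheme: it first shows $\Psi(\qvec)\neq\Psi(\vec{\bs q}')$ for distinct positive solutions (via the one-dimensionality of the Perron eigenspace plus the trace constraint), then introduces the normalized errors $\varepsilon_i=|\varphi_i-\varphi_i'|/\sqrt{\varphi_i\varphi_i'}$, builds a nonnegative matrix $K_{\vec{\bs q},\vec{\bs q}'}$ whose row sums are $\le 1$ by Cauchy--Schwarz and strictly $<1$ in at least one row, invokes Proposition~\ref{prop:seneta-2} to get $\rho(K_{\vec{\bs q},\vec{\bs q}'})<1$, and concludes $\bs\varepsilon=0$. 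Your argument instead is a classical Perron/Hopf max-ratio comparison: setting $M$ and $m$ to be the extremal pointwise ratios, you invert the master equations as $1/q_i = s^2/(V^\tran\bq)_i + (V\bqt)_i$, test at an index attaining $M$ and at one attaining $m$ to get $Mm\le1$ and $Mm\ge1$, exploit tightness of the bounds to propagate the extremal ratio along in- or out-edges of the strongly connected digraph $\Gamma(V)$, and then let the trace constraint force $M=m$, hence $M=m=1$. This avoids the Cauchy--Schwarz machinery and the preliminary $\Psi\neq\Psi'$ lemma; it is arguably more elementary while using the trace constraint and irreducibility in an equally essential way. Two minor remarks on exposition: (i) once $M=m$ is established, $Mm=1$ already gives $M=m=1$, so the final substitution yielding $(c^2-1)q_i^{(2)}(V\bqt^{(2)})_i(V^\tran\bq^{(2)})_i=0$ is redundant; (ii) the phrase ``contradicting $M>m$'' is unnecessary framing --- the argument simply derives $M=m$ unconditionally from the propagated equalities $\bq^{(1)}=M\bq^{(2)}$, $\bqt^{(1)}=m\bqt^{(2)}$ and the trace constraint.
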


\begin{proof} 
We first prove Item \eqref{s2>rho}. Observe that $\Psi(\qvec)V^\tran $ is a
nonnegative irreducible matrix for all $\qvec \succcurlyeq 0$. Assume that
$\bs q \posneq 0$. Then $\rho(\Psi(\qvec)V^\tran ) = 1$ and $\bs q \succ 0$ 
by Proposition~\ref{prop:seneta-2}. From the equation 
$\boldsymbol{1}^\tran  \q = \boldsymbol{1}^\tran  \qtilde$ obtained 
from~\eqref{def:ME2}, we have $\qtilde \posneq 0$. Therefore, 
$\qtilde \succ 0$ by an argument similar to the one used for $\bs q$. 
Consequently, $(V^\tran  \bs q)_i (V \qtilde)_i > 0$ for all $i\in[n]$, 
leading to the contradiction 
\[
1 = \rho(\Psi(\qvec)V^\tran ) < \rho(s^{-2} V^\tran ) = s^{-2} \rhoV \leq 1
\]
where the strict inequality is due to Proposition~\ref{prop:seneta-1}. 
Hence $\qvec=0$.

We now turn to Item \eqref{s2<rho}. The argument $\qvec \posneq 0 
\Rightarrow \qvec \succ 0$ is identical to Item~\eqref{s2>rho}. 

The first step towards establishing uniqueness of the solution is showing that if 
$\qvec = 
(\q^\tran , \qtilde^\tran )^\tran $ and $\vec{\bs q'} = ((\bs q')^\tran , (\bs{\tilde q}')^\tran )^\tran $ 
are two positive solutions such that $\qvec \neq \vec{\bs q'}$, then 
$\Psi(\qvec) \neq \Psi(\vec{\bs q'})$. Assume the contrary. The equation 
$\q = \Psi(\qvec)V^\tran  \q$ shows that $1$ is the Perron--Frobenius eigenvalue 
of the irreducible matrix $\Psi(\qvec)V^\tran $ (Proposition~\ref{prop:seneta-2}). Since 
its eigenspace has the
dimension one, we get that $\q = \alpha \bs q'$ for some $\alpha > 0$.
A similar argument shows that $\qtilde = \tilde\alpha \bs{\tilde q}'$ for
some $\tilde\alpha > 0$. Using the assumption 
$\Psi(\qvec) = \Psi(\vec{\bs q'})$ again and inspecting the expressions of 
these terms, we get that $\alpha = \tilde\alpha^{-1}$. Moreover, the equations 
$\boldsymbol{1}^\tran  \q = \boldsymbol{1}^\tran  \qtilde$ and 
$\boldsymbol{1}^\tran  \bs q' = \boldsymbol{1}^\tran  \bs{\tilde q}'$ show that 
$\alpha = \tilde\alpha$. This implies that $\qvec = \vec{\bs q'}$, a 
contradiction. 

To establish the uniqueness, let us still consider the two positive solutions 
$\qvec \neq \vec{\bs q'}$. Write $\Psi = \diag(\psi_i) = \Psi(\qvec)$ and 
$\Psi' = \diag(\psi_i') = \Psi(\vec{\bs q}')$, and define the vectors  
\[
\bs\varphi = \begin{pmatrix} \varphi_1 \\ \vdots \\ \varphi_n \end{pmatrix} 
= V \bs{\tilde q} , \quad 
\bs{\tilde\varphi} = 
\begin{pmatrix} \tilde\varphi_1 \\ \vdots \\ \tilde\varphi_n \end{pmatrix} 
= V^\tran  \bs q , \quad 
\vec{\bs\varphi} = \begin{pmatrix} \bs\varphi \\ \bs{\tilde\varphi} 
\end{pmatrix}, 
\]
and their similarly defined analogues $\bs\varphi'$, $\bs{\tilde\varphi}'$,
and $\vec{\bs\varphi}'$. It holds by the irreducibility of $V$ that 
$\vec{\bs\varphi}, \vec{\bs\varphi}' \succ 0$. We now write 
\[
\varphi_i 
= \frac 1n \sum_{\ell=1}^n \sigma_{i,\ell}^2 \psi_\ell \varphi_\ell 
= \frac 1n \sum_{\ell=1}^n \sigma_{i,\ell}^2 \psi_\ell^2 
( s^2 \varphi_\ell + \varphi_\ell^2 \tilde\varphi_\ell ) 
\]
and a similar equation for $\tilde\varphi_i$, giving rise to the identity
\[
\vec{\bs\varphi} = \begin{pmatrix} 
s^2 V \Psi^2 & V \Psi^2 \bs\Phi^2 \\
V^\tran  \Psi^2 \widetilde{\bs\Phi}^2 & s^2 V^\tran  \Psi^2
\end{pmatrix} 
\vec{\bs\varphi} 
\]
where $\bs\Phi = \diag(\varphi_i)$ and 
$\widetilde{\bs\Phi} = \diag(\tilde\varphi_i)$. Equivalently, the nonnegative 
matrix
\[
K_{\vec{\bs q}} := 
\begin{pmatrix} 
s^2 \bs\Phi^{-1} V \Psi^2 \bs\Phi & 
\bs\Phi^{-1} V \Psi^2 \bs\Phi^2 \widetilde{\bs\Phi} \\
\widetilde{\bs\Phi}^{-1} V^\tran  \Psi^2 \widetilde{\bs\Phi}^2 \bs\Phi 
& s^2 \widetilde{\bs\Phi}^{-1} V^\tran  \Psi^2 \widetilde{\bs\Phi} 
\end{pmatrix} 
\]
satisfies $K_{\vec{\bs q}} \bs 1 = \bs 1$. Considering now the two
solutions $\vec{\bs\varphi}$ and $\vec{\bs\varphi}'$, we can write 
\begin{align*}
\varepsilon_i &:= 
\Bigl|\frac{\varphi_i - \varphi'_i}{\sqrt{\varphi_i\varphi'_i}}\Bigr| \\
&= 
\frac{1}{\sqrt{\varphi_i\varphi'_i}} 
\frac 1n \Bigl| 
\sum_{\ell=1}^n \sigma_{i,\ell}^2 ( \psi_\ell\varphi_\ell - 
\psi_\ell' \varphi_\ell' ) \Bigr| 
= 
\frac{1}{\sqrt{\varphi_i\varphi'_i}} 
\frac 1n \Bigl| 
\sum_{\ell=1}^n \sigma_{i,\ell}^2 \psi_\ell\psi'_\ell 
  ( (\psi_\ell')^{-1} \varphi_\ell - \psi_\ell^{-1} \varphi_\ell' ) \Bigr| 
\\
&= \frac 1n \Bigl| \sum_{\ell=1}^n 
\Bigl( \frac{\sigma_{i,\ell}^2 s^2 \psi_\ell \psi_\ell' 
             \sqrt{\varphi_\ell \varphi'_\ell}}{\sqrt{\varphi_i\varphi'_i}} 
     \frac{\varphi_\ell - \varphi'_\ell}{\sqrt{\varphi_\ell \varphi'_\ell}} 
+  \frac{\sigma_{i,\ell}^2 \psi_\ell \psi_\ell'
  \varphi_\ell \varphi'_\ell 
         \sqrt{\tilde\varphi_\ell \tilde\varphi'_\ell}} 
       {\sqrt{\varphi_i\varphi'_i}} 
     \frac{\tilde\varphi'_\ell - \tilde\varphi_\ell}
          {\sqrt{\tilde\varphi_\ell \tilde\varphi'_\ell}}  \Bigr)\Bigr|  \\
&\leq 
\frac 1n \sum_{\ell=1}^n 
\Bigl( \frac{\sigma_{i,\ell}^2 s^2 \psi_\ell \psi_\ell' 
             \sqrt{\varphi_\ell \varphi'_\ell}}{\sqrt{\varphi_i\varphi'_i}} 
\Bigl| \frac{\varphi_\ell - \varphi'_\ell}{\sqrt{\varphi_\ell \varphi'_\ell}} 
 \Bigr|
+  \frac{\sigma_{i,\ell}^2 \psi_\ell \psi_\ell'
  \varphi_\ell \varphi'_\ell 
         \sqrt{\tilde\varphi_\ell \tilde\varphi'_\ell}} 
       {\sqrt{\varphi_i\varphi'_i}} 
     \Bigl| \frac{\tilde\varphi_\ell - \tilde\varphi'_\ell}
          {\sqrt{\tilde\varphi_\ell \tilde\varphi'_\ell}} \Bigr| \Bigr) 
\end{align*}
for every $i \in [n]$, and we also have a similar inequality for 
$\tilde\varepsilon_i := | (\tilde\varphi_i - \tilde\varphi'_i) / 
\sqrt{\tilde\varphi_i\tilde\varphi'_i} |$. It results that the vector 
$\bs\varepsilon = ( \varepsilon_1,\ldots,\varepsilon_n, \tilde\varepsilon_1,
\ldots,\tilde\varepsilon_n)^\tran $ satisfies the inequality 
$\bs\varepsilon \preccurlyeq K_{\vec{\bs q},\vec{\bs q}'} \bs\varepsilon$, 
where
\[
K_{\vec{\bs q},\vec{\bs q}'} := 
\begin{pmatrix} 
s^2 (\bs\Phi \bs\Phi')^{-1/2} V \Psi \Psi'
 (\bs\Phi \bs\Phi')^{1/2} & 
(\bs\Phi \bs\Phi')^{-1/2} V \Psi \Psi'
\bs\Phi \bs\Phi' (\widetilde{\bs\Phi} \widetilde{\bs\Phi}')^{1/2} \\
(\widetilde{\bs\Phi}\widetilde{\bs\Phi}')^{-1/2} V^\tran  \Psi \Psi'
\widetilde{\bs\Phi} \widetilde{\bs\Phi}' (\bs\Phi \bs\Phi')^{1/2} 
& s^2 (\widetilde{\bs\Phi}\widetilde{\bs\Phi}')^{-1/2} V^\tran  \Psi \Psi' 
(\widetilde{\bs\Phi} \widetilde{\bs\Phi}')^{1/2} 
\end{pmatrix} , 
\]
and $\bs\Phi' = \diag(\varphi_i')$, 
$\widetilde{\bs\Phi}' = \diag(\tilde\varphi_i')$. 
By applying the Cauchy-Schwarz inequality to the scalar products 
$\bs x_m \bs 1, m = 1,\ldots, n$, where $\bs x_m$ is the row $m$ of 
$K_{\vec{\bs q},\vec{\bs q}'}$, we get that 
$K_{\vec{\bs q},\vec{\bs q}'} \bs 1 \preccurlyeq 
(K_{\vec{\bs q}} \bs 1) \odot (K_{\vec{\bs q}'} \bs 1) = \bs 1$. 

Now, for any $k\in\N$, we have
\[ 
\begin{pmatrix} V & V \\ V^\tran  & V^\tran  \end{pmatrix}^k 
\succcurlyeq 
\begin{pmatrix} V^k & V^k \\ (V^\tran )^k & (V^\tran )^k \end{pmatrix} . 
\]
Since $\vec{\bs\varphi}, \vec{\bs\varphi}' \succ 0$ and $V$ is 
irreducible, it holds that for any $(i,j) \in [2n]^2$, there exists 
$k \in [n]$ such that $[K_{\vec{\bs q},\vec{\bs q}'}^k]_{ij} > 0$, implying 
that $K_{\vec{\bs q},\vec{\bs q}'}$ is irreducible. 
Relying on these results, we will show that there exists $i\in[n]$ such that
$\bs x_i \bs 1 < 1$, i.e.\ Cauchy-Schwarz inequality is strict for this 
row vector. Proposition~\ref{prop:seneta-2} will then show that
$\rho( K_{\vec{\bs q},\vec{\bs q}'} ) < 1$. By consequence, the only 
solution to the inequality
$\bs\varepsilon \preccurlyeq K_{\vec{\bs q},\vec{\bs q}'} \bs\varepsilon$ will
be $\bs\varepsilon = 0$, contradicting the assertion $\qvec \neq \vec{\bs q'}$,
and the uniqueness of the solution of~\eqref{def:ME2} for 
$\qvec \posneq 0$ will follow. 

Recalling that $\Psi(\qvec) \neq \Psi(\vec{\bs q'})$, there exists $\ell\in[n]$
such that 
$\varphi_\ell \tilde\varphi_\ell\neq \varphi'_\ell \tilde\varphi'_\ell$. 
Since $V$ is irreducible, no column of this matrix is zero. Therefore, we can
choose $i\in[n]$ such that $\sigma^2_{i,\ell} > 0$. Consider the vector 
\[
\bs v_i := \Bigl( 
\Bigl( \frac{s \sigma_{i,m} \psi_m(\qvec) \sqrt{\varphi_m}}
{\sqrt{n} \sqrt{\varphi_i}} \Bigr)_{m=1}^n, 
\Bigl( \frac{\sigma_{i,m} \psi_m(\qvec) \varphi_m \sqrt{\tilde\varphi_m}}
{\sqrt{n} \sqrt{\varphi_i}} \Bigr)_{m=1}^n 
\Bigr) = 
\Bigl( (v_{1,m})_{m=1}^n,  (v_{2,m})_{m=1}^n \Bigr) 
\]
and his analogue $\bs v'_i$ (with the obvious notations). Consider also the
$2 \times 2$ matrix 
\[
M = \begin{pmatrix} v_{1,\ell} & v_{2,\ell} \\ v'_{1,\ell} & v'_{2,\ell} 
\end{pmatrix} = 
\begin{pmatrix} \frac{\sigma_{i,\ell} \psi_\ell(\qvec)}
{\sqrt{n}\sqrt{\varphi_i}} & \\
&  \frac{\sigma_{i,\ell} \psi_\ell(\vec{\bs q}')}
{\sqrt{n}\sqrt{\varphi'_i}} \end{pmatrix}
\begin{pmatrix} 
\sqrt{\varphi_\ell} & \sqrt{\tilde\varphi_\ell} \varphi_\ell \\
\sqrt{\varphi_\ell'} & \sqrt{\tilde\varphi_\ell'} \varphi_\ell' 
\end{pmatrix} 
\begin{pmatrix} s & \\ & 1 \end{pmatrix} 
:= M_1 M_2 M_3. 
\] 
Since $\det(M_2) = \sqrt{\varphi_\ell \varphi_\ell'}  
( \sqrt{\varphi'_\ell \tilde{\varphi}_\ell'} -   
  \sqrt{\varphi_\ell \tilde\varphi_\ell} ) \neq 0$, the vectors $\bs v_i$ and
$\bs v_i'$ are not collinear. Therefore,    
\[
\bs x_i \bs 1 = \langle \bs v_i, \bs v'_i \rangle < 
\| \bs v_i \|^2 \| \bs v'_i \|^2 = 
(K_{\vec{\bs q}} \bs 1)_i \, (K_{\vec{\bs q}'} \bs 1)_i = 1 . 
\] 
Lemma~\ref{lem:qunique} is proved. 
\end{proof}

It remains to establish Theorem~\ref{thm:master}--\eqref{q:contdif}. This is a 
consequence of following lemma, which also provides an expression for 
$\nabla \qvec(s)$ on $(0,\rhoV^{1/2})$.

\begin{lemma}
\label{q:dif} 
Assume that the nonnegative $n\times n$ matrix $V$ is irreducible. Then the 
function $s\mapsto \qvec(s)$ is continuous on $(0,\infty)$, and is 
continuously differentiable on $(0,\rhoV^{1/2}) \cup (\rhoV^{1/2}, \infty)$. 
Setting
\begin{gather*} 
M(s) = \begin{pmatrix} 
s^2 \Psi(\qvec(s))^2 V^\tran  & - \Psi(\qvec(s))^2 \widetilde{\bs\Phi}(s)^2 V  \\
- \Psi(\qvec(s))^2 {\bs\Phi}(s)^2 V^\tran  & s^2 \Psi(\qvec(s))^2 V
\end{pmatrix}, \\ 
{\bs\Phi}(s) = \diag(\varphi_i(s))_{i=1}^n, \ 
\widetilde{\bs\Phi}(s) = \diag(\tilde\varphi_i(s))_{i=1}^n, \quad 
\varphi_i(s) = (V \bs{\tilde q}(s))_i , \ 
\tilde\varphi_i(s) = (V^\tran  \bs{q}(s))_i,  \\
A(s) = \begin{pmatrix} I_{2n} - M(s) \\
\begin{pmatrix} \bs 1_n^\tran  & - \bs 1_n^\tran  \end{pmatrix} \end{pmatrix} 
\in \R^{(2n+1)\times 2n} ,  
\quad \text{and} \quad 
b(s) = - \begin{pmatrix} 
\Psi(\qvec(s))^2 V^\tran  \bs q(s) \\
\Psi(\qvec(s))^2 V \bs{\tilde q}(s)  \\ 
0 \end{pmatrix} \in \R^{2n+1} , 
\end{gather*}
the matrix $A(s)$ has a full column rank, and
\[
\nabla \qvec(s) = 2s A(s)^{-\text{L}} b(s),  
\] 
where $A(s)^{-\text{L}}$ is the left inverse of $A(s)$. On 
$(\rhoV^{1/2}, \infty)$, it holds that $\qvec(s) = \nabla \qvec(s) = 0$. 
\end{lemma}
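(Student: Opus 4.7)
The claim on $(\sqrt{\rhoV}, \infty)$ is immediate from Theorem~\ref{thm:master}\eqref{q:sys1}, so I focus on the open interval $(0, \sqrt{\rhoV})$. My plan is to apply the implicit function theorem to the map $\mathcal{F}:(0, \sqrt{\rhoV})\times \R^{2n}\to \R^{2n+1}$ defined by
\[
\mathcal{F}(s, \vec{\bs w}) = \bigl(\vec{\bs w} - \mathcal{I}(\vec{\bs w}, s),\; \bs 1_n^\tran(\bs w_1 - \bs w_2)\bigr)^\tran ,
\]
which, by Theorem~\ref{thm:master}\eqref{q:sys2}, vanishes along the known curve $s\mapsto (s,\qvec(s))$. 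Writing the Master Equations componentwise as $q_i = \psi_i \tilde\varphi_i$ and $\tilde q_i = \psi_i \varphi_i$, direct differentiation of $\mathcal{I}$ yields $D_{\vec{\bs w}}\mathcal{I}(\qvec(s), s) = M(s)$ and $\partial_s \mathcal{I}(\qvec(s), s) = -2s\,(\Psi^2 V^\tran \bq;\, \Psi^2 V \bqt)^\tran$, so the $\vec{\bs w}$-Jacobian of $\mathcal{F}$ at $\qvec(s)$ is exactly $A(s)$, and differentiating $\mathcal{F}(s,\qvec(s))=0$ in $s$ gives the relation $A(s)\nabla \qvec(s) = 2s\, b(s)$.

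The crucial step is establishing that $A(s)$ has full column rank, i.e.\ that $\ker(I - M(s))$ meets the hyperplane $\{\bs 1_n^\tran(\bs v_1 - \bs v_2) = 0\}$ only at $0$. Using the Master Equations and the identity $\psi_i(s^2 + \varphi_i\tilde\varphi_i) = 1$, I will verify directly that
\[
M(s)\begin{pmatrix} \bq \\ -\bqt \end{pmatrix} = \begin{pmatrix} \bq \\ -\bqt \end{pmatrix},
\]
so $(\bq,-\bqt)^\tran \in \ker(I - M(s))$. Conjugating by the sign matrix $D = \diag(I_n, -I_n)$ gives $D M(s) D = |M(s)|$ (the entrywise modulus), hence $\ker(I - M(s))$ and $\ker(I - |M(s)|)$ have equal dimensions, and the positive vector $(\bq, \bqt)^\tran \succ 0$ lies in the latter. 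I then verify that $|M(s)|$ is nonnegative and irreducible: its block sparsity pattern is proportional to $V^\tran$ (top--top and bottom--top blocks) and $V$ (bottom--bottom and top--bottom blocks), and the strong connectivity of $\Gamma(V)$ together with abundant cross edges (each row and column of $V$ contains a positive entry by irreducibility) produces strong connectivity of the associated $2n$-vertex digraph. Perron--Frobenius (Proposition~\ref{prop:seneta-2}) then yields $\rho(|M(s)|) = 1$ with a one-dimensional eigenspace, so $\ker(I - M(s)) = \mathrm{span}\{(\bq, -\bqt)^\tran\}$, and the constraint excludes this line since $\bs 1_n^\tran(\bq + \bqt) = 2\langle \bs 1_n, \bq\rangle > 0$ by Theorem~\ref{thm:master}\eqref{q:sys2}.

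With the rank of $A(s)$ pinned down, the standard implicit function theorem applied to any $2n\times 2n$ nonsingular submatrix of $A(s)$, together with the uniqueness of the positive solution (Lemma~\ref{lem:qunique}\eqref{s2<rho}), yields that $\qvec$ is $C^1$ on $(0, \sqrt{\rhoV})$, and the formula $\nabla \qvec(s) = 2s A(s)^{-\mathrm{L}} b(s)$ is obtained by inverting $A(s)\nabla \qvec(s) = 2s b(s)$ through any left inverse. For global continuity on $(0,\infty)$ it remains to check the threshold $s = \sqrt{\rhoV}$: Lemma~\ref{lem:qbound} (viewing $\qvec(s)$ as the $t\downarrow 0$ limit of $\rvec(s,t)$) shows that $\{\qvec(s)\}$ is bounded on compact subsets of $(0,\infty)$, and any subsequential limit of $\qvec(s_k)$ as $s_k\uparrow \sqrt{\rhoV}$ is a nonnegative solution of \eqref{def:ME2} at $s = \sqrt{\rhoV}$, hence vanishes by Theorem~\ref{thm:master}\eqref{q:sys1}.

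The main technical obstacle is the spectral analysis of $M(s)$: identifying $\ker(I - M(s))$ explicitly and showing it is exactly one-dimensional. Via the sign conjugation $DM(s)D = |M(s)|$, this reduces to Perron--Frobenius for the nonnegative irreducible matrix $|M(s)|$, so the real technical piece is the transfer of irreducibility from $V$ to $|M(s)|$ through the block structure. Once this is in hand, the constraint $\bs 1_n^\tran \bq = \bs 1_n^\tran \bqt$---precisely the extra equation that distinguishes the Master Equations from their regularized counterpart---plays the dual role of selecting the nontrivial solution in Theorem~\ref{thm:master} and of breaking the unique kernel direction of $I - M(s)$.
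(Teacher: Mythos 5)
Your argument is essentially correct, and the key structural lemma---identifying $\ker(I-M(s))$ via the sign conjugation $D M(s) D = N(s)$ and Perron--Frobenius for the irreducible nonnegative matrix $N(s)$, then observing that the trace constraint excludes the one-dimensional kernel $\mathrm{span}\{(\bq,-\bqt)^\tran\}$---is exactly the one the paper uses. The genuine divergence is in how the differentiability is extracted from this rank information: you invoke the implicit function theorem on the overdetermined map $\mathcal F(s,\vec{\bs w})$ and then restrict to a nonsingular $2n\times 2n$ subsystem, whereas the paper forms explicit difference quotients, writes $(I-\bs M(s,s'))\frac{\qvec-\qvec'}{s^2-s'^2}=\bs a(s,s')$, augments with the trace constraint, and passes to the limit using the continuity of $\bs A(s,s')$ and $\bs b(s,s')$. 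Your IFT route is shorter and conceptually cleaner, while the paper's difference-quotient route is more self-contained and makes the limiting procedure explicit. One small imprecision in your write-up: at the final step, after applying IFT to a $2n\times 2n$ nonsingular subsystem, the invocation of Lemma~\ref{lem:qunique}\eqref{s2<rho} is not quite what is needed---that lemma gives global uniqueness of the positive solution of the full system, but what you actually need is the \emph{local} uniqueness clause of the IFT applied to the chosen subsystem, combined with the already-established continuity of $s\mapsto\qvec(s)$. Since $\qvec(\cdot)$ is a continuous curve that solves the subsystem (as it solves the full system), local IFT uniqueness identifies it with the $C^1$ IFT curve near $(s,\qvec(s))$. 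With that substitution the argument is complete.
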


\begin{proof} 
We already know that $\qvec(s) = 0$ on $[\rhoV^{1/2},\infty)$, and we can 
easily check that $\nabla \qvec(s) = 2s A(s)^{-\text{L}} b(s)$ on 
$(\rhoV^{1/2},\infty)$.  

Let us show that $\q(s)$ is continuous on $(0,\rhoV^{1/2})$.  
Fix $s\in (0, \rhoV^{1/2})$. When $u$ belongs to a small neighborhood of $s$ in
$(0, \rhoV^{1/2})$, the function $\qvec(u)$ is bounded by
Lemma~\ref{lem:qbound}, since $\qvec(u)$ is the limit as $t\downarrow 0$
of the bounded function $\qvec(u,t)$. Let $u_k \to_k s$ be such that
$\qvec(u_k) \to_k \qvec_*$. The vector $\qvec_*$ is clearly a solution
to~\eqref{def:ME2}.  Observing that $\rho(\Psi(\qvec(u_k)) V) = 1$, we
get by the continuity of the spectral radius that $\rho(\Psi(\qvec_*) V) = 1$.
If $\qvec_*$ were equal to zero, then we would have 
$\rho(\Psi(\qvec_*) V) = \rho(s^{-2} V) > 1$, a contradiction.  Therefore
$\qvec \posneq 0$, and by Lemma~\ref{lem:qunique}--\eqref{s2<rho}, 
$\qvec_* = \qvec(s)$ since $\qvec(s)$ is the only nonnegative and non zero
solution to~\eqref{def:ME2}.

To obtain the continuity of $\qvec(s)$ on $(0,\infty)$, all what remains to 
prove is that $\qvec(u) \to 0$ as $u \uparrow \rhoV^{1/2}$. Relying on
Lemma~\ref{lem:qbound}, take a sequence $u_k \uparrow_k \rhoV^{1/2}$
such that $\qvec(u_k) \to_k \qvec_*$. Then, the same argument as in the proof 
of Lemma~\ref{lem:qunique}--\eqref{s2>rho} shows that $\qvec_* = 0$. 

To establish the differentiability of $\qvec(s)$ on $(0,\rhoV^{1/2})$, we 
start by writing 
\[
q_i(s) = \psi_i \tilde\varphi_i = 
\psi_i^2 ( s^2 \tilde\varphi_i + \tilde\varphi_i^2 \varphi_i ) 
= \psi_i^2 ( s^2 ( V^\tran \bs q )_i 
   + \tilde\varphi_i^2 ( V \bs{\tilde q} )_i ) .  
\]
Doing a similar derivation for $\tilde q_i(s)$, we get the equation 
$\qvec(s) = N(s) \qvec(s)$, where 
\[
N(s) = \begin{pmatrix} 
s^2 \Psi(\qvec(s))^2 V^\tran & \Psi(\qvec(s))^2 \widetilde{\bs\Phi}(s)^2 V  \\
  \Psi(\qvec(s))^2 \bs\Phi(s)^2 V^\tran & s^2 \Psi(\qvec(s))^2 V
\end{pmatrix} .  
\]
As in the proof of Lemma~\ref{lem:qunique}--\eqref{s2<rho}, we can show that 
$N(s)$ is irreducible.  Thus, the Perron--Frobenius eigenvalue of $N(s)$ is 
equal to one, it is algebraically simple, and its associated eigenspace is 
generated by $\qvec(s)$. 

Now, given two real numbers $s, s' \in (0, \rhoV^{1/2})$ with $s\neq s'$,
we have 
\begin{align*}
q_i - q_i' &= 
   \psi_i \tilde\varphi_i - \psi_i' \tilde\varphi_i' 
= \psi_i \psi_i' \bigl( (\psi_i')^{-1} \tilde\varphi_i - 
\psi_i^{-1} \tilde\varphi_i' \bigr) \\
&=  \psi_i \psi_i' \bigl( 
- (s^2-s'^2) \tilde\varphi_i + s^2 ( \tilde\varphi_i - \tilde\varphi_i' ) 
- \tilde\varphi_i \tilde\varphi_i' ( \varphi_i - \varphi_i') 
\bigr)  
\end{align*}
where we set $q_i = q_i(s)$ and $q'_i = q(s')$, and we used the same notational
shortcut for all the other quantities. We thus have
\begin{align*}
\frac{q_i - q_i'}{s^2 - s'^2} &= 
- \Bigl( \Psi \Psi' V^\tran \bs q \Bigr)_i + 
\Bigl( s^2 \Psi \Psi' V^\tran \frac{\bs q - \bs q'}{s^2 - s'^2} \Bigr)_i
- \Bigl( \Psi \Psi' \widetilde{\bs\Phi} \widetilde{\bs\Phi}' V 
   \frac{\bs{\tilde q} - \bs{\tilde q}'}{s^2-s'^2} \Bigr)_i . 
\end{align*}
Doing a similar derivation for $\tilde q_i - \tilde q_i'$, we obtain
the system
\[
( I - \bs M(s,s') ) \frac{\qvec - \qvec'}{s^2 - s'^2}
= \bs a(s,s')
\]
where 
\begin{gather*} 
\bs M(s,s') = \begin{pmatrix} 
s^2 \Psi \Psi' V^\tran & 
       - \Psi \Psi' \widetilde{\bs\Phi} \widetilde{\bs\Phi}' V  \\
- \Psi \Psi' {\bs\Phi} {\bs\Phi}' V^\tran & s^2 \Psi \Psi' V
\end{pmatrix} \quad \text{and} \quad 
\bs a(s,s') = 
- \begin{pmatrix} \Psi\Psi' V^\tran \bs q \\
\Psi\Psi' V \bs{\tilde q} 
\end{pmatrix} . 
\end{gather*} 
Using in addition the identity $\sum q_i = \sum \tilde q_i$, we get
the system $\bs A(s,s') (\qvec - \qvec') / (s^2 - s'^2) = \bs b(s,s')$,  
where
\[
\bs A(s,s') = \begin{pmatrix} I - \bs M(s,s')  \\
\begin{pmatrix} \bs 1_n^\tran & - \bs 1_n^\tran \end{pmatrix} \end{pmatrix} 
\in \R^{(2n+1)\times 2n} 
\quad \text{and} \quad 
\bs b(s,s') = \begin{pmatrix} \bs a(s,s') \\ 0 \end{pmatrix} .
\]
By the continuity of $\qvec(s)$, $\bs A(s,s') \to A(s)$ and 
$\bs b(s,s') \to b(s)$ as $s' \to s$. 
It is easy to see that $( \bs x^\tran, \bs{\tilde x}^\tran )^\tran$ is an eigenvector
of $M(s)$ if and only if $( \bs x^\tran, - \bs{\tilde x}^\tran )^\tran$ is an 
eigenvector of $N(s)$. Thus, the right null space $I - M(s)$ is spanned by 
$\bs v(s) := ( \bs q(s)^\tran, - \bs{\tilde q}(s)^\tran )^\tran$. But we clearly have
$A(s) \bs v(s) \neq 0$, hence the matrix $A(s)$ has a full column rank. 
Thus, for $s'$ close enough to $s$, 
\[
\frac{\qvec - \qvec'}{s^2 - s'^2} = 
\bs A(s,s')^{-\text{L}} \begin{pmatrix} \bs a(s,s') \\ 0 \end{pmatrix} 
\xrightarrow[s'\to s]{} A(s)^{-\text{L}}  b(s) 
\]
which shows that $\qvec(s)$ is differentiable for any 
$s \in (0, \rhoV^{1/2})$, with the gradient 
$2s A(s)^{-\text{L}}  b(s)$. The continuity of this gradient follows
from the continuity of $A(s)$ and $b(s)$ and the fact that $A(s)$ has a
full column rank for any $s \in (0, \rhoV^{1/2})$. 
\end{proof}

\section{Bounding solutions to the Regularized Master Equations via graphical bootstrapping}
\label{sec:bddness}


In this section we are concerned with establishing bounds on the solution $\rvec(s,t)\succ0$ to the Regularized Master Equations \eqref{def:MEt} that are uniform in the regularization parameter $t>0$.
Here we will view the standard deviation profile $A$ and all parameters as fixed.
Hence, we fix $n\ge1$ and consider an arbitrary nonnegative $n\times n$ matrix $A=(\sigma_{ij})$.
Putting $V=\frac1nA\odot A$ and fixing $s,t>0$, we let $\rvec=\rvec(s,t)$ denote the unique solution to the Regularized Master Equations satisfying $\rvec\succ 0$, as is provided by Proposition \ref{prop:MEt}.

\subsection{Some preparation and proofs of Propositions \ref{prop:sigmin} and \ref{prop:symmetry}}
We begin by recording some key estimates and identities that will be used repeatedly in the sequel.
We may write the Regularized Master Equations \eqref{def:MEt} in component form as
\begin{equation}	\label{qsys}
\frac1{\rr_i} = \vp_i + \frac{s^2}{\tp_i},\quad \quad \frac1{\rt_i} = \tp_i+ \frac{s^2}{\vp_i}
\end{equation}
where
\begin{equation}	\label{def:vtvp}
\tp_i := t+ (V^\tran \br)_i,\quad\quad \vp_i := t+(V\brt)_i
\end{equation}
(from Proposition \ref{prop:MEt} we have $\rr_i,\rt_i>0$ for all $i\in [n]$, so we are free to take reciprocals).
Additionally from Proposition \ref{prop:MEt} we have trace identity:
\begin{equation}	\label{qtrace}
\frac1n\sum_{j=1}^n \rr_j= \frac1n\sum_{j=1}^n \rt_j.
\end{equation}
From \eqref{qsys} it is immediate that
\begin{equation}	\label{qbds}
\frac12 \le  \frac{\rr_i}{\min\big(\tp_i/s^2, 1/\vp_i\big)}, \;
\frac{ \rt_i}{ \min\big(\vp_i/s^2, 1/\tp_i\big)} \le 1
\end{equation}
for all $i\in [n]$.
We can similarly bound the product
\begin{equation}	\label{qproduct}
\rr_i\rt_i = \frac{\vp_i\tp_i}{(s^2+\vp_i\tp_i)^2}  \le  \frac1{s^2} \min\Big( \frac{\vp_i\tp_i}{s^2}, \frac{s^2}{\vp_i\tp_i}\Big) \le 1/s^2.
\end{equation}
Hence, if for some $i\in[n]$ one of $\rr_i,\rt_i$ is large, the other is small.
Finally, we note the trivial upper bounds
\begin{equation}	\label{qbds:trivial}
\rr_i,\, \rt_i \le 1/t \quad \forall i\in [n].
\end{equation}

We now prove Propositions \ref{prop:sigmin} and \ref{prop:symmetry}.

\begin{proof}[Proof of Proposition \ref{prop:sigmin}]
Assume towards a contradiction that $\frac{1}{n} \sum_{j=1}^n \rr_j>1/\smin$.
Then there exists $i\in [n]$ such that $\rr_i> 1/\smin$.
From \eqref{qbds} it follows that
\[
1/\smin < 1/\vp_i = 1/(t+(V \brt)_i)
\]
and so
\[
\smin> (V \brt)_i = \frac{1}{n}\sum_{j=1}^n \sigma_{ij}^2\rt_j \ge \frac{\smin^2}{n} \sum_{j=1}^n \rt_j.
\]
Rearranging, we find $\frac1n\sum_{j=1}^n \rt_j <1/\smin$. From \eqref{qtrace} it follows that $\frac1n\sum_{j=1}^n \rr_j <1/\smin$, a contradiction.
\end{proof}

\begin{proof}[Proof of Proposition \ref{prop:symmetry}] If $V=\frac1nA\odot A$ is symmetric, then the $2n$ regularized master equations merge into $n$ equations since $\br=\brt$. In fact since $V^\tran=V$ then if $\rvec=\begin{pmatrix} \br^\tran\,   \brt^\tran \end{pmatrix}^\tran$ is a solution of the Regularized Master Equations, so is $\boldsymbol{\check r}=\begin{pmatrix} \brt^\tran \,  \br^\tran \end{pmatrix}^\tran$. By uniqueness, $\br=\brt$. Hence the Regularized Master Equations write
$$
r_i=\frac{(V\br)_i +t}{s^2+ (V\br)_i^2}\ ,\quad i\in [n]\ .
$$
An elementary analysis of the function $f(x)=\frac{x}{s^2+x^2}$ yields
$
\sup_{x\in [0,\infty)} f(x) \le (2s)^{-1}
$.
Hence
\[
\frac 1n \sum_{i\in[n]} r_i =\frac 1n \sum_{i\in [n]} f(\, (V\br)_i + t) \ \le \ \frac1{2s} \qquad \textrm{for all $t>0$} .	\qedhere
\]
\end{proof}

Our main objective now is to establish the following, which immediately yields Theorem \ref{th:sufficient}.

\begin{prop} \label{prop:boundq} 	
Assume $\sigma_{ij}\le \smax$ for all $i,j\in[n]$ and some $\smax<\infty$, and that $A(\scut)$ is $(\delta,\kappa)$-robustly irreducible for some $\scut,\delta,\kappa\in (0,1)$ (see Definition \ref{def:robust}).
For every fixed $z\in \C\setminus \{ 0 \}$ there exists a constant $K=K(z,\scut,\smax,\delta,\kappa)<\infty$ such that
\[
\frac 1n \sum_{i=1}^n \rr_i \le K . 
\]
\end{prop}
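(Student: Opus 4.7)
The plan is a proof by contradiction built on two structural ingredients of the Regularized Master Equations: the pointwise product bound $\rr_i \rt_i \le 1/(4s^2)$ from \eqref{qproduct} and the trace identity $\sum_i \rr_i = \sum_i \rt_i$ from \eqref{qtrace}. Suppose $u := n^{-1}\sum_i \rr_i > K$ for some constant $K = K(z,\scut,\smax,\delta,\kappa)$ to be determined. By averaging one finds a vertex $i_0$ with $\rr_{i_0} \ge K$. The identity $1/\rr_{i_0} = \vp_{i_0} + s^2/\tp_{i_0}$ of \eqref{qsys} then forces $\vp_{i_0} \le 1/K$ and $\tp_{i_0} \ge s^2 K$. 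Since $\vp_{i_0} = t + n^{-1}\sum_j \sigma_{i_0 j}^2 \rt_j$ and $|\mN_{A(\scut)}(i_0)| \ge \delta n$ by \eqref{mindegree}, Markov's inequality yields that $\rt_j$ is small on a nearly full subset of $\mN_{A(\scut)}(i_0)$, and by the product bound $\rr_j$ is correspondingly large on that same subset.

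The core step is to propagate this largeness of $\rr$ to all of $[n]$ via the expansion condition \eqref{lb:expand}. Given a set $S \subset [n]$ with $\rr_i \ge M$ on $S$, each $j \in \mN_{A(\scut)}^{(\delta)}(S)$ (in the notation of \eqref{def:dense-nbrs}) satisfies
\[
\tp_j \;\ge\; \frac{\scut^2}{n}\sum_{k\in \mN_{A(\scut)^\tran}(j)\cap S}\rr_k \;\ge\; \frac{\scut^2 \delta |S| M}{n}.
\]
From the identity $\rr_j = 1/(s^2/\tp_j + \vp_j)$, a lower bound on $\rr_j$ follows either from dominance of the $\tp_j$ term or from the a priori estimate $\vp_j \le t + \smax^2 u$. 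Iterating via \eqref{lb:expand}, the growing sets $S_k$ expand by the factor $(1+\kappa)$ until $|S_k|$ reaches a definite fraction of $n$, after which one further application of \eqref{lb:expand} forces $S_{k+1} = [n]$; the total number of iterations is $O(\log n / \log(1+\kappa))$. The aim is to extract a uniform lower bound $\rr_i \ge c > 0$ on all of $[n]$, with $c$ depending only on $z,\scut,\smax,\delta,\kappa$.

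Once this uniform lower bound is secured, the product bound forces $\rt_i \le 1/(4s^2 c)$ pointwise, so $n^{-1}\sum_i \rt_i \le 1/(4s^2 c)$, and the trace identity then gives $u \le 1/(4s^2 c)$, contradicting $u > K$ as soon as $K > 1/(4s^2 c)$. The main obstacle is quantitative bookkeeping in the propagation: each iteration can degrade the lower bound $M$ by a constant factor, so a naive estimate through $O(\log n)$ iterations yields only a polynomially small lower bound in $n$. The anticipated resolution is that once $|S_k|$ is a definite fraction of $n$, the bound $\tp_j \ge \scut^2 \delta M$ is already of order $M$ so no further loss is incurred, confining the degradation to a short initial ``bootstrap'' phase that can be absorbed by taking the starting value $K$ sufficiently large. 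A secondary difficulty is the circular dependence on $u$ coming from the estimate $\vp_j \le t + \smax^2 u$; this is handled by staging the argument so that $K$ is chosen last, keeping all constants independent of $u$ until the final contradiction.
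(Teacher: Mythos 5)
Your proposal identifies the two correct structural pillars — the pointwise bound $\rr_i\rt_i \le 1/(4s^2)$ and the trace identity $\sum_i \rr_i = \sum_i \rt_i$ — and correctly begins by locating an $i_0$ with $\rr_{i_0}$ large and propagating smallness of $\rt$ via Markov plus the minimum out-degree condition. But the plan has a gap that is not merely ``quantitative bookkeeping'': the inference \emph{``$\rt_j$ small $\Rightarrow$ $\rr_j$ large by the product bound''} is simply false. The product bound $\rr_j\rt_j \le 1/(4s^2)$ is an \emph{upper} bound on the product; a small $\rt_j$ gives no lower bound whatsoever on $\rr_j$. Indeed, both $\rr_j$ and $\rt_j$ are small whenever both $\vp_j$ and $\tp_j$ are small, which the regularized equations permit. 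So the very first propagation step, on which the rest of the argument rests, does not go through.

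Your alternative route through $\rr_j = 1/(s^2/\tp_j + \vp_j)$ exposes the second obstruction, which you notice but cannot escape: the only control you have on $\vp_j$ is $\vp_j \le t + \smax^2 u$ with $u = n^{-1}\sum_i\rr_i$, and under the contradiction hypothesis $u > K$ this is an \emph{upper} bound that grows with the very quantity you are trying to bound. Even with $\tp_j$ of order $KM$, this yields only $\rr_j \gs 1/u$, which is \emph{small} when $u$ is large. Consequently, the uniform lower bound $c$ you need cannot be chosen independently of $u$, and the final contradiction $u \le 1/(4s^2 c)$ collapses into something of the shape $u \le O(u)$, which is no contradiction at all. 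Staging ``so that $K$ is chosen last'' does not break this circularity: the constant $c$ is built from the propagation and inherits the $u$-dependence.

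The paper's proof resolves both problems by a different normalization and a two-stage structure that your outline omits. First, it works with the \emph{scale-free} quantities $\rr_i/\tpmax$ and $\rt_i\tpmax$ (defining $S_\alpha = \{i: \rr_i \ge \alpha\tpmax\}$ and $T_\beta = \{i: \rt_i < \beta/\tpmax\}$), so that all propagation estimates are ratios that cancel the unknown scale $\tpmax$; the contradiction at the end is extracted by showing $\tpmax$ is bounded. Second, the first stage (Lemma~\ref{lem:beta0}) propagates \emph{smallness of $\rt$} — not largeness of $\rr$ — to prove $T_{\beta_0}$ must be \emph{empty}, i.e.\ $\rt_i \ge \beta_0/\tpmax$ for all $i$; this uniform lower bound on $\rt$ is exactly what controls $\vp_j$ and replaces your unusable $\vp_j \le t + \smax^2 u$. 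Only then does the second stage (Lemmas~\ref{lem:Wbounds}, \ref{lem:avgLB}, \ref{lem:alphaprime}) grow the set $S_\alpha$, and it does so via averaged bounds involving the auxiliary matrix $W^S = (\Psi_{S\times S}^{-1} - V^\tran_{S\times S})^{-1}$ and a nested two-level iteration on $\alpha'$ that is needed precisely because the naive single-level iteration loses a multiplicative factor per step. None of this machinery appears in your proposal, and without it the argument cannot close.
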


In the proof of Proposition \ref{prop:sigmin} we were able to pass from a lower bound on a single component $\rr_i$ to an upper bound on the average $\frac1n \sum_{j=1}^n \rt_j$ in one line. 
This will not be possible when we allow some of the variances $\sigma_{ij}^2$ to be zero.
We will employ a bootstrap-type argument which we roughly outline as follows:
\begin{enumerate}
\item Assume towards a contradiction that $\frac1n\sum_{i=1}^n \rr_i$ is large. 
By the pigeonhole principle there exists $i_0\in [n]$ such that $\rr_{i_0}$ is large.
\item Use the estimates \eqref{qbds}--\eqref{qbds:trivial} together with assumptions on the connectivity properties of the associated directed graph to iteratively ``grow" the set of indices $i$ for which we know $\rr_i$ is large. 
\item Once we have shown $\rr_i$ is large for (almost) all $i\in [n]$, by \eqref{qproduct} it follows that $\rt_i$ is small for (almost) all $i\in [n]$. 
We then apply the trace constraint \eqref{qtrace} to derive a contradiction. 
\end{enumerate}
We emphasize that the key idea for our proofs to bound $\frac1n\sum_i \rr_i$ will be to play \eqref{qproduct} against the trace constraint \eqref{qtrace}.

Recall the graph-theoretic notation from Section \ref{sec:notation}.
In this section we abbreviate
\begin{equation}
\mN_+(i):= \mN_{A(\scut)}(i), \quad \mN_-(i):= \mN_{A(\scut)^\tran}(i)
\end{equation}
for the in- and out-neighborhoods of a vertex $i$ in the graph $\Gamma=\Gamma(A(\scut))$, 
and similarly define $\mN_-^{(\delta)}(i)$. 
For parameters $\alpha,\beta>0$ we define the sets
\begin{equation}	\label{def:ST}
S_\alpha = \{ i\in [n]: \rr_i \ge \alpha \tpmax\},\quad\quad T_\beta = \{ i\in [n] : \rt_i< \beta/\tpmax \}
\end{equation}
where here and in the sequel we write $\tpmax = \max_{i\in [n]}\tp_i$ and similarly $\vpmax = \max_{i\in [n]}\vp_i$.
(Note that by \eqref{def:vtvp} we have $\tpmax\ge t>0$.)

\subsection{Qualitative boundedness}	\label{sec:qualitative}

In this subsection we establish Proposition \ref{prop:qualitative} and Lemma \ref{lem:qbound}, which give an $n$-dependent bound on the components of $\br,\brt$ assuming only that the standard deviation profile is irreducible.
This was used in Section \ref{sec:master} with a compactness argument to establish existence of solutions to the Master Equations.
While Lemma \ref{lem:qbound} follows from Proposition \ref{prop:boundq} under the robust irreducibility assumption \ref{ass:expander} (which we assume for our main result), we prove this lemma separately for two reasons: 
\begin{itemize}
\item to show that \ref{ass:expander} is not needed for the conclusion of Theorem \ref{thm:master}, and 
\item to provide a cartoon for the more technical proof of Proposition \ref{prop:boundq}.
\end{itemize}
We will also establish some auxiliary lemmas that will be reused in the proof of Proposition \ref{prop:boundq} (in particular Lemmas \ref{lem:S14} and \ref{lem:Wbounds}).

Proposition \ref{prop:qualitative} and Lemma \ref{lem:qbound} are an immediate consequence of the following:
\begin{prop}\label{prop:bdd-weak} 
For any fixed $0<\snot\le \smax$ and $s_0>0$ there is a constant $K_0(n,s_0,\snot,\smax)$ such that the following holds. 
Let $s\ge s_0$, and suppose $A$ is irreducible with $\sigma_{ij}\in \{0\}\cup [\snot,\smax]$ for all $1\le i,j\le n$. Then
$\frac{1}{n}\sum_{i=1}^n \rr_i \le K_0.$
\end{prop}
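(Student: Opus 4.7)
The strategy is to show by contradiction that $M := \max_i \rr_i$ cannot be too large. Assume $M$ exceeds a threshold $M_0 = M_0(n, s_0, \snot, \smax)$ to be fixed below. The argument iteratively propagates the property ``$\rr_i$ is large'' across the directed graph $\Gamma(A)$ using the backward structure of the Master Equations, while simultaneously deducing smallness of $\rt_j$ at many vertices via the bound $\rr_j \le 1/\vp_j$. Irreducibility then ensures that after finitely many iterations every vertex is controlled, and the trace identity $\sum \rr_i = \sum \rt_j$ closes the loop.

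First dispose of the easy range: summing $\rr_i \le \tp_i/s^2 \le (t + \smax^2 M)/s^2$ over $i$ yields $M(s^2 - \smax^2) \le t \le 1$, so $M \le 1$ whenever $s \ge \sqrt{\smax^2+1}$, reducing the problem to $s \in [s_0, \sqrt{\smax^2+1}]$. Pick $i_0$ realizing $M$. From $\tp_{i_0} \ge s_0^2 M$ and (for $M \ge 2/s_0^2$) the consequent estimate $(V^\tran \br)_{i_0} \ge s_0^2 M/2$, averaging produces some $i_1 \in \mN_-(i_0)$ with $\rr_{i_1} \ge \alpha M$, where $\alpha := s_0^2/(2\smax^2)$. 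Iterating this $n$ times (which requires $M \ge M_0 \sim \alpha^{-n}$) produces a backward path $i_n \to \cdots \to i_0$ in $\Gamma$ with $\rr_{i_k} \ge \alpha^k M$; by pigeonhole two of these $n+1$ vertices coincide, so the path contains a directed cycle $C$ on which $\rr_v \ge \alpha^n M$.

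Next, propagate smallness of $\rt$ forward from $C$: for $v \in C$, the Master Equation gives $\vp_v \le 1/(\alpha^n M)$, so $\rt_\ell \le n/(\alpha^n \snot^2 M)$ for all $\ell \in \mN_+(v)$. The Master Equation for such $\ell$ splits into two subcases: either $\vp_\ell$ is small (so the bound propagates directly to $\mN_+(\ell)$), or $\tp_\ell$ is large (revealing a new vertex $v' \in \mN_-(\ell)$ with $\rr_{v'}$ also large, from which forward propagation restarts). In either case the small-$\rt$ bound extends to a strictly larger subset of $[n]$ with only polynomial-in-$n$ loss per step; by irreducibility of $A$, within $O(n)$ iterations we obtain $\rt_\ell \le K/M$ at every vertex $\ell$, for some $K = K(n, s_0, \snot, \smax)$. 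The trace identity then yields $M \le \sum \rr_i = \sum \rt_j \le nK/M$, so $M \le \sqrt{nK}$, and the bound is $K_0 := \max(M_0, \sqrt{nK})$.

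\textbf{The main obstacle} lies in the forward propagation of the second case (``$\tp_\ell$ large''), where the Master Equation does not directly control $\rt$ at the out-neighbors of $\ell$; instead one must extract a new large-$\rr$ vertex among $\mN_-(\ell)$ and restart the forward sweep there. Bookkeeping the combined effect of the two propagation modes (forward along ``$\vp$ small'' edges, and sideways via newly discovered large-$\rr$ vertices) while ensuring that the deterioration remains at most $\exp(O(n))$ is the delicate part of the argument, and mirrors in simplified form the expansion-based propagation used for the robustly irreducible case of Proposition~\ref{prop:boundq}.
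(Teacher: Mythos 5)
There is a genuine gap, and to your credit you flag its location yourself: the forward propagation of $\rt$-smallness does not close.

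The root cause is the exponential loss in your backward propagation. After $n$ steps, you only know $\rr_v \ge \alpha^n M$ on the cycle, which translates (via $\vp_v \le 1/\rr_v$) into $\rt_\ell \le n/(\alpha^n \snot^2 M)$ at the out-neighbors $\ell$. But from $\rr_{i_0}\le \tp_{i_0}/s^2$ you also have $\tpmax \gtrsim M$, so this upper bound on $\rt_\ell$ is comparable to, not smaller than, $1/\tpmax$. The dichotomy coming from the lower bound $\rt_\ell \ge \frac12\min(\vp_\ell/s^2, 1/\tp_\ell)$ therefore genuinely splits into two cases, and your Case 2 fix does not make progress: the new vertex $v'\in\mN_-(\ell)$ with large $\rr_{v'}$ satisfies $\ell\in\mN_+(v')$, so $\mN_+(v')$ may consist entirely of vertices at which you already knew $\rt$ was small. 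There is nothing forcing new vertices into the covered set, and irreducibility alone is not enough: you only get $\mN_+(\ell)\not\subset U$ for some $\ell\in U$, but if that $\ell$ happens to be in Case 2 you cannot step forward through it. So the iteration can stall short of $[n]$.

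The paper's proof removes the dichotomy rather than attempting to manage it. It tracks the normalized set $T_\beta=\{i:\rt_i<\beta/\tpmax\}$. Because $1/\tp_i\ge 1/\tpmax$, the bound $\rt_i<\beta/\tpmax$ with $\beta\le 1/2$ \emph{forces} the minimum in $\min(\vp_i/s^2,1/\tp_i)$ to be attained by $\vp_i/s^2$, so the propagation is unconditional: small $\rt_i$ always yields small $\vp_i$, hence small $\rt_j$ for $j\in\mN_+(i)$, up to a fixed constant factor $C=C(n,s,\snot)$ per step. Taking $\beta_0\sim C^{-n}$ (dependence on $n$ is permitted here) and iterating along a directed path gives $T_{\beta}\ne\varnothing \Rightarrow T_{C_0\beta}=[n]$, and this is used only to prove, by contradiction with the trace identity, that $T_{\beta_0}$ is \emph{empty}, i.e.\ a uniform \emph{lower} bound $\rt_i\ge\beta_0/\tpmax$. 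That lower bound is then fed into a completely separate second phase (the $W^S$ machinery of Lemma~\ref{lem:Wbounds} and the growth of $S_\alpha$ in Lemma~\ref{lem:alphaprime.qual}), which the paper then closes with the trace identity and $\rr_j\rt_j\le 1/s^2$. Your plan to show $\rt$ small everywhere runs in the opposite direction from what the paper actually establishes, and the bound you start from is simply not strong enough to seed that propagation.
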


We now begin the proof of Proposition \ref{prop:bdd-weak}.
First we dispose of the case that $s>2\smax$:

\begin{claim}	\label{claim:bigs}
Suppose $s>2\smax$. Then 
\begin{equation}
\rr_i, \rt_i \le \min\left( \frac{t}{s^2- \smax^2}, \frac1t\right) \le (s^2-\smax^2)^{-1/2}.
\end{equation}
\end{claim}

\begin{proof}
Suppose $\rr_{i^*} = \max_{i\in [n]} \rr_i$. 
From \eqref{qbds} we have
\[
\rr_{i^*} \le \frac1{s^2} \tp_i = \frac1{s^2} (t+ (V^\tran \br)_i) \le \frac1{s^2} (t+ \smax^2 \rr_{i^*}).
\]
Rearranging we obtain $\rr_{i^*} \le t/(s^2-\smax^2)$, which combines with \eqref{qbds:trivial} to give the desired uniform bound for $\rr_i$, $i\in [n]$. The same bound is obtained for $\rt_i$ by similar lines.
\end{proof}

Without loss of generality we take $\smax=1$. 
By Claim \ref{claim:bigs} we may assume $0<s_0\le s\le 2$.
We may also assume $t\le 1$.
Indeed, otherwise it follows from \eqref{qbds:trivial} that $\frac1n\sum_{i=1}^n \rr_i< 1$ and we are done.
Let $K>0$ to be chosen later depending on $n, \snot$ and $s_0$, but independent of $t$, and assume
\begin{equation}	\label{assume:qK}
\frac1n\sum_{i=1}^n \rr_i \ge K.
\end{equation}
We will derive a contradiction for $K$ sufficiently large. 

In the following lemma we use the irreducibility of $A_n$ to show that if $T_\beta$ is non-empty for some $\beta$ sufficiently small, then $T_{\beta'}=[n]$ for a somewhat larger value of $\beta'$.
This will allow us to assume a uniform lower bound on the components $\rt_i$. 

\begin{lemma}		\label{lem:beta0.qual}
There are positive constants $C_0(\snot, n)$, $\beta_0(\snot,n)$ such that for all $\beta\le\beta_0$, if $T_\beta$ is non-empty then $T_{C_0\beta}=[n]$.
\end{lemma}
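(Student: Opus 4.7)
The plan is to exploit the identity \eqref{qbds}, which forces $\rt_i$ to be small only if either $\vp_i$ is small or $\tp_i$ is large. For $\beta < 1/2$, observe that if $i \in T_\beta$ then the alternative $1/\tp_i \le 2\beta/\tpmax$ would yield $\tp_i \ge \tpmax/(2\beta) > \tpmax$, a contradiction. Hence we must have $\vp_i/s^2 \le 2\beta/\tpmax$, and since $\vp_i = t + (V\brt)_i$ (with the reduction $t,s\le 1$ in force), this gives $(V\brt)_i \le 2\beta/\tpmax$. Because every nonzero entry of $V$ has size at least $\snot^2/n$, for each $j \in \mN_+(i)$ (i.e., each $j$ with $\sigma_{ij}>0$) we deduce
\[
\rt_j \ \le\ \frac{n}{\snot^2}(V\brt)_i \ \le\ \frac{2n}{\snot^2}\cdot \frac{\beta}{\tpmax} \ =\ \frac{C_1\beta}{\tpmax},
\qquad C_1 := \frac{2n}{\snot^2}.
\]
Thus $i \in T_\beta$ with $\beta<1/2$ implies $\mN_+(i) \subset T_{C_1\beta}$.

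The second step is to iterate this one-step propagation along directed paths in $\Gamma(A)$. Since $A$ is irreducible, the digraph $\Gamma(A)$ is strongly connected, so every vertex $j \in [n]$ can be reached from $i_0$ by a directed path of length at most $n-1$. Applying the one-step propagation $k$ times along such a path yields, inductively, that the endpoint lies in $T_{C_1^k\beta}$, provided the intermediate parameter $C_1^{k-1}\beta$ stays below $1/2$ so that the dichotomy above remains applicable. Choosing
\[
C_0\ :=\ C_1^{\,n-1},\qquad \beta_0\ :=\ \frac{1}{2C_0},
\]
we ensure that at every stage of the iteration the running parameter is $<1/2$, and we conclude that every vertex of $[n]$ lies in $T_{C_0\beta}$, i.e., $T_{C_0\beta}=[n]$.

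There is no real obstacle here — both constants depend on $n$ and $\snot$ as advertised, and the argument is purely graph-theoretic propagation once the dichotomy from \eqref{qbds} is read off. The mildly delicate point to keep track of is that $\tpmax$ is a global (and $t$-dependent) quantity, which is why the propagation is stated multiplicatively in $\beta$ rather than in an absolute threshold: every inequality above is renormalized by $1/\tpmax$, so the argument is uniform in $t$ as required for the downstream use in Proposition \ref{prop:bdd-weak}.
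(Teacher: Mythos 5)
Your proof is correct and follows essentially the same strategy as the paper's: use \eqref{qbds} to show that membership in $T_\beta$ forces $\vp_i$ (hence $(V\brt)_i$) to be small, transfer this to a bound $\rt_j \le (n/\snot^2)(V\brt)_i$ on out-neighbors $j\in\mN_+(i)$, then propagate along directed paths using strong connectivity and take the worst-case constants over paths of length $\le n-1$. You track membership in $T_{(\cdot)}$ directly rather than alternating bounds on $\rt$ and $\vp$ as the paper does, which is a minor streamlining, and your constants ($C_0 = (2n/\snot^2)^{n-1}$) are marginally sharper than the paper's. One cosmetic point: the paper's proof says ``a directed path from vertex $k$ to vertex $i$,'' but since the propagation runs from a vertex to its \emph{out}-neighbors, the path must go from $i$ outward to $k$; you state the direction correctly.
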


\begin{proof}
Let $\beta>0$ to be taken sufficiently small depending on $\snot,n$, and suppose $T_\beta$ is non-empty. 
Then there exists $i\in[n]$ such that $\rt_i< \beta/\tpmax$.
From \eqref{qbds} it follows that
\[
\frac12\min(\vp_i/s^2,1/\tp_i) < \beta/\tpmax.
\]
Assuming $\beta\le1/2$ it follows that 
\[
2s^2\beta/\tpmax > \vp_i \ge (V \brt)_i \ge \snot^2 \frac1n \sum_{j\in \mN_+(i)} \rt_j
\]
and hence 
\begin{equation}
\rt_j < \frac{2s^2n}{\snot^2} \frac{\beta}{\tpmax} \quad\quad \forall j\in \mN_+(i).
\end{equation}
Again from \eqref{qbds}, if we further assume $\beta \le\snot^2/4s^2n$ then it follows that
\begin{equation}
\frac{4s^4n}{\snot^2}\frac{\beta}{\tpmax} > \vp_j\quad\quad \forall j\in \mN_+(i).
\end{equation}

Now let $k\in[n]$ be arbitrary. 
By the irreducibility of $A_n$ there exists a directed path in the associated digraph $\Gamma_n$ from vertex $k$ to vertex $i$ of length at most $n$.
Applying the above lines iteratively along each edge of the path we find
\[
\rt_k \le \left(\frac{2s^2n}{\snot^2}\right)^n \frac{\beta}{\tpmax}
\]
if we take $\beta\le \frac12(\frac{\snot^2}{2s^2n})^{n-1}$.
Since $k$ was arbitrary, the result follows by setting $C_0= (8n/\snot^2)^n$ and $\beta_0=\frac12(\snot^2/(8n))^{n-1}$ (here we have used our assumption $s\le 2$).
\end{proof}

If $T_{C_0\beta}=[n]$ for some $\beta\le\beta_0$ then by the trace identity \eqref{qtrace},
\begin{equation}	\label{ifTC0:start}
\frac1n\sum_{i=1}^n \rr_i =\frac1n\sum_{i=1}^n \rt_i \le C_0\beta/\tpmax.
\end{equation}
On the other hand, from \eqref{qbds} we have
\begin{equation}	\label{qj:ubtp}
\rr_j \le \tp_j/s^2 \le \tpmax / s^2
\end{equation}
for all $j\in [n]$.
In particular,
\begin{equation}	\label{qj:avgub}
\frac1n\sum_{j=1}^n \rr_j \le \tpmax/s^2
\end{equation}
Hence,
\begin{equation}
 \frac1n\sum_{i=1}^n \rr_i  \le \min\left(\frac{C_0\beta}{\tpmax}, \frac{\tpmax}{s^2}\right) \le (C_0\beta/s^2)^{1/2}\le 
  (C_0\beta_0/s_0^2)^{1/2},
\end{equation}
which contradicts \eqref{assume:qK} if $K$ is sufficiently large.
Hence we may assume $T_{\beta_0}$ is empty for $\beta_0(\snot,n)$ as in Lemma \ref{lem:beta0.qual}.
Thus,
\begin{equation}	\label{assume:tqi.lb}
\rt_i \ge \beta_0/\tpmax	\quad\quad \forall i\in [n].
\end{equation}

Now we find a value of $\alpha$ for which $S_\alpha$ is already of linear size:

\begin{lemma}		\label{lem:S14}
Assume $K\ge 2/s^2$. Then
$|S_{1/4}| \ge (s^2/4)n$. 
\end{lemma}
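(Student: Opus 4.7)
\textbf{Proof plan for Lemma \ref{lem:S14}.} Set $M := \frac{1}{n}\sum_{j=1}^n \rr_j$, so the hypothesis \eqref{assume:qK} together with $K \ge 2/s^2$ gives $M \ge 2/s^2 \ge 2$ (recall $s \le 1$ by convention). My strategy is to use the pointwise bound \eqref{qj:ubtp} on $S_{1/4}$ and the defining inequality $\rr_j < \tpmax/4$ on $S_{1/4}^c$, split the average $M$ accordingly, and close the estimate by bounding $\tpmax$ in terms of $M$.

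For the splitting step, I write
\[
M \;=\; \frac1n\sum_{j\in S_{1/4}} \rr_j \;+\; \frac1n\sum_{j\notin S_{1/4}} \rr_j
\;\le\; \frac{|S_{1/4}|}{n}\cdot\frac{\tpmax}{s^2} \;+\; \frac{\tpmax}{4},
\]
using \eqref{qj:ubtp} on the first sum and the complement of the definition \eqref{def:ST} on the second. For the upper bound on $\tpmax$, observe that since $\smax=1$ and $t\le 1$,
\[
\tpmax = t + \max_i (V^\tran \br)_i \;\le\; t + \frac{1}{n}\sum_{j=1}^n \rr_j \;=\; t + M \;\le\; 1 + M \;\le\; 2M,
\]
where the final inequality uses $M\ge 1$ (which follows from $M\ge 2/s^2\ge 2$). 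Substituting $\tpmax \le 2M$ into the previous display yields
\[
M \;\le\; \frac{|S_{1/4}|}{n}\cdot\frac{2M}{s^2} \;+\; \frac{M}{2},
\]
which rearranges to $|S_{1/4}|/n \ge s^2/4$, as desired.

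The only subtle point is making sure all the auxiliary bounds invoked ($t\le 1$, $\smax=1$, and \eqref{qj:ubtp}) are already available at this stage of the argument, which they are: $t\le 1$ and $\smax=1$ are the standing conventions from the reductions preceding Lemma \ref{lem:beta0.qual}, and \eqref{qj:ubtp} holds unconditionally as a consequence of \eqref{qbds}. No further machinery (e.g.\ the trace identity \eqref{qtrace} or the irreducibility of $A$) is needed for this step—those will come into play in the subsequent propagation arguments.
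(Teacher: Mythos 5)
Your proof is correct and follows essentially the same route as the paper: the same split of the average $\frac1n\sum_j \rr_j$ over $S_{1/4}$ and its complement using \eqref{qj:ubtp} and the definition \eqref{def:ST}, and the same comparison $\tpmax \le t + \frac1n\sum_j \rr_j \le 2\cdot\frac1n\sum_j\rr_j$. The only cosmetic difference is that the paper derives $\tpmax \ge 2$ first and then $\frac1n\sum_j\rr_j\ge\tpmax/2$, whereas you invoke $M\ge 1$ directly to get $\tpmax\le 2M$; these are the same inequality.
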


\begin{proof}
From our assumption and \eqref{qj:avgub},
\begin{equation}
2/s^2\le K\le \frac1n\sum_{j=1}^n \rr_j \le \tpmax/s^2,
\end{equation}
so $\tpmax\ge 2$.
Let $i\in [n]$ such that $\tp_i=\tpmax $.
We have
\[
\tpmax  = \tp_i = t+ \frac{1}{n} \sum_{j=1}^n \sigma_{ji}^2 \rr_j \le t+ \frac1{n}\sum_{j=1}^n \rr_j.
\]
Since $\tpmax \ge 2$ and $t\le 1$, 
$
\frac{1}{n}\sum_{j=1}^n \rr_j\ge \frac12\tpmax .
$
Now again by \eqref{qj:ubtp},
\[
\tpmax n/2 \le \sum_{j\in S_{1/4}}\rr_j + \sum_{j\in S_{1/4}^c} \rr_j \le (\tpmax /s^2)|S_{1/4}| + \tpmax n/4,
\]
and the result follows by rearranging. 
\end{proof}

Next we seek to show that we can enlarge $S_\alpha$ by lowering $\alpha$.
By irreducibility we can find a vertex $i^*\in S_\alpha$ that is connected to $S_\alpha^c$. We can use this to show that the average of the components $\rr_k$ over $S_\alpha^c$ is bounded below by $c\rr_{i^*}$ for some small $c>0$ depending on $\alpha,n,s,\snot$.
From the pigeonhole principle we obtain $k\in S_\alpha^c$ with $\rr_k\ge c\rr_{i^*}\ge c\alpha \tpmax$.
Taking $\alpha' = c\alpha$, we will then have shown $|S_{\alpha'}|\ge |S_{\alpha}|+1$.

We begin by relating the values of $\br,\brt$ on a fixed set of vertices $S\subset [n]$ to the values taken on $S^c$.
For an $n\times n$ matrix $M$ and $S,T\subset[n]$ nonempty we write $M_{S\times T}$ for the $|S|\times |T|$ submatrix of $M$ with entries indexed by $S\times T$. 
The following lemma will also be used in the proof of Proposition \ref{prop:boundq}.

\begin{lemma}	\label{lem:Wbounds}
Fix a nonempty set $S\subset[n]$, and recall the diagonal matrix $\Psi$ from \eqref{def-psi}.
The $|S|\times |S|$ matrix $\Psi_{S\times S}^{-1} - V_{S\times S}^\tran$ is invertible, and we denote its inverse
\begin{equation}	\label{def:W2}
W^S = (\Psi_{S\times S}^{-1} - V_{S\times S}^\tran)^{-1}.
\end{equation}
In terms of $W^S$ the restrictions of $\br,\brt$ to $S$ and $S^c$ satisfy
\begin{equation}	\label{def:W}
\br_S = W^S(t+ V_{S^c\times S}^\tran\br_{S^c}),\quad \brt_S = (W^S)^\tran (t+ V_{S\times S^c} \brt_{S^c}).
\end{equation}
Furthermore, the entries of $W^S$ satisfy the following bounds.
For all $i,j\in S$,
\begin{equation}	\label{Wbound0}
W^S_{ij} \ge 0.
\end{equation}
For all $j\in S$, 
\begin{equation}	\label{Wbound1}
\sum_{i\in S} W_{ij}^S \le \rt_j/t
\end{equation}
and if \eqref{assume:tqi.lb} holds for some $\beta_0>0$.
\begin{equation}	\label{Wbound2}
\sum_{i\in S}W_{ij}^S |\mN_+(i)\cap S^c| \le \Big(\frac{n\tpmax }{\beta_0\snot^2}\Big)\rt_j.
\end{equation}
\end{lemma}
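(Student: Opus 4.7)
The plan is to read the Regularized Master Equations \eqref{def:MEt} as two linear systems and exploit the M-matrix structure of the resulting coefficient matrix on $S\times S$. Writing $\Psi=\diag(\psi_i)$ with $\psi_i=(s^2+\vp_i\tp_i)^{-1}$, the equations \eqref{def:MEt} can be rearranged to
\[
(\Psi^{-1} - V^\tran)\br \,=\, t\,\bs 1_n, \qquad (\Psi^{-1} - V)\brt \,=\, t\,\bs 1_n,
\]
where $\br,\brt\succ 0$ by Proposition \ref{prop:MEt}. Restricting the first identity to coordinates in $S$ and splitting $(V^\tran\br)_i = (V^\tran_{S\times S}\br_S)_i + (V^\tran_{S\times S^c}\br_{S^c})_i$ for $i\in S$ immediately yields
$M\br_S = t\,\bs 1_S + V^\tran_{S\times S^c}\br_{S^c}$, where $M := \Psi^{-1}_{S\times S} - V^\tran_{S\times S}$.

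Now $M$ is a Z-matrix (positive diagonal, nonpositive off-diagonal), and since $\br_S\succ 0$ while $M\br_S\succ 0$ (the right-hand side is componentwise nonnegative and includes the strictly positive term $t\,\bs 1_S$), the classical characterization of nonsingular M-matrices (e.g.\ Berman--Plemmons, Chapter 6) gives that $M$ is invertible with $W^S := M^{-1}\succcurlyeq 0$ entrywise. This yields \eqref{Wbound0} and the first representation in \eqref{def:W}. Transposing the analogous identity $M^\tran \brt_S = t\,\bs 1_S + V_{S\times S^c}\brt_{S^c}$ and using $(M^\tran)^{-1}=(W^S)^\tran$ produces the second representation in \eqref{def:W}.

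For the column sum bound \eqref{Wbound1}, I would read off \eqref{def:W} for $\brt_S$: since $W^S,\,V_{S\times S^c},\,\brt_{S^c}$ are all componentwise nonnegative, we have $(W^S)^\tran V_{S\times S^c}\brt_{S^c} \succcurlyeq 0$, hence $t\,(W^S)^\tran\bs 1_S \preccurlyeq \brt_S$; the $j$-th coordinate of this inequality is exactly $t\sum_{i\in S}W^S_{ij}\le \rt_j$, which is \eqref{Wbound1}.

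Finally, \eqref{Wbound2} is the only step that uses the uniform lower bound \eqref{assume:tqi.lb}. The key observation is that for any $i\in S$, every $k\in\mN_+(i)\cap S^c$ satisfies both $\sigma_{ik}\ge\scut$ and $\rt_k\ge\beta_0/\tpmax$, so
\[
(V_{S\times S^c}\brt_{S^c})_i \,=\, \frac{1}{n}\sum_{k\in S^c}\sigma_{ik}^2 \rt_k \,\ge\, \frac{\scut^2\beta_0}{n\,\tpmax}\,\bigl|\mN_+(i)\cap S^c\bigr|.
\]
Multiplying by $W^S_{ij}\ge 0$, summing over $i\in S$, and combining with $((W^S)^\tran V_{S\times S^c}\brt_{S^c})_j = \rt_j - t\sum_{i\in S}W^S_{ij} \le \rt_j$ (which is just the second representation of \eqref{def:W} read in the $j$-th coordinate) produces the bound \eqref{Wbound2}. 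The only step that is not a one-line manipulation is the invertibility of $M$ and the nonnegativity of $W^S$; this is the main obstacle, but it is handled cleanly by the M-matrix characterization precisely because $\br_S$ already furnishes the required strictly positive certificate.
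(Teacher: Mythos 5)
Your proof is correct and follows essentially the same route as the paper: the representation \eqref{def:W} comes from the same linear rearrangement of \eqref{def:MEt}, and the bounds \eqref{Wbound1}--\eqref{Wbound2} are read off the second identity in \eqref{def:W} exactly as the paper does. The only difference is in establishing invertibility and entrywise nonnegativity of $W^S$: you invoke the M-matrix characterization directly, with $\br_S\succ 0$ and $M\br_S = t\,\bs 1_S + V^\tran_{S^c\times S}\br_{S^c}\succ 0$ as certificate, whereas the paper factors out $\Psi_{S\times S}$, bounds $\rho(\Psi_{S\times S}V^\tran_{S\times S})<1$, and uses a Neumann series -- two equivalent formulations of the same Perron--Frobenius fact, with yours having the mild advantage of not needing any irreducibility of the principal submatrix $V^\tran_{S\times S}$.
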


\begin{proof}
Arguing as in the proof of Lemma \ref{lem:qexist}(2) (using Proposition \ref{prop:seneta-2}) we find the spectral radius of $\Psi_{S\times S}V_{S\times S}^\tran$ is strictly less than 1.
Hence, $(I - \Psi_{S\times S}V_{S\times S}^\tran)^{-1}$ has a convergent Neumann series, and it follows that 
\[
W^S = \Psi_{S\times S}(I - \Psi_{S\times S}V_{S\times S}^\tran)^{-1}  = \Psi_{S\times S}\sum_{k=0}^\infty (\Psi_{S\times S}V_{S\times S}^\tran)^k.
\]
is well defined. 
Furthermore, since all of the matrices in the above series have non-negative entries, \eqref{Wbound0} follows. 

\eqref{def:W} is quickly obtained by rearranging the equations \eqref{def:MEt}.

Now for \eqref{Wbound1} and \eqref{Wbound2}, let $j\in S$ be arbitrary. 
From the second equation in \eqref{def:W}, 
\[
\rt_j = \sum_{i\in S} W^S_{ij}\Big( t + \frac1n\sum_{k\in S^c} \sigma_{ik}^2 \rt_k\Big).
\]
In particular,
$
\rt_j\ge t\sum_{i\in S} W^S_{ij},
$
giving \eqref{Wbound1}, and
\[
\rt_j \ge \frac1n \sum_{i\in S} W^S_{ij}\sum_{k\in S^c} \sigma_{ik}^2\rt_k
\ge \frac{\beta_0\snot^2}{n\tpmax } \sum_{i\in S} W^S_{ij} |\mN_+(i)\cap S^c|
\]
which rearranges to give \eqref{Wbound2}.
\end{proof}

We can use Lemma \ref{lem:Wbounds} and the irreducibility of $A$ to establish the following:

\begin{lemma}[Incrementing $\alpha$]	\label{lem:alphaprime.qual}
Let $\alpha>0$ such that $1\le |S_\alpha|\le n-1$. 
If 
$K>\sqrt{2n}/(s_0^3\alpha)$
then there exists $\alpha'=\alpha'(\alpha,s_0, \snot,n)\in (0,\alpha)$ such that $|S_{\alpha'}|\ge |S_{\alpha}|+1$.
\end{lemma}

Let us conclude the proof of Proposition \ref{prop:bdd-weak} on the above lemma.
Putting $\alpha_0=1/4$, by Lemma \ref{lem:S14} we have $S_{\alpha_0}\ne \emptyset$. 
Taking $K$ sufficiently large depending on $s_0, \snot$ and $n$ we can iterate Lemma \ref{lem:alphaprime.qual} at most $n$ times to find $\alpha=\alpha(s_0, \snot,n)>0$ such that $S_\alpha=[n]$.
Then by \eqref{qtrace} and \eqref{qbds},
\begin{equation}
\alpha \tpmax  \le \frac1n \sum_{j=1}^n \rr_j =  \frac1n\sum_{j=1}^n \rt_j  \le \frac1{s^2\alpha \tpmax },
\end{equation}
so we have $\tpmax \le 1/(s\alpha)$.
Then again by \eqref{qbds} we have
\begin{equation}
\frac1n\sum_{j=1}^n \rr_j \le \frac{\tpmax }{s^2} \le \frac{1}{s^3\alpha} \le \frac{1}{s_0^3\alpha}
\end{equation}
and we are done.

\begin{proof}[Proof of Lemma \ref{lem:alphaprime.qual}]
We write $W=W^{S_\alpha}$. 
From the first equation in \eqref{def:W}, for any $i\in S_\alpha$ we have
\begin{equation}	\label{apq:1}
\alpha \tpmax  \le \rr_i = \sum_{j\in S_\alpha} W_{ij} \left( t+ \sum_{k\in S_\alpha^c} \sigma_{kj}^2 \rr_k\right).
\end{equation}
Suppose first that 
\begin{equation}	\label{apq:case1}
\sum_{j\in S_\alpha} W_{ij} > \frac{\alpha \tpmax }{2t}
\end{equation}
for some $i\in S_\alpha$.
Then from \eqref{Wbound1},
\[
\frac{\alpha \tpmax }{2t} < \sum_{j\in S_\alpha} W_{ij} \le \frac1t \sum_{j\in S_\alpha} \rt_j\le \frac{1}{s^2t} \sum_{j\in S_\alpha} \frac{1}{\rr_j} \le \frac{|S_\alpha|}{ts^2\alpha \tpmax },
\]
where in the third inequality we applied \eqref{qbds}.
Rearranging we have $\tpmax \le \sqrt{2|S|}/(s\alpha) \le \sqrt{2n}/(s_0\alpha)$ in this case.
On the other hand, from \eqref{qj:avgub} we have $K\le \tpmax/s_0^2$, and we obtain a contradiction if 
$K>\sqrt{2n}/(s_0^3\alpha)$.

Suppose now that \eqref{apq:case1} does not hold for any $i\in S_\alpha$.
Then rearranging \eqref{apq:1} we have
\begin{equation}	\label{apq:2}
\frac{\alpha \tpmax }{2} \le \sum_{j\in S_\alpha}\sum_{k\in S_\alpha^c} \sigma_{kj}^2 \rr_k W_{ij}
\end{equation}
for any $i\in S_\alpha$.
From the assumption that $\Sig_n$ is irreducible there exists $(i^*,j^*)\in S_\alpha\times S_\alpha^c$ such that $\sigma_{i^*j^*}\ge \snot$, i.e.\ $|\mN_+(i^*)\cap S_\alpha^c|\ge 1$.
From \eqref{Wbound2} it follows that
\begin{equation}
W_{i^*j} \le \left( \frac{n\tpmax }{\beta_0\snot^2}\right) \rt_j
\end{equation}
for all $j\in S_\alpha$.
Inserting this bound in \eqref{apq:2} we have
\begin{align*}
\frac{\alpha \tpmax }{2} 
&\le \frac{n\tpmax }{\beta_0 \snot^2} \sum_{j\in S_\alpha} \sum_{k\in S_\alpha^c} \sigma_{kj}^2 \rr_k\rt_j \le \frac{n|S_\alpha|}{\beta_0 \snot^2 s^2\alpha} \sum_{k\in S_\alpha^c}\rr_k,
\end{align*}
where in the second inequality we applied the bounds $\sigma_{ij}\le 1$ for all $i,j\in[n]$ and $\rt_j\le (s^2\alpha \tpmax )^{-1}$ for all $j\in S_\alpha$ (by \eqref{qproduct}).
Rearranging we have
\begin{equation}	\label{avgout.qual}
\sum_{k\in S_\alpha^c} \rr_k \ge \frac{\alpha^2 s^2\beta_0\snot^2}{2n|S_\alpha|}\tpmax .
\end{equation}
By the pigeonhole principle there exists $k\in S_\alpha^c$ such that
\begin{equation}
\rr_k \ge \frac{\alpha^2 s^2\beta_0\snot^2}{2n|S_\alpha||S_\alpha^c|}\tpmax  \ge \frac{\alpha^2 s_0^2\beta_0\snot^2}{2n^3}\tpmax.
\end{equation}
Setting $\alpha'= \alpha^2 s_0^2\beta_0\snot^2/(2n^3)$ we have $k\in S_{\alpha'}$. Also, since $\alpha'<\alpha$ we have $S_\alpha\subset S_{\alpha'}$.
Hence, $|S_{\alpha'}|\ge |S_\alpha|+1$ as desired.
\end{proof}

\subsection{Quantitative boundedness}	\label{sec:quantitative}

Now we prove Proposition \ref{prop:boundq}.
By rescaling the variance profile $V$ we may take $\smax=1$.
By Claim \ref{claim:bigs} we may assume $s\in (0,2]$.
As in the proof of Proposition \ref{prop:bdd-weak} we may assume $t\le 1$.
We may also assume $n$ is sufficiently large depending on $s,\snot, \delta$ and $\kappa$.
In the remainder of the section we make use of asymptotic notation $O(\,),$ $\ls,$ $\gs$, allowing implied constants to depend on the parameters $s,\snot,\delta$ and $\kappa$, but not on $n$ and $t$. 

As in the proof of Proposition \ref{prop:bdd-weak} we assume \eqref{assume:qK} holds
for some $K>0$ and aim to derive a contradiction for $K$ sufficiently large depending on $s,\snot,\delta$ and $\kappa$.
The argument follows the same general outline as the proof in the previous subsection.
We will reuse Lemmas \ref{lem:S14} and \ref{lem:Wbounds} as stated, but we will need versions of Lemmas \ref{lem:beta0.qual} and \ref{lem:alphaprime.qual} with constants independent of $n$.

\subsubsection{Lower bounding $\rt_i$}

The following is an analogue of Lemma \ref{lem:beta0.qual}.

\begin{lemma}		\label{lem:beta0}
There are positive constants $C_0(s, \snot, \delta,\kappa)$, $\beta_0(s, \snot, \delta,\kappa)$ such that for all $\beta\le\beta_0$, if $T_\beta$ is non-empty then $T_{C_0\beta}=[n]$.
\end{lemma}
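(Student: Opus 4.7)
The plan is to propagate the property ``$\rt_i$ small'' from a single vertex $i_0\in T_\beta$ to all of $[n]$ in a number of iterations depending only on $\delta$ and $\kappa$, replacing the $n$-long path used in Lemma~\ref{lem:beta0.qual} by a geometric expansion argument. Throughout I arrange $\beta_0$ so that $C_m\beta<1$ at every stage, ensuring via the dichotomy $\rt_i\ge \tfrac12\min(\vp_i/s^2,1/\tp_i)$ of~\eqref{qbds} that $\vp_i\le 2s^2 C_m\beta/\tpmax$ for every $i\in T_{C_m\beta}$.

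Base step: from $i_0\in T_\beta$ one gets $\vp_{i_0}\le 2s^2\beta/\tpmax$, and since $\vp_{i_0}=t+(V\brt)_{i_0}\ge (\snot^2/n)\sum_{j\in \mN_+(i_0)}\rt_j$, pigeonhole applied to the set $\mN_+(i_0)$ of size $\ge\delta n$ (condition~(1) of robust irreducibility) yields $|T_{C_1\beta}|\ge\delta n/2$ with $C_1=4s^2/(\delta\snot^2)$. Inductive step: summing $\vp_i\le s^2 C_m\beta/\tpmax$ over $i\in S_m:=T_{C_m\beta}$ and exchanging order gives
\[
\sum_{j\in[n]}\rt_j\,|\mN_-(j)\cap S_m|\ \le\ O\bigl(C_m\beta n|S_m|/\tpmax\bigr).
\]
The double-counting identity $\sum_j|\mN_-(j)\cap S_m|=\sum_{i\in S_m}|\mN_+(i)|\ge\delta n|S_m|$ forces (via Markov) that $M_m:=\{j:|\mN_-(j)\cap S_m|\ge(\delta/2)|S_m|\}$ has size $\ge\delta n/2$; restricting the displayed sum to $M_m$ and applying pigeonhole produces at least $\delta n/4$ indices with $\rt_j\le O(C_m\beta/\tpmax)$. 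To upgrade this into strictly geometric growth, I invoke condition~(2): for $|S_m|\le n/(1+\kappa)$ the set $\mN^{(\delta)}_{A^\tran}(S_m)\cap S_m^c$ has size $\ge\kappa|S_m|$, so a positive fraction of the newly-controlled indices falls outside $S_m$, yielding $|S_{m+1}|\ge(1+c)|S_m|$ for some $c(\delta,\kappa)>0$. After $K=O(\log(2/\delta)/\log(1+c))$ iterations---a number independent of $n$---one reaches $|S_K^c|\le\delta n/2$.

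Finishing: in the dense regime $|S_K^c|\le\delta n/2$, condition~(1) gives $|\mN_+(j)\cap S_K|\ge\delta n/2$ for every $j\in[n]$, so splitting $\vp_j$ into contributions from $S_K$ (bounded by $O(C_K\beta/\tpmax)$ using $\rt_k\le C_K\beta/\tpmax$ there) and from $S_K^c$ (bounded using the pointwise inequality $\rt_k\le 1/\tp_k$ together with the small density of $S_K^c$) yields $\vp_j=O(C_K\beta/\tpmax)$, hence $\rt_j\le\vp_j/s^2=O(C_K\beta/\tpmax)$ uniformly in $j$, closing the argument with $C_0=O(C_K)$. The main obstacle will be reconciling the mismatch between the out-neighbor expansion supplied by condition~(2) (phrased via $\mN^{(\delta)}_{A^\tran}$) and the in-neighbor sums $|\mN_-(j)\cap S_m|$ arising naturally from summing $\vp_i$; bridging these via the double-counting identity, and carefully controlling the residual contribution from $S_K^c$ in the final phase without picking up an uncontrolled $1/t$ factor, is the delicate step.
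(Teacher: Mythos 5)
Your plan diverges from the paper's, and the two places you flag as ``delicate'' are in fact genuine gaps, not bookkeeping.

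\textbf{The directionality gap in the inductive step.} You apply condition~(2) to $S=S_m$, obtaining the set $\mN^{(\delta)}_{A^\tran}(S_m)\cap S_m^c$, which (unwinding the definitions $\mN_-^{(\delta)}(S)=\{j:|\mN_+(j)\cap S|\ge\delta|S|\}$) is the set of vertices \emph{outside} $S_m$ that \emph{send} many edges \emph{into} $S_m$. But smallness of $\rt$ propagates from a vertex $i$ with small $\rt_i$ (hence small $\vp_i$) to its \emph{out}-neighborhood $\mN_+(i)$, via $\vp_i\ge(\snot^2/n)\sum_{j\in\mN_+(i)}\rt_j$. Knowing that certain vertices $j\notin S_m$ have dense out-neighborhoods in $S_m$ tells you nothing useful: you cannot propagate smallness to $j$, and you cannot propagate from $j$ because $\rt_j$ is not known to be small. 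The hypothesis must be applied to $S=S_m^c$ instead, giving $\mN_-^{(\delta)}(S_m^c)\cap S_m$ of size $\ge\min(\kappa|S_m^c|,|S_m|)$: vertices $i\in S_m$ (so $\rt_i$ small) whose out-neighborhood is $\delta$-dense in $S_m^c$. From any single such $i$ you already get, by Markov applied to $\sum_{j\in\mN_+(i)}\rt_j\le O(\vp_i n)$, that at least $\tfrac12\delta|S_m^c|$ vertices of $S_m^c$ enter $T_{C\beta}$ for a suitable constant $C$. This is exactly the paper's Claim and it makes the entire double-counting machinery over all of $S_m$ unnecessary (and indeed that machinery, even corrected, only locates small-$\rt$ indices among those \emph{receiving} many edges from $S_m$, which the robust irreducibility hypothesis — phrased via $\mN_{A^\tran}^{(\delta)}$ — does not control).

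\textbf{The gap in the finishing step.} Once $|S_K^c|\le\eps_0 n$ you propose to bound $\vp_j$ for an arbitrary $j$ by splitting $(V\brt)_j$ over $S_K$ and $S_K^c$. The $S_K$-part is fine, but for $k\in S_K^c$ the only available bound on $\rt_k$ is the $\beta$-independent $\rt_k\le 1/t$ (or $\rt_k\le\vpmax/s^2$, which is circular), so the residual term cannot be made $O(\beta)$ no matter how small $|S_K^c|/n$ is. The paper sidesteps this entirely by a contradiction via the spreading argument of Lemma~\ref{lem:S14}: if $T_{C_0\beta}\ne[n]$ then some $\rt_i\ge C_0\beta/\tpmax$, hence $\vpmax\ge C_0s^2\beta/\tpmax$, and (using that $K=\frac1n\sum\rt_j$ is large) a linear proportion $\ge(s^2/4)n$ of the $\rt_j$ are $\ge\vpmax/4\gs C_0\beta/\tpmax$, contradicting $|T_{C'\beta}|\ge(1-s^2/8)n$ once $C_0\gs C'/s^2$. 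You would need to import this trace-identity/spreading mechanism to close the argument; a pointwise estimate on $\vp_j$ alone will not do it.

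Your base step and the overall architecture (iterate to a $(1-\eps_0)n$-fraction, then finish) match the paper's, but the two core mechanisms — choosing the source vertex from $\mN_-^{(\delta)}(S_m^c)\cap S_m$ rather than summing, and the Lemma~\ref{lem:S14}-style endgame — are missing.
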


\begin{proof}
Let $\beta>0$ to be taken sufficiently small and assume $T_\beta$ is non-empty. Fix an element $i_0\in T_\beta$.
We will grow the set $T_\beta$ in stages by enlarging $\beta$ by appropriate constant factors.
We do this by iterative application of the following:

\begin{claim}	\label{claim:iterateT}
Let $\beta,\eps_0\in (0,1/2]$, and assume $0<|T_\beta|\le (1-\eps_0)n$.
There exists $C=C(\snot, \delta,\eps_0)>0$ such that if $n$ is sufficiently large depending on $\kappa$ and $\eps_0$ then $|T_{C\beta}\setminus T_\beta|\ge (\delta\eps_0/2)n$.
\end{claim}

\begin{proof}
By the assumption that $A(\scut)$ is $(\delta,\kappa)$-robustly irreducible we have
\[
|\mN_-^{(\delta)}(T_\beta^c)\cap T_\beta| \ge \min(\kappa |T_\beta^c|,|T_\beta|) \ge \min(\kappa \eps_0n,1)\ge1
\]
if $n$ is sufficiently large.
Fix an element $i\in \mN_-^{(\delta)}(T_\beta^c)\cap T_\beta$. 
By definition we have $$|\mN_+(i) \cap T_\beta^c|\ge \delta|T_\beta^c|\ge \delta\eps_0n.$$
Next, we claim that for any $C>0$ we have
\begin{equation}	\label{bdd:goodout}
|\mN_+(i) \cap T_{C\beta}^c|\le \frac{2s^2}{C\snot^2} n.
\end{equation}
Indeed, since $i\in T_\beta$, by \eqref{def:ST}
and \eqref{qbds} we have
\[
\frac{\beta}{\tpmax} > \rt_{i} \ge \frac12 \min\left(\frac{\vp_{i}}{s^2}, \frac1{\tp_{i}}\right).
\]
Since $\beta\le 1/2$ it follows that the minimum is attained by the first argument. Thus
\begin{align*}
\frac{\beta}{\tpmax} &> \frac{\vp_i}{2s^2} > \frac{1}{2s^2} \frac1n\sum_{j=1}^n \sigma_{ij}^2 \rt_j 
\ge \frac{\snot^2}{2s^2} \frac1n \sum_{j\in \mN_+(i)}\rt_j.
\end{align*}
From Markov's inequality it follows that for any $C>0$, $\rt_j < C\beta/\tpmax$ for all but at most $(2s^2/C\snot^2)n$ values of $j\in \mN_+(i)$, which gives \eqref{bdd:goodout}.
Combining these estimates and taking $C=8s^2/(\snot^2\delta\eps_0)$ we have
\begin{align*}
|T_{C\beta}\setminus T_\beta| &\ge |\mN_+(i)\cap T_\beta^c|-|\mN_+(i)\cap T_{C\beta}^c| \ge \left(\delta\eps_0 - \frac{2s^2}{C\snot^2}\right)n \ge (\delta\eps_0/2)n
\end{align*}
as desired.
\end{proof}

Applying the above claim iteratively with $\eps_0=s^2/8$ we obtain $C'(\snot, \delta,s)<\infty$ such that if $\beta$ is sufficiently small depending on $\snot,\delta,s$ and $n$ is sufficiently large depending on $\kappa,s$, then
\begin{equation}	\label{TCprime}
|T_{C'\beta}|\ge (1-s^2/8)n.
\end{equation}

Now let $C_0>0$ to be chosen later, and towards a contradiction suppose $T_{C_0\beta}\ne [n]$. 
Then there exists $i\in [n]$ such that $\rt_i\ge C_0\beta/\tpmax$. 
From \eqref{qbds} we have the upper bound $\rt_i\le \vpmax/s^2$, so we conclude $\vpmax \ge C_0s^2\beta/\tpmax$.
Now from our assumption $K\le \frac1n\sum_{j=1}^n \rr_j = \sum_{j=1}^n \rt_j$, if $K$ is sufficiently large depending on $s$ then the same argument as in the proof of Lemma \ref{lem:S14} shows that $\rt_j\ge \vpmax/4$ for at least $(s^2/4)n$ values of $j\in [n]$.
Thus, 
$
\rt_j \ge  C_0s^2\beta/(4\tpmax)
$
for at least $(s^2/4)n$ values of $j\in [n]$, i.e.\ $|T_{C_0s^2\beta/4}|<(1-\frac{s^2}{4})n$. 
Taking $C_0= 4C'/s^2$ we contradict \eqref{TCprime}, and we conclude $T_{C_0\beta}=[n]$.
\end{proof}

Now by the same lines as in \eqref{ifTC0:start}--\eqref{assume:tqi.lb} we conclude
\begin{equation}	\label{assume:tqi.lb2}
\rt_i \ge \beta_0/\tpmax \quad\quad\forall i\in [n]
\end{equation}
for some $\beta_0(s,\snot,\delta,\kappa)>0$.
Note we are now free to use the estimates in Lemma \ref{lem:Wbounds} with this value of $\beta_0$.

\subsubsection{Upper bounding $\rr_i$}

Here our task is essentially to modify the proof of Lemma \ref{lem:alphaprime.qual} to show we can take $\alpha'$ sufficiently small and \emph{independent of $n$} such that $|S_{\alpha'}\setminus S_\alpha|\gs n$, rather than merely nonempty.
We can then conclude the proof by iterating this fact a bounded number of times.

Let us summarize the key new ideas. 
In the proof of Lemma \ref{lem:alphaprime.qual} we used the irreducibility assumption to find an element $i^*\in S_\alpha$ such that the average of the components $\rr_k$ over $S_\alpha^c$ was of order $\gs_{\alpha, n} \rr_{i^*} \ge \alpha\tpmax$ (see \eqref{avgout.qual}).
In a similar spirit, Lemma \ref{lem:avgLB} below controls the average of $\rr_k$ over $k\in S_{\alpha}^c$ from below by the average of $\rr_i$ over $i\in U_0$, for a set $U_0\subset S_{\alpha}$ that is \emph{densely connected} to $S_\alpha^c$.
By averaging over a large set $U_0$ we are able to use the full strength of the bounds in Lemma \ref{lem:Wbounds} and avoid any dependence of the constants on $n$. 

Proceeding na\"ively, one can then use Lemma \ref{lem:avgLB} to deduce 
$
|S_{c_0\alpha^2}\setminus S_\alpha| \ge c_0 \alpha |S_\alpha^c|
$
for a sufficiently small constant $c_0=c_0(s,\snot,\delta,\kappa)>0$.
However, when iterating this bound over a sequence of values $\alpha_{k+1} = c_0\alpha_{k}^2 $, the sets $S_{\alpha_k}$ grow by an exponentially decreasing proportion of $n$, so this is not enough to find a value of $\alpha$ for which $|S_{\alpha}|$ is close to $n$.

Instead, in Lemma \ref{lem:alphaprime} we are able to grow $S_\alpha$ by a constant factor using a nested iteration argument, which we now describe.
We would like to find some value of $\alpha'\in (0,\alpha)$ for which
\begin{equation}	\label{spose.alpha}
|S_{\alpha'}\setminus S_{\alpha}|\ge c |S_{\alpha'}|
\end{equation}
where $c>0$ is small constant.
Suppose that \eqref{spose.alpha} fails.
By the expansion assumption, we know that $S_{\alpha'}$ contains a fairly large set $U=\mN_-^{(\delta)}(S_{\alpha'}^c)\cap S_{\alpha'}$ (of size at least $\min(|S_{\alpha'}|, \kappa|S_{\alpha'}^c|)$) that is densely connected to $S_{\alpha'}^c$.
In particular, if $c$ is sufficiently small depending on $\kappa$, then $U$ must have large overlap with $S_{\alpha}$. 
Denoting the overlap by $U_0$, Lemma \ref{lem:avgLB} can now be applied to deduce
\[ 
|S_{c_0\alpha\alpha'}\setminus S_{\alpha'}| \ge c_0 \alpha |S_{\alpha'}^c|
\]
for some $c_0=c_0(s,\snot,\delta,\kappa)>0$ sufficiently small.
The key is that the constant of proportionality on the right hand side is independent of $\alpha'$.
Hence, for fixed $\alpha$, as long as \eqref{spose.alpha} fails we can iteratively lower $\alpha'$ to increase $|S_{\alpha'}\setminus S_{\alpha}|$ by an amount $\gs \alpha |S_{\alpha'}^c|$, until eventually \eqref{spose.alpha} holds.
This whole procedure can then be iterated a bounded number of times to obtain $\alpha''$ such $|S_{\alpha''}|$ is close to $n$.

Having motivated the key ideas, we turn now to the proofs.

\begin{lemma}	\label{lem:avgLB}
Let $\alpha\in (0,1)$ and suppose that $0<|S_\alpha|\le (1-\delta/2)n$. 
If $K$ is sufficiently large depending on $\alpha, s,\snot,\delta,\kappa$, 
then for any $U_0\subset \mN_-^{(\delta)}(S_\alpha^c)\cap S_\alpha$ with $|U_0|\ge \frac1{10} |\mN_-^{(\delta)}(S_\alpha^c)\cap S_\alpha|$ we have
\begin{equation}
\frac1{|S_\alpha^c|}\sum_{k\in S_\alpha^c} \rr_k \gs \frac{\alpha}{|S_\alpha|} \sum_{i\in U_0} \rr_i.
\end{equation}
\end{lemma}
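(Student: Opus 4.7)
The plan is to apply the first identity from Lemma \ref{lem:Wbounds} with $S = S_\alpha$ to each $i \in U_0$, writing
\[
\rr_i \;=\; t \sum_{j \in S_\alpha} W_{ij} \;+\; \frac{1}{n}\sum_{j \in S_\alpha} W_{ij}\sum_{k \in S_\alpha^c} \sigma_{kj}^2 \rr_k,
\]
and sum over $i \in U_0$. For the second (main) piece, exchange the order of summation and estimate $\sum_{i \in U_0} W_{ij}$ via \eqref{Wbound2}: the hypothesis $U_0 \subseteq \mN_-^{(\delta)}(S_\alpha^c) \cap S_\alpha$ ensures $|\mN_+(i) \cap S_\alpha^c| \ge \delta |S_\alpha^c|$ for every $i \in U_0$, hence $\delta|S_\alpha^c| \sum_{i \in U_0} W_{ij} \le (n\tpmax/(\beta_0\snot^2))\rt_j$. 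Combined with $\rt_j \le 1/(s^2 \rr_j) \le 1/(s^2\alpha\tpmax)$ for $j \in S_\alpha$ (from \eqref{qproduct} and the definition of $S_\alpha$) together with $\sigma_{kj}^2 \le 1$, this produces the upper bound
\[
\sum_{i \in U_0}\rr_i \;\le\; t \sum_{i\in U_0}\sum_{j\in S_\alpha} W_{ij} \;+\; \frac{|S_\alpha|}{s^2\alpha\beta_0\snot^2\delta|S_\alpha^c|} \sum_{k\in S_\alpha^c} \rr_k.
\]

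For the pure-$t$ piece, the elementwise inequality $W_{ij} \le \rt_j/t$ (which follows from \eqref{Wbound0} and \eqref{Wbound1}) together with the same bound $\rt_j \le 1/(s^2\alpha\tpmax)$ gives $t\sum_{i \in U_0}\sum_{j\in S_\alpha} W_{ij} \le |S_\alpha|/(s^2\alpha\tpmax)$. Since $\sum_{i\in U_0}\rr_i \ge \alpha\tpmax|U_0|$ by definition of $S_\alpha$, if this pure-$t$ piece were \emph{not} at most $\tfrac12\sum_{i\in U_0}\rr_i$, we would obtain $\tpmax^2 \ls |S_\alpha|/(\alpha^2|U_0|)$. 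Here the quantitative hypothesis \ref{ass:expander} enters decisively: from $|\mN_-^{(\delta)}(S_\alpha^c) \cap S_\alpha| \ge \min(\kappa|S_\alpha^c|,|S_\alpha|)$ combined with $|S_\alpha^c| \ge (\delta/2)n$ (a consequence of $|S_\alpha| \le (1-\delta/2)n$), splitting into the cases $|S_\alpha| \le \kappa|S_\alpha^c|$ and $|S_\alpha| > \kappa|S_\alpha^c|$ gives $|S_\alpha|/|U_0| \ls 1/(\kappa\delta)$, \emph{independently of $n$}. Hence $\tpmax$ is then bounded by a constant depending only on $\alpha, s, \snot, \delta, \kappa$, which contradicts $K \le \tpmax/s^2$ from \eqref{qj:avgub} once $K$ is taken large enough in these parameters.

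With the pure-$t$ piece ruled out as dominant, the inequality reduces to
\[
\sum_{i\in U_0}\rr_i \;\le\; \frac{2|S_\alpha|}{s^2\alpha\beta_0\snot^2\delta|S_\alpha^c|}\sum_{k\in S_\alpha^c}\rr_k,
\]
which rearranges to the claim. The main obstacle is precisely this $n$-free control of $|S_\alpha|/|U_0|$: the purely qualitative irreducibility argument used in Lemma \ref{lem:alphaprime.qual} produces constants blowing up with $n$, and it is the quantitative expansion furnished by \ref{ass:expander}, together with the size hypothesis $|S_\alpha| \le (1-\delta/2)n$, that allows the bound to pass uniformly in $n$ and yield a conclusion strong enough to drive the iterative scheme described in Section \ref{sec:quantitative}.
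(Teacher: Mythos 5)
Your proof is correct and follows essentially the same lines as the paper's argument: you start from the same representation \eqref{def:W}, you invoke \eqref{Wbound1} and \eqref{Wbound2} in the same way, you derive the same $n$-independent control on $|S_\alpha|/|U_0|$ from the $(\delta,\kappa)$-expansion hypothesis together with $|S_\alpha|\le(1-\delta/2)n$, and your case split on whether the pure-$t$ piece dominates is exactly the paper's preliminary comparison $\sum_{i\in U_0}\rr_i\ge 2\sum_{j\in S_\alpha}\rt_j$ recast. One wording slip worth fixing: the \emph{elementwise} inequality $W_{ij}\le\rt_j/t$ would, if applied literally inside $t\sum_{i\in U_0}\sum_{j\in S_\alpha}W_{ij}$, produce an extraneous factor $|U_0|$ (and the resulting bound $\tpmax^2\ls |S_\alpha|/\alpha^2$ would be $n$-dependent and unusable); what you actually need, and what your displayed bound shows you are implicitly using, is the row-sum form $\sum_{i\in U_0}W_{ij}\le\sum_{i\in S_\alpha}W_{ij}\le\rt_j/t$ coming directly from \eqref{Wbound0}, \eqref{Wbound1} and $U_0\subset S_\alpha$.
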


\begin{proof}
First we prove the comparison
\begin{equation}	\label{SaS0:compare}
\sum_{i\in U_0} \rr_i \ge 2 \sum_{j\in S_\alpha}\rt_j
\end{equation}
assuming $K$ is sufficiently large depending on $\alpha, s,\snot,\delta,\kappa$.
Indeed, if \eqref{SaS0:compare} does not hold, then by the fact that $U_0\subset S_\alpha$ and  \eqref{qproduct},
\[
\alpha \tpmax|U_0| \le \sum_{i\in U_0} \rr_i < 2\sum_{j\in S_\alpha} \rt_j \le \frac{2|S_\alpha|}{s^2\alpha\tpmax}.
\]
Rearranging we have
\[
\tpmax \le \frac1\alpha\left( \frac{2|S_\alpha|}{s^2 |U_0|}\right)^{1/2} \ls \frac1{\alpha} \left(\frac{|S_\alpha|}{\min(|S_\alpha|,|S_\alpha^c|)}\right)^{1/2} \ls 1/\alpha
\]
where in the second bound we applied the robust irreducibility assumption and our assumed bounds on $U_0$ and $S_\alpha$, and in the bound we used that both $S_\alpha$ and its complement are of linear size in $n$.
From our assumption \eqref{assume:qK}, \eqref{qbds} and the above it follows that
\begin{equation}
K\le \frac1n\sum_{i=1}^n \rr_i \le \tpmax/s^2 \ls 1/\alpha.
\end{equation}
Taking $K$ sufficiently large depending on $\alpha, s,\snot,\delta$ and $\kappa$, we may assume \eqref{SaS0:compare} holds.

From \eqref{def:W} and Lemma \ref{lem:Wbounds} we have
\begin{align*}
\sum_{i\in U_0} \rr_i &= t\left( \sum_{j\in {S_\alpha}} \sum_{i\in U_0} W_{ij}^{S_\alpha}\right) + \frac1n \sum_{j\in {S_\alpha}}\sum_{k\in {S_\alpha}^c} \left( \sum_{i\in U_0} W_{ij}^{S_\alpha} \right) \sigma_{kj}^2 \rr_k
\le \sum_{j\in {S_\alpha}} \rt_j + \frac1n \sum_{j\in {S_\alpha}}\sum_{k\in {S_\alpha}^c} \left( \sum_{i\in U_0} W_{ij}^{S_\alpha} \right) \sigma_{kj}^2 \rr_k,
\end{align*}
where in the bound we have applied \eqref{Wbound0} and \eqref{Wbound1}.
Applying \eqref{SaS0:compare} and rearranging yields
\begin{equation}	\label{avgLB:1}
\sum_{i\in U_0} \rr_i \le \frac2n\sum_{j\in S_\alpha} \sum_{k\in S_\alpha^c} \left(\sum_{i\in U_0} W_{ij}^{S_\alpha}\right) \sigma_{kj}^2 \rr_k.
\end{equation}
Now since $U_0\subset \mN_-^{(\delta)}(S_\alpha^c)$, for any $i\in U_0$ we have $|\mN_+(i) \cap S_\alpha^c| \ge \delta|S_\alpha^c|$. 
Together with \eqref{Wbound0} and \eqref{Wbound2} this implies	

\[
\delta|S_\alpha^c| \sum_{i\in U_0}W_{ij}^{S_\alpha} \le \sum_{i\in S_\alpha}W_{ij}^{S_\alpha}|\mN_+(i) \cap S_\alpha^c| \le \left( \frac{n\tpmax}{\beta_0\snot^2}\right) \rt_j.
\]
Rearranging we obtain a bound on $\sum_{i\in U_0}W_{ij}^{S_\alpha}$, which we substitute in \eqref{avgLB:1} to obtain
\begin{align*}
\sum_{i\in U_0}\rr_i 
&\;\le\; \frac{2\tpmax}{\beta_0\snot^2 \delta|S_\alpha^c|} \sum_{j\in S_\alpha}\sum_{k\in S_\alpha^c} \sigma_{kj}^2 \rt_j\rr_k \;\le\; \frac{2|S_\alpha|}{s^2\alpha\beta_0\snot^2 \delta|S_\alpha^c|}\sum_{k\in S_\alpha^c} \rr_k,
\end{align*}
where in the second inequality we applied \eqref{qproduct} to bound $\rt_j\le 1/s^2\alpha\tpmax$ for all $j\in S_\alpha$.
The result now follows by rearranging.
\end{proof}

\begin{lemma}	\label{lem:alphaprime}
For any $\alpha\in (0,1)$ there exists $\alpha'=\alpha'(\alpha,s,\scut,\delta,\kappa)>0$ such that either
\begin{equation}	\label{ap:alt1}
|S_{\alpha'}|\ge (1-\delta/2)n
\end{equation}
or
\begin{equation}	\label{ap:alt2}
|S_{\alpha'}\setminus S_{\alpha}| \ge \frac12\min (|S_{\alpha'}|, \kappa |S_{\alpha'}^c|)
\end{equation}
(or both).
\end{lemma}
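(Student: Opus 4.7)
The plan is to construct $\alpha'$ by a nested iteration. I initialize $\alpha'_0 := \alpha$. At iteration $k$, if either alternative~1 or alternative~2 holds for $\alpha' = \alpha'_k$, I stop and output $\alpha' = \alpha'_k$; otherwise, I produce $\alpha'_{k+1} < \alpha'_k$ as described below. I expect the iteration to terminate after a bounded number of steps, with the resulting $\alpha'_K$ bounded below by a positive quantity depending only on $\alpha$, $s$, $\scut$, $\delta$, and $\kappa$.

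Suppose then that both alternatives fail at $\alpha'_k$, so $|S_{\alpha'_k}^c| \ge (\delta/2)n$ and $|S_{\alpha'_k}\setminus S_\alpha| < \tfrac{1}{2}\min(|S_{\alpha'_k}|,\kappa|S_{\alpha'_k}^c|)$. By robust irreducibility applied to $S_{\alpha'_k}^c$, the set $U := \mN_-^{(\delta)}(S_{\alpha'_k}^c)\cap S_{\alpha'_k}$ has size $|U|\ge \min(|S_{\alpha'_k}|,\kappa|S_{\alpha'_k}^c|)$. Failure of alternative~2 then gives $|U_0|\ge |U|/2$ for $U_0 := U\cap S_\alpha$, so $U_0$ is a substantial subset of $S_\alpha$ that is densely connected out to $S_{\alpha'_k}^c$. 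I would then apply a modified form of Lemma~\ref{lem:avgLB} with this choice of $U_0$. Two modifications are needed: (i) for $i\in U_0$ we only have $|\mN_+(i)\cap S_\alpha^c|\ge \delta|S_{\alpha'_k}^c|$ rather than $\delta|S_\alpha^c|$, so the factor $\delta|S_\alpha^c|$ in~\eqref{Wbound2} is replaced by $\delta|S_{\alpha'_k}^c|$; (ii) the inclusion $U_0\subset S_\alpha$ yields the stronger lower bound $\rr_i\ge \alpha\tpmax$ for $i\in U_0$. Running through the argument of Lemma~\ref{lem:avgLB} with these changes, and using $|S_\alpha|\gs n$ (Lemma~\ref{lem:S14}) together with $|S_{\alpha'_k}^c|\ge (\delta/2)n$, I aim to obtain
\[
|S_{\alpha'_{k+1}}\setminus S_{\alpha'_k}| \;\gs\; \alpha\,|S_{\alpha'_k}^c|
\]
for a suitably chosen $\alpha'_{k+1}$ of order $c_0\alpha\alpha'_k$, with implicit constant depending only on $s,\scut,\delta,\kappa$.

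This estimate translates into the geometric shrinkage $|S_{\alpha'_{k+1}}^c|\le (1-c\alpha)|S_{\alpha'_k}^c|$ for some $c=c(s,\scut,\delta,\kappa)>0$. After $K = O(\log(2/\delta)/\alpha)$ iterations we must have $|S_{\alpha'_K}^c|\le (\delta/2)n$, whence alternative~1 holds for $\alpha'_K$. Since $K$ is bounded in terms of the parameters only, the value $\alpha'_K$---being a product of at most $K$ factors of the form $c_0\alpha$ times $\alpha$---remains bounded below by a positive constant depending only on $\alpha,s,\scut,\delta,\kappa$, giving the required $\alpha'$.

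The hard part will be obtaining the $\alpha$ (rather than the weaker $\alpha^2$) dependence in the key estimate above, which is essential for the geometric shrinkage rate to be independent of $\alpha'_k$. A naive application of Lemma~\ref{lem:avgLB} with $S_{\alpha'_k}$ in place of $S_\alpha$ loses an extra factor of $\alpha'_k$ coming from the bound $\rt_j\le (s^2\alpha'_k\tpmax)^{-1}$ for $j\in S_{\alpha'_k}$, which would destroy the termination estimate. To avoid this I expect to split $\sum_{j\in S_{\alpha'_k}}\rt_j = \sum_{j\in S_\alpha}\rt_j + \sum_{j\in S_{\alpha'_k}\setminus S_\alpha}\rt_j$: the first piece is controlled by $|S_\alpha|/(s^2\alpha\tpmax)$ via~\eqref{qproduct}, and the second piece, thanks to the alternative-2 failure bound $|S_{\alpha'_k}\setminus S_\alpha|<\tfrac{1}{2}\kappa|S_{\alpha'_k}^c|$ combined with $|S_\alpha|\gs n$, can be absorbed into the first. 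Careful bookkeeping through this splitting, together with the case analysis on which term realizes $\min(|S_{\alpha'_k}|,\kappa|S_{\alpha'_k}^c|)$, is the principal technical content of the argument.
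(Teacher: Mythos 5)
Your overall strategy — the nested iteration, the choice $U_0 = \mN_-^{(\delta)}(S_{\alpha'_k}^c)\cap S_\alpha$, the size bound $|U_0|\ge |U|/2$ from failure of alternative~(2), and the observation $\rr_i\ge\alpha\tpmax$ for $i\in U_0$ — matches the paper's. But your final paragraph contains a genuine misconception that sends you off in an unnecessary direction.

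You worry that applying Lemma~\ref{lem:avgLB} with $S_{\alpha'_k}$ in place of $S_\alpha$ "loses an extra factor of $\alpha'_k$" which "would destroy the termination estimate." This is not so. The lemma applied with parameter $\alpha'_k$ gives $\frac{1}{|S_{\alpha'_k}^c|}\sum_{k\in S_{\alpha'_k}^c}\rr_k \gs \frac{\alpha'_k}{|S_{\alpha'_k}|}\sum_{i\in U_0}\rr_i$, and since $U_0\subset S_\alpha$ one then gets $\gs\alpha\alpha'_k\tpmax$ (using $|U_0|/|S_{\alpha'_k}|\gs 1$). In the increment step you set $U' = \{k\in S_{\alpha'_k}^c : \rr_k\ge \frac12 c_0\alpha\alpha'_k\tpmax\}$ and split $\sum_{k\in S_{\alpha'_k}^c}\rr_k \le \alpha'_k\tpmax|U'| + \frac12 c_0\alpha\alpha'_k\tpmax|S_{\alpha'_k}^c|$; here the bound $\rr_k < \alpha'_k\tpmax$ on $S_{\alpha'_k}^c$ produces exactly the matching factor of $\alpha'_k$, which then cancels to give $|U'|\ge \frac12 c_0\alpha|S_{\alpha'_k}^c|$. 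So the geometric shrinkage rate $c_0\alpha$ comes out cleanly, with no $\alpha'_k$ contamination. The $\alpha'_k$ in the lower bound and the $\alpha'_k$ in the upper bound on $S_{\alpha'_k}^c$ are not in tension; they are the same and cancel.

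Because of this misdiagnosis, your proposed "fix" (splitting $\sum_{j\in S_{\alpha'_k}}\rt_j$ over $S_\alpha$ and $S_{\alpha'_k}\setminus S_\alpha$ and invoking the alternative-(2) failure bound to absorb the second piece) is both unnecessary and not obviously workable: the second piece still carries the factor $1/\alpha'_k$ and its size bound $|S_{\alpha'_k}\setminus S_\alpha| < \frac12\kappa|S_{\alpha'_k}^c|$ does not reduce that factor. Relatedly, your "two modifications" to Lemma~\ref{lem:avgLB} arise only because you appear to want to apply it with the original set $S_\alpha$ — whereas applying it verbatim with $\alpha\leftarrow\alpha'_k$ (so the base set is $S_{\alpha'_k}$ and the neighborhood condition is relative to $S_{\alpha'_k}^c$) requires no modification at all: $U_0\subset\mN_-^{(\delta)}(S_{\alpha'_k}^c)\cap S_{\alpha'_k}$ is automatic since $S_\alpha\subset S_{\alpha'_k}$, and the stronger bound $\rr_i\ge\alpha\tpmax$ is applied \emph{after} invoking the lemma, not inside it. Once you use the lemma this way, the rest of your outline (geometric shrinkage, $O(1/\alpha)$ iterations, $\alpha'$ bounded below in the parameters) goes through as you anticipated.
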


\begin{proof}
For $\alpha'\in (0,\alpha)$ denote by $P(\alpha')$ the statement that at least one of \eqref{ap:alt1} and \eqref{ap:alt2} holds.
We will show that while $P(\alpha')$ fails, we can lower $\alpha'$ by a controlled amount to increase the size of $S_{\alpha'}\setminus S_\alpha$ by a little bit. We can then iterate this until $P(\alpha')$ holds. 

Let $\alpha'\in (0,\alpha)$ be arbitrary and assume $P(\alpha')$ fails. 
We claim there exists $c_0(s,\scut,\delta,\kappa)>0$ such that
\begin{equation}
\frac{1}{|S_{\alpha'}^c|} \sum_{k\in S_{\alpha'}^c} \rr_k  \ge c_0 \alpha\alpha' \tpmax. 		\label{lb:qtout}
\end{equation}
Put
$
U_0 = \mN_-^{(\delta)}(S_{\alpha'}^c) \cap S_\alpha.
$
By the robust irreducibility assumption and the fact that \eqref{ap:alt2} fails,
\begin{align}
|U_0| 
&\ge |\mN_-^{(\delta)}(S_{\alpha'}^c)\cap S_{\alpha'}| - |S_{\alpha'}\setminus S_\alpha| \ge \frac12 |\mN_-^{(\delta)}(S_{\alpha'}^c)\cap S_{\alpha'}|	\label{inca:1}\\
&\ge \frac12\min (|S_{\alpha'}|, \kappa |S_{\alpha'}^c|).	\label{inca:2}
\end{align}
By \eqref{inca:1} and Lemma \ref{lem:avgLB},
\begin{align*}
\frac{1}{|S_{\alpha'}^c|} \sum_{k\in S_{\alpha'}^c} \rr_k 
&\,\gs\, \frac{\alpha'}{|S_{\alpha'}|}\sum_{i\in U_0}\rr_i	
\,\ge\,   \alpha\alpha' \tpmax  \frac{|U_0|}{|S_{\alpha'}|}	
\,\gs \, \alpha\alpha' \tpmax ,
\end{align*}
where in the last inequality we applied \eqref{inca:2} and the fact that \eqref{ap:alt1} fails.
This gives \eqref{lb:qtout} as desired.

Now denoting
\[
U' = \Big\{k\in S_{\alpha'}^c: \rr_k \ge \frac12c_0 \alpha\alpha' \tpmax \Big\}
\]
we have
\begin{align*}
\sum_{k\in S_{\alpha'}^c}\rr_k 
&\le \sum_{k\in U'}\rr_k + \sum_{k\in S_{\alpha'}^c \setminus U'} \rr_k \le \alpha ' \tpmax |U'| + \frac12c_0 \alpha\alpha' \tpmax |S_{\alpha'}^c|,
\end{align*}
where we used that by definition, $\rr_k \le \alpha'\vpmax $ for all $k\in S_{\alpha'}^c$.
Combining with \eqref{lb:qtout} and rearranging gives
\begin{equation}	\label{lb:continuity}
|S_{c_0\alpha\alpha'/2}\setminus S_{\alpha'}| \ge |U'|\ge \frac12c_0 \alpha |S_{\alpha'}^c|.
\end{equation}
Since \eqref{lb:continuity} holds as long as $P(\alpha')$ fails,
we can repeatedly lower $\alpha'$ by a factor $c_0\alpha/2$ to obtain $\alpha'=\alpha'(\alpha,s,\scut,\delta,\eps)$ such that $P(\alpha')$ holds.
More explicitly, for each $k\ge 0$ put $\alpha_k= (c_0\alpha/2)^k\alpha$ and abbreviate $S_k:= S_{\alpha_k}$.
Then for all $k\ge 1$ such that $P(\alpha_k)$ fails we have
$
|S_{k+1}\setminus S_k| \ge \frac12c_0\alpha |S_k^c|,
$
so
\begin{align*}
|S_{k+1}\setminus U_0| &= |S_{k+1}\setminus S_k| + \cdots + |S_{1}\setminus U_0| 
\ge \frac12c_0\alpha( |S_k^c| + \cdots + |S_{0}^c|)
\ge (k+1)\frac12c_0\alpha |S_{k+1}^c|.
\end{align*}
Thus, we must have that $P(\alpha_{k})$ holds for some $k\le 2\kappa/c_0\alpha$.
(This gives $\alpha'$ of size $O(1/\alpha)^{-O(1/\alpha)}$.)
\end{proof}

Now we conclude the proof of Proposition \ref{prop:boundq}.
From Lemma \ref{lem:S14} we have $|S_{1/4}|\ge (s^2/4)n$. 
Applying Lemma \ref{lem:alphaprime} $O(1)$ times we obtain $\alpha''\gs 1$ such that
\begin{equation}	\label{adp}
|S_{\alpha''}|\ge (1-\delta/2)n.
\end{equation}
Now from \eqref{qproduct} we have
\begin{equation}	\label{bd:inSaa}
\rt_j \le \frac1{s^2\alpha'' \tpmax}
\end{equation}
for all $j\in S_{\alpha''}$.
On the other hand, for any $j\in S_{\alpha''}^c$,
\[
1/\rt_j \ge \tp_j \ge (V^\tran \rr)_j \ge \frac1n \sum_{i\in S_{\alpha''}} \sigma_{ij}^2\rr_i \ge \frac1n \snot^2 \alpha'' \tpmax|\mN_-(j)\cap S_{\alpha''}|.
\]
From \eqref{adp} and the robust irreducibility assumption (specifically the condition \eqref{mindegree}), 
\[
|\mN_-(j)\cap S_{\alpha''}| \ge \delta n - |S_{\alpha''}^c| \ge \delta n/2.
\]
Combining the previous two displays we obtain
\[
\rt_j \le \frac{2}{\delta\snot^2 \alpha'' \tpmax}
\]
for all $j\in S_{\alpha''}^c$.
Together with \eqref{bd:inSaa} we have
\[
\rt_j \ls \frac{1}{\alpha'' \tpmax}
\]
for all $j\in [n]$.
Applying \eqref{qtrace},
\[
\alpha'' \tpmax n/2 \le \alpha'' \tpmax |S_{\alpha''}| \le \sum_{j=1}^n \rr_j = \sum_{j=1}^n \rt_j \ls \frac{n}{\alpha'' \tpmax}
\]
and rearranging gives $\tpmax \ls 1/\alpha''\ls 1$. Finally, since
\[
K\le \frac1n\sum_{j=1}^n \rr_j \le \tpmax/s^2
\]
by \eqref{qbds}, we obtain a contradiction if $K$ is sufficiently large depending on $s,\snot, \delta$ and $\kappa$.
It follows that \eqref{assume:qK} fails for sufficiently large $K$, which concludes the proof of Proposition \ref{prop:boundq}.

\begin{rem} 
We note that in the above proof we only applied the expansion bound \eqref{lb:expand} to sets of size at least $\delta n/10$. 
\end{rem}

\section{Proof of Theorem \ref{thm:main}-(1): Tail estimates and asymptotics of the logarithmic potential}
\label{sec:logpotential} 

The main purpose of this section is to show that the logarithmic potential
$U_{\mu^Y_n}$ is close to 
$h_n(z) = - \int_\R \log|x| \, \check\nu_{n,z}(dx)$ for large $n$, and moreover that $h_n$ is the logarithmic potential of a probability measure $\mu_n$. 
Recall that in Theorem~\ref{th:L->nu} we have already established the almost sure convergence of the truncated potentials:
\[
\int_{\{ |x|\ge \varepsilon\}} \log |x| \, \check L_{n,z}(dx) - 
\int_{\{ |x|\ge \varepsilon\}} \log |x| \, \check\nu_{n,z}(dx)  
\ \xrightarrow[n\to\infty]{\text{a.s.}} \ 0 \ .
\]
Thus, we need to show that these measures uniformly integrate the singularity of $x\mapsto \log|x|$ at 0.
The proof has two main ingredients. The first is a result from \cite{Cook:ssv} by the first author (stated in Proposition~\ref{prop:nick} below) that provides control on the smallest singular value of $Y_n-z$.

The second is the control of the remaining small singular values of $Y_n-z$ 
via the quantity $\E \Imm g_{\check L_{n,z}}(\ii t)$ when $t$ is close to zero. We refer to step 2 in Section \ref{outline} for an outline of this argument.

Finally, to obtain the deterministic equivalents $\mu_n$ for the ESDs $\mu_n^Y$ we rely on a \emph{meta-model argument}, which has been used before in \cite{dumont-et-al-10,najim-yao-2016}.
The idea is that for fixed $n$ we can define a sequence $\{\Ym_n\}_{m\ge 1}$ of $nm\times nm$ random matrices as in Definition \ref{def:model}, where the standard deviation profiles $\Am_n$ are obtained by replacing each entry $\sigma_{ij}$ of $A_n$ by an $m\times m$ block with entries all equal to $\sigma_{ij}$.
We can then show that the logarithmic potentials of the associated ESDs converge to $h_n$ as $m\to \infty$, which will allow us to deduce that $h_n$ is itself the logarithmic potential of a probability measure.
This argument is described in more detail in Section \ref{sec:meta} below.

\subsection{Control on small singular values}

The following result, obtained by one of the authors in \cite{Cook:ssv}, gives an estimate on the lower tail of the smallest singular value $s_{n,z}$ of $Y_n-z$.

\begin{prop}[\cite{Cook:ssv}, Theorem~1.19 and Corollary~1.22]
\label{prop:nick} 
Assume \ref{ass:moments} and \ref{ass:sigmax} hold, and fix
$z\in \C \setminus\{ 0\}$. 
There exist constants $C(|z|,\Mo,\smax),\alpha(\eps),\beta(|z|,\eps,\Mo\smax)>0$ such that for all $n\ge 1$,
\begin{equation}	\label{bound:nick}
\PP \left( s_{n,z} \le  n^{-\beta}\right) \le 
Cn^{-\alpha}.
\end{equation}
\end{prop}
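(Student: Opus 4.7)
The plan is to run the Rudelson--Vershynin dichotomy for the smallest singular value, adapted to the sparse variance profile setting along the lines of \cite{Cook:ssv}. Writing $M_n = Y_n - z I_n$, we have $s_{n,z} = \inf_{\|v\|=1}\|M_n v\|$; I would split the unit sphere in $\C^n$ into \emph{compressible} vectors (within Euclidean distance $\rho$ of some vector of support size $\le \delta n$) and \emph{incompressible} vectors, for small constants $\delta,\rho$ depending on $|z|$ and the moment parameters, and bound the contribution of each set separately.

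First I would handle the compressible sphere, using crucially that $z\neq 0$. For a fixed sparse unit $v_0$, the deterministic anchor $-z v_0$ has norm $|z|$, while $\E\|Y_n v_0\|^2$ is controlled by \ref{ass:sigmax} and a Chebyshev argument using the $(4+\varepsilon)$-moments of \ref{ass:moments} shows that $\|Y_n v_0\| \le |z|/2$ with probability $1-o(1)$. Combining this pointwise estimate with a sparse $\varepsilon$-net of cardinality $\exp(O(\delta n \log(1/\delta\rho)))$, together with a crude high-probability bound on $\|M_n\|$, a union bound then yields
\[
\PP\Bigl(\inf_{v\text{ compressible}} \|M_n v\| \le c|z|\Bigr) \le e^{-c'n},
\]
provided $\delta,\rho$ are chosen small enough depending on $|z|$; this is the easier part and even gives exponential decay.

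Next, on the incompressible set I would invoke the classical row-distance reduction
\[
\inf_{v\text{ incomp.}}\|M_n v\| \;\ge\; \frac{c_0}{\sqrt n}\,\min_{k\in[n]} \dist(R_k, H_k),
\]
where $R_k$ is the $k$-th row of $M_n$ and $H_k$ the span of the remaining rows. Conditioning on those rows, $\dist(R_k,H_k)=|\langle R_k,u_k\rangle|$ for a unit vector $u_k$ normal to $H_k$ and independent of $R_k$. Since $R_k$ has independent entries that are centered (off its $k$-th coordinate) with variances $\sigma_{kj}^2/n$, a quantitative L\'evy concentration / Berry--Esseen inequality gives, conditionally on $u_k$,
\[
\PP\bigl(|\langle R_k,u_k\rangle|\le n^{-\beta}\,\bigm|\,u_k\bigr) \;\le\; \frac{C\, n^{-\beta}}{\|D_k u_k\|_2} + \frac{C'}{\sqrt n}, \qquad D_k:=\diag(\sigma_{kj}/\sqrt n),
\]
so that after a union bound over $k$ the problem reduces to a polynomial ``spreadness'' estimate $\|D_k u_k\|_2 \gs n^{-\gamma}$ valid with probability $1-O(n^{-\alpha})$, uniformly in $k$.

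The hard part will be precisely this lower bound on $\|D_k u_k\|_2$: in the sparse regime $u_k$ could a priori concentrate exactly on those indices $j$ with $\sigma_{kj}=0$, killing $D_k u_k$. To prevent this, one must exploit the combinatorial structure of the directed graph $\Gamma(A_n(\scut))$ associated to the truncated profile. The orthogonality relation $H_k u_k=0$ can be rewritten as a fixed-point statement saying, roughly, that the set of coordinates where $|u_k|$ is non-negligible must be (approximately) invariant under pulling back along the edges of $\Gamma(A_n(\scut))$. A robust irreducibility / expansion property of $\Gamma(A_n(\scut))$ then either forces this set to cover a positive proportion of indices $j$ with $\sigma_{kj}\ge\scut$ (in which case $\|D_k u_k\|_2\ge n^{-O(1)}$ directly), or derives a contradiction with $\|u_k\|=1$ after polynomially many iterations of the expansion. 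Making this peeling argument quantitative and uniform over $z$ at a fixed positive distance from the origin is the technical heart of \cite{Cook:ssv}, and combined with the two previous steps produces the claimed tail bound $\PP(s_{n,z}\le n^{-\beta})\le C n^{-\alpha}$.
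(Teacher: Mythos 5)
This proposition is not proved in the paper at all: it is imported verbatim as a black-box citation of \cite{Cook:ssv} (Theorem~1.19 and Corollary~1.22), so there is no internal argument to compare your sketch against. What \emph{can} be checked is whether your sketch is consistent with the \emph{statement} of the proposition and with the paper's own comments about where each hypothesis is used, and on that score there is a genuine mismatch.

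The hypotheses of the proposition are only \ref{ass:moments}, \ref{ass:sigmax}, and $z\neq 0$; the constants $C,\alpha,\beta$ depend only on $|z|$, $\Mo$, $\smax$ and the moment exponent $\eps$. No assumption on the graph $\Gamma(A_n(\scut))$ appears, and the proof outline in the paper explicitly says the cited result holds ``for arbitrary fixed $z\in\C\setminus\{0\}$ under the sole assumption \ref{ass:sigmax} on the standard deviation profile.'' By contrast, your argument for the incompressible sphere hinges on robust irreducibility / expansion of $\Gamma(A_n(\scut))$ to forbid the normal vector $u_k$ from concentrating on the zero block of row $k$'s variance profile. That is the assumption \ref{ass:expander}, which this proposition does \emph{not} have access to (in the paper it is reserved for the Wegner estimate on the remaining small singular values, Corollary~\ref{cor:wegner}, which sits in a different part of the proof). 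So as written, your plan would establish a strictly weaker statement than the one being quoted. The mechanism that makes the cited result expansion-free is precisely the deterministic shift $-zI$: when $z\neq 0$ the diagonal provides an anticoncentration anchor that must be exploited to control the spread of $u_k$ even when $D_k u_k$ could be tiny, rather than trying to preclude that degeneracy via graph expansion. Indeed the paper emphasizes (Remark~\ref{rmk:almostsure} and Proposition~\ref{prop:singular.profile}) that for some variance profiles the estimate genuinely fails at $z=0$, so any proof must use $z\neq 0$ in a more essential way than your sketch does.

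A second, more minor, issue: in the compressible step you appeal to a Chebyshev bound from the $(4+\eps)$-moment assumption to get $\|Y_nv_0\|\le |z|/2$ with probability ``$1-o(1)$,'' and then union-bound over an $\exp(O(\delta n\log(1/\delta\rho)))$-sized net. A polynomial failure probability cannot beat an exponential union bound; to close the compressible case you need per-point failure probabilities that are $e^{-cn}$, which under only $(4+\eps)$ moments requires a truncation/Bernstein-type argument (or a tensorization trick), not plain Chebyshev. Finally, the ``row removal'' reduction you write down is normally phrased in terms of \emph{columns} of $M_n$ (for a unit $v$ and $Mv=\sum_j v_j C_j$); this is not a real error since $s_n(M)=s_n(M^\tran)$, but it is worth being careful about because in the non-i.i.d.\ variance-profile setting the rows and columns of $Y_n$ have different distributions.
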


\begin{rem}
Similar bounds have been obtained under stronger assumptions on the standard deviation profile.
For instance, \eqref{bound:nick} follows from \cite[Lemma A.1]{2012-bordenave-chafai-circular} if we additionally assume \ref{ass:sigmin} (and in fact this result does not require \ref{ass:moments}).
Further assuming that $A_n$ is composed of a bounded number of blocks of equal size with constant entries, \cite[Corollary 5.2]{ARS} gives \eqref{bound:nick} with $\alpha>0$ as large as we please (and $\beta=\beta(\alpha)$).
An easy argument also gives \eqref{bound:nick} for arbitrary fixed $\alpha>0$ and $\beta(\alpha)$ under \ref{ass:sigmin} and replacing \ref{ass:moments} with a bounded density assumption -- see \cite[Section 4.4]{2012-bordenave-chafai-circular}.
For the case that the entries $X_{ij}$ are real Gaussian variables and $A_n(\scut)$ is $(\delta,\kappa)$-broadly connected for some fixed $\scut,\delta,\kappa\in (0,1)$ (see Definition \ref{def:broad}), \eqref{bound:nick} holds with arbitrary $\alpha>0$ and $\beta=\alpha+1$ by \cite[Theorem 2.3]{rudelson-zeitouni-2016}.
\end{rem}

We now consider the other small singular values of $Y_n - z$. The key is the uniform control on solutions to the Regularized Master Equations \eqref{def:MEt} provided by Assumption \ref{ass:admissible} 
combined with Theorem \ref{th:convergence-QR}.

\begin{coro}[Wegner estimates] \label{cor:wegner}	
Let \ref{ass:moments}, \ref{ass:sigmax} and \ref{ass:admissible} hold. 
Then, for all $z\in\C \setminus \{0\}$  there exist constants $C,\gamma_0>0$ such that for all $x>0$,
\begin{equation}
\check \nu_{n,z}( (-x,x) )\le  C x \label{wegner-nu}
\end{equation}
and 
\begin{equation}
\E \check L_{n,z}((-x,x))   \le C   (x \vee n^{-\gamma_0}) .
\label{wegner-L}
\end{equation}
\end{coro}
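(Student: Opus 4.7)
My plan is to reduce both bounds to a single Stieltjes-transform inequality, namely that for any probability measure $\mu$ on $\R$ and any $t>0$,
\[
\mu((-t,t)) \le 2t \int \frac{t}{\lambda^2+t^2}\,\mu(d\lambda) = 2t\,\Imm g_\mu(\ii t),
\]
applied in turn to $\mu=\check\nu_{n,z}$ and $\mu=\check L_{n,z}$. Combined with the fact that $\check L_{n,z}((-x,x))$ and $\check \nu_{n,z}((-x,x))$ are nondecreasing in $x$, it suffices to control $\Imm g(\ii t)$ for $t$ close to zero and then take $t=x$ (or slightly larger).

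For the deterministic bound \eqref{wegner-nu}, I would identify $\Imm g_{\check\nu_{n,z}}(\ii t)$ with the solution of the Regularized Master Equations: by \eqref{eq:p->q} and Proposition~\ref{prop:deteq}--(4),
\[
\Imm g_{\check\nu_{n,z}}(\ii t) \;=\; \frac{1}{n}\sum_{i\in[n]} r_i(|z|,t).
\]
Assumption \ref{ass:admissible} then gives $\Imm g_{\check\nu_{n,z}}(\ii t)\le C(|z|)$ uniformly in $n\ge 1$ and $t\in(0,1]$, so the Stieltjes inequality yields $\check\nu_{n,z}((-t,t))\le 2C(|z|) t$ for $t\in(0,1]$; for $t>1$ the bound $\check\nu_{n,z}((-t,t))\le 1\le t$ is trivial. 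Choosing $t=x$ gives \eqref{wegner-nu}.

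For the random bound \eqref{wegner-L}, the plan is to compare $\E \Imm g_{\check L_{n,z}}(\ii t)$ with $\Imm g_{\check\nu_{n,z}}(\ii t)$ via Theorem~\ref{th:convergence-QR}, which provides (after taking imaginary parts of $n^{-1}\tr\,\E G(z,\ii t)$ vs.\ $n^{-1}\tr P(|z|,\ii t)$) an error of the form $\mathcal O_\eta(n^{-1/2})$ at $\eta=\ii t$. In view of Notation~\ref{not:asymp} this error is bounded by $C' t^{-c_0}/\sqrt n$ for some absolute integer $c_0\ge 0$, so
\[
\E\Imm g_{\check L_{n,z}}(\ii t) \;\le\; C(|z|) + \frac{C'}{t^{c_0}\sqrt{n}}.
\]
Plugging this into the Stieltjes inequality gives
\[
\E\check L_{n,z}((-t,t)) \;\le\; 2C(|z|)\,t + \frac{2C'\,t^{\,1-c_0}}{\sqrt{n}}.
\]
The main (mild) obstacle is choosing $t$ to absorb the error term into the leading one. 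Setting $\gamma_0:=1/(2(c_0\vee 1))$ and $t:=x\vee n^{-\gamma_0}$ ensures $t^{-c_0}/\sqrt{n}\le 1$, whence $t^{1-c_0}/\sqrt n\le t$, and monotonicity yields
\[
\E\check L_{n,z}((-x,x)) \;\le\; \E\check L_{n,z}((-t,t)) \;\le\; 2(C(|z|)+C')\,t \;=\; C''(x\vee n^{-\gamma_0}),
\]
as required. (For $x\ge 1$ the trivial bound $\check L_{n,z}\le 1\le Cx$ handles the large-$x$ regime, so we may restrict attention to $t\le 1$, where the growth of the error in $t^{-c_0}$ is the only quantity to track.) This reasoning exhibits $\gamma_0$ explicitly in terms of the implicit constant $c_0$ coming from Proposition~\ref{prop:QR} and the definition of $\vOeta{\cdot}$.
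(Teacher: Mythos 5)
Your proposal is correct and follows essentially the same route as the paper: the same Stieltjes-transform inequality $\mu((-t,t))\le 2t\,\Imm g_\mu(\ii t)$, the same identification $\Imm g_{\check\nu_{n,z}}(\ii t)=\frac1n\sum_i r_i(|z|,t)$ bounded via \ref{ass:admissible}, and the same comparison through Theorem~\ref{th:convergence-QR} with the choice $t=x\vee n^{-\gamma_0}$, $\gamma_0\asymp 1/(2c_0)$, to absorb the $\Oeta{n^{-1/2}}$ error. The only cosmetic difference is your harmless safeguard $\gamma_0=1/(2(c_0\vee1))$ for the degenerate case $c_0=0$.
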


\begin{proof}
We rely on the following elementary estimate for the Stieltjes transform of a probability measure $\mu$ (see for instance {\cite[Lemma 15]{MR2831116}}): 
\begin{equation}
\label{bnd:ts} 
\Imm g_\mu(\ii t) \ = \ t\int \frac{\mu(d\lambda)}{\lambda^2 + t^2}  \ \geq \
t\int_{-t}^t \frac{\mu(d\lambda)}{\lambda^2 + t^2} \ \geq \
\frac{1}{2t} \mu((-t,t))\, . 
\end{equation} 
Recall that 
$
\im\,  g_{\check \nu_{n,z}}(\ii t) = n^{-1} \sum_{i\in [n]} \rr_i(|z|,t) .
$
The first Wegner estimate \eqref{wegner-nu} is a straightforward consequence of Assumption \ref{ass:admissible} and the estimate \eqref{bnd:ts}.

We now establish the second Wegner estimate \eqref{wegner-L} and first prove that there exists $\gamma_0>0$ such that 
\begin{equation}\label{eq:g-L-estimate}
\sup_{t\geq n^{-\gamma_0}} \E \Imm g_{\check L_{n,z}}(\ii t) \leq C
\end{equation}
for all $n\ge 1$. For $t\geq 1$, $\E \Imm g_{\check L_{n,z}}(\ii t) \leq 1$ by the mere definition of a Stieltjes transform. Assume $t < 1$ and recall that $\E \Imm g_{\check L_{n,z}}(\ii t) = n^{-1} \tr \Imm \E
G(z, \ii t)$. By Theorem \ref{th:convergence-QR}, there exist constants $c_0, C > 0$ such that 
\[
\Bigl | \frac 1n  \tr \Imm \E G(z, \ii t) - 
\frac 1n \sum_{i=1}^n \rr_i(|z|, t) \Bigr | \ \leq\ \frac{C}{\sqrt{n} t^{c_0}} \, .
\]
By \ref{ass:admissible}, we therefore get that 
\[
\E \Imm g_{\check L_{n,z}}(\ii t) \leq C( t^{-c_0} n^{-1/2} + 1).
\] 
By letting now $t\ge n^{-\gamma_0}$ with $\gamma_0 = 1 / (2c_0)$, we obtain \eqref{eq:g-L-estimate}.
Combining this result with \eqref{bnd:ts}, we get
\[
\E \check L_{n,z}((-x,x)) \, \le \, \E \check L_{n,z}(( -(x\vee n^{-\gamma_0}) ,x \vee n^{-\gamma_0} ))\,  \le \,  
2  C   (x \vee n^{-\gamma_0})
\]
which is the desired result.
\end{proof}

\subsection{Comparison of logarithmic potentials via a meta-model}	\label{sec:meta}

We now turn to the task of finding the measures $\mu_n$ from Theorem \ref{thm:main} which serve as a sequence of deterministic equivalents for the ESDs $\mu_n^Y$.
A first idea is to try to show that for every $\psi \in \C_c^\infty(\C)$, 
\[
\int \psi(z) \, \mu^Y_n(dz) = - \frac{1}{2\pi} \int \Delta\psi(z) \, 
U_{\mu^Y_n}(z) \, \ell(dz) 
= \frac{1}{2\pi} \int \Delta\psi(z) \Bigl( \int_\R \log |x| \, 
\check L_{n,z}(dx) \Bigr) \ell(dz) 
\]
is ``close'' to $- (2\pi)^{-1} \int \Delta\psi(z) h_n(z) dz$.  However, there
is a difficulty in directly applying this approach, related to the
fact that $\check L_{n,z}$ does not converge in general with no further assumption
on the variance profile matrices $V_n$. 

To circumvent this difficulty, we rely on a \emph{meta-model argument}, which has been used in \cite{dumont-et-al-10,najim-yao-2016}, and which we now describe.
Let $n$ be fixed, consider the standard deviation profile $A_n=(\sigma_{ij})$ and the normalized variance profile $V_n=\left(\frac 1n \sigma_{ij}^2\right)$. Recall the associated Schwinger--Dyson equations as provided in Proposition \ref{prop:deteq} and the solution ${\bs{\vec p}}=\left( \begin{array}{c} \bs{p}\\ \bs{\tilde p}\end{array}\right)$, of dimension $2n\times 1$. Define the meta-model in the following way: for an integer $m\ge 1$, consider the $nm\times nm$ standard deviation profile matrix defined as
\[
\Am_n = \left( 
\begin{array}{ccc}
A_n &\cdots & A_n\\
\vdots& & \vdots\\
A_n &\cdots & A_n\\
\end{array}
\right) = (\bs 1_m \bs 1_m^\tran)\otimes A_n \ ,
\]
associated to the normalized variance profile $\Vm_n =(\bs 1_m \bs 1_m^\tran)\otimes m^{-1} V_n $, and the random matrix 
\begin{equation}\label{eq:meta-model}
\Ym_n =  \left(
\frac{[\Am_n]_{ij}}{\sqrt{mn}} X^{(nm)}_{ij} \right) _{i,j\in [nm]} .
\end{equation}
Denote by $\Lm_{n,z}$ the symmetrized empirical distribution of the singular values of $\Ym_{n}-zI_{mn}$. Due to the specific form of $\Vm_n$, it is straightforward to check that the solutions of the Schwinger--Dyson equations associated to this model are provided by 
\[
{\bs{\vec p}}_m=\left( \begin{array}{c} \bs{p}_m\\ \bs{\tilde p}_m\end{array}\right)\qquad \textrm{where}\qquad {\bs{p}^\tran_m}=\left(  \bs{p}^\tran,\cdots, \bs{ p}^\tran\right)\quad \textrm{and}\quad \bs{\tilde p}^\tran_m=\left( \bs{\tilde p}^\tran,\cdots, \bs{\tilde p}^\tran\right)\ ,
\]
where ${\bs{p}}_m$ and ${\bs{\tilde p}}_m$ are $nm\times 1$ vectors.
As an important consequence, we have:
\[
g_{\check{\nu}^{(m)}_{n,z}}(\eta) \ =\ \frac 1{mn} \sum_{i=1}^{mn} [\,{\bs{\vec p}}_m\,]_i\ =\ \frac 1{n} \sum_{i=1}^{n} [\,{\bs{\vec p}}\,]_i\ =\ g_{\check \nu_{n,z}}(\eta) .
\]
Hence the Stieltjes transform $g_{\check{\nu}^{(m)}_{n,z}}$ of $\check{\nu}^{(m)}_{n,z}$ does not depend on $m$ and is equal to $\check \nu_{n,z}$.
Finally, if \ref{ass:moments} and \ref{ass:sigmax} are satisfied for $Y_n$, they are also satisfied for $\Ym_n$.
In particular, $\Lm_{n,z}\sim \check{\nu}^{(m)}_{n,z}$ admits a genuine limit as $m\to\infty$:
\begin{equation}\label{eq:conv-meta-model}
\Lm_{n,z} \ \xrightarrow[m\to\infty]{\bf w}\  \check \nu_{n,z} \quad \text{a.s.} 
\end{equation}
since $\check{\nu}^{(m)}_{n,z}=\check \nu_{n,z}$.

We now state our proposition giving the existence of the measures $\mu_n$, the proof of which will occupy the main part of the remainder of this section.

\begin{prop}
\label{Umu}
Let \ref{ass:moments}, \ref{ass:sigmax} and \ref{ass:admissible} hold. Then the following hold:
\begin{enumerate}
\item  For all $n\ge 1$ and $z \in C \setminus \{0 \}$, the function 
\[
h_n(z) = - \int_\R \log |x| \, \check \nu_{n,z}(dx) 
\]
is well defined and for every compact set $\mathcal K \subset \C$,
\[
\sup_n \int_{\mathcal K} | h_n(z) |^2 \, \ell(dz)  < \infty\, .
\]
Moreover, $h_n(z)$ coincides with the logarithmic potential $U_{\mu_n}(z)$
of a probability measure $\mu_n$ on $\C$. 
\item \label{point:tightness} For $\mu_n$ as defined in part \textup{(1)}, there exists a constant
$C > 0$, independent of $n$, such that for all $M > 0$, 
\[
\mu_n(\{ z\in \C; \ |z|>M\} ) \leq \frac{C}{M^2} .
\]
\end{enumerate}
\end{prop}

We will rely on the following lemmas, whose proofs are respectively deferred to Appendices \ref{proof:cvP} and \ref{proof:h:pot}.

\begin{lemma}
\label{cvP}
Let $(\Omega,{\mathcal F},\mathbb{P})$ be a given probability space, $\zeta$ a finite positive measure on $\C$ and $f_n:\Omega\times \C \to \R$ measurable functions satisfying 
\begin{equation}\label{eq:unif-integrability}
\sup_n\,  \int_\C | f_n(\omega,z) |^{1+\alpha} \, \mathbb{P}\otimes\zeta(d\omega\times dz) \ \leq\  C 
\end{equation}
for some constants $\alpha, C > 0$. 

Let $g : \C \to \R$ 
be a measurable function such that for $\zeta$-almost all 
$z\in\C$,  
\[
f_n(\omega,z) \xrightarrow[n\to\infty]{{\mathcal P}} g(z) \ . 
\]
Then $\int_\C |g(z)|^{1+\alpha} \zeta(dz) \leq C$, and 
\begin{equation}\label{eq:conv-probab}
\int_\C f_n(\omega,z) \, \zeta(dz) \xrightarrow[n\to\infty]{{\mathcal P}} 
\int g(z) \, \zeta(dz) .
\end{equation}
\end{lemma}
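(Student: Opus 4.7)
The plan is to work on the product measure space $(\Omega \times \C, \mathcal F \otimes \mathcal B(\C), \PP \otimes \zeta)$, which is finite since $\zeta$ is finite. I will first upgrade the pointwise-in-$z$ convergence in probability to convergence in measure on the product space, then leverage the $L^{1+\alpha}$ bound to obtain $L^1(\PP \otimes \zeta)$ convergence, from which the claim follows by Fubini and Markov.

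First, the bound $\int_\C |g|^{1+\alpha}\, d\zeta \leq C$. Since $f_n(\omega, z) \to g(z)$ in probability in $\omega$ for $\zeta$-a.e.\ $z$, a diagonal extraction yields a subsequence $(n_k)$ along which $f_{n_k}(\omega, z) \to g(z)$ for $(\PP \otimes \zeta)$-a.e.\ $(\omega, z)$. Applying Fatou's lemma on the product space,
\[
\int_\C |g(z)|^{1+\alpha}\, \zeta(dz) \;=\; \int_{\Omega \times \C} |g(z)|^{1+\alpha}\, \PP \otimes \zeta(d\omega\,dz) \;\leq\; \liminf_k \int_{\Omega \times \C} |f_{n_k}|^{1+\alpha}\, d(\PP\otimes\zeta) \;\leq\; C.
\]

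Next, I will upgrade the convergence. For each fixed $\eps > 0$ and $\zeta$-a.e.\ $z$, the hypothesis gives $\PP(|f_n(\cdot,z) - g(z)| > \eps) \to 0$. Since $\zeta$ is finite and $\PP(\cdot) \leq 1$, dominated convergence on $(\C, \zeta)$ yields
\[
\int_\C \PP(|f_n(\cdot,z) - g(z)| > \eps)\, \zeta(dz) \;\xrightarrow[n\to\infty]{}\; 0,
\]
which by Fubini is exactly $(\PP \otimes \zeta)(\{|f_n - g| > \eps\}) \to 0$. Thus $f_n - g \to 0$ in $(\PP \otimes \zeta)$-measure.

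Now I combine this with uniform integrability. The family $\{f_n - g\}_n$ is bounded in $L^{1+\alpha}(\PP \otimes \zeta)$ by the triangle inequality and the two bounds already established, so it is uniformly integrable on the finite measure space $\PP \otimes \zeta$ (by de la Vall\'ee Poussin, since $x \mapsto x^{1+\alpha}$ dominates $x$ at infinity). Vitali's convergence theorem then upgrades the convergence in measure to convergence in $L^1(\PP \otimes \zeta)$:
\[
\int_{\Omega \times \C} |f_n(\omega, z) - g(z)|\, \PP\otimes\zeta(d\omega\,dz) \;\xrightarrow[n\to\infty]{}\; 0.
\]
Applying Fubini on the left and using Jensen/triangle inequality,
\[
\e\left| \int_\C f_n(\omega, z)\, \zeta(dz) - \int_\C g(z)\, \zeta(dz)\right| \;\leq\; \int_\C \e|f_n(\omega, z) - g(z)|\, \zeta(dz) \;\xrightarrow[n\to\infty]{}\; 0,
\]
so the integrals converge in $L^1(\PP)$ and hence in probability by Markov's inequality, which is the desired conclusion \eqref{eq:conv-probab}. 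The only point requiring minor care is the measurability of $g$ on the product space, which is immediate since $g$ depends only on $z$, and the $\zeta$-integrability of $|g|^{1+\alpha}$ implies $\PP \otimes \zeta$-integrability of the same. There is no essential obstacle here: the argument is a standard Vitali-type upgrade, and the hypothesis \eqref{eq:unif-integrability} is tailored exactly to provide the needed uniform integrability.
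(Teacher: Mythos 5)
Your proof is essentially correct and reaches the same place as the paper's, but with a different route to the key intermediate step (convergence in $\PP\otimes\zeta$-measure). There is one misjustified step that needs repair.

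The gap is in your first paragraph: you claim that because $f_n(\cdot,z)\to g(z)$ in $\PP$-probability for $\zeta$-a.e.\ $z$, ``a diagonal extraction yields a subsequence $(n_k)$'' that converges $\PP\otimes\zeta$-a.e. This does not work as stated. For each fixed $z$ you can extract a subsequence along which the convergence is $\PP$-a.s., but that subsequence depends on $z$, and there are uncountably many $z$'s, so diagonalization is not available. The fix is to reorder: establish convergence in $\PP\otimes\zeta$-measure \emph{first} (your second paragraph, which is entirely self-contained and correct — it uses only dominated convergence on $z\mapsto\PP(|f_n(\cdot,z)-g(z)|>\eps)$ together with Fubini), and then invoke the standard Riesz fact that convergence in measure on a finite measure space produces an a.e.\ convergent subsequence. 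Fatou on the product space then yields $\int|g|^{1+\alpha}d\zeta\le C$ as you wrote. After that reordering, everything else (de la Vall\'ee Poussin giving uniform integrability, Vitali giving $L^1(\PP\otimes\zeta)$ convergence, Fubini plus Markov giving the conclusion) is fine.

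On comparison with the paper: the paper derives convergence of $f_n-g$ to $0$ in $\PP\otimes\zeta$-measure by passing through convergence in distribution, using compactly supported test functions $\varphi$, dominated convergence, and Fubini. Your route — dominated convergence applied directly to the probabilities $\PP(|f_n(\cdot,z)-g(z)|>\eps)$, which are uniformly bounded by $1$ — is more elementary and avoids the test-function detour. Both then finish via uniform integrability of $\{f_n-g\}$ in $L^1(\PP\otimes\zeta)$ (the paper cites Kallenberg Prop.~3.12, you invoke Vitali explicitly; these are the same tool). So this is a genuinely cleaner derivation of the same intermediate step, not merely a rephrasing, and once the subsequence argument is moved after the convergence-in-measure step, the whole proof is sound.
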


\begin{lemma}
\label{h:pot} 
Let $(\zeta_n)$ be a sequence of random probability measures on 
$\C$. Assume that a.s. $(\zeta_n)$ is tight and that there exists a locally integrable function $h:\C\to\R$ such that
for all $\psi \in C_c^\infty(\C)$, 
\begin{equation}
\label{cvg:h}
\int \psi(z) \, \zeta_n(dz) 
\xrightarrow[n\to\infty]{{\mathcal P}} 
- \frac{1}{2\pi} \int \Delta\psi(z) h(z) \, \ell(dz)\ .
\end{equation} 
Then there exists a non-random probability measure $\zeta$ on $\C$ with logarithmic potential $h$, i.e.
\[
h(z) = -\int_{\C} \log|z-u|\, \zeta(\, du)
\]
for almost all $z\in \C$ such that 
$
\zeta_n \xrightarrow[n\to\infty]{\bf w} \zeta
$
in probability. 
\end{lemma}

\begin{proof}[Proof of Proposition \ref{Umu}]
We prove the first point of the proposition.

With $\Ym_n$ as in \eqref{eq:meta-model}, 
denote by $\mu^{\Y}_{n,m}$ the spectral measure of $\Ym_n$, and recall that 
\[
U_{\mu^{\Y}_{n,m}}(z) = - \int_\R \log |x| \, \Lm_{n,z}(dx) . 
\]
The proof consists of the following three steps:
\begin{enumerate}
\item\label{potmeta}
To show that for every $z \in \C \setminus \{0\}$, $x\mapsto \log|x|$ is $\check \nu_{n,z}$-integrable and 
\begin{equation}
\label{cvgUmeta} 
U_{\mu^{\Y}_{n,m}}(z) 
\xrightarrow[m\to\infty]{{\mathcal P}} h_n(z) \, . 
\end{equation} 
\item\label{hmes} 
To show that the function $h_n(z)$ is measurable. \\
\item\label{unif-int}
To show that for any compact set $\mathcal K \subset \C$, 
\begin{equation} 
\label{uimeta}
\sup_m \E \int_{\mathcal K} | U_{\mu^{\Y}_{n,m}}(z) |^2 \, \ell(dz) \leq C
\end{equation} 
for some constant $C > 0$ independent of $n$. 
\end{enumerate}
The three previous steps being proved, the assumptions of Lemma~\ref{cvP} are fulfilled and the lemma yields
$$
 \int_{\mathcal K} | h_n(z) |^2 \, \ell(dz) \ \leq\  C 
$$ 
where $C$ does not depend upon $n$. Moreover,
\[
\int \psi(z) \, \mu_{n,m}^{\Y}(dz) \ =\  
- \frac 1{2\pi} \int \Delta\psi(z) \, 
U_{\mu^{\Y}_{n,m}}(z) \, \ell(dz) \
 \xrightarrow[m\to\infty]{{\mathcal P}}\  
- \frac 1{2\pi} \int \Delta\psi(z) \, h_n(z) \, \ell(dz) 
\]
for every $\psi \in \C_c^\infty(\C)$.

It remains to apply Lemma~\ref{h:pot} to conclude that $h_n$ is the logarithmic potential of a probability distribution $\mu_n$ on $\C$ 
and point (1) of Proposition \ref{Umu} will be proved.

Let us address Step~\ref{potmeta}, to prove the convergence in \eqref{cvgUmeta}, we separately consider the integrals defining the logarithmic potentials in the regions $|x|$ greater than and less than $\varepsilon$.

Taking into account the convergence \eqref{eq:conv-meta-model} and applying Theorem~\ref{th:L->nu} to the sequence $(\Lm_{n,z})_m$, we deduce that $x\mapsto \log|x|$ is 
$\check \nu_{n,z}$-integrable near infinity, and that 
\begin{equation}\label{eq:conv-P-1}
 \int_{\{|x|\ge \varepsilon\}}\log |x| \,\Lm_{n,z}(dx) 
\xrightarrow[m\to\infty]{\text{a.s.}} 
 \int_{\{|x|\ge \varepsilon\}}\log |x| \, \check \nu_{n,z}(dx) \  . 
\end{equation}
We now handle the remaining regions.
\begin{eqnarray} 
\lefteqn{\left| \int_{\{|x|<\varepsilon\}}  \log |x| \, \check \nu_{n,z}(dx) \right| 
\quad\le  \quad \int_{\{|x|<\varepsilon\}} |\log |x|\, | \, \check \nu_{n,z}(dx) }\nonumber \\
&= &\int_\R \check \nu_{n,z}\, \{|x|<\varepsilon  \, , \, \log |x| \leq - y\} \, dy\ \stackrel{(a)}\leq \ C \int_0^\infty ( \exp(-y) \wedge \varepsilon ) \, dy 
\ =\ C \varepsilon( 1 - \log\varepsilon) \, , 
\label{eq:conv-P-2} 
\end{eqnarray}
where $(a)$ follows from Wegner's estimate \eqref{wegner-nu} in Corollary \ref{cor:wegner}.

Let $(\bs s_{i,z})_{i\in [mn]}$ be the singular 
values of $\Ym_n - z$ ordered as $\bs s_{1,z}\ge \cdots \ge \bs s_{mn,z}$. 
For $z\in \C \setminus \{0\}$ and $\beta>0$ the exponent as 
in Proposition \ref{prop:nick}, we introduce the event
$$
\mathcal G_m := \left\{  \bs s_{i,z} \geq (mn)^{-\beta}\, , \ i\in[mn] \right\} \ .
$$  
For all $\tau>0$, 
\begin{align*}
\lefteqn{\PP\left\{  \Bigl| \int_{\{|x|<\varepsilon\}} \log |x| \,\Lm_{n,z}(dx) \Bigr| 
  > \tau \right\}}\\
  &\le  \PP\left\{  \Bigl| {\mathbbm 1}_{\mathcal G_m}  \int_{\{|x|<\varepsilon\}} \log |x| \,\Lm_{n,z}(dx)
\Bigr| > \frac{\tau}2 \right\}+\PP\left\{  \Bigl| {\mathbbm 1}_{\mathcal G^c_m}  
\int_{\{|x|<\varepsilon\}} \log |x| \,\Lm_{n,z}(dx)
\Bigr| > \frac{\tau}2 \right\}\ .
\end{align*}
Noticing that 
\[
\left\{  \Bigl| {\mathbbm 1}_{\mathcal G^c_m}  
\int_{\{|x|<\varepsilon\}} \log |x| \,\Lm_{n,z}(dx)
\Bigr| > \frac{\tau}2 \right\}
\ \subset\  \left\{ \bs s_{mn,z} \le (mn)^{-\beta}\right\}
\]
for $m$ large enough, Proposition \ref{prop:nick} yields that 
\[
\PP\left\{  \Bigl| {\mathbbm 1}_{\mathcal G^c_m}  
\int_{\{|x|<\varepsilon\}} \log |x| \,\Lm_{n,z}(dx)
\Bigr| > \frac{\tau}2 \right\}
\ \le\ \frac C{(mn)^{\alpha}}\ .
\]
Recall the constant $\gamma_0$ in Corollary \ref{cor:wegner}. Choose $m$ large enough and $\gamma\le \gamma_0$ small enough so that $(nm)^{-\beta}\le (mn)^{-\gamma} \le \varepsilon\le 1$. We now estimate 
\begin{align*} 
\lefteqn{\E {\mathbbm 1}_{\mathcal G_m}  
\int_{\{|x|<\varepsilon\}}
|\log |x|\,|  \, \Lm_{n,z}(dx)}\\ 
&= \int_{\{(mn)^{-\beta}\le |x|\le (mn)^{-\gamma}\}} |\log |x|\,| \, \E \Lm_{n,z}(dx) 
 + \int_{\{ (mn)^{-\gamma}<|x|<\varepsilon\}} |\log |x|\,| \, \E \Lm_{n,z}(dx) \\
 & :=  I_1 + I_2 \ . 
\end{align*} 
By the Wegner estimate \eqref{wegner-L}, we obtain
$$
I_1 \ \le\  \beta \log(mn) \E \Lm_{n,z}([-(mn)^{-\gamma}, (mn)^{-\gamma}])
 \ \le\  C\beta (mn)^{-\gamma}\log(mn) .
$$
On the other hand, another application of the same Wegner estimate yields
\begin{eqnarray*}
I_2 &= &
\int_0^\infty \, \E \Lm_{n,z}\left(\left\{  x \, : \, | \log |x|\, | 
{\mathbbm 1}_{[ (mn)^{-\gamma}, \varepsilon ]}(|x|) \geq y \right\} \right) \, dy \\
&= &
\int_0^\infty \E\Lm_{n,z}\left([ -e^{-y} \wedge \varepsilon,-(mn)^{-\gamma} ]\cup [(mn)^{-\gamma}, e^{-y} \wedge \varepsilon ]
\right) \, dy \\
&\leq& C \int_0^\infty ( \exp(-y) \wedge \varepsilon ) \, dy \qquad =\qquad C \varepsilon ( 1 - \log\varepsilon ) \, . 
\end{eqnarray*}
Therefore, by Markov's inequality, we finally obtain
\[
\PP\biggl\{ \biggl| \int_{\{|x|<\varepsilon\}} \log |x| \, \Lm_{n,z}(dx) \bigg| 
  > \tau \biggr\}
   \le   \frac1\tau2C \big[
\beta (mn)^{-\gamma} \log (mn) + \varepsilon (1-\log\varepsilon)\big]
+ C (mn)^{-\alpha}  . 
\]
Thus, for all $\tau,\tau' > 0$, we can choose 
$\varepsilon > 0$ small enough so that 
\[
\PP\biggl\{ \bigg| \int_{\{|x|<\varepsilon\}} \log |x| \, \Lm_{n,z}(dx) \bigg| 
  > \tau \biggr\}
 < \tau' 
\]
for $m$ large enough. Gathering this result with \eqref{eq:conv-P-1} and \eqref{eq:conv-P-2} yields \eqref{cvgUmeta}, and Step~\ref{potmeta} is proved. 

We now address Step~\ref{hmes} and study the measurability of $h_n(z)$. Recall the regularized logarithmic potentials defined in \eqref{cU} and consider also the following function:
$$
\cU_{n,m}^{\Y}(z,t) \, :=\, 
- \frac{1}{2nm} \log\det ( (\Ym_n-z)^* (\Ym_n-z) + t^2 ) \ = \ - \frac 12 \int_\R \log(x^2 + t^2) \, \Lm_{n,z}(dx) \ .
$$
Given $z$ and $z' \in \C$, Hoffman-Wielandt's theorem applied 
to $\Ym_n-z$ and $\Ym_n - z'$ yields
\[
\max_{i\in [mn]} | \bs s_{i,z} - \bs s_{i,z'} | \leq | z - z'|\ .
\]
Thus
\begin{multline*}
\left| \cU_{n,m}^{\Y}(z,t) - \cU_{n,m}^{\Y}(z',t) \right| \\
= \ 
\frac{1}{2nm} \left| \sum_{i\in [mn]}
 \log\left(1 + \frac{\bs s_{i,z}^2}{t^2}\right) -  \log\left(1 + \frac{\bs s_{i,z'}^2}{t^2}\right)\right|\ \le\  \frac1{2t^2}\max_{i\in [mn]} | \bs s_{i,z} - \bs s_{i,z'} |
 \ \le\  \frac{|z-z'|}{2t^2}
\end{multline*}
and it follows that for any fixed $t>0$ the family $\{z\mapsto \cU_{n,m}^{\Y}(z,t)\}_{m\ge1}$ is uniformly equicontinuous. 
Since from Theorem~\ref{th:L->nu} we have 
$
\cU_{n,m}^{\Y}(z,t) \xrightarrow[m\to\infty]{} \cU_n(z,t)
$
almost surely, it follows that $z\mapsto \cU_n(z,t)$ is continuous for any fixed $t > 0$. 
Finally, since $x\mapsto \log|x|$ is $\check \nu_{n,z}$-integrable near zero for any $z\neq 0$ by 
\eqref{eq:conv-P-2},
\[
\cU_n(z,t) \xrightarrow[t\to 0]{} h_n(z) .
\]
The measurability of $h_n$ follows and Step \ref{hmes} is proved. 

We now address Step \ref{unif-int} and prove \eqref{uimeta}. Observe that on any compact set 
$\mathcal K \in \C$, there exists a constant $C_{\mathcal K}$ such that 
\[
\int_{\mathcal K} (\log |\lambda - z |)^2 \, \ell(dz) \ \leq\  
C_{\mathcal K} ( 1 + |\lambda|^2 ) 
\]
for all $\lambda \in \C$. Denote by $(\bs \lambda_i;\, i\in [mn])$ the eigenvalues of $\Ym_n$. We have
$$
\E \int_{\mathcal K} | U_{\mu^{\Y}_{n,m}}(z) |^2 \, \ell(dz) 
\ \leq\  \E \left( \frac{1}{mn}\sum_{i\in [mn]}
\int_{\mathcal K} (\log | \bs \lambda_{i}^{(m)} - z|)^2 \, \ell(dz) \right)\ \leq\ 
C_{\mathcal K} \Bigl( 1 + \E \int |\lambda|^2 \mu^{\Y}_{m,n}(d\lambda) \Bigr)\,  .
$$
By the Weyl comparison inequality for eigenvalues and singular values 
(cf.\ e.g.\ \cite[Theorem~3.3.13]{hor-joh-topics}), 
\[
\int |\lambda|^2 \mu^{\Y}_{n,m}(d\lambda) = \frac 1{nm}\sum_{i=1}^{nm} |\bs\lambda_i|^2 
\leq \frac 1{nm}\sum_{i=1}^{nm} \bs s_{i,0}^2 = \frac 1{nm} \tr \big(\Ym_n(\Ym_n)^*\big) \\ 
\leq \frac{\smax^2}{(nm)^2} \sum_{i,j=1}^{nm} |X_{ij}^{(nm)}|^2 . 
\]
Taking the expectation of the previous inequality finally yields 
\[
\E \int_{\mathcal K} | U_{\mu^{\Y}_{n,m}}(z) |^2 \, \ell(dz)  \le C .
\]
Step \ref{unif-int} is proved.

We now prove point (2) of Proposition \ref{Umu}. Given $M > 0$, we get 
from Lemma~\ref{cvP} that 
\[
\limsup_m \mu^{\Y}_{n,m}(\{ z\in \C;\ |z| >M\}) \le 
\limsup_m \frac{1}{M^2} \int_{\C} |\lambda|^2 \, \mu^{\Y}_{n,m}(d\lambda) 
\leq \frac{C}{M^2} \quad \text{a.s.} 
\]
where $C > 0$ is independent of $n$. 
Let $\psi$ be a nonnegative $C_c^\infty(\C)$ function equal to one for $|z|<M$ 
and to zero if $|z|>M+1$. As a byproduct of Lemma \ref{h:pot}, 
\[
\mu^{\Y}_{n,m}\xrightarrow[m\to\infty]{w} \mu_n
\]
almost surely. 
Consequently, on a set of probability one, 
\[
\mu_n(\{ z\in \C; \ |z|\le M+1\} ) \ \geq\  
\int\psi(z) \, \mu_n(dz) \ =\  \lim_m \int\psi(z) \,\mu^{\Y}_{n,m}(dz) 
\ \geq\  1 - \frac{C}{M^2}\, .
\]
Proposition~\ref{Umu} is proved. 
\end{proof}

\subsection{Conclusion of the proof of Theorem~\ref{thm:main}-\textup{(i)}}

We can now complete the proof of Theorem~\ref{thm:main}-\textup{(i)} and prove that $\mu_n^Y \sim \mu_n$ in probability, with $\mu_n$ defined in Proposition \ref{Umu}.

By Proposition~\ref{Umu}, the sequence $(\mu_n)$ is tight. It remains to prove that
for all $\varphi \in C_c(\C)$, 
$\int \varphi d\mu_n^Y - \int\varphi d\mu_n \to 0$ in probability. 
By the density of $C_c^\infty(\C)$ in $C_c(\C)$, it is enough to show that 
\[
\int \psi(z) \mu_n^Y(dz) - \int \psi(z) \mu_n(dz) = - \frac{1}{2\pi} 
\int \Delta \psi(z) ( U_{\mu_n^Y}(z) - U_{\mu_n}(z) ) \, \ell(dz) 
\xrightarrow[n\to\infty]{\mathcal P} 0 
\]
for all $\psi\in C_c^\infty(\C)$. By mimicking the proof of Proposition~\ref{Umu},  
where $\Ym_n$ and $m$ are replaced with $Y_n$ and $n$ respectively, 
we straightforwardly obtain that $U_{\mu_n^Y}(z) - U_{\mu_n}(z) \to 0$ in
probability for every $z\in\C\setminus \{ 0 \}$. This proof also shows that 
$\sup_n \E \int_{\mathcal K} |U_{\mu_n^Y}(z)|^2 \, \ell(dz) < \infty$ for all 
compact sets $\mathcal K \subset \C$. We also know by Proposition~\ref{Umu} that
$\sup_n \int_{\mathcal K} |U_{\mu_n}(z)|^2 \, \ell(dz) < \infty$. 
The result now follows from Lemma~\ref{cvP}.

\section{Conclusion of proofs of Theorems~\ref{thm:main} and \ref{thm:bistochastic}}
\label{sec:end}

\subsection{Proof of Theorem~\ref{thm:main}: Identification of $\mu_n$} \label{sec:density} 

We established in the previous section that $\mu^Y_n \sim \mu_n$ in 
probability. To conclude the proof of Theorem~\ref{thm:main}, it remains to show that
$\mu_n$ is rotationally invariant, and that its radial cumulative distribution function
$$
\mu_n\{ z\in \C\, ; \ |z|\le r\}  
$$
coincides with the function $F_n$ specified in the statement of the 
theorem. These facts, along with the properties of $F_n$, are established in 
Lemma~\ref{propF} below. 

For the remainder of this section, we set 
\begin{equation}\label{def:b}
b_n(z,t) := - \frac z{2n} \tr \Psi(\rvec(|z|,t)\,, t )\qquad  \textrm{and} \qquad
b_n(z) = - \frac z{2n} \tr \Psi(\qvec(|z|) )\ ,
\end{equation}
where $\Psi(\cdot,t)$ and $\Psi(\cdot)$ are defined in \eqref{def-psi},  $\rvec(\cdot,t)$ is defined in Proposition \ref{prop:MEt}, and $\qvec(\cdot)$ is defined in \ref{thm:master}. 

\begin{lemma} \label{lemma:identification}
Under the same assumptions as in Theorem \ref{thm:main}, the function $z\mapsto b_n(z)$ is locally integrable on $\C$, and 
$$
\partial_{\bar{z}} U_{\mu_n}(z) = b_n(z)$$ 
in ${\mathcal D}'(\C)$. 
\end{lemma}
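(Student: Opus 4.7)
The strategy is to work with the regularized logarithmic potential $\cU_n(z,t) := -\frac{1}{2}\int_\R \log(x^2+t^2)\,\check\nu_{n,z}(dx)$, prove the identity $\partial_{\bar z}\cU_n(z,t) = b_n(z,t)$ in $\mathcal{D}'(\C)$ for each fixed $t>0$, and then pass to the limit $t\downarrow 0$. Local integrability of $b_n$ is immediate: the Master Equations yield $\psi_i(\qvec)\leq |z|^{-2}$, so $|b_n(z)|\leq \frac{1}{2|z|}$, which lies in $L^1_{\mathrm{loc}}(\C)$; the same bound holds uniformly in $t$ for $b_n(z,t)$.

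For the identity at fixed $t>0$, I would proceed via the meta-model $\Ym_n$ of Section~\ref{sec:meta}. Applying Jacobi's formula to the random regularized potential $\cU^{\Y}_{n,m}(z,t) = -\frac{1}{2nm}\log\det((\Ym_n-z)^*(\Ym_n-z)+t^2)$ yields the pointwise identity $\partial_{\bar z}\cU^{\Y}_{n,m}(z,t) = \frac{1}{2nm}\tr F^{\Y}_{n,m}(z,\ii t)$, together with the deterministic bound $|\partial_{\bar z}\cU^{\Y}_{n,m}(z,t)|\leq \frac{1}{4t}$ (from $s/(s^2+t^2)\leq 1/(2t)$ applied to each singular value). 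Invoking Theorem~\ref{th:convergence-QR} on the meta-model, together with the block-replication identity $\frac{1}{nm}\tr B^{\Y}_{n,m}(z,\ii t) = \frac{1}{n}\tr B(z,\ii t)$, a short calculation using the Regularized Master Equations gives $\frac{1}{2n}\tr B(z,\ii t) = -\frac{z}{2n}\sum_i \psi_i(\rvec(|z|,t),t) = b_n(z,t)$, and therefore $\E\partial_{\bar z}\cU^{\Y}_{n,m}(z,t)\to b_n(z,t)$ as $m\to\infty$. An integration by parts against an arbitrary $\psi\in C_c^\infty(\C)$, followed by $\E$, Fubini, and passage to the limit $m\to\infty$ (bounded convergence on the derivative side by virtue of the $\frac{1}{4t}$ bound, and $L^2_{\mathrm{loc}}$ convergence on the potential side from Step~\ref{unif-int} of the proof of Proposition~\ref{Umu}), yields the distributional identity $\int b_n(z,t)\psi(z)\,dA(z) = -\int \cU_n(z,t)\partial_{\bar z}\psi(z)\,dA(z)$.

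Finally, I pass to the limit $t\downarrow 0$. Since $-\tfrac12\log(x^2+t^2)$ increases monotonically to $-\log|x|$ as $t\downarrow 0$ and $\log|x|$ is $\check\nu_{n,z}$-integrable for $z\neq 0$ (Theorem~\ref{th:L->nu}), monotone convergence gives $\cU_n(z,t)\nearrow U_{\mu_n}(z)$ for every $z\neq 0$; combined with the $L^2_{\mathrm{loc}}$ bound from Proposition~\ref{Umu}, this upgrades to convergence in $L^1_{\mathrm{loc}}(\C)$. For $b_n(z,t)$, Lemmas~\ref{lem:qbound}--\ref{lem:qunique} together with the characterization in Theorem~\ref{thm:master} ensure $\rvec(|z|,t)\to\qvec(|z|)$ for every $z$ with $|z|\neq \sqrt{\rho(V_n)}$, hence $b_n(z,t)\to b_n(z)$ pointwise a.e.; the uniform bound $|b_n(z,t)|\leq \frac{1}{2|z|}$ then delivers convergence in $L^1_{\mathrm{loc}}(\C)$ by dominated convergence. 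Passing to the limit $t\downarrow 0$ in the regularized identity proves $\partial_{\bar z} U_{\mu_n}(z) = b_n(z)$ in $\mathcal{D}'(\C)$.

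The main obstacle is the identity at positive regularization: the meta-model argument requires simultaneously controlling the quantitative bias $\E\tr F^{\Y}_{n,m}(z,\ii t) - \tr B(z,\ii t)$ (handled by Theorem~\ref{th:convergence-QR}) and uniform $L^2_{\mathrm{loc}}$ bounds on $\E\cU^{\Y}_{n,m}(\cdot,t)$ to legitimize interchanging expectation, the limit $m\to\infty$, and the distributional $\partial_{\bar z}$. The subsequent $t\downarrow 0$ step is comparatively clean thanks to the pointwise bound $|b_n(z,t)|\le 1/(2|z|)$ and the integrability machinery already in place in Section~\ref{sec:logpotential}.
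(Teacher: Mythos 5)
Your proof is correct and follows essentially the same route as the paper: regularize via $\cU_n(z,t)$, identify $\partial_{\bar z}\cU_n(z,t)=b_n(z,t)$ at fixed $t>0$ through the meta-model, Jacobi's formula, the bound $|\partial_{\bar z}\cU^{\Y}_{n,m}(z,t)|\le 1/(4t)$ and Theorem~\ref{th:convergence-QR}, and then pass $t\downarrow 0$ using the uniform bound $|b_n(z,t)|\le 1/(2|z|)$ and the monotone convergence $\cU_n(z,t)\uparrow U_{\mu_n}(z)$. The only cosmetic difference is that the paper routes the interchange of limit and test-integral through Lemma~\ref{cvP} (convergence in probability plus uniform integrability), where you work directly with expectations and bounded/dominated convergence, and that you spell out the pointwise limit $\rvec(|z|,t)\to\qvec(|z|)$ which the paper leaves implicit; both are legitimate.
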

\begin{proof} 

Recall the definition of $\cU_n(z,t)$ in 
\eqref{cU}. Recall also that by Proposition \ref{Umu}-(i), the probability measure $\mu_n$ is such that:
\[
U_{\mu_n}(z) = - \int_\R \log|x|\check \nu_{n,z}(\, dx)\ .
\]
We first prove that 
\begin{equation}\label{conv:U->Umu} 
\cU_n(z,t) \ \xrightarrow[t\downarrow 0]{{\mathcal D}'(\C)} \ U_{\mu_n}(z)\ .
\end{equation}

Recall from the proof of Proposition~\ref{Umu} that $z\mapsto\cU_n(z,t)$ is continuous for any fixed $t>0$.
It is moreover clear from the expressions of
$\cU_n(z,t)$ and $U_{\mu_n}(z)$ that $\cU_n(z,t) \uparrow U_{\mu_n}(z)$ as
$t\downarrow 0$. Recall that
$U_{\mu_n}(z)$, being a logarithmic potential, is locally integrable (as can be
seen by Fubini's theorem). Thus, given a fixed $t_0 > 0$, 
\[ 
0\leq \cU_n(t,z) - \cU_n(t_0,z) 
\leq U_{\mu_n}(z) - \cU_n(t_0,z) , 
\] 
for $0 < t \leq t_0$, 
and \eqref{conv:U->Umu} immediately follows from the monotone convergence theorem.
By a property of the convergence in ${\mathcal D}'(\C)$, this implies the convergence of the distributional derivative
\begin{equation}\label{conv:dU->dUmu}
\partial_{\bar{z}}\,\cU_n(z,t)\ \xrightarrow[t\downarrow 0]{{\mathcal D}'(\C)} \ \partial_{\bar{z}}\, U_{\mu_n}(z) .
\end{equation}

We now prove that for all $t>0$, 
\begin{equation}\label{eq:identification-dU}
\partial_{\bar{z}} \cU_n(z,t) \ =\ b_n(z,t) 
\end{equation}
in ${\mathcal D}'(\C)$. We shall rely on a meta-model argument. Recall the meta-model $\Ym_n$ introduced in \eqref{eq:meta-model}, its limiting property \eqref{eq:conv-meta-model}, and the definition of $\cU_{n,m}^{\Y}(z,t)$ in \eqref{cU}. 

Fix $t > 0$. By Theorem \ref{th:L->nu}, $\cU_{n,m}^{\Y}(z,t) \to \cU_n(z,t)$ almost surely as $m\to \infty$ for 
all $z\in\C$. Furthermore, recalling the notation $(\bs s_{i,z})_{i\in [mn]}$ for the singular 
values of $\Ym_n - z$, we have
\begin{align*}
\left| \cU_{n,m}^{\Y}(z,t)\right|^2 &= \left| \log(t) + \frac 1{2mn} \sum \log\left( 1+{\bs s^2_{i,z}}/{t^2}\right)\right|^2 \\
\ & \stackrel{(a)}\le \  2\left| \log(t)\right|^2 + \frac 1{t^2 mn} \sum_{i\in [mn]} \bs s^2_{i,z}
\end{align*}
where $(a)$ follows from the elementary inequality $2^{-1}\log^2(1+x)\le x$, valid for $x\ge 0$. In particular, this implies that 
\[
\E \int_{\mathcal K} | \cU_{n,m}^{\Y}(z,t) |^2 \, \ell(dz) 
\ \leq\ (\log t)^2 + 
\frac{1}{t^2} \E \int_{\mathcal K} 
\frac{\tr (\Ym_n-z)^* (\Ym_n-z)}{mn} \, \ell(dz) 
\ \leq\  C 
\]
on every compact set $\mathcal K \subset \C$. By Lemma~\ref{cvP}, we get that 
$\cU_n(\cdot,t)$ is locally integrable on $\C$, and that 
\begin{equation} 
\label{dz*} 
\int \partial_{\bar{z}} \psi(z) \, \cU_{n,m}^{\Y}(z,t) \, \ell(dz) 
\ \xrightarrow[m\to\infty]{\mathcal P} \ 
\int \partial_{\bar{z}} \psi(z) \, \cU_{n}(z,t) \, \ell(dz) 
\end{equation} 
for all $\psi \in C_c^\infty(\C)$. 
An integration by parts along with Jacobi's formula shows that for all 
$\omega\in\Omega$, the distributional derivative 
$\partial_{\bar{z}}\, \cU^{\Y}_{n,m}(t,z)$ coincides with the pointwise 
derivative, which is given by 
\[
\partial_{\bar{z}} \cU^{\Y}_{n,m}(z,t)\ =\ \frac{1}{2nm} \tr (\Ym_n-z) 
         ( (\Ym_n-z)^* (\Ym_n-z) + t^2 )^{-1}   \, . 
\]
On the other hand, we know from Theorem~\ref{th:convergence-QR} that 
$\partial_{\bar{z}}\, \cU^{\Y}_{n,m}(z,t) \to b_n(z,t)$ almost surely as $m\to \infty$, for all
$z\in\C$. Moreover, from a singular value decomposition of $\Ym_n-z$ we easily
see that $| \partial_{z^*} \cU^{\Y}_{n,m}(z,t) | \leq (4t)^{-1}$. 
Consequently, we get by Lemma~\ref{cvP} again that 
\[
\int \partial_{\bar{z}} \psi(z) \ \cU_{n,m}^{\bs Y}(t,z) \ \ell(dz) = 
- \int \psi(z) \, \partial_{\bar{z}}\cU_{n,m}^{\bs Y}(t,z) \ \ell(dz) 
\xrightarrow[m\to\infty]{\mathcal P} 
- \int \psi(z) \, b_n(z,t) \, \ell(dz) . 
\] 
Comparing with \eqref{dz*}, we obtain that 
$\partial_{\bar{z}} \cU_n(z,t) = b_n(z,t)$ in ${\mathcal D}'(\C)$. 

We now consider the limit in $t\downarrow 0$ in \eqref{eq:identification-dU}. Since $|b_n(z,t)| \le |2z|^{-1}$, the dominated convergence theorem yields
\[
b_n(z,t) \xrightarrow[t\downarrow 0]{ {\mathcal D}'(\C)} b_n(z) .
\]
Combining this convergence together with \eqref{conv:dU->dUmu} and \eqref{eq:identification-dU}, we obtain the desired result. 
\end{proof} 

In order to characterize the probability measure $\mu_n$, we use the equation 
$\mu_n = - (2\pi)^{-1} \Delta U_{\mu_n}$ and rely on the smoothness 
properties of $\Delta U_{\mu_n}$ that can be deduced from Lemma~\ref{q:dif}. 
We recall that $\qvec(s)$ is defined in the statement of Theorem \ref{thm:master}.

\begin{lemma}
\label{propF}
The probability measure $\mu_n$ is rotationally invariant. On $(0,\infty)$, 
the distribution function $F_n(s) := \mu_n(\{ z\, :\, |z|\le s\})$ satisfies
\[
F_n(s) = 1 - \frac{1}{n} \langle \q(s),  V \qtilde(s) \rangle . 
\] 
The support of $\mu_n$ is contained in $\{z\,:\, |z|\le \sqrt{\rhoV}\}$. 
Finally, $F_n$ is absolutely continuous on $(0,\infty)$, and has a continuous
density on $(0,\sqrt{\rhoV})$. 
\end{lemma}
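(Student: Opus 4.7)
The plan is to leverage Lemma \ref{lemma:identification}, which supplies the distributional identity $\partial_{\bar z}U_{\mu_n} = b_n$ with $b_n(z) = -\frac{z}{2n}\tr\Psi(\qvec(|z|))$. First I would establish rotational invariance of $\mu_n$; then extract the radial CDF from $b_n$ by comparing two natural expressions for the derivative of the radial potential; finally the support and regularity statements will be read off directly from Theorem \ref{thm:master} and Lemma \ref{q:dif}.

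For rotational invariance, the key observation is that $b_n(z) = z\,h(|z|)$ with $h(s):=-\frac{1}{2n}\tr\Psi(\qvec(s))$ depending only on $|z|$, so $b_n(e^{\ii\theta}z) = e^{\ii\theta}b_n(z)$ for every $\theta$. Applying $\partial_z$ to the identity $\partial_{\bar z}U_{\mu_n} = b_n$ yields $\mu_n = -(2\pi)^{-1}\Delta U_{\mu_n} = -(2/\pi)\partial_z b_n$ in $\mathcal{D}'(\C)$, and a direct change of variables $w = e^{\ii\theta}z$ in the pairing $\langle \partial_z b_n,\phi\rangle = -\langle b_n,\partial_z\phi\rangle$ shows that $\partial_z b_n$ is rotation-invariant as a distribution. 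Hence so is $\mu_n$, and its logarithmic potential takes the form $U_{\mu_n}(z) = u(|z|)$.

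Comparing $\partial_{\bar z}u(|z|) = u'(|z|)\frac{z}{2|z|}$ with $b_n(z) = z\,h(|z|)$ for $z\neq 0$ yields the first formula $u'(s) = -\frac{s}{n}\tr\Psi(\qvec(s))$ on $(0,\infty)$. On the other hand, decomposing the rotationally invariant measure as $\mu_n = \mu_n(\{0\})\delta_0 + \tilde\nu_n(dr)\otimes\frac{d\theta}{2\pi}$ in polar coordinates and applying circle averaging of the logarithm (Jensen's formula), one obtains
\[
u(s) = -\mu_n(\{0\})\log s - \int_0^\infty \log\max(s,r)\,\tilde\nu_n(dr),
\]
which differentiates at every $s>0$ outside the countable set of atoms of $\tilde\nu_n$ to $u'(s) = -F_n(s)/s$. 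Equating the two expressions gives $F_n(s) = \frac{s^2}{n}\tr\Psi(\qvec(s))$ on a co-countable set; right-continuity of $F_n$ together with continuity of $\qvec$ on $(0,\infty)$ provided by Lemma \ref{q:dif} promote this to equality at every $s>0$ (and in particular force $F_n$ itself to be continuous). Finally, the definition of $\Psi$ combined with the Master Equation $q_i = \Psi_i(\qvec)(V^\tran\q)_i$ gives $s^2\Psi_i(\qvec) = 1 - q_i(V\qtilde)_i$, and summation over $i\in[n]$ produces the claimed identity $F_n(s) = 1 - \frac{1}{n}\langle\q(s), V\qtilde(s)\rangle$.

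For the remaining statements, Theorem \ref{thm:master}(1) gives $\qvec(s)=0$ for $s\geq\sqrt{\rho(V_n)}$, hence $\Psi(\qvec(s)) = s^{-2}I_n$ and $F_n(s)\equiv 1$ on $[\sqrt{\rho(V_n)},\infty)$, so $\mathrm{supp}\,\mu_n\subset\{|z|\leq\sqrt{\rho(V_n)}\}$. By Lemma \ref{q:dif}, $\qvec$ is continuous on $(0,\infty)$ and $C^1$ on $(0,\sqrt{\rho(V_n)})\cup(\sqrt{\rho(V_n)},\infty)$; substituting into the formula $F_n(s) = \frac{s^2}{n}\tr\Psi(\qvec(s))$ shows $F_n$ is continuous on $(0,\infty)$ and $C^1$ off the single point $\sqrt{\rho(V_n)}$, hence absolutely continuous on $(0,\infty)$, and the radial density $\varphi_n(|z|) = F_n'(|z|)/(2\pi|z|)$ is continuous on $(0,\sqrt{\rho(V_n)})$. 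The hardest part will be the circle-averaging identification $u'(s) = -F_n(s)/s$: $\mu_n$ may well carry a point mass at the origin (cf.\ Proposition \ref{prop:singular.profile}), giving $u$ a logarithmic singularity at $s=0$ that must be isolated before differentiating; moreover the individual components of $\qvec(s)$ need not remain bounded as $s\downarrow 0$ (as illustrated by the same example), even though the scalar combinations of interest such as $\langle\q,V\qtilde\rangle$ do admit a finite limit, so the continuity arguments transporting the identity across the critical radius $\sqrt{\rho(V_n)}$ and down towards $0$ must be run on $F_n$ itself rather than componentwise on $\qvec$.
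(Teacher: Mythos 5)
Your proposal is correct in outline but takes a genuinely different route from the paper. The paper starts by proving $\mu_n(C)=0$ (where $C$ is the critical circle) via a bump-function computation, then uses the fact that $b_n$ is $C^1$ on $\mathcal D=\{z:|z|\neq 0,\ |z|\neq\sqrt{\rho(V)}\}$ to identify the density $f_n(z)=-\frac{2}{\pi}\partial_z b_n(z)$ classically, observes that $f_n$ is radial, and integrates $f_n$ over annuli to obtain $F_n$. You instead get rotational invariance of $\mu_n$ directly from the equivariance $b_n(e^{\ii\theta}z)=e^{\ii\theta}b_n(z)$, then compare two expressions for the derivative of the radial profile $u(s)=U_{\mu_n}(se^{\ii\theta})$: one from $\partial_{\bar z}U_{\mu_n}=b_n$, the other from circle-averaging the kernel $\log|z-\lambda|$ (giving $u(s)=-\int\log\max(s,|\lambda|)\,\mu_n(d\lambda)$ and hence $u'(s)=-F_n(s)/s$ off the atoms of the radial marginal). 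Equating and invoking the Master Equation identity $s^2\psi_i=1-q_i(V\qtilde)_i$ produces the formula for $F_n$ on a co-countable set, which your continuity argument then promotes to all $s>0$ — and, as a bonus, this promotion \emph{forces} $F_n$ to be continuous on $(0,\infty)$, so it subsumes the paper's separate proof that $\mu_n(C)=0$.

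Two caveats, both fixable. First, you pass freely between the distributional identity $\partial_{\bar z}U_{\mu_n}=b_n$ and the pointwise identity $u'(s)=-\frac{s}{n}\tr\Psi(\qvec(s))$; this requires knowing that the radial profile $u$ is absolutely continuous and its classical a.e.\ derivative matches the weak one. That follows from your own circle-averaging formula, since $|\log\max(s',r)-\log\max(s,r)|\le|\log s'-\log s|$ uniformly in $r$ shows $u$ is locally Lipschitz on $(0,\infty)$ — but you should say this explicitly rather than implicitly invoking a Sobolev-type regularity. Second, for absolute continuity of $F_n$ on $(0,\infty)$ one cannot simply say "$C^1$ off a point plus continuous"; one needs to write $F_n(v)-F_n(s)=\int_s^v F_n'$ on $(0,\sqrt{\rho(V)})$ and on $(\sqrt{\rho(V)},\infty)$ (where $F_n\equiv 1$) and then use continuity at $\sqrt{\rho(V)}$ to join the pieces, exactly as the paper does by taking $v\uparrow\sqrt{\rho(V)}$. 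With these two points made precise your argument is a valid and somewhat more economical alternative to the paper's.
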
 

Before entering the proof, we note that the rotational invariance of $\mu_n$
can be ``guessed'' from the form of the Schwinger--Dyson equations of Proposition~\ref{prop:deteq}. Indeed, from this one sees that the Stieltjes transform
$g_{\check\nu_{n,z}}(\eta) = n^{-1} \tr P(|z|,\eta)$ of $\check\nu_{n,z}$ 
depends on $z$ only through its absolute value, and this is therefore also the case 
for $U_{\mu_n}(z)$. It is easy to check that this yields the rotational 
invariance of $\mu_n$. 

\begin{proof}
First, we show that $\mu_n(C) = 0$, where $C$ is the circle with center
zero and radius $\sqrt{\rhoV}$. 

Consider a smooth function $\phi:\R\to [0,1]$ with support in $[-1,1]$ and value $\phi(0)=1$, and the function 
\[
g_{\varepsilon}(z)\ =\ \phi\left( \varepsilon^{-1}\left( |z|-\sqrt{\rhoV}\right)\right)\ ,\quad z\in \C
\]
with support in the annulus $\{ z\,:\, \sqrt{\rhoV}-\varepsilon\le |z|\le \sqrt{\rhoV}+\varepsilon\}$. We have 
\[
\int g_{\varepsilon}(z) \, \mu_n(dz) \ =\  
- \frac{1}{2\pi} \int g_{\varepsilon}(z) \, \Delta U_{\mu_n}(z) \, \ell(dz) \ =\   
 \frac{2}{\pi} \int \partial_z g_{\varepsilon}(z) \, b_n(z) \, \ell(dz)  
\]
where $b_n$ is defined in \eqref{def:b}. Notice that 
\begin{equation}\label{eq:eps-to-zero}
\lim_{\varepsilon\downarrow 0} \int g_{\varepsilon}(z) \, \mu_n(dz) = \mu_n(C) .
\end{equation}
By replacing $|z|=\sqrt{x^2+y^2}$ and computing $\partial_z=\frac 12(\partial_x -\ii \partial_y)$, we get
\[
\partial_z g_{\varepsilon} (z) \ =\ \frac {\bar{z}}{2 \varepsilon |z|} \phi'\left( \varepsilon^{-1}\left( |z|-\sqrt{\rhoV}\right)\right) .
\]
Hence, replacing $b_n$ by its expression in \eqref{def:b}, we obtain
\begin{align*}
 \frac{2}{\pi} \int \partial_z g_{\varepsilon}(z) \, b_n(z) \, \ell(dz) &= -\frac 1{2 \varepsilon n\pi} \int |z| \phi'\left( \varepsilon^{-1}\left( |z|-\sqrt{\rhoV}\right)\right)  \tr \Psi(\qvec(|z|)\, \ell(dz)\\
 &\stackrel{(a)}=  -\frac 1{2 \varepsilon n\pi} \int_{\theta=0}^{2\pi} \int_{\rho=\sqrt{\rhoV}-\varepsilon}^{\sqrt{\rhoV}+\varepsilon}  \phi'\left( \varepsilon^{-1}\left( |z|-\sqrt{\rhoV}\right)\right)  \tr \Psi(\qvec(\rho))\, \rho^2\, d\rho\,d\theta \\
 &\stackrel{(b)}= -\frac 1{\varepsilon n} \int_{-1}^{1}  \left( \sqrt{\rhoV} +\varepsilon u\right)^2 \phi'(u)  \tr \Psi(\qvec(\sqrt{\rhoV} +\varepsilon u))\, \varepsilon\, du
\end{align*}
where $(a)$ follows from a change of variables in polar coordinates and $(b)$, from the change of variable $u=\frac{\rho -\sqrt{\rhoV}}{\varepsilon}$.
Since $n^{-1} \tr \Psi(\qvec(|z|))\le |z|^{-2}$, the dominated convergence theorem yields
\begin{multline*}
-\frac {1}{ n} \int_{-1}^{1}  \left( \sqrt{\rhoV} +\varepsilon u\right)^2 \phi'(u)  \tr \Psi(\qvec(\sqrt{\rhoV} +\varepsilon u))\, du\\
\xrightarrow[\varepsilon \downarrow 0]{}\  
-\frac { \rhoV}{n} \tr \Psi (\qvec(\sqrt{\rhoV}) )\int_{-1}^1 \phi'(u)\, du \\
= -\frac { \rhoV }n \tr \Psi (\qvec(\sqrt{\rhoV})) \left[ \phi(1)-\phi(-1)\right] \ =\  0\ .
\end{multline*}
Equating with \eqref{eq:eps-to-zero}, we finally conclude $\mu_n(C)=0$.

By Theorem~\ref{thm:master}--\eqref{q:contdif}, the mapping $z\mapsto \qvec(|z|)$ is 
continuously differentiable on the open set 
${\mathcal D} := \{ z \in \C \, : \, |z| \neq 0, \ |z| \neq \rho(V)^{1/2} \}$. 
Therefore, $b_n(z)$ is continuously differentiable on this set, and 
for any $g \in C^\infty_c({\mathcal D})$, we get 
\[
\int_\C g(z) \, \mu_n(dz) = 
- \frac{1}{2\pi} \int_\C g(z) \, \Delta U_{\mu_n}(z) \, \ell(dz) =  
- \frac{2}{\pi} \int_\C g(z) \,  \partial_z b_n(z) \, \ell(dz) 
= \int_\C g(z) \, f_n(z) \, \ell(dz) 
\]
where the density $f_n(z)$ is given by 
\begin{align*}
f_n(z) &:= -\frac{2}{\pi} \partial_z b_n(z) 
= \frac{1}{n\pi} \partial_z \left( z \tr \Psi(\qvec(|z|)) \right) 
= \frac{1}{n\pi} \left\{ \tr \Psi(\qvec(|z|))  + |z|^2 
\partial_{|z|^2} \tr \Psi(\qvec(|z|)) \right\} \\
&= \frac{1}{n\pi} \partial_{|z|^2} \left\{ |z|^2 
 \tr \Psi(\qvec(|z|)) \right\} = 
  \frac{-1}{\pi n} \sum_{i=1}^n 
  \partial_{|z|^2} \Bigl( \frac{\varphi_i \tilde\varphi_i} 
                   {|z|^2 + \varphi_i \tilde\varphi_i} \Bigr) \\
&= \frac{-1}{\pi n} \partial_{|z|^2} \langle \q(|z|), V \qtilde(|z|)\rangle . 
\end{align*} 
Since $f_n(z)$ depends only on $|z|$, this density is rotationally invariant. 
From Theorem~\ref{thm:master}--\eqref{q:sys1}, $f_n(z) = 0$ for $|z| > \sqrt{\rhoV}$. 
Thus, the support of $\mu_n$ is contained in $\overline{\mathcal B(0,\sqrt{\rhoV})}$. 
Moreover, $F_n(\sqrt{\rhoV}) = 1 = \lim_{v\uparrow\sqrt{\rhoV}} F_n(v)$, 
since $\mu_n(C) = 0$. Given $0 < s \leq v < \sqrt{\rhoV}$, we have 
\begin{align*}
F_n(v) - F_n(s) &= \int_{\mathcal B(0,v)\setminus \mathcal B(0,s)} f_n(z) \, \ell(dz)  
= \int_0^{2\pi} d\theta \int_s^v  
\frac{-1}{2\pi r n} \partial_r \langle \q(r), V \qtilde(r)\rangle \ r \, dr \\
&= 
\frac{1}{n} \langle \q(s), V \qtilde(s)\rangle - 
\frac{1}{n} \langle \q(v), V \qtilde(v)\rangle . 
\end{align*}
By taking $v\uparrow \sqrt{\rhoV}$, $\langle \q(v), V \qtilde(v)\rangle 
\to 0$, and we get the expression~\eqref{expF}. 
Finally, the continuity of the density of $F_n$ on $(0,\sqrt{\rhoV})$ 
follows from Theorem~\ref{thm:master}--\eqref{q:contdif}.
\end{proof}

\subsection{Proof of Theorem~\ref{thm:bistochastic}}	\label{sec:bistochastic}

In Example \ref{ex:bistochastic}, it has been proved that Theorem \ref{thm:bistochastic} holds under the additional assumption that the matrices $A_n$ are irreducible.
Now for the general case, by conjugating $Y_n$ by a permutation matrix we may assume $A_n$ takes the form


\begin{equation}
A_n = \begin{pmatrix} 
A_n^{(1)} &  \cdots & 0\\ 
 \vdots &  \ddots  & \vdots\\
0 & \cdots  &A_n^{(m)} \end{pmatrix}
\end{equation}

where $A_n^{(1)},\dots, A_n^{(m)}$ are square irreducible matrices of respective dimension $n_1\ge \cdots \ge n_m$. 
Indeed, for $A_n$ a general nonnegative matrix we can achieve this with the upper triangular blocks not necessarily zero, but these are forced to be zero by the stochasticity condition.
Also, by \ref{ass:sigmax} and the row-sum constraint applied to the last row of $A_n=(\sigma_{ij})$,
$
1 = \frac1n\sum_{j=1}^n \sigma_{ij}^2 \le \frac{n_m}{n} \smax^2 
$
so in fact we have 
\begin{equation}	\label{ns:lb}
n_1,\dots, n_m\ge n/\smax^2.
\end{equation}

Denote the corresponding submatrices of $X_n$ by $X_n^{(k)}$ and set 
\begin{equation}
Y_n^{(k)} = \frac{1}{\sqrt{n}}A_n^{(k)}\odot X_n^{(k)} = \frac{1}{\sqrt{n_k}}B_n^{(k)}\odot X_n^{(k)}
\end{equation}
where we set $B_n^{(k)}= (n_k/n)^{1/2}A_n^{(k)}$. 
For each $k$ we have:
\begin{enumerate}
\item $A_n^{(k)}$ is irreducible,
\item $\frac1{n_k}B_n^{(k)}\odot B_n^{(k)}$ is doubly stochastic, and
\item $n_k\to \infty$ as $n\to \infty$ (by \eqref{ns:lb}).
\end{enumerate}
Thus, for each $k$ the ESD $\mu_n^{(k)}$ of $Y_n^{(k)}$ converges weakly in probability to $\mu_{\textrm{circ}}$.
Since the $\mu_n^Y$ is the weighted sum:
\[
\mu_n^Y = (n_1/n)\mu_n^{(1)} + \cdots +(n_m/n) \mu_n^{(m)}
\] 
we get that $\mu_n^Y$ converges weakly in probability to $\mu_{\textrm{circ}}$. 
This concludes the proof of Theorem \ref{thm:bistochastic}.

\begin{appendix}

\section{Remaining proofs}

\subsection{Stieltjes transform of a symmetric probability measure} 
We note that a symmetric probability distribution $\check\nu$ on $\R$ satisfies 
$\check\nu(A) = \check\nu(-A)$ for each Borel set $A\subset \R$. 
\begin{lemma}
\label{nusym} 
A probability measure $\check\nu$ is symmetric if and only if its Stieltjes 
transform $g_{\check\nu}$, seen as an analytic function on $\C\setminus\R$,
satisfies $g_{\check\nu}(-\eta) = - g_{\check\nu}(\eta)$. 
\end{lemma}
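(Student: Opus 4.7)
The plan is to verify both implications by direct manipulation of the integral defining the Stieltjes transform, using the change of variables $\lambda \mapsto -\lambda$, and then invoking the standard injectivity of the Stieltjes transform on the space of finite Borel measures on $\R$.

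For the forward direction, suppose $\check\nu$ is symmetric. Then for any bounded Borel function $f$, $\int f(\lambda)\,\check\nu(d\lambda) = \int f(-\lambda)\,\check\nu(d\lambda)$. Applying this with $f(\lambda) = (\lambda+\eta)^{-1}$, which is bounded for $\eta \in \C\setminus\R$, yields
\[
g_{\check\nu}(-\eta) = \int \frac{\check\nu(d\lambda)}{\lambda+\eta} = \int \frac{\check\nu(d\lambda)}{-\lambda+\eta} = -\int \frac{\check\nu(d\lambda)}{\lambda-\eta} = -g_{\check\nu}(\eta).
\]

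For the reverse direction, let $\check\nu^{\flat}$ denote the pushforward of $\check\nu$ by the map $\lambda \mapsto -\lambda$. Then by the same change of variables,
\[
g_{\check\nu^\flat}(\eta) = \int \frac{\check\nu(d\lambda)}{-\lambda-\eta} = -g_{\check\nu}(-\eta) = g_{\check\nu}(\eta),
\]
where the last equality uses the assumed symmetry of the Stieltjes transform. By the standard fact that the Stieltjes transform uniquely determines a finite Borel measure on $\R$ (for instance via the Stieltjes--Perron inversion formula), we conclude $\check\nu^\flat = \check\nu$, i.e.\ $\check\nu$ is symmetric.

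No step here is serious; the only subtlety worth a sentence is the injectivity of the Stieltjes transform, which can be cited as a standard fact (e.g.\ from \cite{book-weidmann}) rather than reproved. The whole argument should fit in a few lines of actual writing.
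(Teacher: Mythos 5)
Your proof is correct. The forward direction is the same short computation as in the paper (the paper simply calls it obvious). For the converse, the paper unfolds the Stieltjes--Perron inversion formula directly: it applies the formula to $\int\varphi(-x)\,\check\nu(dx)$, performs the substitution $x\mapsto -x$, and then uses both the assumed antisymmetry $g_{\check\nu}(-\eta)=-g_{\check\nu}(\eta)$ and the conjugate-symmetry $g_{\check\nu}(\bar\eta)=\overline{g_{\check\nu}(\eta)}$ to recognize $\Imm g_{\check\nu}(-x+\ii\varepsilon)=\Imm g_{\check\nu}(x+\ii\varepsilon)$, concluding $\int\varphi(-x)\,\check\nu=\int\varphi(x)\,\check\nu$. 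You instead form the reflected measure $\check\nu^\flat$, verify $g_{\check\nu^\flat}=g_{\check\nu}$ on $\C\setminus\R$ by a one-line change of variables together with the assumed antisymmetry, and invoke injectivity of the Stieltjes transform as a black box. The two arguments rest on the same underlying fact (the inversion formula is what proves injectivity), but your packaging is cleaner in one respect: by comparing the two transforms on all of $\C\setminus\R$ and quoting injectivity, you avoid the need to separately deploy the conjugate-symmetry identity $g_{\check\nu}(\bar\eta)=\overline{g_{\check\nu}(\eta)}$, which the paper's route requires in order to move between the upper and lower half-planes inside the inversion integral. Both are a few lines; yours reads a bit more modularly, the paper's is marginally more self-contained.
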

\begin{proof}
The necessity is obvious from the definition of the Stieltjes 
transform and from the fact that $\check\nu(d\lambda) = \check\nu(-d\lambda)$. 
To prove the sufficiency, we use the Perron inversion formula, that says that
for any function $\varphi \in C_c(\R)$, 
\[
\int_\R \varphi(x) \, \check\nu(dx) = 
\lim_{\varepsilon\downarrow 0} \frac{1}{\pi} \int_\R \varphi(x) \, 
\Imm g_{\check\nu}(x + \ii \varepsilon) \, dx .
\]
By a simple variable change at the right hand side, and by using the 
equalities $g_{\check\nu}(-\eta) = - g_{\check\nu}(\eta)$ and 
$g_{\check\nu}(\bar\eta) = \bar g_{\check\nu}(\eta)$, we obtain that 
$\int \varphi(x) \, \check\nu(dx) = \int \varphi(-x) \, \check\nu(dx)$, as desired.  
\end{proof}

\subsection{Variance estimates}\label{app:variance-estimates} 
In this section we collect without proofs a number of standard variance estimates. Let $(Y_n)$ be a sequence of matrices as in Definition \ref{def:model}. In the sequel we drop the subscript $n$. Denote by $(\vec{e}_i)$ the standard vector basis. 
We introduce the following notations:
$$
Y=\left( \vec{y}_1, \cdots, \vec{y}_n \right)\quad \text{and}\quad Q(\eta^2)= \left[ \sum_{i=2}^n (\vec{y}_i -z\vec{e}_i)(\vec{y}_i -z\vec{e}_i)^* -\eta^2\right]^{-1}\ .
$$ 
Recall the definition of matrices $\Res$ and $G$ in \eqref{def:res0}.
\begin{prop}\label{prop:var-estimates} Let \ref{ass:moments} and \ref{ass:sigmax} hold. Let $\Delta$ be a $n\times n$ deterministic diagonal matrix, then the following estimates hold:
\begin{eqnarray}
\var(\Res_{ij}) &=& \Oeta{n^{-1}}\quad \text{for}\quad 1\le i,j\le 2n\ ,\label{eq:var-H}\\
\var\left( \frac 1n \tr \Delta G \right)&=& \Oeta{\|\Delta\|^2\, n^{-2}}\ ,\label{eq:var-trace}\\
\var\left[ (\vec{y}_1 -z\vec{e}_1)^* [\eta Q]^\alpha (\vec{y}_1 -z\vec{e}_1)\right] &=& \Oeta{n^{-1}}\quad\text{for}  \quad 
\alpha=1,2\ .\label{eq:var-quadra}
\end{eqnarray}
Similar estimates hold true if $G$ is replaced by $\widetilde{G}$, if one considers the columns of $Y^*$ instead of those of $Y$, etc.
\end{prop}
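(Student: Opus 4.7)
The plan is to treat all three bounds by a single mechanism: a martingale decomposition along rows of $X$, combined with the rank-two Sherman--Morrison--Woodbury formula for the Hermitization and classical concentration estimates for quadratic forms in independent entries (the $4+\varepsilon$ moment assumption \ref{ass:moments} is what makes these quantitative).

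First I would set up the filtration $(\mathcal F_k)_{0\le k\le n}$ generated by the first $k$ rows of $X$, and for a scalar statistic $f(X)\in L^2$ write $f-\E f = \sum_{k=1}^n D_k$ with $D_k = (\E_k - \E_{k-1}) f$, so that $\var f = \sum_k \E|D_k|^2$. Let $\mathbb{Y}^{z,(k)}$ denote $\mathbb Y^z$ with its $k$-th and $(n+k)$-th row/column zeroed out, and $\Res^{(k)}$ its resolvent. Changing row $k$ of $Y$ corresponds to a rank-two perturbation $E_k = e_k v_k^{\tran} + v_k e_k^{\tran}$ of $\mathbb Y^z$ (in the appropriate block form), and Woodbury gives $\Res = \Res^{(k)} - \Res^{(k)} E_k (I + \Res^{(k)} E_k)^{-1} \Res^{(k)}$; after expansion the increment is a rational expression whose coefficients are $\mathcal F_{k-1}$-measurable matrices bounded in operator norm by $O(\Imm\eta^{-c})$, evaluated on the independent random vector $v_k$.

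For estimate (1), the pointwise bound $|\Res_{ij} - \Res^{(k)}_{ij}| \ls 1/\Imm \eta$ combined with $\E_{k-1}$-centering reduces the martingale difference to the fluctuation of quadratic forms of the type $v_k^{*} B v_k - \E[v_k^{*} B v_k \,|\, \mathcal F_{k-1}]$, with $\|B\|\ls \Imm\eta^{-2}$. Under \ref{ass:moments} and \ref{ass:sigmax} the standard variance identity
\[
\var(v_k^* B v_k) \ls \tfrac{1}{n^2}\sum_{i,j}\sigma_{ik}^2\sigma_{jk}^2 |B_{ij}|^2 \cdot M_0 \ls \|B\|^2/n
\]
gives $\E|D_k|^2 = \Oeta{1/n^2}$, and summing $n$ such terms yields $\var \Res_{ij} = \Oeta{1/n}$. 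Estimate (3) is in fact the same computation done directly (no martingale needed): since $Q(\eta^2)$ is independent of $\vec y_1$, one conditions on $Q$, expands $(\vec y_1 - z e_1)^* B (\vec y_1 - z e_1)$ with $B = (\eta Q)^\alpha$ (so $\|B\|\ls |\eta|^\alpha/\Imm\eta^{2\alpha}$), and bounds each term by the same quadratic-form variance estimate plus a trivial linear-form variance $\ls |z|^2 \|B\|^2/n$.

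For estimate (2) the key point, and the main obstacle, is that the naive bound $|\tr \Delta G - \tr \Delta G^{(k)}| \ls \|\Delta\|/\Imm\eta^2$ coming from the rank-two inequality $|\tr(AEB)| \le \mathrm{rank}(E)\|A\|\|E\|\|B\|$ only gives $\sum_k \E|D_k|^2 \ls n\|\Delta\|^2$, hence $\var(\tfrac1n \tr \Delta G) \ls \|\Delta\|^2/n$, which is a factor of $n$ short. The improvement must come from centering: after conditioning by $\mathcal F_{k-1}$, the Woodbury expansion expresses $D_k$ as the mean-zero part of a smooth function of $v_k$, and because $v_k$ has $n$ independent entries of variance $O(1/n)$, the fluctuation of this function gains a factor $1/\sqrt n$, yielding $\E|D_k|^2 = \Oeta{\|\Delta\|^2/n}$ and therefore $\var(\tfrac1n \tr \Delta G) = \Oeta{\|\Delta\|^2/n^2}$. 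Concretely, one writes the Woodbury resolvent expansion to second order, identifies the $\mathcal F_{k-1}$-measurable leading term (whose contribution is killed by $\E_k - \E_{k-1}$), and controls the remainder through the quadratic-form variance estimate of the previous paragraph applied with the $\mathcal F_{k-1}$-measurable coefficient matrix. The $4+\varepsilon$ moments enter exactly at this point, in controlling $\var(v_k^*Bv_k)$ by $\|B\|^2/n$, and the uniform bound \ref{ass:sigmax} is used to absorb the $\sigma_{ik}$'s into constants. I expect the bookkeeping in this final step to be the only delicate part of the argument; the bounds for (1) and (3) follow the classical Bai--Silverstein template without modification.
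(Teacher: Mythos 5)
Your high-level strategy --- martingale decomposition along rows, Sherman--Morrison--Woodbury for the rank-two perturbation, concentration of quadratic and linear forms --- is the standard Bai--Silverstein template, and it matches what is used in the reference \cite{najim-yao-2016} to which the paper delegates the entire proof. Your treatments of \eqref{eq:var-trace} and \eqref{eq:var-quadra} are correct; for \eqref{eq:var-trace} in particular you correctly observe that the naive rank-two trace bound gives only $\Oeta{\|\Delta\|^2/n}$ and that the $\mathcal F_{k-1}$-centering of the leading quadratic form, whose variance is $\ls\|B\|^2/n$, supplies the needed extra $1/n$.

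There is, however, an arithmetic gap in your sketch of \eqref{eq:var-H}. You assert that $\var(v_k^*Bv_k) \ls \|B\|^2/n$, with $\|B\| \ls \Imm(\eta)^{-2}$, gives $\E|D_k|^2 = \Oeta{n^{-2}}$. But $\|B\|^2/n$ is $\Oeta{n^{-1}}$, and summing $n$ such terms yields only $\var(\Res_{ij}) = \Oeta{1}$ --- a factor of $n$ short of the claim. The row-wise martingale increment $D_k$ for a fixed entry $\Res_{ij}$ is in fact of size $O(1/\sqrt n)$ in $L^2$, not $O(1/n)$: after the Woodbury expansion, the fluctuating part of $\Res_{ij}-\Res^{(k)}_{ij}$ contains bilinear expressions of the type $(\Res^{(k)}\vec y_k)_i\,(\vec y_k^*\Res^{(k)})_j/(\text{scalar})$ whose conditional variance is of order $|\Res^{(k)}_{i,n+k}|^2/(n\,\Imm(\eta)^2) + |\Res^{(k)}_{n+k,j}|^2/(n\,\Imm(\eta)^2) + O(n^{-2})$, and the prefactors $|\Res^{(k)}_{i,n+k}|^2$ are \emph{not} uniformly $O(1/n)$. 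What closes the argument is the Ward (absorption) identity
\[
\sum_{m=1}^{2n} |\Res_{im}|^2 \;=\; (\Res\Res^*)_{ii} \;=\; \frac{\Imm\Res_{ii}}{\Imm(\eta)} \;\le\; \frac{1}{\Imm(\eta)^2}\,,
\]
which controls the \emph{sum over $k$} of those prefactors and thereby gives $\var(\Res_{ij}) = \Oeta{n^{-1}}$ after totalling the martingale differences. (Equivalently one can run Efron--Stein over the $n^2$ entries $X_{k\ell}$: summing $|\partial\Res_{ij}/\partial X_{k\ell}|^2 = \sigma_{k\ell}^2|\Res_{ik}\Res_{n+\ell,j}|^2/n$ over all $(k,\ell)$ and invoking the Ward identity twice yields the same $\Oeta{n^{-1}}$, which is the non-Gaussian analogue of the Poincar\'e calculation.) Your proposal nowhere invokes this identity, and it is precisely the structural ingredient missing from your argument for \eqref{eq:var-H}. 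A small aside: a plain finite fourth moment already suffices for the quadratic-form variance bound; the extra $\varepsilon$ in \ref{ass:moments} is consumed elsewhere in the paper, e.g.\ in the truncation step of the Gaussian interpolation argument, not here.
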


These estimates can be obtained as in the proof of \cite[Proposition 6.3]{najim-yao-2016}, see also the references therein.

As a direct corollary of the previous proposition, we have:
\begin{coro}
\label{coro:var-estimates} Let \ref{ass:moments} and \ref{ass:sigmax} hold.
\begin{eqnarray}
\var\Bigl[\Bigl(\eta +\frac 1n \tr \Delta G\Bigr)^{-1}\Bigr] 
&=& \Oeta{\|\Delta\|^2\, n^{-2}}\ ,\label{eq:var-frac}\\
\var\left[\left(\eta + (\vec{y}_1 
-z\vec{e}_1)^* [\eta Q](\vec{y}_1 -z\vec{e}_1)\right)^{-1}\right] 
&=& \Oeta{n^{-1}}\ .\label{eq:var-frac-quadra}
\end{eqnarray} 
\end{coro}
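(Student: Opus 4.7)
The plan is to deduce both estimates from Proposition~\ref{prop:var-estimates} via resolvent identities that transfer variance bounds through the reciprocal operation.

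First I would dispatch \eqref{eq:var-frac-quadra} by a Schur complement / rank-one perturbation argument: upon removing the first row and column of $\mathcal{H} - \eta I_{2n}$, the standard formula for the diagonal entry of the inverse of a Hermitian matrix identifies
\[
\Res_{11} \;=\; G_{11} \;=\; -\bigl(\eta + (\vec{y}_1 - z\vec{e}_1)^*\,[\eta Q]\,(\vec{y}_1 - z\vec{e}_1)\bigr)^{-1}.
\]
Consequently, the variance of the reciprocal equals $\var(\Res_{11})$, which is $\Oeta{1/n}$ by \eqref{eq:var-H}.

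For \eqref{eq:var-frac}, I would set $X_n = \tfrac{1}{n}\tr \Delta G$ and start from the elementary identity
\[
\frac{1}{\eta+X_n} - \frac{1}{\eta+\e X_n} \;=\; \frac{\e X_n - X_n}{(\eta+X_n)(\eta+\e X_n)}.
\]
Combined with the fact that $\var(Z) \le \e|Z-c|^2$ for any constant $c$, this yields
\[
\var\!\left[(\eta+X_n)^{-1}\right] \;\le\; \e\!\left[\dfrac{|X_n - \e X_n|^2}{|\eta + X_n|^2\,|\eta + \e X_n|^2}\right],
\]
and \eqref{eq:var-trace} then reduces the task to proving the deterministic lower bound $|\eta + X_n|^{-1} = \Oeta{1}$ (uniformly in the randomness), and the same for $|\eta + \e X_n|$. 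To obtain this I would decompose $\Delta = (D_1^+ - D_1^-) + \ii(D_2^+ - D_2^-)$ with $D_j^\pm \succcurlyeq 0$ real diagonal and $\|D_j^\pm\| \le \|\Delta\|$, and use that for each $D \succcurlyeq 0$ the quantity $\tfrac{1}{n}\tr DG = \tfrac{1}{n}\sum_i D_{ii}\Res_{ii}$ is a nonnegative linear combination of diagonal entries of the Hermitian resolvent $\Res$ of $\mathcal H$, each of which is itself the Stieltjes transform of the spectral measure of $\mathcal H$ at $e_i$ and hence has nonnegative imaginary part. A case split on the size of $|\eta|\,\im(\eta)$ relative to $\|\Delta\|$, together with the crude operator-norm bound $|X_n| \le \|\Delta\|/\im(\eta)$, then furnishes the required $\Oeta{1}$ lower bound.

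The hard part will be this last step: for a general complex diagonal $\Delta$, the random scalar $X_n$ has no a priori sign constraint on its imaginary part and could in principle land close to $-\eta$. The splitting into four nonnegative diagonal pieces together with the Stieltjes transform structure of the diagonal resolvent entries is what prevents this, and the verification has to be organized so that all $\eta$-dependence can be absorbed into the $\Oeta{\cdot}$ notation rather than into $n$- or $\|\Delta\|$-dependent factors.
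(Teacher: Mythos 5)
Your treatment of \eqref{eq:var-frac-quadra} is a genuinely different and cleaner route than the paper's terse ``can be established similarly.'' The Schur complement identity you invoke is correct in spirit, but the quantity you identify is the wrong entry: with $\vec{w}_1 := \vec{y}_1 - z\vec{e}_1$ the first column of $Y - z$, the Sherman--Morrison/push-through computation applied to $(Y-z)^*(Y-z)$ gives
\[
\widetilde G_{11} \,=\, \Res_{n+1,n+1} \,=\, -\bigl(\eta + \vec{w}_1^*[\eta Q]\vec{w}_1\bigr)^{-1},
\]
not $G_{11}$ (which is controlled by the first \emph{row}, not column, of $Y-z$). With that index corrected, the conclusion $\var = \Oeta{n^{-1}}$ follows immediately from \eqref{eq:var-H}, and this is an explicit and economical argument.

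For \eqref{eq:var-frac}, your opening steps (the telescoping identity and $\var(Z)\le\E|Z-c|^2$) match the paper's exactly. You are also right that the whole weight of the argument falls on a deterministic bound $|\eta + X_n|^{-1} = \Oeta{1}$ with $X_n = \tfrac{1}{n}\tr\Delta G$. However, your proposed repair for general complex $\Delta$ does not work. Decomposing $\Delta = (D_1^+ - D_1^-) + \ii(D_2^+ - D_2^-)$ hands you four pieces each with $\Imm \ge 0$, but once you take the differences and linear combination the Stieltjes-transform structure is destroyed — the imaginary parts can cancel — so you cannot conclude $\Imm X_n \ge 0$. And the case split you outline only saves you when $|\eta|$ is large compared to $\|\Delta\|/\Imm(\eta)$; in the complementary regime the crude operator-norm bound is silent about whether $\eta + X_n$ lands near $0$. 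So the proposal, as written, leaves a genuine gap. The paper's proof sidesteps this by applying \eqref{eq:ST-estimate} directly, which implicitly uses that $X_n$ is itself a Stieltjes transform of a nonnegative finite measure — true precisely when $\Delta\succcurlyeq 0$, since each $G_{ii}$ is a diagonal resolvent entry of a Hermitian matrix and hence a Stieltjes transform, and nonnegative weights preserve this. That hypothesis is exactly what is satisfied everywhere the corollary is used (the weight matrices come from the nonnegative variance profile $V_n$), so the correct move is to note that the statement tacitly requires $\Delta\succcurlyeq 0$ and then read off $|\eta + X_n| \ge \Imm(\eta + X_n) \ge \Imm(\eta)$, rather than to attempt an argument for arbitrary complex $\Delta$.
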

\begin{proof}
Let us establish \eqref{eq:var-frac}. Notice first that $\left|\eta +\frac 1n \tr \Delta G\right|^{-1} \le \im^{-1}(\eta)$ by \eqref{eq:ST-estimate}.
\begin{eqnarray*}
\var\left( \frac 1{\eta +\frac 1n \tr \Delta G}\right) 
&\stackrel{(a)}\le&  \E \left|  \frac 1{\eta +\frac 1n \tr \Delta G} - \frac 1{\E\left(\eta +\frac 1n \tr \Delta G\right)}\right|^2\ ,\\
&\le& \frac 1{\im^4(\eta)} \var\left( \frac 1n \tr \Delta G\right)\quad =\quad \Oeta{\frac{\|\Delta\|^2}{n^2}}\ ,
\end{eqnarray*}
where $(a)$ follows from the fact that $\var(X)=\inf_a\E |X-a|^2$. 
Estimate \eqref{eq:var-frac-quadra} can be established similarly. 
\end{proof}


\subsection{Proof of Proposition \ref{prop:QR-interpolation}}
\label{app:proof-interpolation}

Notice first that it is sufficient to prove 
\[
\frac 1n \tr \E\, \boldsymbol{C} \left(\Res(z,\eta) - \Res^{\mathcal N}(z,\eta) \right) 
= \vOeta{\frac 1{\sqrt{n}}} \, , 
\]
where $\boldsymbol{C}$ is any of the following $2n\times 2n$ matrices:
$$\begin{pmatrix} I_n & 0 \\ 0 & 0 \end{pmatrix}\, , 
\ \begin{pmatrix} 0 & 0 \\ I_n & 0 \end{pmatrix}\, , 
\ \begin{pmatrix} 0 & I_n \\ 0 & 0 \end{pmatrix}\, ,
\ \begin{pmatrix} 0 & 0 \\ 0 & I_n \end{pmatrix}\, .
$$ 
We follow the approach used in \cite{tao-vu-(am)11} and perform an entry-by-entry interpolation between $Y$ and $Y^{\mathcal N}$. Let $\alpha : [n^2] \to [n]^2$ be the bijection
defined as $\alpha(m) = (i,j)$, where $m-1 = (i-1) n + j-1$ is the Euclidean 
division of $m-1$ by $n$. Given $m \in [n^2]$, define the $n\times n$ matrices
$Z^m = ( Z^m_{i,j} )$ and 
$W^m = ( W^m_{i,j})$ by
\[
Z^{m}_{i,j} := \left\{\begin{array}{ll} 
Y_{i,j}^{\mathcal N} & \text{if} \ \alpha^{-1}(i,j) <  m, \\ 
Y_{i,j}  & \text{if} \ \alpha^{-1}(i,j) \geq  m , 
\end{array}\right. 
 \quad \text{and} \quad 
W^{m}_{i,j} := \left\{\begin{array}{ll} 
Y_{i,j}^{\mathcal N} & \text{if} \ \alpha^{-1}(i,j) < m,  \\ 
0 & \text{if} \ \alpha^{-1}(i,j) = m,  \\ 
Y_{i,j}  & \text{if} \ \alpha^{-1}(i,j) >  m . 
\end{array}\right. 
\]
Notice that $Z^1=Y$. By convention, we denote $Z^{n^2+1} = Y^{\mathcal N}$.
Redenoting the resolvent $\Res(z,\eta)$ defined in~\eqref{def:res0} as 
$\Res_{Y}$ to express the dependence on $Y$ (thus, 
$\Res^{\mathcal N} = \Res_{Y^{\mathcal N}}$), we have  
\begin{align} 
\frac 1n \tr \E\, \boldsymbol{C} \left(\Res_{Y} - \Res_{Y^{\mathcal N}}\right) 
& =  \frac 1n \sum_{m=1}^{n^2}
  \tr \E\, \boldsymbol{C} \left(\Res_{Z^m} - \Res_{Z^{m+1}} \right) \nonumber \\
& =  \frac 1n \sum_{m=1}^{n^2}
  \tr \E\, \boldsymbol{C} \left(\Res_{Z^m} - \Res_{W^{m}} \right) - 
  \tr \E \, \boldsymbol{C} \left(\Res_{Z^{m+1}} - \Res_{W^{m}} \right) . 
\label{eq:telescope} 
\end{align} 
For $(i,j) = \alpha(m)$, the matrices $\bs\Delta_m := Z^m - W^m$ and 
$\bs\Delta^{\mathcal N}_m := Z^{m+1} - W^m$ are given by the equations
\[
\bs\Delta_m = \begin{bmatrix} 0 & Y_{ij} {\bs e}_i {\bs e}_j^* \\
 \bar Y_{ij} {\bs e}_j {\bs e}_i^* & 0 \end{bmatrix} 
\quad \text{and} \quad 
\bs\Delta_m^{\mathcal N} = 
\begin{bmatrix} 0 & Y_{ij}^{\mathcal N} {\bs e}_i {\bs e}_j^* \\
 \bar Y_{ij}^{\mathcal N} {\bs e}_j {\bs e}_i^* & 0 \end{bmatrix} , 
\]
where ${\bs e}_i$ is the $i^{\text{th}}$ canonical vector of $\C^n$. These
matrices are (at most) rank-two matrices that are both independent of 
$W^m$. We now use the identity 
$\Res_{Z^m} = \Res_{W^{m}} - \Res_{Z^m} \bs\Delta_m \Res_{W^{m}}$ three times 
to obtain 
\[
\E C(\Res_{Z^m} - \Res_{W^m}) = - \E C \Res_{W^m} \bs\Delta_m \Res_{W^m}  
 + \E \Res_{W^m} ({\bs\Delta}_m \Res_{W^m})^2 
 - \E \Res_{Z^m}  ({\bs\Delta}_m \Res_{W^m})^3 . 
\]
Similarly, 
\[
\E C(\Res_{Z^{m+1}} - \Res_{W^m}) 
 = - \E C \Res_{W^m} \bs\Delta_m^{\mathcal N} \Res_{W^m}  
 + \E \Res_{W^m} ({\bs\Delta}_m^{\mathcal N} \Res_{W^m})^2 
 - \E \Res_{Z^m}  ({\bs\Delta}_m^{\mathcal N} \Res_{W^m})^3 . 
\]
The first two terms at the right hand sides of these equations are
identical since the first two moments of $Y_{ij}$ and $Y_{ij}^{\mathcal N}$ 
are equal, thus, they cancel out in \eqref{eq:telescope}. Moreover, it is easy
to see from the general properties of the resolvents and from the expressions
of $\bs\Delta_m$ and $\bs\Delta_m^{\mathcal N}$ that the traces of the 
terms with the cubes are bounded by $C n^{-3/2} / \eta^4$. The result follows.

\subsection{Proof of estimate \eqref{eq:invertibility-I-AA}} \label{app:linear-algebra}
This estimate relies on the following standard proposition, whose proof can be found in \cite[Lemma 5.2]{hachem-et-al-2008}.
\begin{prop}\label{prop:folklore}
 Let $C\succcurlyeq 0$ be a $n\times n$ matrix and $\bs{u}=(u_{\ell}) \succ 0$ and $\bs{v}=(v_{\ell}) \succ 0$ two $n\times 1$ vector. Assume that the following equality holds true:
$
\bs{u} = C\bs{u} +\bs{v}\ .
$  
Then $\rho(C)<1$, matrix $I-C$ is invertible, $(I-C)^{-1} \succcurlyeq 0$ and 
$$
\lrn (I-C)^{-1}\rrn_{\infty} \ \le \ \frac{\max(u_{\ell}\, ; \ \ell \in [n])}{\min( v_{\ell}\, ; \ \ell\in [n])}\ . 
$$
\correction{\textcolor{red}{Moreover, $\det(I-C)>0$.}}
\end{prop}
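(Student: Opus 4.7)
The key observation is that the hypothesis $\bs u = C\bs u + \bs v$ rewrites as $C\bs u = \bs u - \bs v$, and since $\bs v \succ 0$ we obtain the strict componentwise inequality $C\bs u \preccurlyeq (1-\varepsilon)\bs u$ with $\varepsilon := \min_\ell v_\ell / \max_\ell u_\ell \in (0,1]$. My plan is to use this quantitative sub-eigenvector relation to bound the spectral radius, and then derive the remaining conclusions via a Neumann series expansion.

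First, I will establish $\rho(C) \le 1-\varepsilon < 1$ by a Collatz--Wielandt-type argument that does \emph{not} require irreducibility (so Proposition~\ref{prop:seneta-2} is not directly applicable). Take $\bs w \succcurlyeq 0$, $\bs w \neq 0$, a left Perron--Frobenius eigenvector of $C$ satisfying $C^\tran \bs w = \rho(C)\bs w$ (which exists since $C\succcurlyeq 0$). Pairing with $\bs u$ gives
\[
\rho(C)\,\langle \bs w,\bs u\rangle = \langle \bs w, C\bs u\rangle \le (1-\varepsilon)\langle \bs w,\bs u\rangle,
\]
and since $\bs u \succ 0$ and $\bs w \posneq 0$ force $\langle \bs w, \bs u\rangle > 0$, we can divide to get $\rho(C) \le 1-\varepsilon < 1$.

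With $\rho(C)<1$ in hand, $I-C$ is invertible and admits the convergent Neumann series $(I-C)^{-1} = \sum_{k\ge 0} C^k \succcurlyeq 0$, yielding both the invertibility and the entrywise nonnegativity of $(I-C)^{-1}$. For the max-row norm bound, I invert the hypothesis to write $\bs u = (I-C)^{-1}\bs v$, and use $\bs v \succcurlyeq (\min_\ell v_\ell)\,\bs 1_n$ together with the nonnegativity of $(I-C)^{-1}$ to conclude
\[
u_i \;\ge\; \min_\ell v_\ell \cdot \bigl((I-C)^{-1}\bs 1_n\bigr)_i \;=\; \min_\ell v_\ell \cdot \sum_{j=1}^n \bigl[(I-C)^{-1}\bigr]_{ij}
\]
for every $i\in[n]$; taking the maximum over $i$ gives exactly the stated bound.

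Finally, the positivity of $\det(I-C)$ follows from $\rho(C)<1$: the eigenvalues of $I-C$ are $\{1-\lambda_i(C)\}$ with $|\lambda_i(C)|<1$, so real eigenvalues are positive and nonreal eigenvalues appear in conjugate pairs contributing $|1-\lambda_i(C)|^2 > 0$ to the determinant. I expect no real obstacle here; the only subtlety is avoiding any implicit appeal to irreducibility of $C$ when establishing $\rho(C)<1$, which is why the left Perron eigenvector argument (valid for arbitrary $C\succcurlyeq 0$) is used rather than Proposition~\ref{prop:seneta-2}.
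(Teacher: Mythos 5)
Your proof is correct and complete. Worth noting: the paper's own ``proof'' of this proposition only establishes the last assertion $\det(I-C)>0$, deferring everything else to \cite[Lemma~5.2]{hachem-et-al-2008}. For that final piece the paper uses a homotopy argument --- the map $t\mapsto\det(I-tC)$ is continuous on $[0,1]$, equals $1$ at $t=0$, and never vanishes because $\rho(tC)=t\rho(C)<1$, so it stays positive --- whereas you argue via the spectrum of the real matrix $I-C$: real eigenvalues lie in $(0,2)$ and nonreal ones pair off as $(1-\lambda)(1-\bar\lambda)=|1-\lambda|^2>0$. Both are clean; the homotopy argument is arguably slicker in that it does not require observing that $C$ is real and that its spectrum is conjugate-symmetric, while your spectral argument is more elementary. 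For the remaining claims, your route is self-contained and efficient: the quantitative sub-eigenvector inequality $C\bs u\preccurlyeq(1-\varepsilon)\bs u$ (with $\varepsilon=\min_\ell v_\ell/\max_\ell u_\ell>0$) paired against a left Perron eigenvector of $C$ gives $\rho(C)\le 1-\varepsilon<1$ without any irreducibility hypothesis, then the Neumann series yields $(I-C)^{-1}=\sum_k C^k\succcurlyeq 0$, and inverting $\bs u=(I-C)^{-1}\bs v$ row by row against $\bs v\succcurlyeq(\min_\ell v_\ell)\bs 1_n$ delivers exactly the stated max-row-norm bound. One cosmetic slip: you describe $C\bs u\preccurlyeq(1-\varepsilon)\bs u$ as a ``strict'' inequality but write it non-strictly, which is in fact all the argument needs.
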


\correction{
\textcolor{red}{
\begin{proof}
We only prove the last property, for the other properties' proof can be found in \cite[Lemma 5.2]{hachem-et-al-2008}. Let $t\in [0,1]$ and notice that $\rho(tC)=t\rho(C)<1$, hence $I-tC$ is invertible for all $t\in [0,1]$. Since this mapping $t\mapsto \det(I-tC)$ is continuous for $t\in [0,1]$ with value 1 at $t=0$, its sign must remain constant and positive for all $t\in [0,1]$, for otherwise $\det(I-t_0C)=0$ for some $t_0\in (0,1)$, which contradicts the invertibility of $I-t_0C$.
\end{proof}
}}
\correction{
\textcolor{red}{
\begin{lemma}\label{lemma:key-lemma}
Let $A$ and $B$ be two $n\times n$ matrices such that $\rho(A\odot\bar{A})<1$ and $\rho(B\odot \bar{B})<1$. Let $\bs{u}, \bs{v}, \bs{w},\bs{r}$ be $n\times 1$ vectors, then 
\begin{enumerate}
\item matrix $I-A\odot B$ is invertible,
\item the following inequality holds true:
\begin{multline*}
\left| 
(\bs{u}\odot \bs{v})^\tran (I-A\odot B)^{-1} \bs{w}\odot\bs{r}
\right| \\
\le \ \sqrt{
(\bs{u}\odot \bar{\bs{u}})^\tran (I-A\odot \bar{A})^{-1} \bs{w}\odot\bar{\bs{w}}
}\ 
\sqrt{
(\bs{v}\odot \bar{\bs{v}})^\tran (I-B\odot \bar{B})^{-1} \bs{r}\odot\bar{\bs{r}}
}\ .
\end{multline*}
\item the following max-row norm estimate holds true
\begin{equation}\label{eq:estimate-maxrow}
\lrn (I-A\odot B)^{-1}\rrn_{\infty} \le 
\sqrt{\lrn (I-A\odot \bar{A})^{-1}\rrn_{\infty}}\ 
\sqrt{ \lrn (I-B\odot \bar{B})^{-1}\rrn_{\infty}} \ .
\end{equation}
\end{enumerate}
\end{lemma}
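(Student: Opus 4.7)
My plan is to deduce all three parts of Lemma~\ref{lemma:key-lemma} from a single path-expansion identity together with two applications of an entrywise Cauchy--Schwarz inequality, after first establishing the Neumann series representation for $(I-A\odot B)^{-1}$.

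For part (1), I would actually strengthen invertibility to the quantitative estimate
$$
\rho(A\odot B)\,\le\,\rho(|A|\odot|B|)\,\le\,\sqrt{\rho(A\odot\bar A)\,\rho(B\odot\bar B)}\,<\,1,
$$
which automatically validates the Neumann series $(I-A\odot B)^{-1}=\sum_{k\ge 0}(A\odot B)^k$. The first inequality is Horn--Johnson \cite[Theorem~8.1.18]{book-horn-johnson}, already invoked in Proposition~\ref{prop:deteq}. For the second, after a harmless perturbation $A\rightsquigarrow A+\varepsilon J$, $B\rightsquigarrow B+\varepsilon J$ (with $J$ the all-ones matrix) to make both $A\odot\bar A$ and $B\odot\bar B$ strictly positive and hence irreducible, I let $\bs u,\bs v\succ 0$ be their Perron eigenvectors and test $|A|\odot|B|$ against the strictly positive vector $\bs w:=\sqrt{\bs u}\odot\sqrt{\bs v}$. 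Entrywise Cauchy--Schwarz gives
$$
\sum_j|A_{ij}||B_{ij}|\sqrt{u_jv_j}\le\Bigl(\sum_j|A_{ij}|^2 u_j\Bigr)^{1/2}\Bigl(\sum_j|B_{ij}|^2 v_j\Bigr)^{1/2}=\sqrt{\rho(A\odot\bar A)\,\rho(B\odot\bar B)}\;w_i,
$$
so $(|A|\odot|B|)\bs w\preccurlyeq\sqrt{\rho(A\odot\bar A)\,\rho(B\odot\bar B)}\,\bs w$, and Proposition~\ref{prop:seneta-2} yields the claim. Continuity of the spectral radius lets $\varepsilon\downarrow 0$.

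For part (2), I would expand along paths. Writing $\gamma=(i_0,\ldots,i_k)$, $\alpha(\gamma):=\prod_{\ell=0}^{k-1}A_{i_\ell i_{\ell+1}}$, $\beta(\gamma):=\prod_{\ell=0}^{k-1}B_{i_\ell i_{\ell+1}}$, the $k$-th term of the Neumann series factors as
$$
(\bs u\odot\bs v)^\tran(A\odot B)^k(\bs w\odot\bs r)=\sum_{|\gamma|=k}\bigl[u_{i_0}\alpha(\gamma)w_{i_k}\bigr]\bigl[v_{i_0}\beta(\gamma)r_{i_k}\bigr].
$$
Cauchy--Schwarz over the finite set of paths of length $k$, combined with the pointwise identity $|u_{i_0}\alpha(\gamma)w_{i_k}|^2=(\bs u\odot\bar{\bs u})_{i_0}\prod_{\ell}|A_{i_\ell i_{\ell+1}}|^2(\bs w\odot\bar{\bs w})_{i_k}$ (and the analogous one for $\beta$), gives
$$
\bigl|(\bs u\odot\bs v)^\tran(A\odot B)^k(\bs w\odot\bs r)\bigr|\le\sqrt{(\bs u\odot\bar{\bs u})^\tran(A\odot\bar A)^k(\bs w\odot\bar{\bs w})}\,\sqrt{(\bs v\odot\bar{\bs v})^\tran(B\odot\bar B)^k(\bs r\odot\bar{\bs r})}.
$$
Summing over $k$ and applying Cauchy--Schwarz a second time to the numerical series, $\sum_k\sqrt{a_k}\sqrt{b_k}\le\sqrt{\sum_k a_k}\sqrt{\sum_k b_k}$, produces the bound of (2).

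Part (3) is then immediate: specializing (2) to $\bs u=\bs v=\bs e_i$ and $\bs w=\bs r=\bs e_j$ yields
$$
\bigl|[(I-A\odot B)^{-1}]_{ij}\bigr|\le\sqrt{[(I-A\odot\bar A)^{-1}]_{ij}}\,\sqrt{[(I-B\odot\bar B)^{-1}]_{ij}},
$$
and the entries on the right are nonnegative (Neumann series with nonnegative matrices). Summing in $j$, applying Cauchy--Schwarz one last time over $j$, and taking the maximum over $i$ gives the max-row norm estimate, since $\sum_j[(I-A\odot\bar A)^{-1}]_{ij}\le\lrn(I-A\odot\bar A)^{-1}\rrn_\infty$ and similarly for $B$. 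The main obstacle I anticipate is the irreducibility issue in part~(1): without it, the Perron eigenvectors need not be strictly positive, preventing the formation of the test vector $\bs w=\sqrt{\bs u}\odot\sqrt{\bs v}\succ 0$ required by Proposition~\ref{prop:seneta-2}. The $\varepsilon J$ perturbation above handles this cleanly, but one must verify stability of the spectral bound as $\varepsilon\downarrow 0$, which is routine by continuity of $\rho$ on finite-dimensional matrices.
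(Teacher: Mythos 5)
Your proof is correct, and it takes a genuinely different route from the paper's. The paper proves parts (1) and (2) by induction on the dimension $n$, writing $I-\tilde A\odot\tilde B$ in block form and working with the Schur complement ${\mathcal S}_{AB}=1-ab-(\bs\alpha\odot\bs\beta)^\tran\Gamma_{AB}\,\bs a\odot\bs b$ and its symmetric analogues ${\mathcal S}_A,{\mathcal S}_B$; the pivotal step is the Schur-complement bound $|{\mathcal S}_{AB}|\ge\sqrt{{\mathcal S}_A{\mathcal S}_B}$, obtained from the induction hypothesis and the elementary inequality $\sqrt{ab}-\sqrt{cd}\ge\sqrt{a-c}\sqrt{b-d}$, and the final inequality at step $n+1$ comes out of the full Schur decomposition of $(I-\tilde A\odot\tilde B)^{-1}$ together with several more Cauchy--Schwarz applications. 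You instead go directly: establish the quantitative spectral-radius bound $\rho(A\odot B)\le\rho(|A|\odot|B|)\le\sqrt{\rho(A\odot\bar A)\rho(B\odot\bar B)}<1$, expand $(I-A\odot B)^{-1}$ as a Neumann series, and read off part (2) from a layered Cauchy--Schwarz (entrywise within each path, then over paths of fixed length, then over lengths). What your route buys is transparency: it exposes exactly why the estimate is a Cauchy--Schwarz phenomenon, avoids all bookkeeping about Schur complements, and requires no dimensional induction. What it costs is that the absolute convergence of the series must be justified up front, which is why you need the quantitative spectral-radius bound rather than mere invertibility; the paper's inductive Schur argument gets invertibility for free from $|{\mathcal S}_{AB}|>0$. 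For part (3) the two proofs again diverge slightly: the paper specializes $\bs u=\bs v=\bs e_i$ with generic $\|\bs w\|_\infty,\|\bs r\|_\infty\le 1$ and optimizes, whereas you specialize to canonical vectors on both sides to get an entrywise bound and then apply one more Cauchy--Schwarz over $j$; both are fine.

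One small notational slip in your part (1): the perturbation you want is $|A|\rightsquigarrow|A|+\varepsilon J$ and $|B|\rightsquigarrow|B|+\varepsilon J$, not $A\rightsquigarrow A+\varepsilon J$. Perturbing $A$ itself by $\varepsilon J$ need not make $A\odot\bar A=|A|\odot|A|$ strictly positive (e.g.\ $A_{ij}=-\varepsilon$ would give a zero entry). Since the whole argument is carried out at the level of $|A|,|B|$, the fix is cosmetic and your conclusion stands, but as written the sentence is not quite what you mean.
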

The proof of Lemma \ref{lemma:key-lemma} is postponed to Section \ref{sec:proof-key-lemma}.  
In order to study the properties of matrix ${\mathcal A}(\E\, \bs{\vec g}) \odot {\mathcal A}(\bs{p})$, we introduce two auxiliary systems. 
}}
Recall the definitions \eqref{def-upsilon} and \eqref{def:Delta} of $\bs{\Upsilon}(\bs{\vec b})$ and $\Delta(\bs{b})$, $\widetilde \Delta(\bs{\tilde b})$.
In order to study the properties of matrix ${\mathcal A}(\bs{p}) \odot {\mathcal A}(\bs{p})$, we introduce an auxiliary system. 
Since the $p_i$'s satisfy
\eqref{cve0}, we immediately obtain
\begin{multline*}
\im(p_i) = \frac{\left[ V^\tran \Imm(\bs{p})\right]_i\ |z|^2}{\left| \ |z|^2 - (\eta +[V\bs{\tilde p}]_i)(\eta +[V^\tran\bs{p}]_i)\right|^2} 
+\frac{ [V\Imm(\bs{\tilde p})]_i}{\big| - (\eta +[V\bs{\tilde p}]_i) +\frac{|z|^2}{\eta +[V^\tran\bs{p}]_i}\big|^2}  \\
+
\frac{  \Imm(\eta)}{\big| - (\eta +[V\bs{\tilde p}]_i) +\frac{|z|^2}{\eta +[V^\tran\bs{p}]_i}\big|^2}\left( \frac{|z|^2}{\left| \eta +[V^\tran\bs{p}]_i)\right|^2}+1\right) 
\end{multline*}
and its counterpart for $\im(\tilde p_i)$. Denote 
by $\bs{v}({\bs{\vec p}})$ the $2n\times 1$ vector defined by
$$
\left[\bs{v}({\bs{\vec p}})\right]_i = \frac{  \Imm(\eta)}{\big| - (\eta +[V\bs{\tilde p}]_i) +\frac{|z|^2}{\eta +[V^\tran\bs{p}]_i}\big|^2}\left( \frac{|z|^2}{\left| \eta +[V^\tran\bs{p}]_i)\right|^2}+1\right) 
$$
for $i\in [n]$ and
$$
\left[\bs{v}({\bs{\vec p}})\right]_i = 
\frac{  \Imm(\eta)}{\big| - (\eta +[V^\tran\bs{p}]_i) +\frac{|z|^2}{\eta +[V\bs{\tilde p}]_i}\big|^2}\left( \frac{|z|^2}{\left| \eta +[V\bs{\tilde p}]_i)\right|^2}+1\right) 
$$
for $i\in \{n+1,\cdots,2n\}$. Then the system satisfied by 
$\im({\bs{\vec p}})$ writes
\begin{equation}\label{eq:system-aux-1}
\im(\vec{\bs{p}}) = {\mathcal A}(\vec{\bs{p}}) \odot \overline{{\mathcal A}(\vec{\bs{p}})} \, \im(\vec{\bs{p}}) + \bs{v}(\vec{\bs{p}})\ ,
\end{equation}
where matrix ${\mathcal A}(\vec{\bs{p}})$ has been defined in \eqref{eq:def-A}. Since matrix ${\mathcal A}(\vec{\bs{p}}) \odot \overline{{\mathcal A}(\vec{\bs{p}})}$ has nonnegative entries, we will rely on Proposition \ref{prop:folklore} to evaluate
$\lrn \left(I-{\mathcal A}(\vec{\bs{p}}) \odot \overline{{\mathcal A}(\vec{\bs{p}})}\right)^{-1}\rrn_{\infty}\ .
$
We need to check that $\im(\vec{\bs{p}}),\bs{v}(\vec{\bs{p}}) \succ 0$, to upper bound $\im(p_{\ell}),\im(\tilde p_{\ell})$ and to lower bound $\left[\bs{v}(\vec{\bs{\theta}})\right]_i$. Since $p_{\ell}$ and $\tilde p_{\ell}$ are Stieltjes transform, we have $
|\im(p_{\ell})| \vee | \im(\tilde p_{\ell})| \le  \im(\eta)^{-1}$ which we write in short $\im(\vec{\bs p}) \prec \im(\eta)^{-1}$.
Now, if $i\le n$  
$$
\left[\bs{v}(\vec{\bs{p}})\right]_i \ge \frac{\Imm (\eta)}{\big| - (\eta +[V\bs{\tilde p}]_i) +\frac{|z|^2}{\eta +[V^\tran\bs{ p}]_i}\big|^2}$$
with
$$
\frac 1{\Imm(\eta)}\left| - (\eta +[V\bs{\tilde p}]_i) +\frac{|z|^2}{\eta +[V^\tran\bs{ p}]_i}\right|^2
\ \le\  2\left( \frac{|\eta|^2}{\Imm(\eta)} +\frac {\smax^4}{\im^3(\eta)} +\frac{|z|^4}{\im^3(\eta)}\right) \ =\  \Oeta{1}\ .
$$
The case where $n+1\le i\le 2n$ being handled similarly, we finally get
\begin{equation}\label{eq:bound-2}
\min_i \left[\bs{v}(\vec{\bs{\theta}})\right]_i\ \ge\ \frac 1{\Oeta{1}} \ .
\end{equation}
Hence, $\bs{v}(\vec{\bs{\theta}})\succ 0$ and $\left[\bs{v}(\vec{\bs{\theta}})\right]_i$ is lower bounded away from zero.

In order to prove $\vec{\bs{p}}\succ 0$, we argue as follows: the $p_i$'s are Stieltjes transforms of probability measures $\mu_i$.
These probability measures are tight, see for instance Proposition \ref{prop:deteq}-(v). In particular, there exists a real number $K$ such that 
$
\mu_i([-K,K]) \ge \frac{1}2\ .
$
Hence,
\begin{eqnarray}
\im(p_i) &=&\im(\eta) \int_{\mathbb{R}} \frac{\mu_i(d \lambda)}{|\lambda - \eta|^2} 
\ \ge\   \im(\eta) \int_{-K}^K \frac{\mu_i(d \lambda)}{|\lambda - \eta|^2} \nonumber \ \ge\  \frac{\mu_{i}([-K,K])}{2(K^2 + |\eta|^2)}\ge \frac{1}{4(K^2+|\eta|^2)}\ .
\end{eqnarray}
We are now in position to apply Proposition \ref{prop:folklore}. This proposition yields in particular that $\rho({\mathcal A}(\vec{\bs{p}}) \odot \overline{{\mathcal A}(\vec{\bs{p}})})<1$ and gathering the estimates $\im(\vec{\bs p}) \prec \im(\eta)^{-1}$ and \eqref{eq:bound-2}, we obtain
\begin{equation}\label{eq:estimate-AA}
\lrn\left(I-{\mathcal A}(\vec{\bs{p}}) \odot \overline{{\mathcal A}(\vec{\bs{p}})}\right)^{-1}\rrn_{\infty} = \Oeta{1}\ .
\end{equation}
We can now conclude. Notice that along the imaginary axis,
${\bs p}(\ii t)=\ii {\bs r}(t)$ and $\tilde {\bs p}(\ii t)=\ii \tilde {\bs r}(t)$ by Proposition \ref{prop:deteq}-\eqref{SD-prop3}.  Hence, straightforward computations yield that  
$$
I- {\mathcal A}(\bs{\vec p})\odot {\mathcal A}(\bs{\vec p}) =\begin{pmatrix}
A & B\\
C & D
\\
\end{pmatrix}\quad \textrm{and}\quad 
I- {\mathcal A}(\bs{\vec p})\odot \overline{{\mathcal A}(\bs{\vec p})} =\begin{pmatrix}
A & -B\\
-C & D
\\
\end{pmatrix}\ .
$$
By standard block inversion formulas, the same structure occurs for the inverses
$$
\left( I- {\mathcal A}(\bs{\vec p})\odot {\mathcal A}(\bs{\vec p})\right)^{-1}=
\begin{pmatrix}
\widetilde{A} & \widetilde{B}\\
\widetilde{C} & \widetilde{D}
\\
\end{pmatrix}\quad  \textrm{and}\quad 
\left(  I- {\mathcal A}(\bs{\vec p})\odot \overline{{\mathcal A}(\bs{\vec p})}\right)^{-1}=\begin{pmatrix}
\widetilde{A} & -\widetilde{B}\\
-\widetilde{C} & \widetilde{D}
\\
\end{pmatrix}\, ,
$$
from which we immediately deduce that $\lrn\left(I-{\mathcal A}(\vec{\bs{p}}) \odot \overline{{\mathcal A}(\vec{\bs{p}})}\right)^{-1}\rrn_{\infty}
= \lrn\left(I-{\mathcal A}(\vec{\bs{p}}) \odot {\mathcal A}(\vec{\bs{p}})\right)^{-1}\rrn_{\infty}$ by either optimizing over $\vec{\bs u}=({\bs u},\tilde{\bs u})$ with 
$\| \vec{\bs u}\|_{\infty}=1$ or  $({\bs u},-\tilde{\bs u})$. Estimate \eqref{eq:invertibility-I-AA} is established.

\correction{
\textcolor{red}{
If one considers now the perturbed system \eqref{eq:Gtilde-ii}-\eqref{eq:G-ii} satisfied by $\E\, \vec{\bs{g}}$, one obtains similarly
\begin{equation}\label{eq:system-aux-2}
\im(\E \vec{\bs{g}}) = {\mathcal A}(\E \vec{\bs{g}}) \odot \overline{{\mathcal A}(\E \vec{\bs{g}})} \ \im(\E \vec{\bs{g}}) + \bs{v}(\E \vec{\bs{g}})
+\Oeta{\frac 1{n^{3/2}}}\ .
\end{equation}
By arguing as before (notice in particular that there is no impact of the residual term $\vec{\mathcal O}_{\eta}\left(n^{-3/2}\right)$), we obtain 
\begin{equation}\label{eq:estimate-BB}
\rho\left({\mathcal A}(\E \vec{\bs{g}}) \odot \overline{{\mathcal A}(\E \vec{\bs{g}})}\right) \ <\ 1\quad \text{and}\quad 
\lrn (I-{\mathcal A}(\E \vec{\bs{g}}) \odot \overline{{\mathcal A}(\E \vec{\bs{g}})})^{-1}\rrn_{\infty} = \Oeta{1}\ .
\end{equation}
It now remains to bound 
$$
\lrn \left(I-{\mathcal A}( \vec{\bs{p}}) \odot {{\mathcal A}(\E \vec{\bs{g}})}\right)^{-1}\rrn_{\infty}
$$ 
given estimates \eqref{eq:estimate-AA} and \eqref{eq:estimate-BB}. Lemma \ref{lemma:key-lemma}-(3) provides the appropriate estimate.
Consider now matrices ${\mathcal A}( \vec{\bs{p}})$ and ${\mathcal A}(\E \vec{\bs{g}})$ as defined in \eqref{eq:def-A}. We have already proved that $\rho({\mathcal A}(\vec{\bs{p}}) \odot \overline{{\mathcal A}(\vec{\bs{p}})})<1$ and $\rho({\mathcal A}(\E \vec{\bs{g}}) \odot \overline{{\mathcal A}(\E \vec{\bs{g}})})<1$ hence 
$I-{\mathcal A}( \vec{\bs{p}}) \odot {{\mathcal A}(\E \vec{\bs{g}})}$ is invertible. Plugging estimates \eqref{eq:estimate-AA} and \eqref{eq:estimate-BB} into \eqref{eq:estimate-maxrow}, we obtain:
$$
\lrn (I-{\mathcal A}( \vec{\bs{p}}) \odot {{\mathcal A}(\E \vec{\bs{g}})})^{-1}\rrn_{\infty} =\Oeta{1}\ ,
$$
which is the desired result. Proposition \ref{prop:inversion} is proved. 
}}

\correction{
\textcolor{red}{
\subsection{Proof of Lemma \ref{lemma:key-lemma}}\label{sec:proof-key-lemma}  
The proof of parts $(1)$ and $(2)$ proceeds by induction on the dimension $n$ and by computing the Schur complement of matrices under investigation.
\subsubsection*{The induction assumption, its verification for $n=1$ and some notations} The induction assumption is the following: let $A$ and $B$ be $n\times n$ matrices with $\rho(A\odot \bar{A})<1$ and $\rho(B\odot \bar{B})<1$, then $\det(I-A\odot B)\neq 0$ (or equivalently $I -A\odot B)$ is invertible) and the inequality of the lemma holds true. 
We first verify the induction assumption for $n=1$. In this case, the matrices are scalar.
$$
A=a\, ,\ B=b\, ,\ A\odot B=ab\, ,\ A\odot \bar{A}=|a|^2\, ,\ B\odot \bar{B} = |b|^2\, ,\ |a|<1\, ,\ |b|<1\ .
$$ 
Then $\det(I-A\odot B)=1-ab \neq0$ and 
$$
\frac{|uvwr|}{|1-ab|}\quad \le\quad  \sqrt{\frac{|uw|^2}{1-|a|^2}}\sqrt{\frac{|vs|^2}{1-|b|^2}}\ ,
$$
which is the desired inequality.
We now assume the induction assumption at step $n$ and prove it at step $n+1$. We first introduce some notations. The tilded quantities refer to step $n+1$ (either $(n+1)\times (n+1)$ matrices or $(n+1)\times 1$ vectors). The untilded quantities refer to their counterparts at step $n$. Plain lowercase letters are scalars.
$$
\tilde A=\left(
\begin{array}{cc}
A & \bs{a}\\
\bs{\alpha}^\tran&a
\end{array}\right)\, ,\
\tilde B=\left(
\begin{array}{cc}
B & \bs{b}\\
\bs{\beta}^\tran&b
\end{array}\right)\, ,\ 
\tilde{\bs{u}}=\left(
\begin{array}{c}
\bs{u}\\
u
\end{array}\right)\, ,\
\tilde{\bs{v}}=\left(
\begin{array}{c}
\bs{v}\\
v
\end{array}\right)\, ,\
\tilde {\bs{w}}=\left(
\begin{array}{c}
\bs{w}\\
w
\end{array}\right)\, ,\
\tilde {\bs{s}}=\left(
\begin{array}{c}
\bs{s}\\
s
\end{array}\right)\, .
$$
We will denote by
$$
\Gamma_{AB} = (I- A\odot  B)^{-1}\ ,\quad \Gamma_A = (I-A\odot \bar A)^{-1}\ ,\quad  \Gamma_B = (I-B\odot \bar B)^{-1}
$$
and consider the following notation for quadratic forms:
\begin{equation}\label{eq:notation-quadratic}
{\mathcal Q}(\bs{u},A,\bs{w}) \ =\  (\bs{u}\odot \bar{\bs{u}})^\tran \Gamma_{A}\,  \bs{w}\odot \bar{\bs{w}}\ ,\quad 
{\mathcal Q}(\bs{v},B,\bs{s}) \ =\ (\bs{v}\odot \bar{\bs{v}})^\tran \Gamma_{B}\,  \bs{s}\odot \bar{\bs{s}}\ ,
\end{equation}
where $\bs{u},\bs{v},\bs{w}$ and $\bs{s}$ are generic $n\times 1$ vectors.
\subsubsection*{Invertibility of $I-\tilde A\odot \tilde B$}
By assumption, $\rho(\tilde A\odot \bar{\tilde A})<1$ and $\rho(\tilde B\odot \bar{\tilde B})<1$. Since $A\odot \bar{A}$ is a principal submatrix of $\tilde A\odot \bar{\tilde A}\succcurlyeq 0$, we have
$$
\rho(A\odot \bar{A}) \le \rho(\tilde A\odot \bar{\tilde A})<1 
$$
and similarly, $\rho(B\odot\bar{B})<1$. By the induction assumption, $I-A\odot B$ is invertible. In particular, $\det(I-A\odot B)\neq 0$.
Consider now the following decomposition
$$
I-\tilde A\odot \tilde B =\left( 
\begin{array}{cc}
I-A\odot B& -\bs{a}\odot\bs{b}\\
-\bs{\alpha}\odot \bs{\beta} & 1-ab
\end{array}\right)\ .
$$
The Schur complement of matrix $I-A\odot B$ in $I- \tilde A \odot \tilde B$ writes:
$$
{\mathcal S}_{AB} = 1-ab - (\bs{\alpha}\odot \bs{\beta})^\tran \Gamma_{AB}\,  \bs{a}\odot \bs{b}\ .
$$
We similarly consider the Schur complement ${\mathcal S}_A$ (resp. ${\mathcal S}_B$) of matrix $I-A\odot \bar{A}$ (resp. matrix $I-B\odot \bar{B}$) in $I- \tilde A \odot \bar{\tilde A}$ (resp. $I- \tilde B \odot \bar{\tilde B}$):
\begin{eqnarray*}
{\mathcal S}_{A} &=& 1-|a|^2 - (\bs{\alpha}\odot \bar{\bs{\alpha}})^\tran \Gamma_{A}\,  \bs{a}\odot \bar{\bs{a}}\ ,\\
{\mathcal S}_{B} &=& 1-|b|^2 - (\bs{\beta}\odot \bar{\bs{\beta}})^\tran \Gamma_{B}\,  \bs{b}\odot \bar{\bs{b}}\ .
\end{eqnarray*}
By Proposition \ref{prop:folklore}, the determinants of $I-\tilde A\odot \bar{\tilde A}$ and $I-A\odot \bar{A}$ are positive. 
By the determinantal formula \cite[Section 0.8.5]{book-horn-johnson} involving ${\mathcal S}_A$, we obtain:
$$
\det(I-\tilde A\odot \bar{\tilde A}) = \det(I-A\odot \bar{A})\times {\mathcal S}_A\ .
$$
Hence ${\mathcal S}_A>0$; similarly, ${\mathcal S}_B>0$.
We now prove that $|{\mathcal S}_{AB}| \neq 0$.
Applying the induction assumption, we get
\begin{eqnarray}
|{\mathcal S}_{AB} | &\ge & 1 - ab -\sqrt{\Qa}\sqrt{\Qb}\nonumber\\
&\ge & \sqrt{1-|a|^2} \sqrt{1-|b|^2} -\sqrt{\Qa}\sqrt{\Qb}\nonumber\\
&\ge & \sqrt{{\mathcal S}_A}\sqrt{{\mathcal S}_B} >0\ ,\label{eq:min-Sab}
\end{eqnarray}
where the last inequality follows from the elementary inequality $\sqrt{ab} - \sqrt{cd}\ge \sqrt{a-c}\sqrt{b-d}$, valid for $(a-c)\wedge(b-d)\ge 0$.
Using again the determinantal formula, we get
$$
\det(I-\tilde A \odot \tilde B)= \det(I - A\odot B) \times {\mathcal S}_{AB} \ \neq\  0\ .
$$
Hence $I-\tilde A \odot \tilde B$ is invertible.
\subsubsection*{The inequality at step $n+1$}
Using the Schur decomposition
$$
\left( I - \tilde A\odot \tilde B\right)^{-1}
=\left( 
\begin{array}{cc}
I &\Gamma_{AB}\, \bs{a}\odot \bs{b}\\
0 &1
\end{array}\right)
\left( \begin{array}{cc}
\Gamma_{AB}&0\\
0&{\mathcal S}^{-1}_{AB}
\end{array}\right)
\left( \begin{array}{cc}
I &0\\
(\bs{\alpha}\odot\bs{\beta})^\tran \Gamma_{AB} &1
\end{array}\right)\ ,
$$
we obtain
\begin{eqnarray}
\lefteqn{(\tilde{\bs{u}}\odot\tilde{\bs{v}})^\tran  \left( I - \tilde A\odot \tilde B\right)^{-1} \tilde{\bs{w}}\odot\tilde{\bs{s}}}\nonumber \\
&=&
(\bs{u}\odot\bs{v})^\tran  \Gamma_{AB} \bs{w}\odot\bs{s} 
+\frac{(\bs{u}\odot\bs{v})^\tran  \Gamma_{AB} \bs{a}\odot\bs{b}\times 
(\bs{\alpha}\odot\bs{\beta})^\tran  \Gamma_{AB} \bs{w}\odot\bs{s}}{{\mathcal S}_{AB}}\nonumber\\
&&\quad + uv\frac{
(\bs{\alpha}\odot\bs{\beta})^\tran  \Gamma_{AB} \bs{w}\odot\bs{s}}{{\mathcal S}_{AB}} + ws \frac{(\bs{u}\odot\bs{v})^\tran  \Gamma_{AB} \bs{a}\odot\bs{b}}{{\mathcal S}_{AB}} + \frac{uvws}{\Sab}\ .\label{eq:schur-decomp}
\end{eqnarray}
A similar computation yields (recall notation \eqref{eq:notation-quadratic})
\begin{eqnarray}
\lefteqn{{(\tilde{\bs{u}}\odot\bar{\tilde{\bs{u}}})^\tran  \left( I - \tilde A\odot \bar{\tilde A}\right)^{-1} \tilde{\bs{w}}\odot\bar{\tilde{\bs{w}}}}}\nonumber\\
&=& {\mathcal Q}(\bs{u},A,\bs{w}) +\frac{{\mathcal Q}(\bs{u},A,\bs{a}){\mathcal Q}(\bs{\alpha},A,\bs{w}) }{\Sa} +|u|^2\frac{{\mathcal Q}(\bs{\alpha},A,\bs{w})}\Sa
+|w|^2 \frac{{\mathcal Q}(\bs{u},A,\bs{a})}\Sa + \frac{|uw|^2}\Sa\ .\nonumber\\
\label{eq:schur-decomp-A}
\end{eqnarray}
Using the induction assumption at step $n$ and the inequality \eqref{eq:min-Sab} over the Schur complements, we can majorize the r.h.s. of \eqref{eq:schur-decomp}:
\begin{eqnarray*}
\lefteqn{\left| 
(\tilde{\bs{u}}\odot\tilde{\bs{v}})^\tran  \left( I - \tilde A\odot \tilde B\right)^{-1} \tilde{\bs{w}}\odot\tilde{\bs{s}}
\right|} \\
&\le &\sqrt{{\mathcal Q}(\bs{u},A,\bs{w})}\sqrt{{\mathcal Q}(\bs{v},B,\bs{s})}
+\frac{\sqrt{{\mathcal Q}(\bs{u},A,\bs{a})}\sqrt{{\mathcal Q}(\bs{v},B,\bs{b})} \sqrt{{\mathcal Q}(\bs{\alpha},A,\bs{w})}\sqrt{{\mathcal Q}(\bs{\beta},B,\bs{s})}}
{\sqrt{\Sa\, \Sb}}\\
&&\quad + |uv|\frac{\sqrt{{\mathcal Q}(\bs{\alpha},A,\bs{w})}\sqrt{{\mathcal Q}(\bs{\beta},B,\bs{s})}}
{\sqrt{\Sa\, \Sb}} 
+ |ws| \frac{\sqrt{{\mathcal Q}(\bs{u},A,\bs{a})}\sqrt{{\mathcal Q}(\bs{v},B,\bs{b})}}
{\sqrt{\Sa\, \Sb}} + 
\frac{|uwvs|}{\sqrt{\Sa\, \Sb}}\ .
\end{eqnarray*}
Using Cauchy-Schwarz inequality $\sum_i \sqrt{x_i y_i} \le (\sum_i x_i)^{1/2} (\sum_i y_i)^{1/2}$ together with \eqref{eq:schur-decomp-A} and its counterpart for 
$(\tilde{\bs{v}}\odot\bar{\tilde{\bs{v}}})^\tran  \left( I - \tilde B\odot \bar{\tilde B}\right)^{-1} \tilde{\bs{s}}\odot\bar{\tilde{\bs{s}}}$, we finally obtain the desired result at step $n+1$.
Parts (1) and (2) are proved.
Part (3) is a simple corollary of the previous inequality. Notice that for a $n\times n$ matrix $C$,
\begin{eqnarray*}
\lrn C\rrn_{\infty}&=& \max_i \Big\{ \sum_{j} |C_{ij}| \Big\} \ =\  \max_i \Big\{ \sum_{j} C_{ij}x_j \ ; \|\bs{x}\|_{\infty} \le 1\Big\}\ ,\\ 
&=& \max_i \Big\{ \sum_{j} C_{ij}x_j y_j \ ; \|\bs{x}\|_{\infty}, \|\bs{y}\|_{\infty} \le 1\Big\} \ .
\end{eqnarray*}
Specializing $\bs{u}=\bs{v}=\bs{e}_i$, $\|\bs{w}\|_{\infty}\le 1$ and $\|\bs{r}\|_{\infty} \le 1$ in the lemma, we obtain:
$$
\left| \sum_{j} \left[(I-A\odot B)^{-1}\right]_{ij} w_j r_j\right| \ \le\ \sqrt{\sum_{j} \left[(I-A\odot \bar{A})^{-1}\right]_{ij} |w_j|^2} \ \sqrt{\sum_{j} \left[(I-B\odot \bar{B})^{-1}\right]_{ij}|r_j|^2}
$$
for $i\in [n]$. Noticing that $\|\bs{w}\|_{\infty} \le 1$ implies that $\|\bs{w}\odot \bar{\bs{w}}\|_{\infty} \le 1$, it remains to optimize over $\bs{w}$ and $\bs{r}$ to conclude.
Lemma \ref{lemma:key-lemma} is proved.}}

\subsection{Proof of Lemma \ref{cvP}}\label{proof:cvP}

Assume without loss of generality that $\zeta$ is a probability measure. Let $\varphi\in C_c(\C)$. Then $
\E\, \varphi(f_n(\cdot,z) - g(z)) \xrightarrow[n\to\infty]{} \varphi(0)
$ for $\zeta$-almost all $z\in\C$ since the convergence in probability induces the convergence in distribution.
Thus, by the dominated convergence and Fubini's theorems,
\[
\int_{\Omega\times \C} \varphi(f_n(\omega, z) - g(z)) \, (\PP \otimes \zeta)(d\omega\times dz) 
\xrightarrow[n\to\infty]{} \varphi(0) \, .  
\]
In other words, $f_n-g$ converges to $0$ in distribution, hence in probability,  for 
the probability measure $\PP \otimes \zeta$. As a consequence (see for instance \cite[Lemma 3.11]{book-kallenberg-second-edition}), 
$$
\int_{\C} |g(z)|^{1+\alpha} \zeta(dz) \ =\  \int_{\Omega\times \C} |g(z)|^{1+\alpha} 
(\PP \otimes \zeta)(d\omega\times dz)\  \le\  C\ .
$$
By \eqref{eq:unif-integrability}, the sequence $(f_n)$ is $\mathbb{P}\otimes \zeta$-uniformly integrable, hence 
$
\int_{\Omega\times \C} |f_n(\omega,z) - g(z)| (\PP \otimes \zeta)(d\omega\times dz)\xrightarrow[n\to\infty]{} 0\ ,
$
see for instance \cite[Proposition 3.12]{book-kallenberg-second-edition}. Convergence \eqref{eq:conv-probab} follows from Markov's inequality. 

\subsection{Proof of Lemma \ref{h:pot}}\label{proof:h:pot}
Let $(\psi_k)_{k\ge 1}$ be a sequence of smooth compactly supported functions, dense in $C_c(\C)$ for the supremum norm $\|\psi\|_{\infty} = \sup_{z\in \C} |\psi(z)|$. By the diagonal extraction procedure, one can find a subsequence $(\zeta_{n'})$ 
such that with probability one $(\zeta_{n'})$ is tight and  
$\int \psi_k d\,\zeta_{n'} \xrightarrow[n'\to\infty]{} - \frac1{2\pi} \int \Delta\psi_k(z) h(z) \,\ell(dz)$
for all $k\ge 1$. Thus, on this set of probability one, the tight sequence 
$(\zeta_{n'})$ has a unique non-random limit point $\zeta$, and this limit point satisfies
$
\zeta = - (2\pi)^{-1} \Delta h
$
in $\mathcal D'(\C)$, the set of Schwartz distributions. With this at hand, we get from the assumption that
\[
\int \psi_k(z) \, \zeta_n(dz) 
\xrightarrow[n\to\infty]{{\mathcal P}} 
\int \psi_k(z) \, \zeta(dz) 
\]
for all $k\ge 1$. By a density argument, we thus get that 
\[
\int \varphi(z) \, \zeta_n(dz) 
\xrightarrow[n\to\infty]{{\mathcal P}} 
\int \varphi(z) \, \zeta(dz) 
\]
for every $\varphi \in C_c(\C)$.

\end{appendix}

\bibliographystyle{abbrv}
\bibliography{math}

\noindent {\sc Nicholas Cook}\\
UCLA Mathematics Department\\
520 Portola Plaza,\\
Los Angeles, CA 90095-1555\\
e-mail: {\tt nickcook@math.ucla.edu}\\

\noindent {\sc Walid Hachem, Jamal Najim},\\
Laboratoire d'Informatique Gaspard Monge, UMR 8049\\
CNRS \& Universit\'e Paris Est Marne-la-Vall\'ee\\
5, Boulevard Descartes,\\
Champs sur Marne,
77454 Marne-la-Vall\'ee Cedex 2, France\\
e-mail: {\tt \{walid.hachem,jamal.najim\}@u-pem.fr}\\

%
\noindent {\sc David Renfrew}\\
Department of Mathematical Sciences\\
Binghamton University (SUNY)\\
Binghamton, NY 3902-6000\\
e-mail: {\tt renfrew@math.binghamton.edu}\\

\end{document}